\theoremstyle{plain}
\newtheorem{theorem}{Theorem}[section]
\newtheorem{lemma}[theorem]{Lemma}
\newtheorem{proposition}[theorem]{Proposition}
\newtheorem{corollary}[theorem]{Corollary}
\theoremstyle{definition}
\newtheorem{definition}[theorem]{Definition}
\newtheorem*{property}{Property}
\theoremstyle{remark}
\newtheorem{remark}[theorem]{Remark}
\newcommand{\IP}{\mathbb P}
\newcommand{\IE}{\mathbb E}
\newcommand{\RR}{\mathbb R}
\newcommand{\ZZ}{\mathbb Z}
\newcommand{\IN}{\mathbb N}
\newcommand{\NN}{\mathbb N}
\newcommand{\1}{\mathds{1}}
\newcommand{\la}{\langle}
\newcommand{\ra}{\rangle}
\newcommand{\supp}{\textup{supp}\,}
\newcommand{\eps}{\varepsilon}
\newcommand{\Ctem}{C_{\textup{tem}}}
\newcommand{\Ito}{It\^o}
\newcommand{\dimension}{q}
\newcommand{\Rdim}{\RR^{\dimension}}
\newcommand{\intRd}{\int}
\newcommand{\be}{\begin{equation}} %
\newcommand{\eq}{\end{equation}}
\newcommand{\nn}{\nonumber}
\begin{document}
\date{\today}
\title{New results on pathwise uniqueness for the heat equation with colored noise}

\author{Thomas Rippl and Anja Sturm\\
Institute for Mathematical Stochastics\\
Georg-August-Universit\"at G\"ottingen \\
Goldschmidtstr. 7\\
37077 G\"ottingen, Germany}

\maketitle
\abstract{
We consider strong uniqueness and thus also existence of strong solutions for  
the stochastic heat equation with  a multiplicative colored noise term. Here, the noise 
is white in time and colored in $q$ dimensional space ($q \geq 1$) with a singular correlation kernel. 
The noise coefficient is H\"older continuous in the solution. We discuss improvements 
of the sufficient conditions obtained in Mytnik, Perkins and Sturm (2006) that relate the 
H\"older coefficient with the singularity of the correlation kernel of the noise. For this we use
new ideas of Mytnik and Perkins (2011) who treat the case of strong uniqueness for the 
stochastic heat equation with multiplicative white noise in one dimension. Our main result on 
pathwise uniqueness confirms a conjecture that was put forward in their paper.\\
}

  \vspace{\fill}

\par
        \emph{AMS 2000 Subject Classification.} \\Primary 60H15,
        60K35 
Secondary  60K37, 60J80,  60F05\\
 \\
\par
        \emph{Key words and phrases.} Heat equation, colored noise,
stochastic partial differential equation, pathwise uniqueness, existence    

          \thispagestyle{empty}

\par
         




\newpage

\section{Introduction}
This work  is the third in a series of papers dealing with the pathwise uniqueness of the stochastic heat equation with H\"older continuous noise coefficients: For $t >0$ and  $x \in \Rdim$ we set $X(0,x) = X_0(x)$ and consider
\begin{equation} \label{eq:spde} \frac{\partial X}{\partial t} = \frac{1}{2} \Delta X + \sigma (t,x,X) \dot{W}(t,x) + b(t,x,X)  \text{  a.s.} \end{equation}
Here, $X: \RR_+ \times \Rdim \rightarrow \RR$ is random, $\Delta$ denotes the Laplacian, $\dot{W}$ a space-time noise on $\RR_+ \times \Rdim,$ and $\sigma$ and $b$ are real valued functions.

Stochastic partial differential equations (SPDE)  of  the form (\ref{eq:spde}) arise naturally in the description of the densities of measure-valued processes on $\Rdim,$ that are  obtained, for one, as diffusion limits of spatial branching particle systems. For example, in the case of super-Brownian motion 
in dimension $\dimension = 1$ the measure at any positive time $t>0$ 
has a density $X_t(x)=X(t,x)$ a.s., and this density satisfies the above equation \eqref{eq:spde} with $\sigma (t,x,X) = \sqrt{X},$ $b \equiv 0$ and $\dot{W}$ space-time white noise (\cite{ks:88}, \cite{reimers:89}). 

Here, we want to focus on equation (\ref{eq:spde}) in any dimension $\dimension \geq 1$ in the case when the
noise coefficient $\sigma$ is  not necessarily Lipschitz but merely H\"older continuous in the solution $X$ and 
 $\dot{W}$ is a noise that is white in time and colored in space.  This means that $W$ is a Gaussian martingale measure on $\RR_+ \times \RR^\dimension$  as introduced in \cite{jW} 
 with spatial correlation kernel $k:\RR^{2\dimension} \to \RR$ specified as follows. For  $\phi \in C_c (\RR^\dimension),$ the continuous compactly supported functions on $\RR^{\dimension},$ the real-valued process $(W_t(\phi))_{t\geq 0}$ is a Brownian motion with quadratic variation given by 
\be \label{eq:corrkernel} \langle W(\phi) \rangle_t:= t \int_{\RR^{\dimension}} \int_{\RR^{\dimension}} \phi(x) \phi(y) k(x,y)\, dx dy. \eq
SPDEs with colored noise of this form arise as diffusion limits of branching particle systems in a random environment, whose spatial correlation is described by the kernel $k$, in the case that $\sigma(t,x,X) = X,$ see \cite{aS03} 
and also \cite{lM96}. 
More general noise coefficients $\sigma$ should correspond to an additional dependence of the branching on the local particle density, see \cite{hZ10} for a recent general formulation in the non-spatial setting without a random environment. 

In this article we give conditions for pathwise uniqueness of solutions to equation (\ref{eq:spde}) with the correlation kernel $k$ in the following form: 
There exist constants $\alpha \in (0, 2 \wedge \dimension)$ and $c_{\ref{kernel}}>0$ such that
\be \label{kernel}
k(w,z) \leq c_{\ref{kernel}} (|w-z|^{-\alpha} + 1)  \quad \text{for all } w,z \in \RR^\dimension.
\eq
For noise correlation kernels of this form, existence and pathwise uniqueness of solutions to (\ref{eq:spde}) when $\sigma$ is H\"older continuous in the solution was previously considered in \cite{mps:06}, where
an equivalent formulations of condition (\ref{kernel}) can be found as well as further conditions that any correlation kernel as in (\ref{eq:corrkernel}) must satisfy. The techniques used in \cite{mps:06} for finding sufficient conditions on pathwise uniqueness were further refined in \cite{mp:11} albeit for (\ref{eq:spde}) in dimension $\dimension =1$ with space-time white noise. In this work, we want to utilize the ideas of \cite{mp:11} in order to improve the results of \cite{mps:06}. 

In order to rigorously describe our new results as well as the preceding results of \cite{mps:06} and \cite{mp:11} we introduce some conditions on the coefficients as well as some notation. We will impose a growth condition and  a H\"older continuity condition on $\sigma$ as well as the standard Lipschitz condition on $b.$ So assume that there exists a constant $c_{\ref{growthcond}}$ such that for all $(t,x,X) \in \RR_+ \times\RR^{\dimension+1},$
\begin{align}
\label{growthcond}
|\sigma(t,x,X)| +|b(t,x,X)|\leq c_{\ref{growthcond}} (1 + |X|).
\end{align}
Furthermore, for some $\gamma \in (0,1)$ there are $A_1,A_2 >0$ and for all $T>0$ there is an $A_0(T)$
so that for all $t \in [0,T]$ and all $(x,X,X') \in \RR^{\dimension + 2},$
\begin{align}
\label{holder}
 | \sigma(t,x,X) - \sigma(t,x,X') | \leq A_0(T) e^{A_1|x|} (1+|X| + |X'|)^{A_2} |X- X'|^{\gamma},
\end{align}
and there is a $B>0$ such that  for all $(t,x,X,X')\in \RR_+ \times\RR^{\dimension +2},$
\begin{align}\label{bLip}
 & |b(t,x,X)-b(t,x,X')|\leq B|X-X'|.
\end{align}
Also, we denote by  $C_c, C_0, C_b$ the spaces of continuous functions with compact support, vanishing at infinity or bounded, respectively. By $C(E,F)$ we denote the continuous functions 
from $E$ to $F$ for some topological spaces $E$ and $F.$ If the function is $k$-times continuously differentiable for $k \in \IN \cup \{ \infty\}$ we write a superscript $k.$ We also write $B^\dimension (x,r)$ for the ball with center $x$ and radius $r$ in $\RR^\dimension.$ Throughout the paper we will use the convention that  constants denoted by $c_{i.j},c_{i}$ refer to their appearance in Lemma $i.j$ or Equation ($i$), respectively. We will denote generic constants by $C,$ which may change their values from line to line. Further dependence on parameters is indicated in brackets.
Finally, let $p_t(x) = (2\pi t)^{-\dimension/2} \exp(-\frac{|x|^2}{2t})$ be the $\dimension$-dimensional heat-kernel.

We say that $(X,W)$ is a (stochastically weak) solution if there exists a filtered probability space $(\Omega, \mathcal{F}, (\mathcal{F}_t)_{t\geq 0}, P)$ that supports a colored noise $W$ defined as in (\ref{eq:corrkernel}) and (\ref{kernel}) such that  $X$ and $W$ are adapted and the mild formulation of \eqref{eq:spde} holds, namely
\begin{align} \label{eq:mspde}
 X(t,x) =& \intRd p_t(x-y) X(0,y) dy + \int_0^t \intRd p_{t-s}(x-y) \sigma(s,x,X(s,y)) W(ds \, dx)\\ 
\nonumber
&+\int_0^t \intRd p_{t-s}(x-y) b(s,x,X(s,y)) dx \, ds
\end{align}
almost surely for all $t \geq 0$ and $\phi \in C_c (\RR^\dimension)$, where we used the abbreviation $\int$ for $\int_{\RR^{\dimension}}.$ (In the following the integration domain will always be assumed to be $\RR^{\dimension}$ if nothing else is specified.)  For more details about these so called \emph{mild} solutions and the existence of the stochastic integral with respect to $W$ see \cite{dalang:99}, for more about the notion of \emph{weak} solutions see \cite{jacod:80} Def.~5.2(a).
Define the space of tempered functions by
\begin{align*}
 \Ctem := \{ f \in C(\RR^\dimension, \RR) : \| f\|_\lambda  < \infty \ \forall \lambda > 0\}  \text{ , where } || f \|_{\lambda} &:= \sup_{x\in \RR^\dimension} |f(x)| e^{-\lambda |x|}.
\end{align*}
For the existence of solutions we state
\begin{theorem}\label{theorem:existence}
 Let $X_0 \in \Ctem$ and let $b, \sigma$ be continuous functions satisfying \eqref{growthcond}. Assume that \eqref{kernel} holds for some $\alpha \in (0, 2 \wedge \dimension).$ Then there exists a stochastically weak solution to \eqref{eq:spde} with sample paths in $C(\RR_+, \Ctem)$. Additionally, it holds that for all $T,\lambda,p>0,$
 \begin{equation} \label{LpXbnd}
 \IE(\sup_{0\leq t\leq T}\sup_{x\in\RR^{\dimension}}|X(t,x)|^p e^{-\lambda|x|})<\infty.
\end{equation}
\end{theorem}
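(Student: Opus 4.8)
The plan is to construct the solution by a standard Picard/Euler-type approximation scheme combined with a tightness and martingale-problem argument, since we only claim a \emph{stochastically weak} solution. First I would approximate the possibly only continuous coefficients $\sigma$ and $b$ by Lipschitz ones $\sigma_n,b_n$ (e.g.\ by mollification), keeping the linear growth bound \eqref{growthcond} uniform in $n$; for each $n$ the mild equation \eqref{eq:mspde} with $(\sigma_n,b_n)$ has a unique solution $X^{(n)}$ in $C(\RR_+,\Ctem)$ by a fixed-point argument in a suitably weighted space, using Dalang's theory of stochastic integration against the colored noise $W$ (this is where \eqref{kernel} with $\alpha<2\wedge q$ enters, guaranteeing finiteness of the relevant second-moment integrals $\int_0^t\!\!\int\!\!\int p_{t-s}(x-w)p_{t-s}(x-z)k(w,z)\,dw\,dz\,ds$).

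Next I would establish the moment bound \eqref{LpXbnd} uniformly in $n$. Here I would apply the $L^p$ (Burkholder--Davis--Gundy) estimate to the stochastic convolution term: the $p$-th moment of $\int_0^t\!\!\int p_{t-s}(x-y)\sigma_n(s,x,X^{(n)}(s,y))\,W(ds\,dy)$ is controlled, via Minkowski and the covariance structure \eqref{eq:corrkernel}, by $\big(\int_0^t\!\!\int\!\!\int p_{t-s}(x-w)p_{t-s}(x-z)k(w,z)\IE[(1+|X^{(n)}|)^2]^{?}\,dw\,dz\,ds\big)^{p/2}$; combining with \eqref{kernel}, the scaling $\int\int p_s(x-w)p_s(x-z)|w-z|^{-\alpha}\,dw\,dz = C s^{-\alpha/2}$ which is integrable in $s$ near $0$ since $\alpha<2$, and a Gronwall argument in $t$, then taming the spatial growth with the weight $e^{-\lambda|x|}$ (using $p_t(x-y)e^{-\lambda|x|}\le C_\lambda p_t(x-y)e^{-\lambda|y|}$ up to an exponential-in-$t$ factor), gives a bound uniform in $n$; a Kolmogorov/Garsia--Rodemich--Rumsey argument upgrades this to the bound with the double supremum over $t\le T$ and $x$, and simultaneously gives equicontinuity estimates on $(t,x)\mapsto X^{(n)}(t,x)$ and hence tightness of $\{X^{(n)}\}$ in $C(\RR_+,\Ctem)$ (equipped with the topology of the weighted sup-norms).

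Then I would pass to the limit: by tightness extract a subsequence along which $(X^{(n)},W,\text{martingale measure terms})$ converge in law; by Skorokhod's representation theorem realize the convergence almost surely on a new probability space, with a limiting noise $\tilde W$ still satisfying \eqref{eq:corrkernel}--\eqref{kernel}. Continuity of $\sigma,b$ plus the uniform moment bounds (giving uniform integrability) let one pass to the limit in each term of the mild equation \eqref{eq:mspde} — the deterministic heat term and drift term are routine, and the stochastic convolution is handled by showing the approximating stochastic integrals converge in $L^2$ to the stochastic integral of the limit, again invoking the covariance bound and uniform integrability. The limit $X$ then solves \eqref{eq:mspde} with the original $(\sigma,b)$, lies in $C(\RR_+,\Ctem)$, and inherits \eqref{LpXbnd} by Fatou.

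I expect the main obstacle to be the interplay between the spatial singularity of the kernel and the non-compact spatial domain: one must simultaneously control the time-singularity $s^{-\alpha/2}$ coming from \eqref{kernel} (which forces $\alpha<2$) and the spatial growth in $x$, and do so with $L^p$ estimates rather than just $L^2$, since the H\"older/growth setting and the desired $\sup_x$ control require $p$ large. Managing the weighted norms $\|\cdot\|_\lambda$ through the stochastic integral — in particular obtaining the factorized convolution bound with the $e^{-\lambda|x|}$ weight and verifying that the constants depend on $T,\lambda,p$ but not on $n$ or $x$ — is the technical heart; the passage to the limit and the tightness are comparatively standard once these a priori bounds are in hand. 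Much of this can likely be cited from \cite{mps:06} and \cite{dalang:99}, so the write-up would emphasize the uniform-in-$n$ estimates and the limiting procedure.
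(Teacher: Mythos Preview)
Your proposal is correct and matches the approach the paper relies on: the paper does not give its own proof of Theorem~\ref{theorem:existence} but defers to Theorems~1.2 and~1.8 of \cite{mps:06} (with the drift and $(t,x)$-dependence added in \cite{rippl:12}), and the argument there is precisely the Lipschitz-approximation / uniform $L^p$-bounds / tightness-plus-Skorokhod scheme you outline. Your identification of the key technical ingredients --- the integrability of $s^{-\alpha/2}$ from \eqref{kernel} with $\alpha<2$, the weighted heat-kernel estimate $p_t(x-y)e^{-\lambda|x|}\le C_\lambda(T) e^{-\lambda|y|}p_t(x-y)$, BDG plus Gronwall for the uniform-in-$n$ moment bound, and Kolmogorov/GRR for the double supremum and equicontinuity --- is exactly right.
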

This theorem is essentially Theorem 1.2 and Theorem 1.8 of \cite{mps:06} combined, except that we add a drift $b$ and allow space and time dependence of $b$ and $\sigma$. The full proof addressing these straightforward generalizations can be found in Chapter 8 of \cite{rippl:12}.

We say that \emph{pathwise uniqueness} for \eqref{eq:spde} holds if for any two solutions $X^1$ and $X^2 \in \Ctem$ on the same filtered probability space $(\Omega, \mathcal{F}, (\mathcal{F}_t)_{t\geq 0}, P)$ supporting a noise $W$ and with $X^1_0=X^2_0$ almost surely we have that $X^1(t,x) = X^2(t,x)$  for all $t \geq 0,x \in \Rdim$ almost surely. We are now in the position to state our main result regarding pathwise uniqueness of solutions to (\ref{eq:spde}):
\begin{theorem}\label{theorem:unique}
 Let $X_0 \in \Ctem$ and assume that $b,\sigma:\RR_+ \times \Rdim \times \RR \to \RR$ satisfy \eqref{growthcond}, \eqref{holder} and \eqref{bLip}. Assume that \eqref{kernel} holds for some $\alpha \in (0, 2 \wedge \dimension).$ Then pathwise uniqueness for solutions of \eqref{eq:spde} holds if
 $$ \alpha < 2(2\gamma - 1) .$$
\end{theorem}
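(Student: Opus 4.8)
The plan is to follow the by-now-classical Yamada--Watanabe style comparison argument adapted to the SPDE setting, as in \cite{mps:06}, but upgraded with the refined analysis of \cite{mp:11}. Set $u = X^1 - X^2$; by \eqref{eq:mspde} the initial term cancels, the drift contributes a term controlled linearly in $u$ via \eqref{bLip} (hence harmless after a Gronwall step), and the essential object is the stochastic convolution
\[
  u(t,x) = \int_0^t \int p_{t-s}(x-y)\,\bigl(\sigma(s,y,X^1(s,y)) - \sigma(s,y,X^2(s,y))\bigr)\,W(ds\,dy) + (\text{drift}).
\]
One introduces the usual Yamada--Watanabe test functions $\phi_n$ with $\phi_n'' (r) \le 2 (n \, a_n)^{-1} \1_{\{a_{n} \le |r| \le a_{n-1}\}}$ along a sequence $a_n \downarrow 0$, applies It\^o's formula to $\langle \phi_n(u(t,\cdot)), \Psi \rangle$ against a suitable (random, moving) test function $\Psi$, and tries to show the right-hand side tends to $0$. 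The quadratic-variation term, after using \eqref{holder}, produces
\[
  \frac{1}{n a_n} \int_0^t \int\!\!\int \1_{\{a_n \le |u(s,y)| \le a_{n-1}\}} |u(s,y)|^{2\gamma} |u(s,y')|^{2\gamma}\, k(y,y')\,\Psi(\cdot)\,dy\,dy'\,ds
\]
(modulo the tempered weights from \eqref{holder}, absorbed using \eqref{LpXbnd}), and the whole difficulty is to bound this using the colored-noise kernel bound \eqref{kernel}.

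The heart of the argument, following \cite{mp:11}, is a careful decomposition of $u(s,y)$ near the ``bad set'' $\{|u| \approx a_n\}$ into a ``local'' part and a contribution from the noise over a short time window $[s-\delta,s]$, and a bootstrap on the modulus of continuity of $u$. Concretely, one freezes a grid in time at scale depending on a further parameter, writes $u$ on each block as (approximately) a stochastic convolution of the frozen increment of $\sigma$, and proves a hierarchy of moment bounds of the form
\[
  \IE\Bigl[ \bigl(\text{spatial integral of } |u(s,y)|^{2\gamma}\1_{|u(s,y)|\le a_{n-1}} \text{ weighted by } k \bigr)^m \Bigr] \le C\, a_n^{(2\gamma)\cdot(\text{something})} n^{-\beta},
\]
so that after multiplying by $(n a_n)^{-1}$ the expression vanishes as $n \to \infty$. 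The precise exponent bookkeeping is where $\alpha$ enters: integrating $|y-y'|^{-\alpha}$ against the heat kernel at time-scale $\delta$ costs $\delta^{-\alpha/2}$, and balancing this against the gain $a_n^{2\gamma}$ coming from the H\"older exponent (used twice, once for each factor, as in the white-noise case one uses it once with exponent $2\gamma$ and the extra spatial integration is what upgrades $2\gamma - 1$ to $2(2\gamma-1) > \alpha$) forces exactly the condition $\alpha < 2(2\gamma - 1)$. So the chain is: (i) reduce to the stochastic convolution $u$; (ii) establish, via the colored heat-kernel estimates and \eqref{kernel}, the key a priori regularity/smallness estimates on $u$ near its zero set, in the spirit of Lemmas in \cite{mp:11}; (iii) feed these into the Yamada--Watanabe functional computation; (iv) Gronwall and take $n \to \infty$ to conclude $u \equiv 0$, then upgrade to the joint-in-$(t,x)$ statement using path continuity in $\Ctem$ from Theorem~\ref{theorem:existence}.

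The main obstacle, and the place where essentially all the work of the paper will go, is step (ii): proving the quantitative estimates on $|u(s,y)|$ on the set where it is small, uniformly enough in the scale $a_n$ and with the correct power of $n$, in the presence of the \emph{singular} correlation kernel. In \cite{mp:11} this is done for white noise in $d=1$ via an intricate induction over ``levels'' controlling higher and higher moments of local averages of $u$; here one must redo that induction with the heat semigroup acting on the singular kernel $|w-z|^{-\alpha}$, tracking how the singularity degrades each estimate, and check that the numerology still closes precisely when $\alpha < 2(2\gamma-1)$. A secondary technical nuisance is handling the spatially growing factors $e^{A_1|x|}(1+|X|+|X'|)^{A_2}$ in \eqref{holder}: these are dealt with by working against tempered test functions and invoking the $L^p$ moment bound \eqref{LpXbnd}, localizing in space first and removing the localization at the end, exactly as in \cite{mps:06}.
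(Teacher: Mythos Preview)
Your overall framework is right --- Yamada--Watanabe test functions $\phi_n$, It\^o's formula against a mollified $u$, and the key term is the quadratic-variation contribution --- but your account of \emph{why} the threshold is $\alpha<2(2\gamma-1)$ rather than the old $\alpha<2\gamma-1$ is incorrect, and this is exactly the idea the paper supplies. First a minor correction: the quadratic-variation term has $|u(s,w)|^\gamma|u(s,z)|^\gamma$, not $|u|^{2\gamma}|u|^{2\gamma}$; the H\"older bound is applied once to each of the two $\sigma$-increments. In \cite{mps:06} one already uses both factors and already does the double spatial integral against $k$; that gives only $\alpha<2\gamma-1$. The improvement does \emph{not} come from ``using H\"older twice'' or from ``the extra spatial integration''.

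What your proposal is missing is the gradient-based decomposition of the bad set. The paper splits $u=u_{1,\delta}+u_{2,\delta}$ with $u_{1,\delta}=P_\delta u_{(t-\delta)^+}$ (so $u_{1,\delta}$ is smooth and has a genuine gradient), and then partitions the set $\{|\langle u_s,\Phi_x^{m_{n+1}}\rangle|\le a_n\}$ into pieces $J_{n,i}$ according to the size of $|\nabla u_{1,a_n}(s,\hat x_n)|\in[a_n^{\beta_{i+1}}/4,a_n^{\beta_i}/4]$. Two competing effects are then exploited: when $\beta_i$ is large (small gradient), an induction on H\"older regularity near the zero set --- the properties $(P_m)$, which push the effective exponent from $\xi<1$ up toward $\xi<2$ --- gives the sharp bound $|u(s,w)|\lesssim a_n^{\beta_i+1/2}$; when $\beta_i$ is small (large gradient), a covering/level-set argument shows the Lebesgue measure $|\tilde J_{n,i}(s)|\lesssim l_n(\beta_i)\bar l_n(\beta_i)^{-1}$ is small, because a nontrivial derivative in some direction $\sigma_x$ forces $u$ to leave the window $[-a_n,a_n]$ quickly along that direction. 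Balancing these two gains over the $\beta_i$-grid is precisely what produces the exponent $2(2\gamma-1)$. Your sketch has neither the gradient partition nor the measure bound; the ``induction over levels'' in \cite{mp:11} is an induction on H\"older exponents $(P_m)\Rightarrow(P_{m+1})$, not on moments, and there is no time-grid freezing. Without these ingredients your outline would, at best, reproduce the \cite{mps:06} condition $\alpha<2\gamma-1$.
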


Our main result improves the sufficient conditions for pathwise uniqueness given in  \cite{mps:06} in the same setting: There, it was shown 
that pathwise uniqueness holds if $\alpha < (2\gamma -1).$ Since it was known already then from \cite{dalang:99,PZ00}
 that for Lipschitz continuous noise coefficients $\sigma$ (corresponding to $\gamma=1$) pathwise uniqueness holds if $\alpha<  2 \wedge \dimension$ there was an obvious gap for $\gamma$ close to $1$ in dimensions $\dimension \geq 2.$ We close this gap with the present work. In addition, heuristic arguments can be made -in the Lipschitz as well as in our H\"older continuous case, see Section \ref{secmainres}- that the sufficient conditions for pathwise uniqueness cannot be further improved, so that we believe that they are indeed necessary  and the result of Theorem \ref{theorem:unique} sharp. 

We would like to point out that the statement of Theorem \ref{theorem:unique} was already conjectured in  \cite{mp:11}. In that article, pathwise uniqueness to \eqref{eq:spde} with white noise (formally $k=\delta,$ the delta measure) is considered in dimension $\dimension =1$ for $b,\sigma:\RR_+ \times \RR^2 \to \RR$ that satisfy \eqref{growthcond}, \eqref{holder} and \eqref{bLip}. By using and significantly improving the techniques of \cite{mps:06} it is shown in this setting that pathwise uniqueness holds for $  \gamma>\frac{3}{4}.$ Recently, it has been proven in \cite{mmp:12} that this result is sharp at least when solutions can be positive and negative, implying in particular that the white noise equation with $\gamma =\frac{1}{2}$ is not pathwise unique. 

The latter question had sparked a lot of interest over the last several decades since the corresponding equation -albeit with nonnegative solutions- describes the density of super-Brownian motion on one hand. On the other hand, it is well known that the corresponding non-spatial ordinary stochastic  differential equation with respect to Brownian motion is pathwise unique if and only if $\gamma \geq \frac{1}{2}.$ Finally, we note that  it has recently been shown in \cite{Xiong12} that a certain SPDE related to super-Brownian motion (different  from \eqref{eq:spde} as it regards a distribution function valued process)  is also pathwise unique.

In this paper, we use the refined techniques put forward in  \cite{mp:11} in order to arrive at our main result, Theorem \ref{theorem:unique}. A heuristic and proof outline for the rather technical and lengthy parts of the arguments will be given in Section \ref{secmainres}. Since in the following sections many arguments are analogous to those provided in  \cite{mp:11} we do not present those parts in complete detail but refer the interested reader to \cite{rippl:12}, where all calculations are carried out explicitly.

Here, we would like to emphasize that  the main differences and additional difficulties to \cite{mp:11} lie in the fact that we are considering a multi-dimensional setting and that we need to take care of correlations stemming from the kernel $k.$ Thus, numerous adjustments and some refinements to the results in \cite{mp:11} are necessary (see for example Lemma 6.8 and the accompanying remark).

At the end of this section, we want to stress the significance of pathwise uniqueness by pointing out that existence of weak solutions combined with pathwise uniqueness generally implies the existence of strong solutions. This is a classic result for ordinary stochastic differential equations (see Proposition 1 and Corollary 1 of \cite{YW:71}). For the more general setting of stochastic partial differential equations used here we appeal to recent results of \cite{kurtz:07} in order to obtain:

\begin{theorem}\label{strong}
 Assume that the assumptions of Theorem \ref{theorem:unique} and therefore also of Theorem \ref{theorem:existence} hold. Let $(\Omega, \mathcal{F}, (\mathcal{F}_t)_{t\geq 0}, P)$ be a filtered probability space with adapted colored noise $W$ and let  $X_0 \in \Ctem$ be $\mathcal{F}_0$-measurable. Then there exists a strong adapted solution $X$ to \eqref{eq:spde} with respect to the prescribed $X_0$ and $W.$
\end{theorem}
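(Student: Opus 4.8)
The plan is to deduce strong existence from the combination of weak existence (Theorem~\ref{theorem:existence}) and pathwise uniqueness (Theorem~\ref{theorem:unique}) via the general Yamada--Watanabe--Engelbert theory in the form developed by Kurtz in \cite{kurtz:07}. First I would cast the mild equation~\eqref{eq:mspde} into Kurtz's abstract framework: the ``input'' is the pair $(X_0,W)$, taking values in the Polish space $\Ctem \times \mathcal{W}$, where $\mathcal{W}$ is a suitable canonical path space for the colored martingale measure $W$ (e.g.\ a space of distribution-valued continuous paths, equivalently $W$ encoded as a cylindrical process indexed by $C_c(\RR^\dimension)$), and the ``output'' is $X \in C(\RR_+,\Ctem)$, which is Polish by the definition of $\Ctem$ and standard arguments. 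The relation ``$(X,X_0,W)$ solves~\eqref{eq:mspde}'' then determines a Borel subset of the product space; checking this is a measurability exercise using that the stochastic integral against $W$ is a jointly measurable functional of its integrand and of $W$, and that the heat-semigroup convolution and the coefficients $\sigma,b$ are jointly measurable and continuous in the relevant variables.

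Next I would verify the hypotheses of \cite{kurtz:07}: (i) existence of a (stochastically) weak solution with the quantitative state-space control, which is exactly Theorem~\ref{theorem:existence} together with~\eqref{LpXbnd}; (ii) pathwise uniqueness in the sense that any two solutions driven by the same $(X_0,W)$ on a common stochastic basis coincide almost surely, which is Theorem~\ref{theorem:unique} combined with the definition of pathwise uniqueness given above; and (iii) the compatibility (immersion) condition relating the filtration of a weak solution to the one generated by the driving data --- here one uses that $W$ is a martingale measure, so that on any basis carrying a weak solution $W$ remains a martingale measure (with the same covariance~\eqref{eq:corrkernel}) with respect to the joint filtration of $(X_0,W,X)$, which is precisely the compatibility structure Kurtz requires. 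Kurtz's theorem then produces a Borel-measurable map $F:\Ctem \times \mathcal{W} \to C(\RR_+,\Ctem)$ such that, for the prescribed $\mathcal{F}_0$-measurable $X_0$ and adapted noise $W$, the process $X:=F(X_0,W)$ solves~\eqref{eq:mspde} and is adapted to the (usual-conditions augmentation of the) filtration generated by $X_0$ and $W$, hence to $(\mathcal{F}_t)_{t\geq 0}$. This is exactly a strong adapted solution.

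The main obstacle is essentially bookkeeping rather than analysis: fixing the canonical path space for $W$ and the accompanying $\sigma$-algebras so that Kurtz's measurability and compatibility hypotheses literally apply, and confirming that the notion of weak solution used in Theorem~\ref{theorem:existence} is the same as Kurtz's notion of a solution of a ``stochastic equation'' for this choice of spaces --- in particular that the stochastic integral term in~\eqref{eq:mspde} is a measurable function of $(X,W)$ jointly. Once the framework is in place the application of \cite{kurtz:07} is automatic; all the analytic content has already been supplied by Theorems~\ref{theorem:existence} and~\ref{theorem:unique}, and no new estimates are needed. For completeness I would also note that in the non-spatial case this argument reduces to the classical Yamada--Watanabe result \cite{YW:71}.
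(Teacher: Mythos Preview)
Your proposal is correct and follows essentially the same route as the paper: both invoke Kurtz's abstract Yamada--Watanabe machinery \cite{kurtz:07}, feeding in weak existence from Theorem~\ref{theorem:existence}, pathwise uniqueness from Theorem~\ref{theorem:unique}, and a compatibility check for weak solutions. The only point where the paper is more concrete than your sketch is the ``bookkeeping'' step you flag: it fixes the canonical path space for $W$ as $C(\RR_+,H^{-\dimension-1}(\Rdim))$ (a Polish space), formulates the SPDE via Example~3.9 of \cite{kurtz:07}, cites Lemma~3.2 there for the compatibility criterion, and then applies Theorem~3.14 of \cite{kurtz:07} directly.
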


\begin{proof}
 We want to use the terminology of \cite{kurtz:07}. In order to apply the results we need to specify the space on which the noise $W$ can be realized. One can show, see Lemma 3.3.14 of \cite{rippl:12} for details, that the Sobolev space $H^{-q-1}(\Rdim)$ is an appropriate space, which is Polish. Now set $S_1 = C(\RR_+,H^{-q-1}(\Rdim))$ and $S_2 = C(\RR_+, \Ctem),$ which is the sample path space of the solutions, and formulate the SPDE \eqref{eq:spde} as in Example 3.9 of \cite{kurtz:07}. By Theorem \ref{theorem:existence} we know that there exist compatible solutions (see Lemma 3.2 of \cite{kurtz:07} for a compatibility criterion which is applicable for weak solutions) and by Theorem \ref{theorem:unique} we have pointwise uniqueness for compatible solutions. So we can apply Theorem 3.14 a) $\Rightarrow$ b) of \cite{kurtz:07}. More details can be found in the proof of Lemma 5.1.1 of \cite{rippl:12}.
\end{proof}

We want to conclude this section with a number of remarks regarding the H\"older continuity condition on $\sigma$ stated in (\ref{holder}): 
\begin{enumerate}
\item When \eqref{growthcond} holds, it suffices to assume \eqref{holder} for $|X-X'|\leq 1$.  Indeed, \eqref{holder} (with any $\gamma >0$) is immediate from \eqref{growthcond} for $|X-X'|\geq 1$ with $A_0(T)= 2 c_4$, $A_1=0$, $A_2=1$.
\item Condition \eqref{holder} implies the following local H\"older condition: For all $K>1$ there is an $L_K$ so that for all $t\in [0,K]$
 and  $x\in B^\dimension (0,K),X,X'\in[-K,K],$
\begin{equation}
\label{locholder}
 |\sigma(t,x,X)-\sigma(t,x,X')|\leq L_K|X-X'|^\gamma.
\end{equation}
\end{enumerate}

\section{Proof of Theorem \ref{theorem:unique}} \label{secmainres}
 The proof of Theorem \ref{theorem:unique} is inspired by the idea of Yamada and Watanabe \cite{YW:71} that was already used in \cite{mps:06} and \cite{mp:11}. We closely follow Section 2 in \cite{mp:11} as most of the ideas can be transferred from white to colored noise and also to the multi-dimensional setting.

Now consider Theorem \ref{theorem:unique} and assume its hypotheses throughout.
Let $X^{1}$ and $X^{2}$ be two solutions of \eqref{eq:spde} on $(\Omega, \mathcal{F}, (\mathcal{F}_t)_{t\geq 0}, P)$ with sample paths in $C(\RR_+,\Ctem)$ a.s., with the same initial condition, $X^{1}_0=X^{2}_0=X_0\in \Ctem$, and of course the same noise $W.$  We start by observing that $X^{i}$ for 
 $i=1,2$ satisfy the distributional form of~\eqref{eq:spde}: For $\Phi \in C_{c}^{\infty}(\RR)$ we have that
\begin{align}
\label{eq:wspde}
\nn \intRd X^i(t,x) \Phi(x)\, dx
= &\intRd X^i_{0}(x)  \Phi(x)\, dx
+\int_{0}^{t}\intRd X^i(s,x) \frac{1}{2} \Delta\Phi(x)\, dx ds \\
 &+\int_{0}^{t}\intRd \sigma(s,x,X^i(s,x))\Phi(x) W(ds\, dx)\\
\nn &+\int_{0}^{t}\intRd b(s,x,X^i(s,x))\Phi(x) \, dxds\quad\forall t\geq 0 \quad a.s.
\end{align}
In fact, for adapted processes with sample paths in $C(\RR_+,\Ctem)$, the mild formulation \eqref{eq:mspde} is equivalent to the distributional formulation (\ref{eq:wspde}) of solutions to~\eqref{eq:spde}, see page 1917 of  \cite{mps:06}. 
Let for any $K>1$
\begin{equation}\label{TKdef}
T_K=\inf\{s\geq 0:\sup_y(|X^1(s,y)| \vee |X^2(s,y)|)e^{-|y|}>K\}\wedge K
\end{equation}  
be a stopping time. Since $X^i \in C(\RR_+,\Ctem)$ we have $T_K\to \infty$ for $K \rightarrow \infty.$ Up to time $T_K$ condition \eqref{holder} implies that
\begin{align}
\label{holder'}
|\sigma(t,x,X)-\sigma(t,x,X')|\leq R_0e^{R_1|x|}|X-X'|^\gamma
\end{align}
for some $R_0, R_1>0.$ Thus, a stopping time argument allows us to prove Theorem \ref{theorem:unique} for $\sigma$ where \eqref{holder} is replaced by \eqref{holder'} (see the text after (2.30) in \cite{mp:11} for more on the sufficiency of this argument).

In order to apply an argument similar to that of Yamada and Watanabe we set for any $n\in \NN$ as in \cite{mp:11}
\[a_n=\exp\{-n(n+1)/2\},\]
fix a positive function $\psi_n \in C^\infty(\RR,\RR_+)$, such that  $\supp \psi_n \subset(a_n,a_{n-1}), \psi_n(x) \leq \frac{2}{nx}$ and
\[\int_{a_n}^{a_{n-1}} \psi_n(x) \, dx = 1.\]
As this function approximates a $\delta$-function at zero as $n \rightarrow \infty,$ we define
\begin{equation}\label{eq:phi}
 \phi_n(x) := \int_0^{|x|} dy  \int_0^y dz\,\psi_n(z), \quad x \in \RR,
\end{equation}
which then approximates the modulus. More precisely, we have
\begin{align}
 \label{phi_n-1}
& \phi_n(x) \to |x| \text{ uniformly in } x \in \RR,\\
  \label{phi_n-2}
 & |\phi_n'(x)| \leq 1 \text{ for all } x \in \RR\text{ and }\\
  \label{phi_n-3}
& |\phi_n ''(x)| \leq \frac{2}{nx} \text{ for all }x \neq 0.
\end{align}
Next we fix a point $x \in \RR^\dimension$ and $t_0 >0$ and a positive function $\Phi \in C_c^\infty(\RR^\dimension, \RR_+)$ such that  $\supp \Phi \subset B^\dimension (0,1)$ and $\int \Phi(y) dy =1$. Let $ \Phi_x^m(y) = m^\dimension \Phi(m(y-x))$ for $m>0$.

\noindent
Define the difference of the solutions
\[ u  := X^{1} - X^{2}\]
and note that we can write down an equation of the form (\ref{eq:wspde}) for $u.$
Let $\la \cdot, \cdot \ra$ denote the scalar product on $L^{2}(\RR^\dimension)$ and assume $t \in [0,t_0]$. We apply the \Ito-formula for the semimartingale $\la u_t(\cdot),\Phi_x^m(\cdot)\ra,$ which is the difference of the two semimartingales given in \eqref{eq:wspde}, with $\phi_n$ as in \eqref{eq:phi} in order to obtain
\begin{align*}
 \phi_n &( \la u_t, \Phi_x^m\ra ) \\
 & = \int_0^t \intRd \phi_n'(\la u_s, \Phi_x^m\ra) \left( \sigma(s,y,X^1(s,y)) - \sigma(s,y,X^2(s,y)) \right) \Phi_x^m(y) \, W(ds\, dy) \\
 & \quad + \int_0^t \phi_n'(\la u_s, \Phi_x^m \ra) \la u_s, \frac{1}{2} \Delta \Phi_x^m\ra \, ds \\
& \quad + \frac{1}{2} \int_0^t ds \intRd dw \intRd  dz\, \psi_n(| \la u_s, \Phi_x^m\ra | ) \Phi_x^m(w) \Phi_x^m(z) k(w,z) \\
 & \qquad \times \left( \sigma(s,w,X^1(s,w)) - \sigma(s,w,X^2(s,w)) \right) \left( \sigma(s,z,X^1(s,z)) - \sigma(s,z,X^2(s,z)) \right) \\
 & \quad + \int_0^t \intRd \phi_n'( \la u_s,\Phi_x^m \ra ) \left( b(s,y,X^1(s,y)) - b(s,y,X^2(s,y)) \right) \Phi_x^m(y) \, dy ds.
\end{align*}
We integrate this function of $x$ against another non-negative test
function $\Psi \in C^{\infty}_{c}([0,t_0]\times\RR^\dimension )$.  Choose $K_1\in\NN$ so large that for $\lambda=1$, 
\begin{equation}
\label{Gamma}
\| X_0\|_\lambda < K_1\text{ and }
\Gamma\equiv\{x: \exists s\leq t_0 \text{ with } \Psi_s(x)>0\}
\subset B^{\dimension}(0,K_1).
\end{equation}
 We then apply the classical and stochastic versions of Fubini's Theorem, see Theorem~2.6 of \cite{jW}. The expectation condition in Walsh's Theorem 2.6 may be realized by localization, using the stopping times $T_K$ for $K \rightarrow \infty.$  Arguing as
in the proof of Proposition II.5.7 of \cite{perkins:03} to handle the time
dependence in $\Psi$ we then obtain that for any $t\in [0,t_0],$
\begin{align}
 \nn \la \phi_n &( \la u_t, \Phi_\cdot^m\ra ), \Psi_t(\cdot) \ra \\
 \nn & = \int_0^t \intRd \la \phi_n'(\la u_s, \Phi_\cdot ^m\ra) \Phi_\cdot^m(y), \Psi_s \ra \left( \sigma(s,y,X^1(s,y)) - \sigma(s,y,X^2(s,y)) \right)  \, W(ds\, dy) \\
 \nn & \quad + \int_0^t \la \phi_n'(\la u_s, \Phi_\cdot^m \ra) \la u_s, \frac{1}{2} \Delta \Phi_\cdot^m\ra, \Psi_s \ra \, ds \\
 \nn & \quad + \frac{1}{2} \int_0^t ds \int_{\RR^{3\dimension}} dx   dw dz\, \Psi_s(x) \psi_n(| \la u_s, \Phi_x^m\ra | ) \Phi_x^m(w) \Phi_x^m(z) k(w,z) \\
 \nn & \qquad \times \left( \sigma(s,w,X^1(s,w)) - \sigma(s,w,X^2(s,w)) \right) \left( \sigma(s,z,X^1(s,z)) - \sigma(s,z,X^2(s,z)) \right) \\
\label{eq:Iparts} & \quad + \int_0^t \la \phi_n(\la u_s,  \Phi_\cdot^m\ra),\dot{\Psi}_s \ra \, ds \\
 \nn & \quad + \int_0^t \intRd \la \phi_n'( \la u_s,\Phi_\cdot^m\ra) \Phi_\cdot^m(y), \Psi_s \ra \left( b(s,y,X^1(s,y)) - b(s,y,X^2(s,y)) \right)  \, dy ds \\
 \nn &\equiv I_{1}^{m,n}(t) + I_{2}^{m,n}(t) + I_{3}^{m,n}(t)+I_{4}^{m,n}(t)+I_5^{m,n}(t).
\end{align}
Now set $m_n=a_{n-1}^{-1/2}=\exp\{(n-1)n/4\}$ for $n\in\NN.$ This choice of $m_n$ differs from that in \cite{mps:06} and is essential for the improvements that are made here to the results in \cite{mps:06}, in particular to their Lemma 4.3.

We quote essentially Lemma 2.2 from \cite{mps:06} (where $m_n$ is used for $m$) and add a last point treating $I_5^{m_n,n}(t)$:
\begin{lemma}\label{lem:mps:2.2} For any stopping time $T$ and constant $t\geq 0$ we
have:
\begin{enumerate}
\item \begin{equation}\label{eq:I_1}
\IE(I_1^{m_n,n}(t\wedge T))=0\text{ for all }n.
\end{equation}
\item \begin{equation}
\label{eq:limI_2}
\limsup_{n\rightarrow \infty} \IE( I_{2}^{m_n,n}(t \wedge T))
\leq  \IE \Big( \int_{0}^{t \wedge T} \int
|u(s,x)| \frac{1}{2} \Delta\Psi_s(x)\, dx ds \Big).
\end{equation}
\item
\begin{equation}
\label{eq:limI_4}
\lim_{n\rightarrow \infty} \IE(I_4^{m_n,n}(t\wedge
T))=\IE\Bigl(\int_0^{t\wedge T}|u(s,x)|\dot\Psi_s(x)\,ds\Bigr).
\end{equation}
\item
\begin{equation}
\label{eq:limI_5}
\lim_{n\rightarrow \infty} \IE(I_5^{m_n,n}(t\wedge T))\leq B \IE\Bigl(\int_0^{t\wedge T}| u(s,x)|\Psi_s(x)\,ds\Bigr) \text{ with } B \text{ as in }(\ref{bLip}).
\end{equation}
\end{enumerate}
\end{lemma}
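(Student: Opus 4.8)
\emph{Proof plan.} My plan is to establish the four assertions separately, in increasing order of difficulty; parts (a)--(c) follow the argument of Lemma~2.2 in \cite{mps:06}, while part (d) is the short new addition. Throughout, the inputs I would lean on are: the moment bound \eqref{LpXbnd}, which makes all relevant quantities integrable once they have been localised in space by the compact support of $\Psi$ (recall $\Gamma\subset B^{\dimension}(0,K_1)$ from \eqref{Gamma}); the linear growth \eqref{growthcond} and the Lipschitz bound \eqref{bLip}; the bound $k(w,z)\leq c_{\ref{kernel}}(|w-z|^{-\alpha}+1)$ with $\alpha<\dimension$, so that $\int_B\int_B|w-z|^{-\alpha}\,dw\,dz<\infty$ on bounded $B$; the elementary properties \eqref{phi_n-1}--\eqref{phi_n-3} of $\phi_n$; and the fact that $\Phi^m_x(y)=m^{\dimension}\Phi(m(y-x))$ is an approximate identity, so that $\la u_s,\Phi^{m_n}_x\ra\to u(s,x)$ and $\int\Phi^{m}_x(y)\Psi_s(x)\,dx\to\Psi_s(y)$ uniformly on compacts as $n$ (hence $m_n$) tends to $\infty$.

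For (a), $I_1^{m_n,n}$ is a stochastic integral against the martingale measure $W$; its integrand is predictable, supported in $y$ inside a fixed bounded set (forced by $\supp\Psi_s\subset\Gamma$ together with the shrinking support of $\Phi^{m_n}$, noting $m_n\geq 1$), and bounded there by $C\bigl(1+\sup_{s\leq t_0}\sup_{|y|\leq K_1+1}(|X^1(s,y)|+|X^2(s,y)|)\bigr)$ using $|\phi_n'|\leq 1$ and \eqref{growthcond}. Together with \eqref{LpXbnd} and the local integrability of $|w-z|^{-\alpha}$, this makes its Walsh norm finite, so $I_1^{m_n,n}(\cdot\wedge t)$ is a genuine $L^2$-martingale and $\IE(I_1^{m_n,n}(t\wedge T))=0$ by optional stopping. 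For (c) and (d) I would pass to the limit by dominated convergence. In (c), $\phi_n(\la u_s,\Phi^{m_n}_x\ra)\to|u(s,x)|$ by the approximate-identity property and \eqref{phi_n-1}, and on $\supp\dot\Psi$ the integrand is dominated by $\sup_{s\leq t_0}\sup_{|y|\leq K_1+1}|u(s,y)|\cdot\sup|\dot\Psi|\in L^1$, giving the stated limit. In (d), $|\phi_n'|\leq 1$ and $\Phi^{m_n}\geq 0$ give, after inserting \eqref{bLip},
\[
I_5^{m_n,n}(t\wedge T)\leq B\int_0^{t\wedge T}\int\Bigl(\int\Phi^{m_n}_x(y)\Psi_s(x)\,dx\Bigr)|u(s,y)|\,dy\,ds,
\]
and letting $n\to\infty$ the inner average converges to $\Psi_s(y)$, dominated by $\sup\Psi$ on a fixed bounded set, which yields the bound in \eqref{eq:limI_5}.

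The substantive step is (b). Fix $s$ and set $g(x):=\la u_s,\Phi^{m_n}_x\ra$, a $C^\infty$ compactly supported function of $x$. The key observation is that $\Phi^m_x(y)=m^{\dimension}\Phi(m(y-x))$ satisfies $\Delta_x\Phi^m_x(y)=\Delta_y\Phi^m_x(y)$, so $\la u_s,\tfrac12\Delta\Phi^{m_n}_x\ra=\tfrac12\Delta_x g(x)$ --- all derivatives have been moved onto test functions and none onto the merely continuous $u_s$. Integrating by parts twice in $x$ and using $\phi_n'(g)\nabla g=\nabla\phi_n(g)$,
\[
\int\phi_n'(g(x))\,\tfrac12\Delta_x g(x)\,\Psi_s(x)\,dx
= -\tfrac12\int\phi_n''(g(x))\,|\nabla g(x)|^2\,\Psi_s(x)\,dx
+\tfrac12\int\phi_n(g(x))\,\Delta\Psi_s(x)\,dx .
\]
Since $\phi_n''(z)=\psi_n(|z|)\geq 0$ for $z\neq 0$ and $\Psi_s\geq 0$, the first term on the right is $\leq 0$ and can be discarded, so $I_2^{m_n,n}(t\wedge T)\leq\int_0^{t\wedge T}\tfrac12\la\phi_n(\la u_s,\Phi^{m_n}_\cdot\ra),\Delta\Psi_s\ra\,ds$. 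Taking expectations and letting $n\to\infty$, the right-hand side converges to $\IE\bigl(\int_0^{t\wedge T}\tfrac12\int|u(s,x)|\Delta\Psi_s(x)\,dx\,ds\bigr)$ exactly as in (c) (with $\dot\Psi$ replaced by $\Delta\Psi$), giving \eqref{eq:limI_2}.

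I expect the main obstacle to be (b): one must correctly transfer the Laplacian from $\Phi^m_x(\cdot)$ onto the smooth compactly supported outer test function $x\mapsto\Psi_s(x)$ (via $\Delta_x\Phi^m_x=\Delta_y\Phi^m_x$) so that the double integration by parts is legitimate despite $u_s$ being only continuous; recognise that the resulting $\phi_n''$-term has a favourable sign and may simply be dropped --- here nonnegativity of $\phi_n''$ suffices, in contrast to the finer bound \eqref{phi_n-3} which is needed only for the more delicate term $I_3$; and supply a single dominating function, assembled from \eqref{LpXbnd} and the compact support of $\Psi$, that covers the $n\to\infty$ limit uniformly. Once this is in place, parts (a), (c) and (d) are routine bookkeeping with the same integrability inputs.
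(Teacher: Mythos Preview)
Your proposal is correct and follows essentially the same route as the paper. The paper simply cites Lemma~2.2 of \cite{mps:06} for (a)--(c) and then proves (d) exactly as you do: bound via $|\phi_n'|\le 1$ and \eqref{bLip}, then pass to the limit by dominated convergence using \eqref{LpXbnd}. Your detailed argument for (b) via $\Delta_x\Phi^m_x=\Delta_y\Phi^m_x$, the identity $\phi_n'(g)\Delta g=\Delta(\phi_n(g))-\phi_n''(g)|\nabla g|^2$, and discarding the nonpositive term is precisely the mechanism behind the cited lemma. One harmless slip: $g(x)=\la u_s,\Phi^{m_n}_x\ra$ is smooth but \emph{not} compactly supported (since $u_s\in\Ctem$ is not); the integration by parts is nevertheless valid because $\Psi_s\in C_c^\infty$ supplies the compact support.
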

\begin{proof}
The points (a), (b) and (c) are proven in Lemma 2.2 of \cite{mps:06}.
 We only need to show the last point (d), for which we follow (2.48) of \cite{mp:11}. Since $|\phi'_n(x)|\leq 1$ for all $x \in \RR^q$ by (\ref{phi_n-2}), \eqref{bLip} implies that for a stopping time $T$,
\begin{equation}\label{I5bound}
I_5^{m_n,n}(t\wedge T)\leq B\int^{t\wedge T}_0\int_{\RR^{2\dimension}} |u(s,y)|\Phi_x^{m_n}(y)\Psi_s(x)\, dydxds =: B\tilde I_5^n(t\wedge T).
\end{equation}
The integral over $y$ converges pointwise in $x$ and $s$ due to continuity. 
Using \eqref{LpXbnd} we can obtain an integrable bound for this integrand and Lebesgue's Dominated Convergence Theorem thus implies for $n\to \infty$, 
\begin{equation}\label{I5conv}
\tilde I_5^n(t\wedge T)\to \int _0^{t\wedge T}\int |u(s,x)|\Psi_s(x)\, dxds\text{ a.s. }
\end{equation}
and hence in $L^1$ since, again by \eqref{LpXbnd}, $(\tilde I_5^n(t))_{n\in\NN}$ is $L^2$-bounded.
\end{proof}
It will be $I_3^{m_{n+1}, n+1}$ which will mostly concern us for the rest of this work. In its integral definition we may assume $|x| \leq K_1$ by \eqref{Gamma} and so $|w| \vee |z| \leq K_1+1$. If $K \geq K_1$, $s\leq T_K$ and $|w| \leq K_1+1$ we have by \eqref{TKdef}
\[|X^i(s,w)|\leq K e^{|w|}\leq Ke^{(K_1+1)} =: K' \ \text{ for }i=1,2.\]  
Therefore \eqref{kernel}, \eqref{locholder} and the fact that $\psi_n(x) \leq \frac{2}{nx}\1\{a_{n}<x<a_{n-1}\}$ show that since $K' \geq K_1+1$ for all $t\in [0,t_0]$, 
\begin{align}
\label{I3bnd1} & I_3^{m_{n+1},n+1} (t\wedge T_{K})\\
\nn&\leq \frac{c_{\ref{kernel}}}{2} \int_0^{t\wedge T_{K}}\int_{\RR^{3\dimension}} 2(n+1)^{-1}|\la u_s,\Phi_x^{m_{n+1}}\ra|^{-1}\1\{a_{n+1}<|\la u_s,\Phi_x^{m_{n+1}}\ra|<a_n\}\\
\nn&\phantom{\leq \frac{1}{2}\int_0^{t\wedge T_{K}}} \times L_{K'}^2|u(s,w)|^\gamma |u(s,z)|^\gamma \Phi_x^{m_{n+1}}(w) \Phi_x^{m_{n+1}}(z) (|w-z|^{-\alpha} + 1) \Psi_s(x)\, dwdzdxds\\
\nn&\leq c_{\ref{kernel}} L^2_{K'}a_{n+1}^{-1}\int_0^{t\wedge T_{K}}\int_{\RR^{3\dimension}} \1\{a_{n+1}<|\la u_s,\Phi_x^{m_{n+1}}\ra|<a_n\} |u(s,w)|^\gamma |u(s,z)|^\gamma \\
\nn&\qquad \qquad  \quad \times \Phi_x^{m_{n+1}}(w) \Phi_x^{m_{n+1}}(z) (|w-z|^{-\alpha} + 1)\Psi_s(x) \, dwdzdxds.
\end{align}
We note that $a_{n+1}^{-1}=a_n^{-1-2/n}.$ Thus, as the quantity of interest we define
\begin{align}\label{Indef}
I^n(t)=a_n^{-1-2/n}\int_0^{t}\int_{\RR^{3\dimension}} & \1 \{|\la u_s,\Phi_x^{m_{n+1}}\ra|<a_n \} |u(s,w)|^\gamma |u(s,z)|^\gamma \\
\nn& \Phi_x^{m_{n+1}}(w) \Phi_x^{m_{n+1}}(z) (|w-z|^{-\alpha} + 1)\Psi_s(x) \, dwdzdxds.
\end{align}
\begin{proposition}\label{prop:2.1}
 Suppose $\{ U_{M,n,K} : M,n,K \in \NN, K \geq K_1 \}$ are $\mathcal{F}_t$-stopping times such that for each $K \in \NN^{\geq K_1}$,
 \begin{eqnarray*}
  &(H_1)  &\phantom{AAAAAAAAA}U_{M,n,K} \leq T_K \text{ for all } M,n \in \NN, \\
  & &\phantom{AAAAAAA}U_{M,n,K} \nearrow T_K \text{ as } M \to \infty \text{ for all } n \in \NN, \\
  & & \phantom{AAAAAAAAA}\lim_{M\to \infty} \sup_{n \in \NN} P(U_{M,n,K} < T_K ) = 0 , \\
  &&\text{and} \\
  &(H_2)& \phantom{AAAAAA}  \lim_{n\to \infty} \IE(I^n(t_0 \wedge U_{M,n,K} )) = 0 \text{ for all } M\in \NN,
 \end{eqnarray*}
 are satisfied.
 Then the conclusion of Theorem \ref{theorem:unique} holds.
\end{proposition}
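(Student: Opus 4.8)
The plan is to take the decomposition \eqref{eq:Iparts}, apply expectations at the stopping time $t_0 \wedge U_{M,n,K}$, and pass to the limit $n \to \infty$ using Lemma \ref{lem:mps:2.2} together with hypothesis $(H_2)$; then let $M \to \infty$ using $(H_1)$, and finally let $K \to \infty$ and apply Gronwall's lemma. First I would fix $K \geq K_1$ and $M \in \NN$ and write \eqref{eq:Iparts} with $n$ replaced by $n+1$ and $m=m_{n+1}$, evaluated at $t \wedge U_{M,n,K}$. Taking expectations kills $I_1^{m_{n+1},n+1}$ by part (a) of Lemma \ref{lem:mps:2.2} (the stopping time makes the stochastic integral a true martingale). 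The term $\IE(I_3^{m_{n+1},n+1}(t\wedge U_{M,n,K}))$ is nonnegative, and by \eqref{I3bnd1}--\eqref{Indef} it is bounded above by $c_{\ref{kernel}} L_{K'}^2 \IE(I^{n}(t \wedge U_{M,n,K}))$, with $K' = K e^{K_1+1}$; note here we use $U_{M,n,K} \leq T_K$ from $(H_1)$ so that the pointwise bound $|X^i(s,w)| \leq K'$ is valid on the relevant domain. Hence, rearranging, for each $t \in [0,t_0]$,
\begin{align*}
\IE\big(\la \phi_{n+1}(\la u_{t\wedge U_{M,n,K}}, \Phi_\cdot^{m_{n+1}}\ra), \Psi_{t\wedge U_{M,n,K}}\ra\big)
\leq{}& \IE(I_2^{m_{n+1},n+1}(t\wedge U_{M,n,K})) + \IE(I_4^{m_{n+1},n+1}(t\wedge U_{M,n,K}))\\
&{}+ \IE(I_5^{m_{n+1},n+1}(t\wedge U_{M,n,K})) + c_{\ref{kernel}} L_{K'}^2\, \IE(I^{n}(t\wedge U_{M,n,K})).
\end{align*}

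Next I would take $n \to \infty$. On the left, $\phi_{n+1}(r) \to |r|$ uniformly by \eqref{phi_n-1}, and $\la u_t, \Phi_\cdot^{m_{n+1}}\ra \to u(t,\cdot)$ by continuity of $u$ in space (with $m_{n+1}\to\infty$); combined with the moment bound \eqref{LpXbnd} and dominated convergence, the left side converges to $\IE(\la |u_{t \wedge U_{M,n,K}}|, \Psi_{t\wedge U_{M,n,K}}\ra)$ — here one must be slightly careful since $U_{M,n,K}$ also depends on $n$, but since $U_{M,n,K} \leq T_K \leq K$ and all quantities are uniformly bounded on $[0,t_0] \times \Gamma$ up to time $T_K$, the convergence is uniform enough; alternatively one first bounds below by replacing $U_{M,n,K}$-dependence carefully, or simply notes that the whole inequality can be arranged to hold with a common stopping time by monotonicity. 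On the right, parts (b), (c), (d) of Lemma \ref{lem:mps:2.2} control the $\limsup$ of $I_2, I_4, I_5$ (applied with the stopping time $T = U_{M,n,K}$, again needing uniformity in $n$, handled as above), and the crucial term $c_{\ref{kernel}} L_{K'}^2 \IE(I^n(t \wedge U_{M,n,K})) \to 0$ by $(H_2)$. This yields
$$
\IE\big(\la |u_{t \wedge U_{M,n,K}}|, \Psi_{t \wedge U_{M,n,K}}\ra\big) \leq \IE\Big(\int_0^{t} \1\{s \leq U_{M,n,K}\}|u(s,x)|\big(\tfrac12 \Delta\Psi_s(x) + \dot\Psi_s(x) + B\Psi_s(x)\big)\,dx\,ds\Big).
$$

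Then I would let $M \to \infty$: by $(H_1)$, $U_{M,n,K} \nearrow T_K$, so by the continuity of $u$ and $\Psi$ and bounded convergence (using \eqref{LpXbnd}), the inequality holds with $U_{M,n,K}$ replaced by $T_K$. At this stage one follows the now-standard argument of \cite{mps:06}: choose the test function $\Psi$ appropriately — specifically, take $\Psi_s(x) = \Psi^{(\lambda)}_s(x)$ solving the backward heat equation $\dot\Psi_s + \tfrac12 \Delta \Psi_s = 0$ with suitable terminal data and exponential-type cutoff, so that the right-hand side reduces to $B \IE(\int_0^{t\wedge T_K} \la |u_s|, \Psi_s\ra\,ds)$ plus a controllable error, and conclude via Gronwall that $\IE(\la |u_{t \wedge T_K}|, \Psi_t\ra) = 0$ for all $t \leq t_0$. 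Since $t_0, x$ and $\Phi$ were arbitrary, one gets $u(t \wedge T_K, x) = 0$ for a.e. $x$, hence (by continuity) for all $x$, and finally $T_K \to \infty$ as $K \to \infty$ gives $X^1 \equiv X^2$. The main obstacle is the interchange of the $n \to \infty$ limit with the $n$-dependence of the stopping times $U_{M,n,K}$ in $(H_2)$ and in Lemma \ref{lem:mps:2.2}(b)--(d): one must verify that the estimates in those parts are uniform in the stopping time (which they are, since the bounds there depend only on $t$ and on the $C^\infty_c$ data $\Psi$, not on $T$), so that the $\limsup$ over $n$ can be taken while simultaneously varying $T = U_{M,n,K}$; the remaining steps are routine given the machinery already assembled.
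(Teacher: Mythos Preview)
Your proposal is correct and follows essentially the same route as the paper, which simply defers to Proposition~2.1 of \cite{mp:11}: take expectations in \eqref{eq:Iparts} at $t\wedge U_{M,n,K}$, use Lemma~\ref{lem:mps:2.2} for $I_1,I_2,I_4,I_5$, kill $I_3$ via \eqref{I3bnd1}--\eqref{Indef} and $(H_2)$, send $M\to\infty$ by $(H_1)$, then choose $\Psi$ to solve the backward heat equation and close with Gronwall. The one point where your write-up is slightly loose is the handling of the $n$-dependence of $U_{M,n,K}$ when passing $n\to\infty$; in \cite{mp:11} this is done not by arguing uniformity of the bounds in Lemma~\ref{lem:mps:2.2} over the stopping time, but rather by working on the event $\{U_{M,n,K}=T_K\}$ (where $U_{M,n,K}$ may be replaced by $T_K$) and using the \emph{third} clause of $(H_1)$, $\sup_n P(U_{M,n,K}<T_K)\to 0$ as $M\to\infty$, together with the $L^2$-boundedness from \eqref{LpXbnd}, to control the complementary event uniformly in $n$. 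Your phrase ``arranged to hold with a common stopping time'' is the right instinct, but making this precise is exactly why the uniform-in-$n$ condition appears in $(H_1)$.
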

The proof of this proposition is the same as the proof of Proposition 2.1 in \cite{mp:11}, here using Lemma \ref{lem:mps:2.2}. What one shows is that $(t,x) \mapsto \IE[u(t,x)]$  is a non-negative subsolution of the heat equation with Lipschitz drift started in $0.$ Hence, two solutions coincide pointwise and so by continuity of paths we have: $X^1=X^2.$ We omit the details and refer to the proof of Proposition 2.1 in \cite{mp:11}.

Observe that all that is left for the proof of our main result, Theorem \ref{theorem:unique}, is the construction of the stopping times $U_{M,n,K}$ and the verification of $(H_1)$ and $(H_2)$. As these steps are extremely long we want to give a heuristic explanation for the sufficiency of $\alpha < 2(2\gamma -1)$ leading to $(H_2)$ even if we will not yet discuss the construction of the stopping times, which is done in Section \ref{sec:6}.

\medskip

\noindent{\bf Notation.} For $t,t'\geq 0$ and $x,x'\in\Rdim$ let $d((t,x),(t',x'))=\sqrt{|t'-t|}+|x'-x|$, where $|\cdot|$ always denotes the Euclidean norm on the corresponding space.  

\medskip

Note that the indicator function in the definition of $I^n$ in ({\ref{Indef})  implies that there is an $\hat x_0\in B^\dimension (x,\sqrt a_n)$ such that $|u(s,\hat x_0)|\le a_n$.  If we could take $\hat x_0=w =z$ we could bound $I^n(t)$ by $C(t)a_n^{-1-2/n-\alpha/2+2\gamma}$ using that for $C=C(\dimension)$
\begin{equation}
\label{intPhi}
 \int_{\RR^{2\dimension}} dwdz\,  \Phi_x^{m_{n+1}}(w)  \Phi_x^{m_{n+1}}(z) (|w-z|^{-\alpha} + 1) \leq C m_{n+1}^{\alpha},
 \end{equation}
 see page 1929 of \cite{mps:06}. Thus, $(H_1)$ and $(H_2)$ would follow immediately with $U_{M,n,K}=T_K$.  (The criticality of $\alpha < 2(2\gamma-1)$ in this argument is deceptive as it follows from our choice of $m_n$.)
Thus, in order to satisfy the hypotheses of Proposition~\ref{prop:2.1} we now turn to obtaining good bounds on $|u(s,w)-u(s,\hat x_0)|$ with $|\hat{x}_0-w| \leq 2 \sqrt{a_n}.$ The standard $1-\alpha/2-\eps$-H\"older modulus of $u$ (see Theorem 2.1 in ~\cite{ss:02}) will not give a sufficient result. In \cite{mps:06}, provided that $\alpha < 2\gamma-1,$ the H\"older modulus {\it near points where $u$ is small} was refined to $1-\eps$ for any $\eps>0$. More precisely, let
\begin{align*} Z(N,K)(\omega)&=\{(t,x)\in [0,T_K]\times B^\dimension (0,K): \text{ there is a }(\hat t_0,\hat x_0)\in [0,T_K]\times \Rdim \text{ such that }\\
&\phantom{AAAAAAAAAAAAAAA}\ |u(\hat t_0,\hat x_0)|\leq 2^{-N} \text{ and }d((\hat t_0,\hat x_0),(t,x))\leq 2^{-N}\}.
\end{align*}
Theorem 4.1 of  \cite{mps:06} then states (see Theorem 2.2 in \cite{mp:11} for the formulation used here):
\begin{theorem}\label{thm:collipmod} For each $K\in \NN$ and $0<\xi<\frac{1-\frac{\alpha}{2}}{1-\gamma} \wedge 1$ there is an $N_0=N_0(\xi,K,\omega)\in\NN$ a.s. such that for all natural numbers $N\geq N_0$ and all 
$(t,x)\in  Z(N,K)$, 
\[ d((t',x'),(t,x))\leq 2^{-N}\text{ and }t'\leq T_K \text{ implies }|u(t',x')-u(t,x)|\leq 2^{-N\xi}.\]
\end{theorem}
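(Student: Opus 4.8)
The plan is to prove Theorem~\ref{thm:collipmod} by a bootstrap that iteratively improves the modulus of continuity of $u=X^1-X^2$ in a neighbourhood of its near-zeros, following the scheme of Theorem~4.1 of \cite{mps:06} together with the refinements of \cite{mp:11}, now in the colored multi-dimensional setting; the routine parts would be carried out as in \cite{rippl:12}. Since $X^1_0=X^2_0$, the difference $u$ has the mild representation $u(t,x)=\mathcal N(t,x)+\mathcal D(t,x)$ with $\mathcal N(t,x)=\int_0^t\int p_{t-s}(x-y)\,D\sigma(s,y)\,W(ds\,dy)$ and $\mathcal D(t,x)=\int_0^t\int p_{t-s}(x-y)\,Db(s,y)\,dy\,ds$, where $D\sigma(s,y)=\sigma(s,y,X^1(s,y))-\sigma(s,y,X^2(s,y))$ and $Db$ is defined analogously. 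By \eqref{bLip}, $|Db(s,y)|\le B|u(s,y)|$, so $\mathcal D$ inherits, via elementary deterministic heat-kernel estimates, any modulus already established for $u$ and is a harmless lower-order term; all the work is in $\mathcal N$. For the base case I would invoke the a priori Hölder modulus of \cite{ss:02}: a.s., for every $\eps>0$, $u$ has a $(1-\tfrac\alpha2-\eps)$-Hölder modulus with respect to $d$ on $[0,T_K]\times B^q(0,K)$, which gives the statement with exponent $\xi_0:=1-\tfrac\alpha2-\eps$ — even without using $(t,x)\in Z(N,K)$. Note $0<\xi_0<\tfrac{1-\alpha/2}{1-\gamma}\wedge1$ because $0<\alpha<2\wedge\dimension$ and $\gamma<1$.

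The heart of the argument is the inductive step: if Theorem~\ref{thm:collipmod} holds with some exponent $\xi_k<\tfrac{1-\alpha/2}{1-\gamma}\wedge1$, then it holds with every $\xi'$ strictly below $(\gamma\xi_k+1-\tfrac\alpha2)\wedge1$ (constants being absorbed by shrinking $\xi'$ and taking $N$ large). The map $\xi\mapsto(\gamma\xi+1-\tfrac\alpha2)\wedge1$ has slope $\gamma<1$ and fixed point $\tfrac{1-\alpha/2}{1-\gamma}\wedge1$, so iterating it from $\xi_0$ yields a sequence increasing to that fixed point; finitely many steps, with associated $\eps$'s summing to an arbitrarily small total, thus pass above any prescribed target below the limit. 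To carry out one step, fix $K$, a large $N$, a point $(t,x)\in Z(N,K)$ with associated near-zero $(\hat t_0,\hat x_0)$, and $(t',x')$ with $d((t',x'),(t,x))\le2^{-N}$, $t'\le T_K$ (the cases $t'\le t$ and $t'>t$ being handled the same way). I would split the time integral defining $\mathcal N(t',x')-\mathcal N(t,x)$ into a ``recent'' part $[t-2^{-2N},t]$, the dyadic annuli $[t-2^{-2(l-1)},t-2^{-2l}]$ for $l=1,\dots,N$, and a ``deep'' part $[0,t-1]$, the latter being trivially of order $2^{-N}$.

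On the recent part and on annulus $l$ the heat kernels $p_{t-s}(x-y)$ and $p_{t'-s}(x'-y)$ concentrate $y$ within a ball about $\hat x_0$ of radius of order $2^{-l}$ (up to a logarithmic factor), while their Gaussian tails control the complement, where one uses only the growth bound \eqref{growthcond}. Inside that ball $(s,y)$ lies within $\lesssim2^{-l}$ of $(\hat t_0,\hat x_0)$ in $d$, so once $l\ge N_0(\xi_k,K,\omega)$ the inductive hypothesis at scale $2^{-l}$ gives $|u(s,y)|\le|u(\hat t_0,\hat x_0)|+2^{-l\xi_k}\lesssim2^{-l\xi_k}$, hence $|D\sigma(s,y)|\le L_{K'}|u(s,y)|^\gamma\lesssim2^{-l\gamma\xi_k}$ by \eqref{locholder}. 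Feeding this into the standard heat-kernel increment estimate $|p_{t'-s}(x'-y)-p_{t-s}(x-y)|\lesssim(2^{-N}/\sqrt{t-s})^{\theta}\bigl(p_{c(t-s)}(x-y)+\cdots\bigr)$ for $\theta\in(0,1]$, together with the colored-noise covariance bound $\int\!\!\int p_{t-s}(x-y)p_{t-s}(x-y')(|y-y'|^{-\alpha}+1)\,dy\,dy'\lesssim(t-s)^{-\alpha/2}+1$ (the analogue of \eqref{intPhi}, page 1929 of \cite{mps:06}), and integrating over annulus $l$, one finds that the contribution of annulus $l$ to $\IE|\mathcal N(t',x')-\mathcal N(t,x)|^p$ is $\lesssim2^{-Np\theta}\,2^{lp(\theta+\alpha/2-\gamma\xi_k-1)}$, while the recent-past term contributes $\lesssim2^{-Np(\gamma\xi_k+1-\alpha/2)}$. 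Choosing $\theta\in(\xi',1]$ — possible since $\xi'<1$ — the geometric sum over $l\le N$ together with the recent-past term is $\lesssim2^{-Np\xi'}$, the dominant scale being $l\approx N$ when $\xi'$ is close to the limit; the deep-past piece and $\mathcal D$ are of lower order. Hence $\IE|u(t',x')-u(t,x)|^p\lesssim d((t',x'),(t,x))^{p\xi'}$ uniformly near the near-zeros.

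Finally I would pass from these $L^p$ increment bounds to the claimed a.s. modulus by the usual Kolmogorov / Garsia--Rodemich--Rumsey argument on a sufficiently fine space-time lattice, combined with Borel--Cantelli over $N$; since $Z(N,K)$ is random this is done as in \cite{mps:06,mp:11}, bounding for lattice points the probability that a near-zero lies nearby while the increment is large, and arranging enough regularity in $N$ that a single random threshold $N_0$ suffices. I expect the main obstacle to be precisely this core estimate of the inductive step in the colored multi-dimensional setting: controlling the Gaussian tails of the $\dimension$-dimensional heat kernel so that the information ``near a near-zero at scale $2^{-l}$'' can legitimately be used inside every dyadic time-annulus — so that summing over the $\sim N$ scales costs only a sub-$2^{-N\eps}$ factor, which is why the inequality in the theorem is strict — while tracking how the singular correlation kernel $k$ enters through $\int\!\!\int p\,p\,(|\cdot|^{-\alpha}+1)\lesssim(\textrm{time})^{-\alpha/2}+1$; this is where the exponent $1-\tfrac\alpha2$ replaces the white-noise $\tfrac12$ of \cite{mp:11} and where several of the adjustments to the arguments of \cite{mp:11} become necessary.
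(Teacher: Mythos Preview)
The paper does not prove Theorem~\ref{thm:collipmod} itself: it is quoted from Theorem~4.1 of \cite{mps:06} (in the formulation of Theorem~2.2 of \cite{mp:11}), with the drift extension deferred to Section~9.9 of \cite{rippl:12}. Your sketch is exactly the bootstrap of \cite{mps:06}: start from the a~priori $(1-\alpha/2-\eps)$-modulus of \cite{ss:02}, then iterate $\xi\mapsto(\gamma\xi+1-\alpha/2)\wedge1$ by using $|D\sigma|\le C|u|^\gamma$ on dyadic time-annuli near the near-zero together with the colored covariance bound $\int\!\!\int p\,p\,(|\cdot|^{-\alpha}+1)\lesssim (t-s)^{-\alpha/2}$; the fixed point is precisely $(1-\alpha/2)/(1-\gamma)$.

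One point worth tightening: in \cite{mps:06} (and in the analogous arguments in Section~\ref{sec:localbounds} here) the square function is bounded \emph{pathwise} on the event $\{(t,x)\in Z(N,K),\ N\ge N_0(\xi_k,K,\omega)\}$, with a random constant absorbing $N_0$, and one passes to the a.s.\ modulus via Dubins--Schwarz and a Kolmogorov-type lemma rather than through unconditional $L^p$ moment bounds. Your line ``contribution of annulus $l$ to $\IE|\mathcal N(t',x')-\mathcal N(t,x)|^p$ is $\lesssim\ldots$'' glosses over the fact that the inductive bound $|u(s,y)|\lesssim 2^{-l\xi_k}$ is only valid for $l\ge N_0(\omega)$, so a straight expectation is not available; but you correctly flag this at the end and defer to the treatment in \cite{mps:06,mp:11}, which is exactly what the paper does.
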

Theorem 4.1 of \cite{mps:06} is stated and proved for equation \eqref{eq:spde} without a drift. For the necessary changes to include the drift we refer to Section 9.9 in \cite{rippl:12}.

We now argue how this locally improved H\"older regularity can be used.  As already mentioned after \eqref{intPhi} the choice of $m_n$ is crucial. It is related to the locally improved H\"older regularity and so for the moment set $m_n=a_{n-1}^{-\lambda_0}$ for some $\lambda_0>0.$ We will take the liberty to use the approximation $m_n \approx  a_n^{-\lambda_0}$ in the following heuristic argument. Then for $(H_2)$ it suffices to show
\begin{align}\label{Intozero}
I^n(t)&\approx a_n^{-1}\int_0^t\int_{\RR^{3\dimension}} \1\{|\langle u_s,\Phi_x^{m_{n+1}}\rangle |<a_n\}|u(s,w)|^\gamma |u(s,z)|^\gamma \\
\nonumber&\phantom{a_n^{-1}\int_0^t\int A}\Phi_x^{m_{n+1}}(w) \Phi_x^{m_{n+1}}(z) (|w-z|^{-\alpha} + 1)\Psi_s(x)\, dw dz dx ds
\to 0\text{ as n }\to \infty.
\end{align}
For $x$ fixed, the point $\hat{x}_0$ mentioned before \eqref{intPhi} will now lie in $B^\dimension (x,m_n^{-1})$ and on the other hand only those $w$ and $z$ with $|w-x|\vee |z-x| \leq m_n^{-1}$ will appear in the integral \eqref{Intozero}.  So $w,z \in B^\dimension (\hat{x}_0, 2 a_n^{\lambda_0}).$
 Theorem~\ref{thm:collipmod} implies that for $\alpha < 2\gamma -1$ 
 \begin{equation}\label{uliponZ}u(t,\cdot) \text{ is $\xi$-H\"older continuous near its zero set for $\xi<1$},
 \end{equation}
 which allows us to bound $|u(s,w)-u(s,\hat x_0)|$ by  $(2a_n^{\lambda_0})^{\xi }$, and therefore $|u(s,w)|$ by $a_n + 2a_n^{\lambda_0 \xi}$ which in turn is bounded by $ 3a_n^{\lambda_0 \xi}$ if $\lambda_0\le 1.$ We can use this and \eqref{intPhi} in \eqref{Intozero}  to bound $I^n(t)$ for $0<\lambda_0\leq 1$ by a constant times the following
 \begin{align*}
 a_n^{-1-\alpha \lambda_0}\int_0^t \int a_n^{\lambda_0\xi 2\gamma} \Psi_s(x)\, dxds
 \le t a_n^{-1+\lambda_0(\xi2\gamma-\alpha)}
 \to 0\text{ as }n\to\infty,
 \end{align*}
if $2\gamma-1> \alpha$ and we choose $\lambda_0,\xi$ close to one. This was just the result in \cite{mps:06}. However, in Theorem \ref{thm:collipmod} the restriction by $1$ in the condition $\xi < \frac{1-\frac{\alpha}{2}}{1-\gamma} \wedge 1$ seems unnatural and not optimal.
 
To obtain an improved result we need to extend the range of $\xi$ beyond 1. We will obtain a statement close to the following one:
\begin{equation}\label{eq:uC2-} \nabla u(s,\cdot)\text{ is }\xi\text{-H\"older on }\{x: u(s,x)\approx \nabla (s,x)\approx 0\} \text{ for }\xi <1,
\end{equation}
where $\nabla u$ denotes the spatial derivative (in a loose sense as $u$ is not differentiable).  Actually, we cannot really write down \eqref{eq:uC2-} formally, but some statements come close to it, e.g.~Corollary~\ref{cor:5.9} for $m=\bar m+1$.

At this point we would like to note that a similar argument as in \cite{mp:11} shows that, using the techniques for $\alpha > 2(2\gamma -1)$, we will not be able to improve \eqref{eq:uC2-} to
\begin{equation}\label{eq:uC2}
u(s,\cdot)\text{ is 
 $C^2$ on }\{x:u(s,x)\approx \nabla u(s,x)\approx 0\}.
\end{equation}
So we can extend the range of $\xi$ up to $2-\eps$, but not beyond with this technique.

Assuming $\alpha < 2 (2\gamma-1)$ and \eqref{eq:uC2-}, we outline the idea of how we will be able to derive \eqref{Intozero}.
We  choose $0=\beta_0 < \beta_1 < \dots < \beta_L = \bar{\beta} < \infty$, a finite grid, and define
\begin{align*}
\nn \hat{I}_{n,i} (t)&:= a_n^{-1-\frac{2}{n}} \int_0^t\int_{\RR^{3\dimension}} \1_{\hat{J}_{n,i}(s)}(x) |u(s,w)|^\gamma |u(s,z)|^\gamma \\
 \nn
 &\phantom{AAAAAAA}\Phi_x^{m_{n+1}}(w) \Phi_x^{m_{n+1}}(z)(|w-z|^{-\alpha} + 1) \, dw dz\, \Psi_s(x)\, dx ds, \\
 \intertext{for all $i= 0, \dots, L$, where}
 & \nn \hat{J}_{n,i}(s) = \{x \in \Rdim: |\la u_s, \Phi_x^{m_{n+1}} \ra | < a_n, |\nabla u(s,x)| \in (a_n^{\beta_{i+1}}, a_n^{\beta_i} ] \}
 \intertext{for $i <L$ and for $i=0$,}
 & \nn \hat{J}_{n,0}(s) = \{x \in \Rdim: |\la u_s, \Phi_x^{m_{n+1}} \ra | < a_n, |\nabla u(s,x)| > a_n^{\beta_1} \}
 \intertext{and for $i=L$,}
& \nn \hat{J}_{n,L}(s) = \{ x \in \Rdim: |\la u_s, \Phi_x^{m_{n+1}} \ra | < a_n, |\nabla u(s,x)| \in [0, a_n^{\beta_L}] \}.
\end{align*}
Since 
\begin{equation}
\label{eq:In_isum}
I^n(t) = \sum_{i=0}^L \hat{I}_{n,i}(t),
\end{equation}
 our goal of proving $I^n(t) \to 0$ will be attained, if we can show that
\begin{align}\label{eq:In_i_tozero}
 \hat{I}_{n,i}(t) \to 0 \qquad \text{ for all } i= 0, \dots, L.
\end{align}
For a grid of $\beta_i$ fine enough we will be able to replace the condition that the absolute value of the gradient is contained in $(a_n^{\beta_{i+1}}, a_n^{\beta_i} ]$ in the definition of $\hat{J}_{n,i}(s)$ by the condition that it is approximately equal to $a_n^{\beta_i}$ for $i=1,\dots, L.$
Note that due to the boundedness of the support of $\Phi^n_x$, for $x \in \hat{J}_{n,i}(s)$ there must be $\hat{x}_n(s) \in B^\dimension (x,a_n^{\lambda_0})$ such that ~$|u(s,\hat{x}_n(s))|< a_n$. By \eqref{eq:uC2} we have for $w \in B^\dimension (x,a_n^{\lambda_0})$ and $[\hat{x}_n(s),w]$ the Euclidean geodesic between the two points: 
\begin{align}\label{eq:Holder:estim}
 \nn |u(s,w)|  &\leq a_n + \sup_{\tilde{w} \in [\hat{x}_n(s),w]} |\nabla u(s,\tilde{w})| \cdot |\hat{x}_n(s) - w| \\
 \nn & \leq a_n +  \sup_{\tilde{w} \in [\hat{x}_n(s),w]}(|\nabla u(s,x)| + |\tilde{w}-x|^\xi) |\hat{x}_n(s) - w| \\
 \nn & \leq a_n + (a_n^{\beta_i} + 2 a_n^{\lambda_0 \xi}) a_n^{\lambda_0} \\
  & \leq 7 (a_n \vee a_n^{\beta_i+ \frac{1}{2}}),
\end{align}
if we choose $\lambda_0 = \frac{1}{2}$, which is the smallest possible value for balancing the terms. Similarly, $\beta_i \leq \frac{1}{2}$ is optimal in \eqref{eq:Holder:estim}.
If we put this estimate into \eqref{eq:In_isum}, then we can bound $\hat{I}_{n,i}(t)$ by 
\[ a_n^{-1-\frac{2}{n}} 
 (a_n^{2\gamma} \vee a_n^{2 \gamma \beta_i+ \gamma}) \int_0^t\int \1_{\hat{J}_{n,i}(s)}(x) \Phi_x^{m_{n+1}}(w) \Phi_x^{m_{n+1}}(z) (|w-z|^{-\alpha} + 1) \Psi_s(x) \, dwdzdxds \]
and  \eqref{intPhi} leads to the bound
\begin{equation}
\label{eq:I_n,ibound}
a_n^{-1-\frac{2}{n}-\frac{\alpha}{2}}  (a_n^{2\gamma} \vee a_n^{2 \gamma \beta_i+ \gamma}) \int_0^t \int_{B^\dimension (0,K_1)} \1_{\hat{J}_{n,i}(s)}(x) \, dx ds,
\end{equation}
for some $K_1>0,$ since $\Psi$ is compactly supported. If $\beta_i$ is rather small, we find ourselves in the situation that the H\"older estimate \eqref{eq:Holder:estim} is not that strong. With a choice of $\lambda_0=1$ we would have gotten back to the case $\alpha < 2\gamma -1$, since small $\beta_i$ corresponds to neglecting the estimate on derivatives. However, particularly in that case we can give a good estimate on $|\hat{J}_{n,i}(s)|,$ the $\dimension$-dimensional Lebesgue measure of $\hat{J}_{n,i}(s)$.

But, let us first consider $\beta_L=\bar{\beta}$.
Then, by the estimate in \eqref{eq:I_n,ibound} we have
\[ \hat{I}_{n,L}(t) \leq C t (a_n^{2\gamma - 1-\frac{2}{n} -\alpha/2} \vee a_n^{(2\beta_L+1)\gamma - 1-\frac{2}{n} -\alpha/2}) \to 0\]
as $n\to \infty$ as long as we require $\beta_L=\bar{\beta} \geq 1/2.$ From this and the considerations just after \eqref{eq:Holder:estim}, we know that it should suffice to choose $\bar{\beta} = 1/2$, or more precisely, choosing $\bar{\beta}$ smaller will not lead to an optimal result, whereas $\bar{\beta} >\frac{1}{2}$ will not improve the result.

We still need to check the convergence for $i= 0, \dots, L-1$ and write in order to simplify notation $\beta=\beta_i$ and $J_n = \hat{J}_{n,i}(s)$. From \eqref{eq:uC2} we see that if $x\in J_n$, then  there is a direction $\sigma_x \in S^{\dimension -1}:=\{x \in \RR^q: |x|=1\}$ with $\sigma_x \cdot \nabla u(s,y)\geq \frac{1}{2}a_n^{\beta}$ if $|y-x|\le L a_n^{\beta/\xi}$ for an appropriate constant $L$ and $(y-x) \parallel \sigma_x,$ meaning that $(y-x)$ is parallel to $\sigma_x.$ \label{df:parallel} 
Assuming for the heuristic that $u(s,x)>-a_n$ (which we only know precisely for a point $\hat{x}_n(s) \in B^\dimension (x, a_n^{1/2})$ due to $|\la u_s, \Phi_x^{a_n^{1/2}}\ra|< a_n$) we obtain because of the positive gradient for $y \in x + \RR_+ \sigma_x$ by the Fundamental Theorem of Calculus:
\[u(s,y)>a_n \text{ if }4a_n^{1-\beta}< |y-x|\leq L a_n^{\beta/\xi}.\]
Similarly, one can also show (but we will not go into details here) that, by adapting $L$ appropriately, if $x,z \in J_n$ and $|x-z| \leq  L a_n^{\beta /\xi}$, we also have for $ z' \in z + \sigma_x \cdot [4a_n^{1-\beta};L a_n^{\beta/\xi}]$ that $u(s,z')>a_n$ and thus $z' \notin J_n.$ So for $x \in J_{n},$ denoting by $\{x+\sigma_x^{ortho}\}$ the plane through $x$ orthogonal to $\sigma_x,$ we have 
\begin{align}
 \nn |B^\dimension (x,La_n^{\beta/\xi}) \cap J_n| &\leq \int_{\{x+\sigma_x^{ortho}\} \cap B^\dimension (x,L a_n^{\beta/\xi})} dz \int_{-La_n^{\beta/\xi}}^{L a_n^{\beta/\xi}} dz' \1\{z +\sigma_x z' \in J_n \} \\
 &\leq C (a_n^{\beta/\xi})^{\dimension -1} a_n^{1-\beta}. \label{eq:heu:cover}
\end{align}
Covering the box $[-K_1,K_1]^\dimension$ with finitely many balls of radius $\frac{L}{2}a_n^{\beta/\xi}$ and using \eqref{eq:heu:cover} we obtain  $|J_n|\leq C(L,K_1)a_n^{1-\beta}a_n^{-\beta/\xi}$. We can use this in  \eqref{eq:I_n,ibound} to get
\begin{align}
\hat{I}_{n,i}(t) & \leq Ct a_n^{-1-\frac{2}{n}-\frac{\alpha}{2}+2\gamma (1\wedge (\beta_i + \frac{1}{2}))+1-\beta_i -\beta_i/\xi} 
\leq  Ct a_n^{-1-\frac{2}{n}-\frac{\alpha}{2}+2\gamma (1\wedge (\beta_i + \frac{1}{2}))+1-\beta_i(1+1/\xi)}
\label{In2bnd}
\end{align}
for all $ \beta_i \leq \bar{\beta}.$ The right hand side of  (\ref{In2bnd}) tends to zero  for all $\beta _i \leq \bar{\beta}$,
if $-\frac{\alpha}{2}+2\gamma(1\wedge (\beta_i + \frac{1}{2}))-2 \beta_i>0,$ for all $\beta _i \leq \bar{\beta}$,
i.e.~
\[ \gamma>\frac{1}{2}(1\wedge (\bar\beta + \frac{1}{2}))^{-1}(\frac{\alpha}{2}+2 \bar\beta) \]
since the right hand side is increasing as a function in $\bar{\beta}.$ Therefore, it attains its minimum value on the interval $[\frac{1}{2}, \infty)$ for $\bar{\beta}=1/2$  at  $\frac{1}{2}(1+\frac{\alpha}{2}).$ Then the estimate shows that: $\hat{I}_{n,i}(t)$ tends to zero for all $0 \leq \beta_i \leq \bar{\beta},$ if
\[ \alpha < 2(2\gamma -1). \]

This is what we wanted to show and ends the heuristic outline of the proof (some more details in the case of white noise can be found in Section 2 of \cite{mp:11}).

\begin{remark}
In the previous heuristics it suffices to consider one direction of the gradient. This will be sufficient to obtain uniqueness for $\alpha < 2(2\gamma -1)$ rigorously. However, it is tempting to include further information on the gradient, e.g.~$\nabla u \approx (a_n^{\beta^1},a_n^{\beta^2},\dots).$ We believe that no further improvement can be achieved, since \eqref{eq:Holder:estim} only requires the size of the principal component of the gradient.
\end{remark}

\section{Verification of the hypotheses of Proposition \ref{prop:2.1}}\label{sec:3}

In this section we make the heuristics of the previous section rigorous in the sense that we derive hypothesis $(H_2)$. This proof relies on the definition of sets similar to the ones defined before \eqref{eq:In_isum} and on Proposition \ref{prop:3.3}, whose proof is given in Section \ref{sec:6} and contains the verification of hypothesis $(H_1).$

We follow the arguments of Section 3 in \cite{mp:11} and will also restrict our attention to the case $b\equiv 0$ for notational convenience. All of the results can be extended to non-trivial $b$ satisfying the Lipschitz condition (\ref{bLip}), for more details we refer to Section 8 of \cite{mp:11} or Section 9.10 of \cite{rippl:12}. Otherwise, we assume the setting of the beginning of Section \ref{secmainres}. That means that $X^1$ and $X^2$ are two solutions of the SPDE \eqref{eq:spde} with the same noise $W$ and $u:= X^1-X^2$ is the difference of the two, i.e.
\begin{equation}
 u(t,x) = \int_0^t \intRd p_{t-s}(y-x) D(s,y) W(ds \, dy) \text{ a.s. for all } t \geq 0, x\in \Rdim,
\end{equation}
where
$D(s,y)= \sigma(s,y,X^1(s,y))-\sigma(s,y,X^2(s,y))$ which by \eqref{holder'} obeys
\begin{equation}\label{def:D}
 |D(s,y)| \leq R_0 e^{R_1|y|} |u(s,y)|^\gamma.
\end{equation}
Let $(P_t)_{t\geq0}$ be the heat-semigroup acting on $\Ctem$.
For $\delta \geq 0$ set
\begin{equation}\label{eq:u=u1+u2}
 u_{1,\delta} (t,x) := P_\delta (u((t-\delta)^+, \cdot))(x), \quad u_{2,\delta} := u -u_{1,\delta}.
\end{equation}
With the help of the Stochastic-Fubini-Formula (Theorem 2.6 in \cite{jW}, where localization with $T_K$ and \eqref{LpXbnd} are used for the condition on the expectation) reformulate that  for $\delta \leq t$ to
\begin{align*}
 u_{1,\delta}(t,x) =  \int_0^{(t-\delta)^+} \intRd p_{t -s}(y-x) D(s,y)\, W(ds\, dy).
\end{align*}
We define the following functions 
\begin{eqnarray}
\label{Gdeltadef}
G_\delta(s,t,x) &=& P_{(t-s)^+ + \delta}(u_{(s-\delta)^+})(x),\\ 
\label{Fdelta,ldef}
F_{\delta,l} (s,t,x) &=& -\partial_{x_l} G_\delta(s,t,x),  \quad 1\leq l\leq \dimension,
\end{eqnarray}
 for which we easily obtain $u_{1,\delta}(t,x) = G_\delta(t,t,x)$. We denote by $$p_{t,l}(x) = \partial_{x_l}p_t(x) , \, 1\leq l \leq \dimension$$ the spatial derivative of the heat-kernel. Then the following result holds, which is analogous to Lemma 3.1 in \cite{mp:11} and the lines preceding it and has essentially the same proof:
\begin{lemma}\label{lem:3.1}
 The random fields $G_\delta$  and $F_{\delta,l}$ are both jointly continuous in $(s,t,x) \in \RR_+^2 \times \RR^{\dimension}$ and
 \begin{align*} G_{\delta} (s,t,x) &= \int_0^{(s-\delta)^+} \int p_{(t\vee s)-r}(y-x)D(r,y)\, W(dr\,dy), \\
 F_{\delta,l}(s,t,x) &= \int_0^{(s-\delta)^+} \int p_{(t\vee s)-r,l}(y-x)D(r,y)\, W(dr\,dy) \text{, where } 1\leq l \leq \dimension.\end{align*}
Additionally, $u_{1,\delta}$ and $u_{2,\delta}$ are both $C(\RR_+,\Ctem)$-valued.
\end{lemma}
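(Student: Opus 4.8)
The plan is to establish the stochastic integral representations for $G_\delta$ and $F_{\delta,l}$ first, since these are simply applications of the stochastic Fubini theorem to the already-known representation of $u_{1,\delta}$, and then deduce joint continuity from the representations together with moment estimates. Recall that we already have, from the lines preceding Lemma~\ref{lem:3.1}, the representation
\[
 u_{1,\delta}(t,x) = \int_0^{(t-\delta)^+} \int p_{t-s}(y-x) D(s,y)\, W(ds\, dy),
\]
and by the semigroup property $G_\delta(s,t,x) = P_{(t-s)^+ + \delta}(u_{(s-\delta)^+})(x) = P_{(t\vee s)-s}\big( u_{1,\delta}((s,\cdot))\big)(x)$ when one unwinds the definitions; more directly, applying $P_{(t-s)^+}$ to the identity for $u_{1,\delta}$ evaluated at time $s$ and using $P_{(t-s)^+} p_{s-r} = p_{(t\vee s)-r}$ gives the claimed formula for $G_\delta$. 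The formula for $F_{\delta,l}$ then follows by differentiating under the stochastic integral in $x_l$; this differentiation is justified exactly as in \cite{mp:11}, Lemma~3.1, using that $p_{t,l}(x) = \partial_{x_l} p_t(x)$ is again an integrable kernel with Gaussian-type bounds and that the relevant $L^2$-norms (with the correlation kernel $k$ inserted, controlled via \eqref{kernel}) are locally bounded in $(s,t,x)$.

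Next I would prove joint continuity. The strategy is the standard Kolmogorov-type argument: show that for every compact set in $(s,t,x)$-space and every $p$ large,
\[
 \IE\big[\, |G_\delta(s,t,x) - G_\delta(s',t',x')|^p \,\big] \le C\, d((s,x),(s',x'))^{\eta p} + C\,|t-t'|^{\eta p}
\]
for some $\eta>0$, and similarly for $F_{\delta,l}$, then invoke the Kolmogorov continuity criterion. The moment bounds are obtained from the Burkholder--Davis--Gundy inequality applied to the martingale measure $W$: the quadratic variation of a stochastic integral $\int_0^u \int g(r,y) W(dr\,dy)$ is $\int_0^u \int\int g(r,y) g(r,y') k(y,y')\,dy\,dy'\,dr$, so one must estimate double space integrals of products of (differences of) heat kernels against $k(y,y') \le c_{\ref{kernel}}(|y-y'|^{-\alpha}+1)$. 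Here one uses $\gamma$-H\"older growth of $D$ via \eqref{def:D} together with the moment bound \eqref{LpXbnd} to control $\IE[|u(r,y)|^{\gamma p}]$, reducing everything to deterministic heat-kernel computations of the type already used for \eqref{intPhi} (page 1929 of \cite{mps:06}). The increments in $x$, in $t$, and in $s$ are handled separately: the $x$- and $t$-increments exploit the standard $1-\alpha/2-\eps$ H\"older regularity of space-time convolutions with colored noise (cf.\ \cite{ss:02}), while the $s$-increment is controlled by the continuity of $r\mapsto u((r-\delta)^+,\cdot)$ as a $\Ctem$-valued path combined with the smoothing of $P_{(t\vee s)-r}$. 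The final claim that $u_{1,\delta}$ and $u_{2,\delta}$ are $C(\RR_+,\Ctem)$-valued follows since $u_{1,\delta}(t,\cdot) = G_\delta(t,t,\cdot) = P_{(t-\delta)^+}(u((t-\delta)^+,\cdot))$ is a continuous image under the heat semigroup of a $\Ctem$-valued continuous path (the semigroup maps $\Ctem$ to $\Ctem$ continuously, cf.\ \cite{mps:06}), and $u_{2,\delta} = u - u_{1,\delta}$ is then also $C(\RR_+,\Ctem)$-valued as a difference.

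I expect the main obstacle to be the bookkeeping in the moment estimates: one must keep track of the exponential weights $e^{R_1|y|}$ coming from \eqref{def:D}, which interact with the Gaussian decay of $p_{t-s}(y-x)$ and its derivative $p_{t-s,l}(y-x)$, and one must ensure the double integral against the singular kernel $|y-y'|^{-\alpha}$ converges --- this needs $\alpha < q$, which is guaranteed by the hypothesis $\alpha \in (0, 2\wedge q)$ --- uniformly over compacts in $(s,t,x)$. The derivative kernel $p_{t-s,l}$ carries an extra factor of order $(t-s)^{-1/2}$, so the integrability near the diagonal $r = s$ must be checked carefully; this is where the regularization by $\delta>0$ (which bounds $(t\vee s) - r$ away from $0$ by $\delta$ in the integrand of $F_{\delta,l}$, since $r \le (s-\delta)^+$) is used crucially. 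Since all of these computations are direct multi-dimensional analogues of those in \cite{mp:11}, Lemma~3.1, I would carry out the heat-kernel estimates in the colored-noise setting by reference to the corresponding deterministic bounds in \cite{mps:06} and \cite{rippl:12}, and present only the points where the correlation kernel $k$ genuinely changes the argument.
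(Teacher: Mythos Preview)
Your proposal is correct and follows essentially the same route as the paper, which simply states that the result ``is analogous to Lemma 3.1 in \cite{mp:11} and the lines preceding it and has essentially the same proof.'' Your outline---stochastic Fubini for the representations, BDG plus heat-kernel estimates against the singular kernel $|y-y'|^{-\alpha}$ for the Kolmogorov moment bounds, and the observation that $\delta>0$ keeps $(t\vee s)-r$ bounded away from zero so the extra $(t-r)^{-1/2}$ from $p_{t-r,l}$ causes no trouble---is exactly the argument one would write out, and the references you give (\cite{mp:11}, \cite{mps:06}, \cite{rippl:12}) are the ones the paper itself invokes.
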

\noindent
Note that for the special choice of $s=t$ in the previous lemma we have that 
$$ \partial_{x_l} u_{1,\delta}(t,x) = -\int_0^{(t-\delta)^+} \int p_{t-r,l}(y-x)D(r,y)\, W(dr\,dy) =- F_{\delta,l}(t,t,x).$$
For $(t,x)\in \RR_+ \times \RR^{\dimension}$ and $n \in \NN$ let
$$ B_n(t,x) := \{ y \in \RR^{\dimension}: |y-x| \leq \sqrt{a_n},\, |u(t,y)| = \inf \{|u(t,z)|:|z-x| \leq \sqrt{a_n} \} \} $$
be the set of points with the smallest $u$-values in a certain neighborhood close to $x$ and let
$$ \hat{x}_n(t,x) $$
be a measurable choice of a point in $B_n(t,x)$ (e.g. with the smallest first coordinate, if this does not suffice to uniquely select a point, take the smallest second coordinate and so on).
Let us fix two positive but very small constants $\eps_0,\eps_1$ throughout the paper
\begin{equation}\label{eq:epsconditions}
 \eps_1 \in \left( 0, \frac{1}{32}(2(2\gamma-1)-\alpha) \right), \ \eps_0 \in \left( 0, \frac{1}{4}(1-\gamma) \eps_1 \right).
\end{equation}
Let $L  = L( \eps_0, \eps_1) = \lfloor \eps_0^{-1}(1/2 - 6 \eps_1) \rfloor \in \NN$ and set for $i= 0, \dots, L$
\begin{equation}\label{eq:3.10}
 \beta_i = i \eps_0 \in [0, \frac{1}{2}- 6 \eps_1], \quad \lambda_i = 2(\beta_i + \eps_1) \in [0,1]
\end{equation}
and $\beta_{L+1} = \frac{1}{2}- \eps_1.$ So alltogether for $i=0, \dots, L+1$:
\begin{equation}\label{eq:3.12}
 \beta_i \in [0,\frac{1}{2}-\eps_1]. 
\end{equation}
We define the following subsets of $\RR^{\dimension}$:
\begin{align*}
 J_{n,0}(s) := \{x \in \RR^{\dimension}: |x| \leq K_0,|\la u_s, \Phi_x^{m_{n+1}} \ra| \leq a_n, | \nabla u_{1,a_n} (s, \hat{x}_n (s,x)) | \geq \tfrac{a_n^{\eps_0}}{4} ] \},
\end{align*}
\begin{align*}
 J_{n,L}(s) := \{x \in \RR^{\dimension}: |x| \leq K_0,|\la u_s, \Phi_x^{m_{n+1}} \ra| \leq a_n, | \nabla u_{1,a_n} (s, \hat{x}_n (s,x)) | \leq \tfrac{a_n^{\beta_L}}{4} ] \},
\end{align*}
and for $i= 1 ,\dots, L-1$:
\begin{align*}
 J_{n,i}(s) := \{x \in \RR^{\dimension}: |x| \leq K_0,|\la u_s, \Phi_x^{m_{n+1}} \ra| \leq a_n, | \nabla u_{1,a_n} (s, \hat{x}_n (s,x)) | \in [\tfrac{a_n^{\beta_{i+1}}}{4}, \tfrac{a_n^{\beta_i}}{4} ] \}.
\end{align*}
Recall \eqref{Indef} and observe that for $t\geq0$, $n\in\NN$:
\begin{align}
 I^n(t) & \leq a_n^{-1-2/n} \sum_{i=0}^{L(\eps_0,\eps_1)} \int_0^t ds \int_{\RR^{3\dimension}} dxdwdz\, \1_{J_{n,i}(s)}(x) |u(s,w)|^\gamma |u(s,z)|^\gamma \nn \\
 & \qquad \qquad \qquad \times \Phi_x^{m_{n+1}}(w)  \Phi_x^{m_{n+1}}(z) (|w-z|^{-\alpha} + 1) \Psi_s(x) \nn \\
 & =: \sum_{i=0}^{L(\eps_0,\eps_1)} I_{n,i}(t). \label{eq:I_ni}
\end{align}
To verify the hypotheses of Proposition \ref{prop:2.1}, it suffices to show the existence of stopping times $U_{M,n,K}$ satisfying $(H_1)$ as well as for $i=0,\dots,L$,
\begin{equation*}
(H_{2,i})\phantom{AAAA} \lim_{n\to\infty}\IE(I_{n,i}(t_0\wedge U_{M,n,K}))=0\quad  \text{ for all }M,K \in\NN \text{ with } K \geq K_1. \quad \phantom{AAAAAAAAAAAAAA}
\end{equation*}
We will get to the definition of these stopping times in Section \ref{sec:6}.  We now define
\begin{align*}
\sigma_x := \sigma_x (n,s) := \nabla u_{1,a_n}(s,\hat{x}_n(s,x)) ( |\nabla u_{1,a_n}(s,\hat{x}_n(s,x)) |)^{-1}
\end{align*}
as the direction of the gradient $\nabla u_{1,a_n}$ at the point $\hat{x}_n(s,x)$ close to $x.$ We also set
\[ \bar{l}_n(\beta_i) = a_n^{\beta_i + 5 \eps_1},\]
where dependence on $\beta_i$ is not written out explicitly if there are no ambiguities.

To get $(H_{2,i})$ we need to derive some properties of points in $J_{n,i}$. Therefore, set
\begin{align*}
 \tilde{J}_{n,0}(s) := \{ x & \in \RR^{\dimension}: |x| \leq K_0, |\la u_s, \Phi_x^{m_{n+1}} \ra| \leq a_n, \\
 & \sigma_x \cdot \nabla u_{1,a_n^{\eps_0}}(s,x')  \geq a_n^{\eps_0}/16 \,  \text{ for all } x' \in \RR^{\dimension} \text{ s.t.~} |x'-x| \leq 5 \bar{l}_n(\beta_0) \\
 & \text{and } |u_{2, a_n^{\lambda_0}} (s,x') - u_{2, a_n^{\lambda_0}} (s,x'') | \leq 2^{-75} a_n^{\beta_{1}} (|x'-x''| \vee a_n^{\frac{2}{\alpha} (\gamma- \beta_{1} - \eps_1)} \vee a_n ) \\
 & \qquad \text{for all } x',x'' \in \RR^\dimension \text{ s.t.~} |x' -x | \leq 4 \sqrt{a_n}, |x'' -x' | \leq \bar{l}_n(\beta_0)\\
 & \text{and } |u(s,x')| \leq 3 a_n^{(1-\eps_0)/2} \text{ for all }x' \in \RR^\dimension \text{ s.t.~}   |x'  -x| \leq \sqrt{a_n} \},
\end{align*}
\begin{align*}
 \tilde{J}_{n,L}(s) := \{ &  x  \in \RR^{\dimension}: |x| \leq K_0, |\la u_s, \Phi_x^{m_{n+1}} \ra| \leq a_n, \\
 & |\nabla u_{1,a_n^{\lambda_L}}(s,x')| \leq a_n^{\beta_L}  \,  \text{ for all } x' \in \RR^{\dimension} \text{ s.t.~} |x'-x| \leq 5 \bar{l}_n(\beta_L) \\
 & \text{and } |u_{2, a_n^{\lambda_L}} (s,x') - u_{2, a_n^{\lambda_L}} (s,x'') | \leq 2^{-75} a_n^{\beta_{L+1}} (|x'-x''| \vee a_n^{\frac{2}{\alpha} (\gamma- \beta_{L+1}  - \eps_1)} \vee a_n ) \\
 & \qquad \text{for all }  x',x'' \in \RR^\dimension \text{ s.t.~} |x' -x | \leq 4 \sqrt{a_n}, |x'' -x' | \leq \bar{l}_n(\beta_L) \}
\end{align*}
and for $i= 1 ,\dots, L-1$:
\begin{align*}
 \tilde{J}_{n,i}(s) := & \{ x  \in \RR^{\dimension}: |x| \leq K_0, |\la u_s, \Phi_x^{m_{n+1}} \ra| \leq a_n, \\
 &  |\nabla u_{1,a_n^{\lambda_i}}(s,x')| \leq a_n^{\beta_i} \text{ and }\sigma_x \cdot \nabla u_{1,a_n^{\lambda_i}}(s,x') \geq a_n^{\beta_{i+1}}/16 \, \\
 & \phantom{AAAAAA} \text{ for all } x' \in \RR^{\dimension} \text{ s.t.~} |x'-x| \leq 5 \bar{l}_n(\beta_i) \\
 & \text{and } |u_{2, a_n^{\lambda_i}} (s,x') - u_{2, a_n^{\lambda_i}} (s,x'') | \leq 2^{-75} a_n^{\beta_{i+1}} (|x'-x''| \vee a_n^{\frac{2}{\alpha} (\gamma- \beta_{i+1}  - \eps_1)}  \vee a_n) \\
 & \qquad \quad \text{for all } x',x'' \in \RR^\dimension \text{ s.t.~} |x' -x | \leq 4 \sqrt{a_n}, |x'' -x' | \leq \bar{l}_n(\beta_i) \}.
\end{align*}
We also define two deterministic constants 
\begin{eqnarray*}
n_M(\eps_1) = \inf \{ n\geq1: a_n^{\eps_1} \leq 2^{-M-8} \}, \ n_0(\eps_0,\eps_1) = \sup \{ n\in \NN:\sqrt{a_n} < 2^{-a_n^{-\eps_0 \eps_1/4}} \}
\end{eqnarray*}
 and will from now on always assume that
\begin{equation}\label{eq:3.14}
 n > n_M(\eps_1) \vee n_0(\eps_0,\eps_1).
\end{equation}

The next proposition shows that we can ultimately estimate the size of the sets $ \tilde{J}_{n,i}(s)$ instead of that of $ J_{n,i}(s):$
\begin{proposition}\label{prop:3.3}
 $\tilde{J}_{n,i}(s)$ is a compact set for all $s \geq 0$, $i \in \{0, \dots, L\}.$ There exist stopping times $U_{M,n,K}$ satisfying $(H_1)$ such that for all $n \geq n_M$, $i \in \{0, \dots, L\},$ and $s \leq U_{M,n,K},$
 $$   J_{n,i}(s)  \subset \tilde{J}_{n,i}(s). $$
\end{proposition}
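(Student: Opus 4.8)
The plan is to prove the two assertions of Proposition~\ref{prop:3.3} essentially separately: compactness of the sets $\tilde J_{n,i}(s)$, and the inclusion $J_{n,i}(s)\subset\tilde J_{n,i}(s)$ for $s\le U_{M,n,K}$ once suitable stopping times are in place. For the compactness, the key point is that all the defining conditions of $\tilde J_{n,i}(s)$ are closed conditions: the constraint $|x|\le K_0$ gives boundedness; the condition $|\la u_s,\Phi_x^{m_{n+1}}\ra|\le a_n$ is closed by continuity of $x\mapsto\la u_s,\Phi_x^{m_{n+1}}\ra$; and the various conditions on $\nabla u_{1,a_n^{\lambda_i}}$, on $u_{2,a_n^{\lambda_i}}$, and on $u$ at nearby points are uniform statements of the form ``for all $x'$ (and $x''$) in a compact neighbourhood of $x$, a continuous inequality holds.'' By Lemma~\ref{lem:3.1} the fields $u_{1,\delta}$, $u_{2,\delta}$, $F_{\delta,l}$ and $u$ itself are jointly continuous, so each such ``for all nearby points'' condition defines a closed set (one checks this via sequences: if $x_k\to x$ with $x_k\in\tilde J_{n,i}(s)$ and $|x'-x|\le r$, approximate $x'$ by admissible $x'_k$ near $x_k$ and pass to the limit). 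Thus $\tilde J_{n,i}(s)$ is closed and bounded, hence compact. One subtlety is that $\sigma_x$ depends on $x$ through $\hat x_n(s,x)$, which is only a measurable selection, so in the conditions involving $\sigma_x$ one should either note that $|\sigma_x\cdot v|\le|v|$ makes the relevant inequality upgradeable, or argue that on the set where $|\nabla u_{1,a_n}(s,\hat x_n(s,x))|$ is bounded below the map $x\mapsto\sigma_x$ is continuous along convergent sequences staying in the set; I would spell this out carefully as it is the one place where measurability-versus-continuity could bite.

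For the inclusion $J_{n,i}(s)\subset\tilde J_{n,i}(s)$, the strategy is to define $U_{M,n,K}$ as the minimum of $T_K$ and a collection of stopping times that are hitting times of ``bad events'' for the regularity of the fields $u$, $\nabla u_{1,\delta}$ and $u_{2,\delta}$ on scales comparable to $\sqrt{a_n}$, $\bar l_n(\beta_i)$ and $a_n^{\lambda_i}$. Concretely, one wants that up to time $U_{M,n,K}$: (i) the locally improved Hölder modulus of Theorem~\ref{thm:collipmod} is in force with the relevant exponent near the zero set of $u$, giving the bound $|u(s,x')|\le 3a_n^{(1-\eps_0)/2}$ for $|x'-x|\le\sqrt{a_n}$ whenever $|\la u_s,\Phi_x^{m_{n+1}}\ra|\le a_n$ (this uses that the indicator forces a point $\hat x_0$ with $|u(s,\hat x_0)|\le a_n$ within $\sqrt{a_n}$ of $x$); (ii) $\nabla u_{1,a_n^{\lambda_i}}(s,\cdot)$ is close to $\nabla u_{1,a_n}(s,\hat x_n(s,x))$ on a $\bar l_n(\beta_i)$-neighbourhood, so that the bound $|\nabla u_{1,a_n}(s,\hat x_n)|\le a_n^{\beta_i}/4$ (resp.\ $\ge a_n^{\eps_0}/4$) propagates to $|\nabla u_{1,a_n^{\lambda_i}}(s,x')|\le a_n^{\beta_i}$ (resp.\ $\sigma_x\cdot\nabla u_{1,a_n^{\lambda_i}}(s,x')\ge a_n^{\beta_{i+1}}/16$); and (iii) the fine increment bound on $u_{2,a_n^{\lambda_i}}$ on scale $\bar l_n(\beta_i)$ holds. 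Each of these is of the form ``a continuous functional of the fields exceeds a threshold,'' hence its first hitting time is a stopping time; intersecting finitely many of them (the number is controlled by $L$, $M$, $K$) and with $T_K$ yields $U_{M,n,K}$, and the condition $n>n_M(\eps_1)\vee n_0(\eps_0,\eps_1)$ in \eqref{eq:3.14} is exactly what makes the scale separations ($\bar l_n\ll\sqrt{a_n}$, $a_n^{\lambda_i}$ small enough, etc.) quantitatively valid so that (i)--(iii) actually imply membership in $\tilde J_{n,i}(s)$.

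The properties $(H_1)$ for these stopping times---namely $U_{M,n,K}\le T_K$, $U_{M,n,K}\nearrow T_K$ as $M\to\infty$, and $\lim_{M\to\infty}\sup_n P(U_{M,n,K}<T_K)=0$ uniformly in $n$---are where the real work lies and, I expect, the main obstacle. The first is by construction. For the monotone convergence and the uniform-in-$n$ smallness one needs the ``bad events'' at level $M$ to be genuinely rare, uniformly in $n$: this is precisely the content of the quantitative regularity estimates for $u$, $\nabla u_{1,\delta}$ and $u_{2,\delta}$ that must be proved in Section~\ref{sec:6}, building on the locally improved modulus (Theorem~\ref{thm:collipmod}), on moment bounds for the stochastic-integral representations in Lemma~\ref{lem:3.1} together with \eqref{def:D} and \eqref{LpXbnd}, and on the kernel bound \eqref{kernel}; the thresholds in $\tilde J_{n,i}$ (the $2^{-75}$, the $a_n^{2(\gamma-\beta_{i+1}-\eps_1)/\alpha}$, the $a_n^{5\eps_1}$ slack in $\bar l_n$) are calibrated so that a Kolmogorov/Garsia--Rodemich--Rumsey type chaining argument gives exponential-in-$M$ tail bounds uniform in $n$. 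So the honest statement is: the inclusion and compactness are the ``soft'' parts (done here, modulo the continuity bookkeeping above), whereas the existence of $U_{M,n,K}$ satisfying $(H_1)$ with the stated uniformity is reduced to, and proved by, the estimates of Section~\ref{sec:6}; in the write-up I would state the inclusion as the main deduction from those estimates and defer the construction and the verification of $(H_1)$ to that section, exactly as the text announces.
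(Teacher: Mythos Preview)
Your proposal is correct and follows essentially the same architecture as the paper: compactness from closedness of continuous constraints, stopping times defined as first hitting times of ``bad'' regularity events, and $(H_1)$ reduced to the uniform stochastic boundedness of the random $N$-thresholds proved in Section~\ref{sec:localbounds}. One point you compress deserves to be unpacked: in your item (ii), passing from $|\nabla u_{1,a_n}(s,\hat x_n)|$ to $|\nabla u_{1,a_n^{\lambda_i}}(s,x')|$ for $|x'-x|\le 5\bar l_n(\beta_i)$ is a genuine two-step argument in the paper --- first a change of smoothing scale $a_n\to a_n^{\lambda_i}$ at the fixed point $\hat x_n$ (handled by a separate stopping time $U^{(3)}$ built from Proposition~\ref{prop:5.11}, via the identity $\partial_{x_l}u_{1,\delta}(t,x)=-F_{a_n,l}(t-\delta+a_n,t,x)$), and then spatial propagation of $\nabla u_{1,a_n^{\lambda_i}}$ over the $\bar l_n$-ball (the stopping time $U^{(1)}$ built from Corollary~\ref{cor:5.9}); neither step alone suffices, and the scale-change step is not a Kolmogorov-type statement. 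Your flag about the continuity of $x\mapsto\sigma_x$ through the measurable selection $\hat x_n(s,x)$ is well taken; the paper simply asserts ``continuity of all the functions involved'' and does not address it, so your caution there is justified rather than excessive.
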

\noindent
The proof of this proposition can be found in Section \ref{sec:6}.
We will use this proposition to show $(H_{2,i})$ at the end of this section. We need the following notation for $ i \in \{0,\dots, L\}$:
$$ l_n(\beta_i) := (129 a_n^{1-\beta_{i+1}}) \vee a_n^{\frac{2}{\alpha}(\gamma-\beta_{i+1} - \eps_1)} ,$$
where we omit the dependence on $\beta_i$ if there are no ambiguities and obtain: 
\begin{lemma}\label{lem:3.6}
If $i\in\{0,\dots,L\}$ and $n > n_M(\eps_1)$, then 
\[l_n(\beta_i)<\sqrt{a_n}<\frac{1}{2}\bar{l}_n(\beta_i).\]
\end{lemma}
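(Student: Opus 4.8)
\textbf{Proof proposal for Lemma \ref{lem:3.6}.}

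The plan is to unwind all three quantities in terms of powers of $a_n$ and reduce each inequality to a comparison of exponents, using that $a_n = \exp\{-n(n+1)/2\}\to 0$ and that $n > n_M(\eps_1)$ forces $a_n^{\eps_1}$ to be as small as we like (indeed $a_n^{\eps_1}\le 2^{-M-8}$). Recall from \eqref{eq:3.10} and the line after it that $\beta_{i+1}-\beta_i = \eps_0$ for $i\le L-1$, while $\beta_{L+1}-\beta_L = (\tfrac12-\eps_1) - L\eps_0 \ge 5\eps_1 > 0$ by the definition $L = \lfloor \eps_0^{-1}(1/2 - 6\eps_1)\rfloor$; in all cases $\beta_{i+1} > \beta_i$, and $\beta_i \le \tfrac12 - \eps_1$ for every $i\le L+1$ by \eqref{eq:3.12}. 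Also recall $\bar l_n(\beta_i) = a_n^{\beta_i + 5\eps_1}$ and
\[
 l_n(\beta_i) = (129\, a_n^{1-\beta_{i+1}}) \vee a_n^{\frac{2}{\alpha}(\gamma - \beta_{i+1} - \eps_1)}.
\]

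\emph{Step 1: $\sqrt{a_n} < \tfrac12\bar l_n(\beta_i)$.} This amounts to $a_n^{1/2} < \tfrac12 a_n^{\beta_i + 5\eps_1}$, i.e.\ $a_n^{1/2 - \beta_i - 5\eps_1} < \tfrac12$. Since $\beta_i \le \tfrac12 - \eps_1$ we have $1/2 - \beta_i - 5\eps_1 \ge -4\eps_1$; more usefully, since $\beta_i \le \tfrac12 - 6\eps_1$ for $i \le L$ (the range here) we get $1/2 - \beta_i - 5\eps_1 \ge \eps_1 > 0$, so the left side is at most $a_n^{\eps_1} \le 2^{-M-8} < \tfrac12$ by \eqref{eq:3.14}. (For $i=L$ one uses $\beta_L = L\eps_0 \le \tfrac12 - 6\eps_1$ directly.)

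\emph{Step 2: $l_n(\beta_i) < \sqrt{a_n}$.} It suffices to bound each of the two terms in the maximum by $\tfrac12 a_n^{1/2}$, say. For the first term, $129\, a_n^{1-\beta_{i+1}} < a_n^{1/2}$ reduces to $a_n^{1/2 - \beta_{i+1}} < \tfrac{1}{129}$; since $\beta_{i+1} \le \tfrac12 - \eps_1$ we have $1/2 - \beta_{i+1} \ge \eps_1 > 0$, so the left side is $\le a_n^{\eps_1} \le 2^{-M-8} < \tfrac1{129}$ by \eqref{eq:3.14}. For the second term, $a_n^{\frac{2}{\alpha}(\gamma - \beta_{i+1} - \eps_1)} < a_n^{1/2}$ reduces to the exponent inequality $\frac{2}{\alpha}(\gamma - \beta_{i+1} - \eps_1) > \tfrac12$, i.e.\ $\gamma - \beta_{i+1} - \eps_1 > \tfrac{\alpha}{4}$. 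Here I would use $\beta_{i+1} \le \tfrac12 - \eps_1$, giving $\gamma - \beta_{i+1} - \eps_1 \ge \gamma - \tfrac12$; and $\gamma - \tfrac12 > \tfrac{\alpha}{4}$ is exactly the standing hypothesis $\alpha < 2(2\gamma-1) = 4\gamma - 2$ of Theorem \ref{theorem:unique}, rearranged. (If this term has exponent $\le 0$, i.e.\ is $\ge 1 > \sqrt{a_n}$ for small $a_n$, the inequality would fail — so it is important that the exponent be strictly positive, which is precisely what $\alpha < 2(2\gamma-1)$ together with $\beta_{i+1}\le\tfrac12-\eps_1$ guarantees; one might also invoke the $\eps_1$-conditions \eqref{eq:epsconditions} to get a quantitative gap, but the crude bound above already works.) Combining, $l_n(\beta_i) = \max \le \tfrac12 a_n^{1/2} < a_n^{1/2}$, or simply note each term is $< \sqrt{a_n}$.

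\emph{Main obstacle.} There is no real obstacle: the lemma is a bookkeeping check that the parameter choices in \eqref{eq:3.10}, \eqref{eq:3.12} and \eqref{eq:3.14} are mutually consistent. The only point requiring care is the exponent $\frac{2}{\alpha}(\gamma - \beta_{i+1} - \eps_1)$ in $l_n$: one must verify it exceeds $\tfrac12$ uniformly in $i \le L$, and this is where the hypothesis $\alpha < 2(2\gamma - 1)$ enters — it is the same inequality that drives the heuristic of Section \ref{secmainres}. I would present Steps 1 and 2 in that order, each as a two-line exponent comparison, and flag explicitly that $\beta_i \le \tfrac12 - \eps_1$ (for $i\le L$ even $\beta_i \le \tfrac12 - 6\eps_1$) and $a_n^{\eps_1} \le 2^{-M-8}$ are the two facts doing all the work.
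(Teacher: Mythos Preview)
Your proof is correct and follows essentially the same approach as the paper: both arguments reduce each inequality to an exponent comparison, using $\beta_i \le \tfrac12 - 6\eps_1$ (for $i\le L$), $\beta_{i+1}\le \tfrac12 - \eps_1$, the hypothesis $\alpha < 2(2\gamma-1)$, and $a_n^{\eps_1}\le 2^{-M-8}<2^{-8}$ from \eqref{eq:3.14}. The only cosmetic difference is that the paper bundles both terms of $l_n(\beta_i)a_n^{-1/2}$ into a single displayed chain, whereas you treat the two branches of the maximum separately.
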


\begin{proof}
\begin{align*}
l_n(\beta_i)a_n^{-1/2}&=(129a_n^{\frac{1}{2}-\beta_{i+1}})\vee a_n^{\frac{2}{\alpha}(\gamma-\beta_{i+1} - \eps_1) - \frac{1}{2}}\\
&\le (129a_n^{\eps_1})\vee a_n^{\frac{1}{2\alpha}(4 \gamma - 4 \beta_i - 4 \eps_0 - \eps_1 - \alpha) }\\
&<1
\end{align*}
by \eqref{eq:epsconditions}, \eqref{eq:3.12} and because $a_n^{ \eps_1}<2^{-8}$ by \eqref{eq:3.14}. This gives the first inequality. For the second one, use $\beta_i\le \frac{1}{2}-6\eps_1$  and \eqref{eq:3.14} to see that
\begin{equation*}\sqrt{a_n}\, \bar{l}_n(\beta_i)^{-1}=a_n^{\frac{1}{2}-\beta_i-5\eps_1}\le a_n^{\eps_1}<1/2.
\end{equation*}
\end{proof}

We give some elementary properties of the sets $\tilde{J}_{n,i}(s)$.
\begin{lemma}\label{lem:3.4}
 Assume $s\geq0$, $i \in \{0, \dots, L\}, x \in \tilde{J}_{n,i}(s)$, $x'\in \RR^\dimension$ and $|x'-x| \leq 4 \sqrt{a_n}$.
 \begin{enumerate}
  \item  If $i >0$, $x'' \in \RR^{\dimension}$ s.t. $|x''-x'| \leq \bar{l}_n(\beta_i)$, then 
  $$|u(s,x'')-u(s,x')| \leq 2 a_n^{\beta_i} ( | x''-x'| \vee a_n^{\frac{2}{\alpha} (\gamma- \beta_{i+1}  - \eps_1)} \vee a_n).$$
  \item If $i<L$, $x''\in \RR^\dimension$ s.t.~$(x''-x') \parallel \sigma_x$ and $a_n^{\frac{2}{\alpha} (\gamma- \beta_{i+1}  - \eps_1)} \vee a_n \leq |x'-x''| \leq \bar{l}_n(\beta_i)$, then
  $$ u(s,x'')-u(s,x') \begin{cases}
                       \geq 2^{-5} a_n^{\beta_{i+1}} |x''-x'| \text{ if } (x''-x')\cdot \sigma_x \geq 0 , \\
                       \leq -2^{-5} a_n^{\beta_{i+1}} |x''-x'| \text{ if } (x''-x')\cdot \sigma_x < 0 .
                      \end{cases} $$
  \item Let $y \in \tilde{J}_{n,i}(s), |x-y| \leq \bar{l}_n(\beta_i) $. Additionally let $y', y'' \in \RR^{\dimension}$, s.t $|y-y'| \leq \sqrt{a_n},  (y''-y') \parallel \sigma_x$ and $|y''-y'| \in ( a_n^{\frac{2}{\alpha} (\gamma- \beta_{i+1}  - \eps_1)}\vee a_n, \bar{l}_n(\beta_i) ).$ Then
  $$ u(s,y'')-u(s,y') \begin{cases}
                       \geq 2^{-5} a_n^{\beta_{i+1}} |y''-y'|  \text{ if } (y''-y')\cdot \sigma_x \geq 0 , \\
                      \leq -2^{-5} a_n^{\beta_{i+1}} |y''-y'| \text{ if } (y''-y')\cdot \sigma_x < 0 .
                      \end{cases} $$
   \item If $i >0$, then for $|w-x| < \sqrt{a_n}, $
   $$ | u(s,w) | \leq 5 a_n^{\beta_i + 1/2}.$$
 \end{enumerate}
\end{lemma}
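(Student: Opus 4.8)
\textbf{Proof plan for Lemma~\ref{lem:3.4}.}
The plan is to prove the four statements in order, each being a consequence of the defining properties of $\tilde J_{n,i}(s)$ combined with the decomposition $u=u_{1,\delta}+u_{2,\delta}$ for the appropriate choice $\delta=a_n^{\lambda_i}$, and with Lemma~\ref{lem:3.6} which guarantees the nesting $l_n(\beta_i)<\sqrt{a_n}<\tfrac12\bar l_n(\beta_i)$ of the relevant scales. Throughout I would keep in mind that if $|x'-x|\le 4\sqrt{a_n}$ and $|x''-x'|\le\bar l_n(\beta_i)$ then $|x''-x|\le 5\bar l_n(\beta_i)$ (using $\sqrt{a_n}<\tfrac12\bar l_n(\beta_i)$), so both the gradient bound on $u_{1,a_n^{\lambda_i}}$ and the modulus-of-continuity bound on $u_{2,a_n^{\lambda_i}}$ in the definition of $\tilde J_{n,i}(s)$ are applicable at the points under consideration.

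For part (a): write $u(s,x'')-u(s,x') = \big(u_{1,a_n^{\lambda_i}}(s,x'')-u_{1,a_n^{\lambda_i}}(s,x')\big) + \big(u_{2,a_n^{\lambda_i}}(s,x'')-u_{2,a_n^{\lambda_i}}(s,x')\big)$. The second difference is bounded by $2^{-75}a_n^{\beta_{i+1}}(|x''-x'|\vee a_n^{\frac2\alpha(\gamma-\beta_{i+1}-\eps_1)}\vee a_n)\le 2^{-75}a_n^{\beta_i}(\dots)$ directly from the definition of $\tilde J_{n,i}$. For the first difference, since $u_{1,a_n^{\lambda_i}}(s,\cdot)$ is $C^1$ (it is a heat-semigroup smoothing, cf.~Lemma~\ref{lem:3.1}), apply the mean value theorem along the segment $[x',x'']$ and use $|\nabla u_{1,a_n^{\lambda_i}}(s,\cdot)|\le a_n^{\beta_i}$ on the ball of radius $5\bar l_n(\beta_i)$ around $x$, giving a bound $a_n^{\beta_i}|x''-x'|$. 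Adding the two contributions gives the constant $2$ with room to spare. Parts (b) and (c) are the more substantial steps and I expect (b) to be the main obstacle: here one must combine a \emph{lower} bound on the directional derivative $\sigma_x\cdot\nabla u_{1,a_n^{\lambda_i}}\ge a_n^{\beta_{i+1}}/16$ (valid on the $5\bar l_n(\beta_i)$-ball) with the Fundamental Theorem of Calculus along the segment from $x'$ to $x''$ in direction $\sigma_x$, integrating to get $\ge a_n^{\beta_{i+1}}|x''-x'|/16$ from the $u_1$ part, and then absorb the $u_2$ error, which by the hypothesis $|x''-x'|\ge a_n^{\frac2\alpha(\gamma-\beta_{i+1}-\eps_1)}\vee a_n$ is at most $2^{-75}a_n^{\beta_{i+1}}|x''-x'|$, leaving a clean lower bound $2^{-5}a_n^{\beta_{i+1}}|x''-x'|$ (and symmetrically for the negative direction). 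One has to be careful that the whole segment $[x',x'']$ stays inside the ball where the gradient estimates hold, which again follows from $|x'-x|\le 4\sqrt{a_n}$, $|x''-x'|\le\bar l_n(\beta_i)$ and $\sqrt{a_n}<\tfrac12\bar l_n(\beta_i)$; and that $\sigma_x$, defined via $\nabla u_{1,a_n}$ at $\hat x_n(s,x)$ rather than $\nabla u_{1,a_n^{\lambda_i}}$, is still the relevant direction — this is exactly what the definition of $\tilde J_{n,i}$ was engineered to provide.

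Part (c) is proved in the same way as (b) but with the base point shifted: given $y\in\tilde J_{n,i}(s)$ with $|x-y|\le\bar l_n(\beta_i)$, one notes $|y'-x|\le|y'-y|+|y-x|\le\sqrt{a_n}+\bar l_n(\beta_i)\le 2\bar l_n(\beta_i)$ and $|y''-y'|\le\bar l_n(\beta_i)$, so all points stay within $5\bar l_n(\beta_i)$ of $x$, hence the gradient lower bound $\sigma_x\cdot\nabla u_{1,a_n^{\lambda_i}}\ge a_n^{\beta_{i+1}}/16$ and the $u_2$-modulus bound (this time applied around $x$ with $|y'-x|\le 4\sqrt{a_n}$ replaced by the weaker requirement, which one should check is still covered — if not, one enlarges the radius constants accordingly, or invokes the analogous property at $y$) both apply; then repeat the FTC-plus-error argument of (b). Finally part (d): by the indicator $|\la u_s,\Phi_x^{m_{n+1}}\ra|\le a_n$ there is a point $\hat x_0\in B^\dimension(x,\sqrt{a_n})$ (namely $\hat x_n(s,x)$) with $|u(s,\hat x_0)|\le a_n$; then for $|w-x|<\sqrt{a_n}$ we have $|w-\hat x_0|<2\sqrt{a_n}<\bar l_n(\beta_i)$, so applying part (a) with $x'=\hat x_0$ (which satisfies $|x'-x|\le\sqrt{a_n}\le 4\sqrt{a_n}$) and $x''=w$ yields $|u(s,w)|\le a_n + 2a_n^{\beta_i}(2\sqrt{a_n}\vee a_n^{\frac2\alpha(\gamma-\beta_{i+1}-\eps_1)}\vee a_n)$, and since $\beta_{i+1}<\gamma$ with $\eps_1$ small the max is $2\sqrt{a_n}$ for $n$ large (using \eqref{eq:3.14}), giving $|u(s,w)|\le a_n+4a_n^{\beta_i+1/2}\le 5a_n^{\beta_i+1/2}$ because $a_n\le a_n^{\beta_i+1/2}$ for $\beta_i+1/2\le 1$, which holds by \eqref{eq:3.12}. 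The only routine care needed is tracking the numerical constants so that they collapse to the stated $2$, $2^{-5}$ and $5$; these are not tight and leave ample slack given the $2^{-75}$ in the definition of $\tilde J_{n,i}$.
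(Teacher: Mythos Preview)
Your proposal is correct and follows essentially the same approach as the paper's proof: the decomposition $u=u_{1,a_n^{\lambda_i}}+u_{2,a_n^{\lambda_i}}$, the Mean Value Theorem for (a), the Fundamental Theorem of Calculus along $\sigma_x$ for (b) and (c), and the application of (a) at $\hat x_n(s,x)$ for (d), with Lemma~\ref{lem:3.6} controlling the scales. The one point where you hedge---the $u_2$ bound in (c)---is resolved exactly as you suggest in your alternative: the paper uses $x\in\tilde J_{n,i}(s)$ for the directional derivative bound on $u_{1,a_n^{\lambda_i}}$ (since $\sigma_x$ is defined at $x$ and the segment lies in $B(x,5\bar l_n(\beta_i))$), but uses $y\in\tilde J_{n,i}(s)$ for the $u_{2,a_n^{\lambda_i}}$ modulus, because $|y'-y|\le\sqrt{a_n}\le 4\sqrt{a_n}$ and $|y''-y'|\le\bar l_n(\beta_i)$ fit the defining conditions at $y$, whereas $|y'-x|$ need not be $\le 4\sqrt{a_n}$.
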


\begin{proof}
 To prove (a) let $n,i,s,x,x',x''$ be as above. Since
 $$ |x'-x| \vee |x''-x| \leq 4 \sqrt{a_n} \vee(4 \sqrt{a_n}+ \bar{l}_n (\beta_i) ) \leq 5 \bar{l}_n(\beta_i),$$
 the distance to $x$ of any point on the line between $x'$ and $x''$ is bounded from above by $5\bar{l}_n(\beta_i)$. By the Mean Value Theorem and the definition of $\tilde{J}_{n,i}(s),$ we get
 \begin{align*}
  | u(s,x'') - u(s,x') | & \leq | u_{1,a_n^{\lambda_i}}(s,x'') - u_{1,a_n^{\lambda_i}}(s,x') | + | u_{2,a_n^{\lambda_i}}(s,x'') - u_{2,a_n^{\lambda_i}}(s,x') | \\
  & \leq a_n^{\beta_{i}} |x''-x'| + 2^{-75}a_n^{\beta_{i+1}} (|x''-x'| \vee a_n^{\frac{2}{\alpha} (\gamma- \beta_{i+1}  - \eps_1)} \vee a_n ) \\
  & \leq 2 a_n^{\beta_i} (|x''-x'| \vee  a_n^{\frac{2}{\alpha} (\gamma- \beta_{i+1}  - \eps_1)} \vee a_n).
 \end{align*}
To prove (b) w.l.o.g.~consider $(x''-x')\cdot \sigma_x \geq 0$ and so estimate analogously to (a) (remember that $[\cdot,\cdot]$ denotes the Euclidean geodesic between two points in $\Rdim$):
\begin{align*}
 u(s,x'') - u(s,x') & \geq \inf_{ y \in [x',x'']} [ \nabla u_{1,a_n^{\lambda_i}}(s,y) \cdot (x''-x') ] - | u_{2,a_n^{\lambda_i}}(s,x'') - u_{2,a_n^{\lambda_i}}(s,x') | \\
  & \geq (a_n^{\beta_{i+1}}/16 ) \sigma_x \cdot (x''-x') - 2^{-75}a_n^{\beta_{i+1}} |x''-x'| \\
  & \geq (a_n^{\beta_{i+1}}/32 ) (x''-x')\cdot \sigma_x.
\end{align*}
Next, we prove (c) using that $|y'-x| \vee |y''-x| < \sqrt{a_n} + \bar{l}_n(\beta_i) + \bar{l}_n(\beta_i) \leq 5 \bar{l}_n(\beta_i).$
\begin{align*}
 u(s,y'') - u(s,y') & \geq \inf_{ z \in [y',y'']} [ \nabla u_{1,a_n^{\lambda_i}}(s,z) \cdot (y''-y') ] - | u_{2,a_n^{\lambda_i}}(s,y'') - u_{2,a_n^{\lambda_i}}(s,y') | \\
  & \geq (a_n^{\beta_{i+1}}/16 ) (y''-y')\cdot \sigma_x - 2^{-75}a_n^{\beta_{i+1}} |y''-y'| \\
  & \geq (a_n^{\beta_{i+1}}/32 ) (y''-y')\cdot \sigma_x,
\end{align*}
where, in the next to last inequality, we used that $x \in \tilde{J}_{n,i}(s)$ for the $\nabla u_{1,a_n^{\lambda_i}}$-part and $y \in \tilde{J}_{n,i}(s)$ for the $u_{2,a_n^{\lambda_i}}$-part.

Finally, prove (d) much in the same way as the previous claims: We have $|\hat{x}_n(s,x) - w|  < |\hat{x}_n(s,x) - x| + |x-w|  \leq 2\sqrt{a_n} \leq \bar{l}_n(\beta_i)$ by Lemma \ref{lem:3.6}.  So we can apply (a) for $x'=\hat{x}_n(s,x)$ and  $x'' =w$ to obtain
\begin{align*} | u(s,w) | &\leq |u(s,\hat{x}_n(s,x)) | + |u(s,\hat{x}_n(s,x)) - u(s,w)|  \\ 
&\leq a_n + 2 a_n^{\beta_i}(|w-\hat{x}_n(s,x) | \vee a_n^{\frac{2}{\alpha} (\gamma- \beta_{i+1}  - \eps_1)} \vee a_n) \\
&\leq 5a_n^{\beta_i + 1/2}\end{align*}
since  $a_n^{\frac{2}{\alpha}(\gamma-\beta_{i+1}+\eps_1)} < \sqrt{a_n},$ again by Lemma \ref{lem:3.6}.
\end{proof}
Now, define 
\[ F_n(s,x) := \la \Phi^{m_{n+1}}_x, u_s \ra = \int_{B^\dimension (0,\sqrt{a_n})} \Phi^{m_{n+1}} (z) u(s,x+z) \, dz. \]
The next lemma provides some conclusions that can be drawn about points that lie in $\tilde{J}_{n,i}(s)$ for $i \in \{0, \dots, L-1\}$ and $s \in \RR_+.$
\begin{lemma}\label{lem:3.5}
Assume $i \in \{ 0, \dots, L-1\}$, $s \in \RR_+$. 
\begin{enumerate}
 \item If $x \in \tilde{J}_{n,i}(s)$, $\tilde x \in \RR^{\dimension}$ with $ (\tilde{x}-x)\parallel \sigma_x $ and $l_n(\beta_i) <  | (\tilde{x}-x)\cdot \sigma_x | \leq \bar{l}_n(\beta_i)$, then
 $$ F_n(s,\tilde{x}) - F_n(s,x) \begin{cases} \geq 2^{-5} a_n^{\beta_{i+1}} |\tilde{x}-x| \text{  , if } (\tilde{x}-x)\cdot \sigma_x \geq 0,\\
                          \leq - 2^{-5} a_n^{\beta_{i+1}} |\tilde{x}-x| \text{  , if } (\tilde{x}-x)\cdot \sigma_x < 0.
                         \end{cases}
 $$
  \item If $x,y \in \tilde{J}_{n,i}(s)$, $|x-y| \leq \bar{l}_n(\beta_i)$. Then for $\tilde{y} \in \RR^{\dimension}$, such that  $(y-\tilde{y}) \parallel \sigma_x$ and $l_n(\beta_i) < |y-\tilde{y}| < \bar{l}_n(\beta_i)$ it holds that 
 $$ \tilde{y} \notin \tilde{J}_{n,i}(s).$$
 \item If $x \in \tilde{J}_{n,i}(s)$, $z\in \RR^{\dimension}$ and $|x-z| \leq \bar{l}_n(\beta_i)/2$, then
 $$\int_{(-\bar{l}_n/2, \bar{l}_n/2)} db\,  \1\{z  + \sigma_x b \in \tilde{J}_{n,i}(s) \cap B^\dimension (x, \bar{l}_n/2)\, \} \leq 2 l_n(\beta_i) .$$
\end{enumerate}
\end{lemma}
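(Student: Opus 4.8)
The proof of Lemma \ref{lem:3.5} rests on the directional monotonicity estimates established in Lemma \ref{lem:3.4} together with a careful accounting of the difference between $F_n(s,x) = \la \Phi_x^{m_{n+1}}, u_s\ra$ and the pointwise values of $u(s,\cdot)$ near $x$. I will treat the three parts in order, as (b) and (c) build on (a).

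For part (a), the plan is to write $F_n(s,\tilde x) - F_n(s,x) = \int_{B^\dimension(0,\sqrt{a_n})} \Phi^{m_{n+1}}(z) \bigl( u(s,\tilde x + z) - u(s,x+z) \bigr)\, dz$, and to apply Lemma \ref{lem:3.4}(c) to each integrand with the roles $y = x$, $y' = x+z$, $y'' = \tilde x + z$. This is legitimate since $|z| \le \sqrt{a_n}$, and since $\tilde x - x$ is parallel to $\sigma_x$ we have $(y'' - y') = \tilde x - x \parallel \sigma_x$ with $|y''-y'| = |\tilde x - x|$ lying in the required range $(l_n(\beta_i) \vee a_n^{\frac{2}{\alpha}(\gamma - \beta_{i+1} - \eps_1)} \vee a_n, \bar l_n(\beta_i))$ — note $l_n(\beta_i) \ge 129 a_n^{1-\beta_{i+1}} \ge a_n$ and $l_n(\beta_i) \ge a_n^{\frac{2}{\alpha}(\gamma-\beta_{i+1}-\eps_1)}$, so the hypothesis $l_n(\beta_i) < |\tilde x - x|$ already dominates those two lower bounds. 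One also needs $|x - y| = 0 \le \bar l_n(\beta_i)$ (trivial) and $|y - y'| = |z| \le \sqrt{a_n}$, which holds. Then each integrand is $\ge 2^{-5} a_n^{\beta_{i+1}} |\tilde x - x|$ (resp.\ $\le -2^{-5}a_n^{\beta_{i+1}}|\tilde x - x|$) uniformly in $z$, and integrating against the probability density $\Phi^{m_{n+1}}$ (which has total mass $1$) preserves the bound. The sign dichotomy follows directly from that in Lemma \ref{lem:3.4}(c).

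Part (b) is a direct consequence of (a). Suppose for contradiction that $\tilde y \in \tilde J_{n,i}(s)$. By definition of $\tilde J_{n,i}(s)$, both $x$ and $\tilde y$ satisfy $|\la u_s, \Phi_\cdot^{m_{n+1}}\ra| \le a_n$ at their respective points, i.e.\ $|F_n(s,x)| \le a_n$ and $|F_n(s,\tilde y)| \le a_n$, so $|F_n(s,\tilde y) - F_n(s,x)| \le 2 a_n$. On the other hand, I want to apply (a) with $\tilde x := \tilde y$; this requires $(\tilde y - x) \parallel \sigma_x$ and $l_n(\beta_i) < |(\tilde y - x)\cdot\sigma_x| \le \bar l_n(\beta_i)$. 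The parallelism: we are given $(y - \tilde y)\parallel \sigma_x$ and $|x - y| \le \bar l_n(\beta_i)$, but $x$ and $y$ are not assumed collinear along $\sigma_x$, so $\tilde y - x$ need not be parallel to $\sigma_x$. This is the main subtlety: the intended reading is presumably that one uses the monotonicity along the $\sigma_x$-direction through $\tilde y$ and $y$ rather than directly from $x$ — i.e.\ combine $F_n(s,\tilde y) - F_n(s,y)$ bounded below by $2^{-5}a_n^{\beta_{i+1}}|y - \tilde y| > 2^{-5} a_n^{\beta_{i+1}} l_n(\beta_i) \ge 2^{-5}\cdot 129\, a_n^{1 - \beta_{i+1} + \beta_{i+1}} = 2^{-5}\cdot 129\, a_n > 2 a_n$ in modulus (using a variant of (a) with base point $y$, which also lies in $\tilde J_{n,i}(s)$, valid by Lemma \ref{lem:3.4}(c) with $y$ in place of $x$ in the role requiring membership) with $|F_n(s,\tilde y)| \le a_n$ and $|F_n(s,y)| \le a_n$ to reach a contradiction. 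So the key step is to realize one should run (a) from $y$, not from $x$; the hypothesis $|x-y|\le\bar l_n(\beta_i)$ is there only to guarantee $\tilde y$ stays in the neighborhood of $x$ where all the $\tilde J_{n,i}$-estimates are valid. I expect this bookkeeping — checking every displacement stays within $5\bar l_n(\beta_i)$ of $x$ so Lemma \ref{lem:3.4} applies — to be the fiddly part.

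For part (c), the set $\{ b \in (-\bar l_n/2, \bar l_n/2) : z + \sigma_x b \in \tilde J_{n,i}(s) \cap B^\dimension(x, \bar l_n/2)\}$ is a subset of a line segment; the plan is to show it is contained in an interval of length at most $2 l_n(\beta_i)$. Suppose $b_1 < b_2$ are both in the set with $b_2 - b_1 > 2 l_n(\beta_i)$. Set $y := z + \sigma_x b_1$ and $\tilde y := z + \sigma_x b_2$; then $y, \tilde y \in \tilde J_{n,i}(s) \cap B^\dimension(x,\bar l_n/2)$, so $|x - y| \le \bar l_n/2 \le \bar l_n(\beta_i)$, $(y - \tilde y) \parallel \sigma_x$, and $|y - \tilde y| = b_2 - b_1 \in (2 l_n(\beta_i), \bar l_n(\beta_i))$ — in particular $> l_n(\beta_i)$ and $< \bar l_n(\beta_i)$ as required. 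But then part (b) gives $\tilde y \notin \tilde J_{n,i}(s)$, a contradiction. Hence the set lies in an interval of length $\le 2 l_n(\beta_i)$, and its Lebesgue measure is at most $2 l_n(\beta_i)$. The only care needed is that $2 l_n(\beta_i) < \bar l_n(\beta_i)$ so that the range $(2l_n, \bar l_n)$ for $|y - \tilde y|$ is nonempty, which follows from Lemma \ref{lem:3.6} ($l_n(\beta_i) < \sqrt{a_n} < \tfrac12 \bar l_n(\beta_i)$, whence $2 l_n(\beta_i) < \bar l_n(\beta_i)$). Overall the main obstacle is not any single estimate but keeping track, in part (b), of which point plays the role of the base point $x$ in Lemma \ref{lem:3.4} and verifying the distance constraints that make its hypotheses applicable.
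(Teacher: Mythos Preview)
Your proposal is correct and follows essentially the same approach as the paper. The only cosmetic differences are: in (a) you invoke Lemma~\ref{lem:3.4}(c) with $y=x$ whereas the paper cites the simpler Lemma~\ref{lem:3.4}(b) directly (these coincide when $y=x$); in (b) the paper runs the same $F_n$-integral argument from $y$ using Lemma~\ref{lem:3.4}(c) and concludes $|F_n(s,\tilde y)| \ge 2^{-5}a_n^{\beta_{i+1}}l_n(\beta_i) - a_n > a_n$ directly rather than by contradiction; and in (c) the paper shifts the integration variable to be centered at a fixed $y = z + \sigma_x b_0 \in \tilde J_{n,i}(s)$ and then excludes $|b'| > l_n$ via (b), while your diameter argument achieves the same bound.
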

\begin{proof}
 For (a) assume $(\tilde x -x)\cdot \sigma_x \in [l_n(\beta_i), \bar{l}_n(\beta_i)]$. Then
 $$ F_n(s,\tilde x) - F_n(s,x) = \int_{B^{\dimension}(0,\sqrt{a_n})} \Phi^{m_{n+1}}(z) (u(s,\tilde x +z) - u(s,x+z)) \, dz.$$
 Clearly, $|z| \leq \sqrt{a_n}$ and for $x'' = \tilde{x} + z, x' = x +z$, we have
 $$ |x'-x| \leq \sqrt{a_n},\, (x''-x') = (\tilde{x} -x) \parallel \sigma_x,\, |x'-x''| \in [l_n(\beta_i), \bar{l}_n(\beta_i)].$$
 Therefore, we can apply Lemma \ref{lem:3.4} (b) in the case $(\tilde x - x) >0$ to obtain
 \begin{align} F_n(s,\tilde{x}) - F_n(s,x) & \geq \int_{B^\dimension(0,\sqrt{a_n})} \Phi^{m_{n+1}}(z) |\tilde{x} -x| 2^{-5} a_n^{\beta_{i+1}} \, dz \nonumber \\
  & \geq 2^{-5} a_n^{\beta_{i+1}} |\tilde{x} -x|.
 \end{align}
The same can be done in the case $(\tilde x -x)\cdot \sigma_x < 0$.

To show (b) use the same ideas as before, where Lemma \ref{lem:3.4} (b) is replaced by Lemma \ref{lem:3.4} (c), in order to deduce that
\begin{align*}
 |F_n(s,\tilde{y}) | & \geq |F_n(s,\tilde{y}) - F_n(s,y)| - |F_n(s,y)| \\
 & \geq 2^{-5} a_n^{\beta_{i+1}} l_n(\beta_i) - a_n \\
 &  \geq \frac{97}{32} a_n.
\end{align*}
Hence, since $|\la u_s, \phi_{\tilde{y}}^{m_{n+1}}\ra|> a_n$ it follows that $\tilde{y} \notin \tilde{J}_{n,i}(s).$

For (c) assume that $y = z + \sigma_x b \in \tilde{J}_{n,i}(s)$ for a certain $b \in [- \bar{l}_n/2,\bar{l}_n/2]$ (otherwise the integral is $0$ anyway). Observe that
$$ |y-x| \leq |x-z| + |b| \leq \bar{l}_n.$$
So, we can apply (b) for $x,y \in \tilde{J}_{n,i}(s)$ to obtain that
\begin{align*}
  \int_{(-\bar{l}_n/2, \bar{l}_n/2)} & db\,  \1\{z  + \sigma_x b \in \tilde{J}_{n,i}(s) \cap B^\dimension (x,\bar{l}_n/2 )\} \   
   \leq \int_{(-\bar{l}_n, \bar{l}_n)}  db\,  \1 \{y  + \sigma_x b \in \tilde{J}_{n,i}(s) \, \}
   \leq 2 l_n(\beta_i).
\end{align*}
\end{proof}

Let $\Sigma_x$ be a $\dimension \times (\dimension-1)$ dimensional matrix consisting of an orthonormal basis of the orthogonal space $\sigma_x^{ortho} = \{y \in \RR^\dimension: \sigma_x \cdot y = 0 \}$ and let $|A|$ denote the Lebesgue measure of a measurable set $A \subset \RR^{\dimension}$.

\begin{lemma}\label{lem:3.7}
 For $i \in \{0, \dots, L-1\}$ and $s \geq 0$, $n \in \NN$ there is a constant $c_{\ref{lem:3.7}} = c_{\ref{lem:3.7}}(\dimension)$ such that 
 $$ |\tilde{J}_{n,i}(s)| \leq c_{\ref{lem:3.7}} K_0^{\dimension} l_n(\beta_i) \bar{l}_n(\beta_i)^{-1}.$$
\end{lemma}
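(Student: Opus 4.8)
The plan is to bound $|\tilde J_{n,i}(s)|$ by a covering argument along the direction $\sigma_x$, exactly in the spirit of the heuristic bound \eqref{eq:heu:cover}. The key point is that Lemma~\ref{lem:3.5}(c) already provides the one-dimensional control: along any line parallel to $\sigma_x$ intersected with a ball of radius $\bar l_n(\beta_i)/2$ centred at a point $x \in \tilde J_{n,i}(s)$, the measure of the slice of $\tilde J_{n,i}(s)$ is at most $2 l_n(\beta_i)$. So the remaining work is to (i) reduce to a single ball of radius $\bar l_n(\beta_i)/2$ using a covering of the box $\{|x| \leq K_0\}$, and (ii) within such a ball integrate the slice estimate over the $(\dimension-1)$-dimensional orthogonal directions.

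First I would note that $\tilde J_{n,i}(s) \subset \overline{B^\dimension(0,K_0)}$, which is covered by a family of balls $\{B^\dimension(x_j, \bar l_n(\beta_i)/2)\}_{j=1}^{N}$ with centres $x_j$ and $N \leq c(\dimension) K_0^\dimension \bar l_n(\beta_i)^{-\dimension}$ (a standard volume-packing bound, valid since $\bar l_n(\beta_i) \leq 1$ by Lemma~\ref{lem:3.6} and \eqref{eq:3.14}). We may assume each $x_j$ that contributes satisfies $B^\dimension(x_j, \bar l_n(\beta_i)/2) \cap \tilde J_{n,i}(s) \neq \emptyset$, so we can pick $\tilde x_j \in B^\dimension(x_j, \bar l_n(\beta_i)/2) \cap \tilde J_{n,i}(s)$; then $B^\dimension(x_j, \bar l_n(\beta_i)/2) \subset B^\dimension(\tilde x_j, \bar l_n(\beta_i))$ but more usefully we will keep working inside $B^\dimension(x_j,\bar l_n(\beta_i)/2)$ and apply Lemma~\ref{lem:3.5}(c) with $x = \tilde x_j$.

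Next, for a fixed contributing ball with centre $x_j$ and associated $\tilde x_j \in \tilde J_{n,i}(s)$, I would use the orthogonal decomposition $\RR^\dimension = \RR \sigma_{x_j} \oplus \sigma_{x_j}^{ortho}$. Writing a generic point as $z = \Sigma_{x_j} a + \sigma_{x_j} b$ with $a \in \RR^{\dimension-1}$, $b \in \RR$, Fubini gives
\[
 |B^\dimension(x_j, \bar l_n(\beta_i)/2) \cap \tilde J_{n,i}(s)|
 \leq \int_{\{a: |\Sigma_{x_j}a - x_j^{ortho}| \leq \bar l_n(\beta_i)/2\}} da \int_{(-\bar l_n/2,\bar l_n/2)} db\, \1\{\Sigma_{x_j}a + \sigma_{x_j}b \in \tilde J_{n,i}(s) \cap B^\dimension(x_j,\bar l_n/2)\}.
\]
For each fixed $a$, the inner integral is of the form treated by Lemma~\ref{lem:3.5}(c) with $z = \Sigma_{x_j}a$ (the component of the line in $\sigma_{x_j}^{ortho}$, which lies within $\bar l_n(\beta_i)/2$ of $x_j$), hence is $\leq 2 l_n(\beta_i)$. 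The outer integral is over a $(\dimension-1)$-dimensional region of radius $\bar l_n(\beta_i)/2$, so contributes a factor $c(\dimension)\bar l_n(\beta_i)^{\dimension-1}$. Thus each ball contributes at most $c(\dimension)\bar l_n(\beta_i)^{\dimension-1} l_n(\beta_i)$, and multiplying by $N \leq c(\dimension)K_0^\dimension \bar l_n(\beta_i)^{-\dimension}$ yields $|\tilde J_{n,i}(s)| \leq c_{\ref{lem:3.7}}(\dimension) K_0^\dimension l_n(\beta_i)\bar l_n(\beta_i)^{-1}$, as claimed.

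The main obstacle, and the only genuinely delicate point, is making the application of Lemma~\ref{lem:3.5}(c) legitimate in the covering: that lemma requires a point $x \in \tilde J_{n,i}(s)$ and controls the line through an arbitrary $z$ with $|x-z| \leq \bar l_n(\beta_i)/2$, so one must be careful that the chosen $\tilde x_j$ is in $\tilde J_{n,i}(s)$ (which is why we restrict to contributing balls) and that every $z = \Sigma_{x_j}a$ appearing really satisfies $|z - \tilde x_j| \leq \bar l_n(\beta_i)/2$ — this needs the ball to be centred appropriately, and a cleaner route is simply to apply Lemma~\ref{lem:3.5}(c) with $x = \tilde x_j$ and with the containing ball $B^\dimension(\tilde x_j, \bar l_n(\beta_i)/2)$ directly, adjusting the covering so the balls are centred at points of $\tilde J_{n,i}(s)$ (a Vitali-type selection). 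Everything else is the routine volume bookkeeping of Fubini plus a packing estimate.
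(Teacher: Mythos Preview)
Your proposal is correct and follows essentially the same route as the paper: cover by balls of radius $\bar l_n(\beta_i)/2$, slice each ball along $\sigma_x$ for some $x\in\tilde J_{n,i}(s)$ in the ball, apply Lemma~\ref{lem:3.5}(c) to bound each slice by $2l_n(\beta_i)$, and sum using the packing bound on the number of balls. The paper carries out exactly the cleaner variant you suggest in your final paragraph --- it covers the compact set $\tilde J_{n,i}(s)$ directly by balls $B^\dimension(x^k,\bar l_n/2)$ whose centres $x^k$ lie in $\tilde J_{n,i}(s)$ (obtained by first covering with balls of radius $\bar l_n/4$ centred in $\tilde J_{n,i}(s)$ and then thinning so that the centres are $\bar l_n/4$-separated), so that Lemma~\ref{lem:3.5}(c) applies with $x=x^k$ and $z=x^k+\Sigma_{x^k}z'$ without any distance bookkeeping; one minor slip in your write-up is that the direction should be $\sigma_{\tilde x_j}$ (for a point of $\tilde J_{n,i}(s)$), not $\sigma_{x_j}$.
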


\begin{proof}
 Set $B_x = B^{\dimension} (x, \bar{l}_n(\beta_i)/4)$ and cover the compact set $\tilde{J}_{n,i}(s)$ with a finite number of these balls, say $B_{x'^1}, \dots, B_{x'^{Q'}}$. If $|x'^j-x'^k| \leq \bar{l}_n(\beta_i) /4$, then $B_{x'^j} \subset B^{\dimension} (x'^k, \bar{l}_n(\beta_i)/2)$. So, if we increase the radius of the balls around $x'^1, \dots, x'^{Q'}$ to $\bar{l}_n(\beta_i)/2$, it suffices to use those balls whose centers have at least distance $\bar{l}_n(\beta_i)/4,$  which we denote by $x^1, \dots, x^{Q}.$  If we consider $B^{\dimension}(x^k, \bar{l}_n(\beta_i)/8), k =1,\dots, Q$, then all of these balls are disjoint. Thus, we have
 \begin{equation}\label{eq:lem:3.6:pr:1} Q \leq  K_0^{\dimension} (\bar{l}_n(\beta_i)/8)^{-\dimension} \end{equation}
 and also
 \begin{equation}\label{eq:lem:3.6:pr:2} \tilde{J}_{n,i}(s) \subset \bigcup_{k=1}^{Q} B^{\dimension}(x^k, \bar{l}_n(\beta_i)/2) \cap \tilde{J}_{n,i}(s).\end{equation}
 Next we want to consider the Lebesgue measure of the sets on the right-hand-side using some kind of Cavalieri decomposition and Lemma \ref{lem:3.5} (c). Fix $k \in \{1,\dots, Q\}$ and denote by $C(\dimension)$  the volume of the $\dimension$-dimensional Euclidean ball. We have
 \begin{eqnarray*}
&&  |  B^{\dimension}(x^k,  \bar{l}_n(\beta_i)/2) \cap \tilde{J}_{n,i}(s) | \\
  & =& \int_{B^\dimension(0, \bar{l}_n/2)} dz' \, \1 \{x^k + z' \in \tilde{J}_{n,i}(s) \, \} \\
  & \leq& \int_{B^{\dimension-1}(0, \bar{l}_n/2)} dz' \int_{(-\bar{l}_n/2, \bar{l}_n/2)} db \,  \1 \{x^k + \Sigma_{x^k} z' + \sigma_{x^k} b \in \tilde{J}_{n,i}(s) \, \} \\
  & \leq& \int_{B^{\dimension-1}(0, \bar{l}_n/2)} dz' \, 2 l_n(\beta_i) \\
  & =& 2 C(\dimension-1) (\bar{l}_n(\beta_i)/2)^{\dimension-1} l_n (\beta_i).
 \end{eqnarray*}
Here,  we were able to apply Lemma \ref{lem:3.5}(c) in the last inequality with $z= x^k+\Sigma_{x^k} z'$
since $|x^k + \Sigma_{x^k} z' -x^k|  = |\Sigma_x z'| = |z'| \leq \bar{l}_n/2$. And therefore, by \eqref{eq:lem:3.6:pr:1} and \eqref{eq:lem:3.6:pr:2} for $c_{\ref{lem:3.7}} =4 \cdot 4^\dimension  C(\dimension -1)$ we obtain
$$ | \tilde{J}_{n,i}(s) | \leq c_{\ref{lem:3.7}} K_0^{\dimension} l_n(\beta_i) (\bar{l}_n(\beta_i))^{-1}.$$
\end{proof}
\smallskip
\noindent
We are now in the position to complete the \\

\noindent
\textbf{Verification of the Hypothesis $(H_2)$ in Proposition \ref{prop:2.1}.} 

Let $n > n_M(\eps_1) \vee n_0(\eps_0,\eps_1)$, $t >0$ and $M \in \NN$ fixed.\\
First, consider $i=0$. For $x \in J_{n,0} (s)$ and $|y-x| \leq \sqrt{a_n}$ we have $|u(s,y)| \leq 3 a_n^{(1-\eps_0)/2}$ due to Proposition \ref{prop:3.3}.  So, we obtain in \eqref{eq:I_ni} for $n$ large enough so that $\eps_1>\frac{2}{n}:$
\begin{align}
 I_0^n( & t_0 \wedge U_{M,n,K})  \leq a_n^{-1-2/n}3^{2\gamma} a_n^{\gamma(1-\eps_0)} \int_0^{t_0 \wedge U_{M,n,K}} ds \intRd dx\, \Psi_s(x) \1_{J_{n,0}(s)}(x) \\
 & \nn \qquad \phantom{AAAAAAAAAAAAAAA}\intRd dw \intRd dz\, \Phi_x^{m_{n+1}}(w) \Phi_x^{m_{n+1}}(z) (|w-z|^{-\alpha} +1) \\
 & \nn \leq C(\|\Psi\|_\infty, \|\Phi\|_\infty) a_n^{-1-\eps_1} a_n^{\gamma(1-\eps_0)} \\
 &  \nn \qquad \qquad \phantom{AAAAAAA} a_n^{-\alpha/2} t_0 K_0^{\dimension} l_n(\beta_0) \bar{l}_n(\beta_0)^{-1} \text{ (by \eqref{eq:3.14} and Lemma \ref{lem:3.7})} \\
 & \nn \leq C t_0 K_0^{\dimension} a_n^{-1-\alpha/2+\gamma(1-\eps_0)} (a_n^{1-\eps_0-6\eps_1} \vee a_n^{\frac{2}{\alpha}(\gamma-\eps_1-\eps_0)-6\eps_1}).
\end{align}
And this expression tends to zero as $n\to \infty$ since by \eqref{eq:epsconditions}
\begin{align*}
 -1-\alpha/2 +\gamma(1-\eps_0)+1-\eps_0-6\eps_1 & \geq \gamma -\alpha/2 -8\eps_1 >8 \eps_1 > 0
 \intertext{as well as }
 -1-\alpha/2 +\gamma(1-\eps_0)+\frac{2}{\alpha}(\gamma-\eps_1-\eps_0)-6\eps_1 & \geq \gamma -1-\alpha/2 +(\gamma - \eps_1 -\eps_0) - 7 \eps_1 \\
 & \geq 2\gamma - 1 -\alpha/2 - 9 \eps_1 > 23 \eps_1>0.
\end{align*}
Next, let $i \in \{ 1, \dots, L\}$ and assume $x \in \tilde{J}_{n,i}(s), y \in \RR^{\dimension}, |y-x| \leq \sqrt{a_n}.$ So, we can use Lemma \ref{lem:3.4} (d) to get that
\begin{align}
 | u(s,y)| &  \leq 5 a_n^{\beta_i + \frac{1}{2}}.
\end{align}
Put that into \eqref{eq:I_ni} for $y=w$ and $y = z$ to obtain that
\begin{align}
  I_i^n(t_0 \wedge U_{M,n,K}) & \leq 5^{2\gamma} a_n^{-1-2/n} a_n^{2\beta_i \gamma + \gamma} \int_0^t ds \int dx\, \1_{J_{n,i}(s)}(x) \Psi_s(x) \nn \\
 & \qquad \times \int_{\RR^{2\dimension}} dwdz\, \Phi_x^{m_{n+1}}(w)  \Phi_x^{m_{n+1}}(z) (|w-z|^{-\alpha} + 1).  \label{eq:pr:H_2:3}
\end{align}
To treat the integral in $w$ and $z$, we use \eqref{intPhi} leading to
\begin{align}
\label{I_isub}
 I_i^n(t_0 \wedge U_{M,n,K}) & \leq 25 C a_n^{-1-2/n} a_n^{2\beta_i \gamma +\gamma} m_{n+1}^{\alpha} \int_0^{t_0 \wedge U_{M,n,K}} \intRd  \, \1_{J_{n,i}(s)} (x) \Psi_s (x) \, dx  ds\\
 \nonumber 
 & \leq C(\alpha, \dimension, \|\Psi\|_\infty, \|\Phi\|_\infty)  a_n^{-1-\eps_1} a_n^{2\beta_i \gamma +\gamma} a_{n}^{-\frac{\alpha}{2}} \int_0^{t_0 \wedge U_{M,n,K}} | \tilde{J}_{n,i}(s) |\, ds. \end{align}
 Next, we use Lemma \ref{lem:3.7} in the case $i\in \{1, \dots, L-1\}$ and obtain
 \begin{align*} 
 I_i^n(t_0 \wedge U_{M,n,K}) & \leq C  a_{n}^{-1-\eps_1} a_n^{2\beta_i \gamma +\gamma} a_n^{-\frac{\alpha}{2}} t_0 C(\dimension) K_0^{\dimension} l_n(\beta_i) \bar{l}_n(\beta_i)^{-1} \\
 & \leq C t_0 a_n^{-1-\frac{\alpha}{2} +2\beta_i \gamma + \gamma - \eps_1} (a_n^{1-\beta_{i+1}} \vee a_n^{\frac{2}{\alpha}(\gamma-\beta_{i+1}-\eps_1)} )a_n^{-\beta_i -5 \eps_1} \\
 & = C t_0  (a_n^{-1-\frac{\alpha}{2} +2\beta_i \gamma + \gamma + 1-\beta_{i+1} -\beta_i -6 \eps_1} \\ 
 & \qquad \vee a_n^{-1-\frac{\alpha}{2} +2\beta_i \gamma + \gamma + \frac{2}{\alpha}(\gamma-\beta_{i+1}-\eps_1) -\beta_i -6 \eps_1} ) \\
 & =: C t_0 [a_n^{\rho_{1,i}} \vee a_n^{\rho_{2,i}}].
\end{align*}
Hence, it suffices to check for positivity of $\rho_{1,i}$ and $\rho_{2,i}$ to obtain the desired result.
\begin{align*}
 \rho_{1,i} & = -1-\frac{\alpha}{2} +2\beta_i \gamma + \gamma + 1-\beta_{i+1} -\beta_i -6 \eps_1 \\
 & = -\frac{\alpha}{2} +\gamma +2 \gamma \beta_i -2 \beta_i - 6 \eps_1 - \eps_0\\
 & \geq \frac{1}{2}(2(2\gamma-1)-\alpha) + 1 -\gamma -2\beta_i(1-\gamma) - 7 \eps_1\\
 & > 8 \eps_1 + (1-\gamma)(1-2\beta_i) - 7 \eps_1 > \eps_1 >0
\end{align*}
by \eqref{eq:epsconditions}. Additionally, note that by \eqref{eq:epsconditions},
\begin{align*}
 \frac{2}{\alpha} (\gamma - \beta_{i+1}-\eps_1 ) &= \frac{2\gamma -1}{\alpha} + \frac{1-2\beta_{i+1}-2\eps_1}{\alpha} \\
 & \geq \frac{1}{2} + \frac{1}{2}(1-2\beta_i-4\eps_1) \\
 & = 1-\beta_i-2\eps_1.
\end{align*}
So we can calculate
\begin{align*}
 \rho_{2,i} & = -1-\frac{\alpha}{2}+2\beta_i \gamma + \gamma + \frac{2}{\alpha} (\gamma - \beta_{i+1}-\eps_1 ) - \beta_i-6\eps_1 \\
 & \geq -1-\frac{\alpha}{2} + 2\beta_i \gamma +\gamma +1-\beta_i-2\eps_1 -\beta_i -6 \eps_1 \\
 & \geq 2\gamma - 1 -\frac{\alpha}{2} - 8 \eps_1 > \eps_1 >0
\end{align*}
since $\beta_i \leq \frac{1}{2}$ by \eqref{eq:epsconditions}.

To finish the proof, we note that in the case $i=L$ it suffices to use a trivial bound on the integral in (\ref{I_isub}) and we obtain with 
$\beta_L \geq \frac{1}{2}-6 \eps_1-\eps_0\geq \frac{1}{2}-7 \eps_1$ from \eqref{eq:epsconditions}:
\begin{align*}
 I_L^n(t_0 \wedge U_{M,n,K}) & \leq C  a_{n}^{-1-\eps_1} a_n^{2\beta_L \gamma +\gamma} a_n^{-\frac{\alpha}{2}} t_0 K_0^{\dimension} \\
 & \leq C a_n^{\gamma - 1 -\alpha/2 + 2 (\frac{1}{2}-7 \eps_1) \gamma - \eps_1} \\
 & = C a_n^{2\gamma -1-\alpha/2 -15 \eps_1} \leq C a_n^{\eps_1}.
\end{align*}
And so, we are done with the proof of Proposition \ref{prop:2.1}.\hfill $\Box$

\section{Heat kernel estimates}\label{sec:4}
This section will be concerned with estimates for the heat kernel in $\RR^{\dimension}$ defined by
 \[ p_t(x) = (2\pi t)^{-\frac{\dimension}{2}} \exp \left( - \frac{|x|^2}{2t} \right), \]
 and its derivative in space
\[ p_{t,l}(x) := \partial_{x_l} p_t(x) = - \frac{x_l}{t} p_t (x), \, 1\leq l \leq \dimension\]
 for $x\in \RR^{\dimension},\, t>0$.  There are already a number of results in Section 5 of \cite{mps:06} regarding bounds on heat kernels, in particular when they are connected by a correlation kernel and also in Section 4 of \cite{mp:11}
 regarding the derivatives of heat kernels.
Here, we will combine the techniques used for those results in order to obtain bounds on  integrals of the derivatives $p_{t,l}$ that are connected by a correlation kernel related to colored noise. All of the proofs are put into the appendix. As necessary we will highlight the dependence of constants $C$ on various quantities.

This first simple lemma will be used frequently later on:
\begin{lemma}\label{lem:algebra}
 Let $0<r_0\leq r_1.$ Then there is a constant $C=C(r_0,r_1) >0$ such that  for all $r \in [r_0,r_1]$ and $a \geq 0, u\geq 1,$ 
  \begin{equation} 
  \label{eq:ineq:1} 
  a \leq C  u^{1/r}  \exp ( \frac{a^r}{u}) \leq C  u^{1/r_0}  \exp ( \frac{a^r}{u}) . \end{equation}
\end{lemma}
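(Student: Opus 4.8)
The plan is to reduce the inequality to an elementary calculus fact about the function $t \mapsto t^{1/r} e^{-t}$ on $[0,\infty)$ and then package the $u$-dependence by scaling. First I would observe that the second inequality in \eqref{eq:ineq:1} is immediate, since $u \geq 1$ and $1/r \leq 1/r_0$ imply $u^{1/r} \leq u^{1/r_0}$; so it suffices to prove the first inequality with a constant $C$ depending only on $r_0, r_1$. Next I would substitute $t = a^r/u \geq 0$, so that $a = (ut)^{1/r} = u^{1/r} t^{1/r}$, and the claimed bound $a \leq C u^{1/r} \exp(a^r/u)$ becomes $u^{1/r} t^{1/r} \leq C u^{1/r} e^{t}$, i.e.
\[
 t^{1/r} \leq C e^{t} \quad \text{for all } t \geq 0.
\]
Crucially the factor $u^{1/r}$ cancels, so the statement is really $u$-free.

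It then remains to bound $\sup_{t \geq 0} t^{1/r} e^{-t}$ uniformly over $r \in [r_0, r_1]$. For fixed $r$ the function $g_r(t) = t^{1/r} e^{-t}$ vanishes at $t=0$, tends to $0$ as $t \to \infty$, and attains its maximum at $t = 1/r$, with value $g_r(1/r) = (1/r)^{1/r} e^{-1/r}$. Since $r \mapsto (1/r)^{1/r} e^{-1/r}$ is continuous and positive on the compact interval $[r_0, r_1]$, it is bounded there by some $C = C(r_0, r_1) < \infty$; one can even take the crude bound $C = \max\{1, (1/r_0)^{1/r_0}\}$ after noting $(1/r)^{1/r} \leq (1/r_0)^{1/r_0} \vee 1$ and $e^{-1/r} \leq 1$. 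This gives $t^{1/r} e^{-t} \leq C$ for all $t \geq 0$ and all $r \in [r_0, r_1]$, which is exactly the reduced inequality, and undoing the substitution completes the proof.

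There is essentially no obstacle here: the only mildly delicate point is making sure the constant can be chosen uniformly in $r$, which is handled by compactness of $[r_0, r_1]$ (or by the explicit elementary bound above). The $a \geq 0$ and $u \geq 1$ hypotheses are used only to make the substitution legitimate and to dispose of the second inequality; no deeper input is needed.
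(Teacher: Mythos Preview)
Your argument is correct and essentially identical to the paper's: both maximize the same quantity and arrive at the constant $C=\max_{r\in[r_0,r_1]}(1/r)^{1/r}e^{-1/r}$, with the only cosmetic difference that you first substitute $t=a^r/u$ to strip out the $u$-dependence, whereas the paper optimizes $a\exp(-a^r/u)$ directly in $a$.
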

A trivial consequence is the following Lemma 4.2 in \cite{mp:11}:
\begin{lemma}\label{lem:4.2}
For the heat kernel in $\RR^{\dimension}$ there is a constant $C>0$ 
such that for $l=1,\dots,\dimension, t>0, x \in \Rdim,$  \[|p_{t,l}(x) | \leq C \frac{1}{\sqrt{t}} p_{2t}(x).\]
\end{lemma}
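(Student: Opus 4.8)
The plan is to reduce the statement to the elementary fact that $y\mapsto ye^{-y^2/4}$ is bounded on $\RR_+$, after rewriting $p_t$ in terms of $p_{2t}$. First I would use the explicit formula $p_{t,l}(x)=-\frac{x_l}{t}p_t(x)$ recalled just before the lemma, so that
\[
|p_{t,l}(x)|=\frac{|x_l|}{t}\,p_t(x)\leq \frac{|x|}{t}\,p_t(x).
\]
Next I would compare the two Gaussian densities: since $p_t(x)=(2\pi t)^{-\dimension/2}e^{-|x|^2/(2t)}$ and $p_{2t}(x)=(4\pi t)^{-\dimension/2}e^{-|x|^2/(4t)}$, one has
\[
p_t(x)=2^{\dimension/2}\,e^{-|x|^2/(4t)}\,p_{2t}(x).
\]
Substituting this in gives $|p_{t,l}(x)|\leq 2^{\dimension/2}\,\frac{|x|}{t}\,e^{-|x|^2/(4t)}\,p_{2t}(x)$, so it remains to absorb the prefactor.

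For the prefactor I would write $\frac{|x|}{t}e^{-|x|^2/(4t)}=\frac{1}{\sqrt t}\cdot\frac{|x|}{\sqrt t}\,e^{-|x|^2/(4t)}$ and then apply Lemma~\ref{lem:algebra} with $r_0=r_1=2$, $u=4$ and $a=|x|/\sqrt t\geq 0$, which yields a constant $C=C(2,4)$ with $\frac{|x|}{\sqrt t}\leq C\exp\!\big(\frac{|x|^2}{4t}\big)$, i.e.\ $\frac{|x|}{\sqrt t}e^{-|x|^2/(4t)}\leq C$ uniformly in $t>0$ and $x\in\Rdim$. (Equivalently, and without invoking Lemma~\ref{lem:algebra}, one simply notes that $\sup_{y\geq 0}ye^{-y^2/4}=\sqrt{2/e}<\infty$.) Combining the two bounds gives $|p_{t,l}(x)|\leq C\,2^{\dimension/2}\,\frac{1}{\sqrt t}\,p_{2t}(x)$, which is the claim after renaming the constant.

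There is essentially no obstacle here: the only point requiring a moment's care is that the bounding constant must not depend on $t$, and that is exactly what the substitution $a=|x|/\sqrt t$ (or the one-line boundedness statement) takes care of, since the resulting quantity depends on $x$ and $t$ only through the dimensionless ratio $|x|/\sqrt t$. The dependence on $\dimension$ enters only through the harmless factor $2^{\dimension/2}$.
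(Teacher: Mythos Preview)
Your proof is correct and follows essentially the same approach as the paper: both use the explicit formula $p_{t,l}(x)=-\frac{x_l}{t}p_t(x)$ and then invoke Lemma~\ref{lem:algebra} (equivalently, the boundedness of $y\mapsto ye^{-y^2/4}$) to absorb the factor $|x|/\sqrt t$ into the Gaussian, leaving $p_{2t}$. The only cosmetic difference is the parametrization of Lemma~\ref{lem:algebra}: the paper takes $a=|x|/(2\sqrt t)$, $u=1$, $r=2$, while you take $a=|x|/\sqrt t$, $u=4$, $r=2$ (note the constant in Lemma~\ref{lem:algebra} depends on $r_0,r_1$, not on $u$, so your ``$C(2,4)$'' should read $C(2,2)$, but this is harmless).
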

The next lemma is about the integral over distances of heat kernel derivatives:
\begin{lemma}\label{lem:heat-ker:diff:2}
For $\alpha \in (0,\dimension), K\geq 0$, there is a positive constant $C= C(\alpha, \dimension, K) < \infty$ such that for any $x,x' \in \RR^{\dimension}$, $0< t \leq t' \leq K,$
\begin{align*}
    \intRd \intRd & | (p_{t,l}(w-x) - p_{t',l}(w-x'))(p_{t,l}(z-x) - p_{t',l}(z-x')) | \ (|w-z|^{-\alpha} + 1) \, dw dz \\
      &\leq C t^{-1-\alpha/2} \left( 1 \wedge  \frac{|x-x'|^2 + |t-t'|}{t}   \right).
\end{align*}
\end{lemma}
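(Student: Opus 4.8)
The plan is to set $g(w):=p_{t,l}(w-x)-p_{t',l}(w-x')$, so that the quantity to bound is $\intRd\intRd|g(w)|\,|g(z)|\,(|w-z|^{-\alpha}+1)\,dw\,dz$. The $+1$ part is just $\bigl(\intRd|g(w)|\,dw\bigr)^2$, which I would treat by $L^1$-estimates at the end; the real work is the singular kernel $|w-z|^{-\alpha}$. The three ingredients I would use are: (i) Lemma~\ref{lem:4.2} and its higher-derivative analogues $|\nabla p_{t,l}(w)|\le Ct^{-1}p_{2t}(w)$ and $|\partial_s p_{s,l}(w)|=\tfrac12|\Delta p_{s,l}(w)|\le Cs^{-3/2}p_{2s}(w)$, proved by the same elementary computation as Lemma~\ref{lem:4.2} (each differentiation of the Gaussian costs a factor $s^{-1/2}$ once one passes to $p_{2s}$); (ii) the semigroup identity $p_a*p_b=p_{a+b}$; and (iii) the Riesz bound $\sup_{y\in\Rdim}\intRd p_s(z)|y-z|^{-\alpha}\,dz\le Cs^{-\alpha/2}$, which holds because $\alpha<\dimension$ (split at $|y-z|=\sqrt s$). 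Combining (ii) and (iii) gives, for all $a,b>0$ and $y_1,y_2\in\Rdim$, the basic estimate $\intRd\intRd p_a(w-y_1)p_b(z-y_2)|w-z|^{-\alpha}\,dw\,dz=(p_{a+b}*|\cdot|^{-\alpha})(y_1-y_2)\le C(a+b)^{-\alpha/2}$.

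First I would record the crude bound: by Lemma~\ref{lem:4.2} and $t\le t'$ we have $|g|\le Ct^{-1/2}\bigl(p_{2t}(\cdot-x)+p_{2t'}(\cdot-x')\bigr)$, and expanding $|g(w)|\,|g(z)|$ into four Gaussian-times-Gaussian terms (each with parameter sum $\ge 4t$) and applying the basic estimate gives
\[
\intRd\intRd|g(w)|\,|g(z)|\,|w-z|^{-\alpha}\,dw\,dz\le Ct^{-1}\cdot t^{-\alpha/2}=Ct^{-1-\alpha/2},
\]
which already yields the claimed bound when $|x-x'|^2+|t-t'|\ge t$. For the complementary range I would exploit cancellation by splitting $g=g_1+g_2$ into the spatial increment $g_1(w)=p_{t,l}(w-x)-p_{t,l}(w-x')$ and the temporal increment $g_2(w)=p_{t,l}(w-x')-p_{t',l}(w-x')$, so that $|g(w)|\,|g(z)|\le 2|g_1(w)|\,|g_1(z)|+2|g_2(w)|\,|g_2(z)|$. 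Writing $g_1(w)=-\int_0^1\nabla p_{t,l}(w-x_\theta)\cdot(x-x')\,d\theta$ with $x_\theta=x'+\theta(x-x')$ gives $|g_1(w)|\le C|x-x'|t^{-1}\int_0^1 p_{2t}(w-x_\theta)\,d\theta$, and feeding this into the double integral (the basic estimate with $a+b=4t$) produces $Ct^{-1-\alpha/2}\cdot\frac{|x-x'|^2}{t}$. Likewise $g_2(w)=-\int_t^{t'}\partial_s p_{s,l}(w-x')\,ds$ gives $|g_2(w)|\le C\int_t^{t'}s^{-3/2}p_{2s}(w-x')\,ds$, and using $(2s+2s')^{-\alpha/2}\le C(ss')^{-\alpha/4}$ together with $\int_t^{t'}s^{-3/2-\alpha/4}\,ds\le Ct^{-\alpha/4}\int_t^{t'}s^{-3/2}\,ds\le Ct^{-\alpha/4}\cdot t^{-3/2}(t'-t)$ bounds the corresponding double integral by $Ct^{-1-\alpha/2}\cdot\frac{(t'-t)^2}{t^2}$. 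Taking the minimum of the crude bound and this cancellation bound, and using the elementary inequality $1\wedge(A+B^2)\le 1\wedge(A+B)$ for $A,B\ge 0$ with $A=|x-x'|^2/t$, $B=|t-t'|/t$, gives
\[
\intRd\intRd|g(w)|\,|g(z)|\,|w-z|^{-\alpha}\,dw\,dz\le Ct^{-1-\alpha/2}\Bigl(1\wedge\frac{|x-x'|^2+|t-t'|}{t}\Bigr).
\]
It then remains to bound $\bigl(\intRd|g(w)|\,dw\bigr)^2$. The same increment representations at the $L^1$ level give $\intRd|g_1(w)|\,dw\le C|x-x'|t^{-1}$ (and trivially $\le Ct^{-1/2}$), hence $\intRd|g_1(w)|\,dw\le Ct^{-1/2}\bigl(1\wedge|x-x'|/\sqrt t\bigr)$, and $\intRd|g_2(w)|\,dw\le Ct^{-3/2}(t'-t)$ (and trivially $\le Ct^{-1/2}$), hence $\intRd|g_2(w)|\,dw\le Ct^{-1/2}\bigl(1\wedge|t-t'|/t\bigr)$; summing and squaring yields $\bigl(\intRd|g(w)|\,dw\bigr)^2\le Ct^{-1}\bigl(1\wedge\frac{|x-x'|^2+|t-t'|}{t}\bigr)$, and since $t\le t'\le K$ this is $\le CK^{\alpha/2}t^{-1-\alpha/2}\bigl(1\wedge\frac{|x-x'|^2+|t-t'|}{t}\bigr)$. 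Adding the two displays completes the proof, with $C=C(\alpha,\dimension,K)$.

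The step I expect to be the main obstacle is the temporal-increment estimate: one must pin down the third-order heat-kernel bound $|\Delta p_{s,l}(w)|\le Cs^{-3/2}p_{2s}(w)$ with exactly the right power of $s$, and then control the iterated singular integral $\int_t^{t'}\int_t^{t'}(ss')^{-3/2}(s+s')^{-\alpha/2}\,ds\,ds'$ so that it really comes out as $C(t'-t)^2t^{-3-\alpha/2}$ (and not something with a worse $t$-power), which is what balances against the crude bound to produce the factor $1\wedge\frac{|x-x'|^2+|t-t'|}{t}$. Everything else is bookkeeping, but one must make sure all constants depend only on $\alpha,\dimension,K$ — in particular the reduction $t^{-1}\le K^{\alpha/2}t^{-1-\alpha/2}$ in the last step is the only place where $t'\le K$ is actually used — and that the various $1\wedge(\cdot)$ manipulations (of the same elementary type as Lemma~\ref{lem:algebra}) are carried out correctly.
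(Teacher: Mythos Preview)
Your argument is correct in structure and yields the lemma, but one pointwise inequality is misstated: from $g=g_1+g_2$ you cannot conclude $|g(w)|\,|g(z)|\le 2|g_1(w)|\,|g_1(z)|+2|g_2(w)|\,|g_2(z)|$ pointwise (take $g_1(w)=g_2(z)=1$, $g_1(z)=g_2(w)=0$). What is true is the integrated statement: since the Riesz kernel $|w-z|^{-\alpha}$ is positive definite (and the constant kernel trivially is), the bilinear form $I(f,h)=\int\!\!\int f(w)h(z)(|w-z|^{-\alpha}+1)\,dw\,dz$ satisfies Cauchy--Schwarz, so $I(|g|,|g|)\le I(|g_1|+|g_2|,|g_1|+|g_2|)\le 2I(|g_1|,|g_1|)+2I(|g_2|,|g_2|)$. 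Alternatively, just keep the two cross terms $I(|g_1|,|g_2|)$ and bound them directly by the same technique; they contribute at most $Ct^{-1-\alpha/2}\cdot\frac{|x-x'|}{\sqrt t}\cdot\frac{t'-t}{t}$, which is harmless by AM--GM. With this fix everything goes through as you wrote.

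Compared to the paper: the paper proceeds along the same two-step plan (a crude bound plus a cancellation bound), but handles the increments via the auxiliary Lemma~\ref{lem:heat-ker:diff:1}, which gives a coordinate-by-coordinate integral representation for the spatial difference and a direct bound $|p_{t,l}(w)-p_{t',l}(w)|\le C|t-t'|^{1/2}t^{-1/2}\bigl(t^{-1/2}p_{2t}(w)+t'^{-1/2}p_{4t'}(w)\bigr)$ for the temporal difference; it also telescopes the \emph{product} into four pieces (see \eqref{eq:heat-ker:decomp}) rather than splitting each factor. Your route---gradient and $\partial_s=\tfrac12\Delta$ bounds plus the mean-value theorem---is a bit slicker and avoids the coordinatewise bookkeeping; the price is that your temporal term comes out as $Ct^{-1-\alpha/2}\,(t'-t)^2/t^2$ rather than the paper's $Ct^{-1-\alpha/2}\,(t'-t)/t$, which you then recover via $1\wedge B^2\le 1\wedge B$. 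Both approaches rely on the same basic Gaussian--Riesz estimate $\int\!\!\int p_a(w-y_1)p_b(z-y_2)|w-z|^{-\alpha}\,dw\,dz\le C(a+b)^{-\alpha/2}$ (Lemma~5.1 of \cite{mps:06} in the paper, your items (ii)--(iii)). Your observation that $t'\le K$ is used only to absorb the ``$+1$'' part into $t^{-1-\alpha/2}$ is correct and matches the paper.
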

A simple extension of Lemma 5.1 in \cite{mps:06} is the following lemma:
\begin{lemma}\label{lem:mps:5.1:add}
 For $0<t\leq t'$, $0 \leq r_1,r_2, r_3 \leq R$, there is a constant $C=C(R)$ such that
 \begin{align*} \intRd \intRd p_t(w) p_{t'}(z) |w|^{r_1} |z|^{r_2} e^{r_3(|w| + |z|)} (|w-z|^{-\alpha} +1 ) \, dw dz \\
  \leq C e^{2 r_3^2 t'} t^{r_1/2} t'^{r_2/2} (t^{-\alpha/2} +1 ),
 \end{align*}
and there is a constant $C= C(K,R)$ such that for $x,y\in [-K,K]^{\dimension}$:
\begin{align*} \intRd \intRd p_t(x-w) p_{t'}(y-z) |w|^{r_1} |z|^{r_2} e^{r_3(|w| + |z|)} (|w-z|^{-\alpha} +1 ) \, dw dz \\
  \leq C e^{2 r_3^2 t'} (t^{r_1/2}+1) (t'^{r_2/2}+1) (t^{-\alpha/2} +1 ).
 \end{align*}
\end{lemma}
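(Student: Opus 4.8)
The plan is to prove Lemma~\ref{lem:mps:5.1:add} by exploiting the Gaussian structure: the product of two heat kernels with different times can be rewritten via a change of variables into a single Gaussian times a normalizing constant, after which the polynomial and exponential factors are controlled by elementary Gaussian moment estimates and the singular factor $|w-z|^{-\alpha}$ is handled by the standard bound $\int p_t(w-z)|w-z|^{-\alpha}\,dw \leq C t^{-\alpha/2}$ valid for $\alpha < q$.

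\medskip
\noindent
\emph{First statement (centered version).} I would start by noting that $|w|^{r_1} e^{r_3|w|}$ is dominated, up to a constant depending only on $R$, by $C\exp(r_3|w| + \text{lower order})$, and more usefully that for any $\eta>0$ one has $|w|^{r_1} \leq C(R,\eta)\, t^{r_1/2}\exp(\eta|w|^2/t)$ by Lemma~\ref{lem:algebra} (with $r=1$, or directly by the scaling $|w| = \sqrt{t}\,|w|/\sqrt t$). Likewise $e^{r_3|w|} \leq C \exp(r_3^2 t/\delta + \delta |w|^2/(4t))$ by completing the square, for any $\delta \in (0,1)$. Choosing $\eta,\delta$ small enough that the total Gaussian perturbation still leaves $p_t(w)$ with a positive (slightly rescaled) variance, we absorb all the $w$-dependent prefactors into a constant times $t^{r_1/2}e^{Cr_3^2 t}$ times a heat kernel $p_{c t}(w)$ for some $c \in (1,2)$, say; similarly for the $z$-integral with $t'$. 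Then the remaining integral is $\int\int p_{ct}(w)p_{c't'}(z)(|w-z|^{-\alpha}+1)\,dw\,dz$, and since $w - z$ under the product measure is Gaussian with variance $ct + c't' \leq C t'$ (using $t \leq t'$), the bound $\int\int p_{ct}(w)p_{c't'}(z)|w-z|^{-\alpha}\,dw\,dz \leq C(ct+c't')^{-\alpha/2} \leq C t^{-\alpha/2}$ follows; wait — here I must be careful: the variance of $w-z$ is $ct + c't' \geq ct$, so the bound is $\leq C(ct)^{-\alpha/2}$ only if the smaller time controls, which it does since $(ct+c't')^{-\alpha/2} \leq (ct)^{-\alpha/2}$. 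Combining with the trivial $+1$ term gives the claimed $C e^{2r_3^2 t'}t^{r_1/2}t'^{r_2/2}(t^{-\alpha/2}+1)$ after tracking the constants and using $r_3^2 t \leq r_3^2 t'$.

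\medskip
\noindent
\emph{Second statement (shifted version with $x,y \in [-K,K]^q$).} Here I would substitute $w \mapsto x - w$, $z \mapsto y - z$, so that the heat kernels become $p_t(w)p_{t'}(z)$ but now the polynomial/exponential factors are $|x-w|^{r_1}|y-z|^{r_2}e^{r_3(|x-w|+|y-z|)}$. Using $|x-w| \leq |x| + |w| \leq K + |w|$ and similarly for $z$, together with the elementary inequality $(K+|w|)^{r_1} \leq C(K,R)(1 + |w|^{r_1})$ and $e^{r_3|x-w|} \leq e^{r_3 K}e^{r_3|w|}$, reduces matters to the same type of integral as before but with the factor $(t^{r_1/2}+1)$ replacing $t^{r_1/2}$ (the "$+1$" coming from the constant term in $(1+|w|^{r_1})$, since $t$ need not be bounded below). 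Apply the already-proved first statement (or repeat the argument) to conclude. The $|w-z|^{-\alpha}$ factor is unaffected by the shift beyond the change of variables, since $|w-z|$ is the same quantity in either coordinate system up to the $x,y$ shift — actually one gets $|(x-w)-(y-z)| = |w-z - (x-y)|$, but this is fine: $w-z$ is still Gaussian (mean $-(x-y)$, which is bounded by $2K\sqrt q$), and the estimate $\int\int p_t(w)p_{t'}(z)|w-z-a|^{-\alpha}\,dw\,dz \leq C t^{-\alpha/2}$ holds uniformly in the shift $a$ by the same Gaussian computation, since translating a Gaussian does not change the relevant bound.

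\medskip
\noindent
\emph{Main obstacle.} The only real subtlety is bookkeeping: one must choose the small parameters governing how much "Gaussian mass" is spent absorbing the factors $|w|^{r_i}$ and $e^{r_3|w|}$ so that, after absorption, the residual kernel is still a genuine (sub-probability, up to constant) Gaussian of variance comparable to $t$ (resp.\ $t'$) — not a variance that has blown up or collapsed — and so that the accumulated exponential constant is of the stated form $e^{2r_3^2 t'}$ rather than something worse like $e^{Cr_3^2 t'}$ with a large $C$. This requires completing the square carefully: $r_3|w| - \frac{|w|^2}{2\tau} \leq \frac{r_3^2\tau}{2}$ exactly, so using the full variance $\tau = t$ of $p_t$ gives $e^{r_3|w|}p_t(w) \leq e^{r_3^2 t/2}\cdot(\text{const})\cdot$(a heat kernel of a slightly larger time), and the factor $2$ in $e^{2r_3^2 t'}$ comfortably accommodates both the $w$ and $z$ contributions plus the loss from also absorbing the polynomial factors. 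Everything else is a routine application of Lemma~\ref{lem:algebra}, the scaling identity $\int |v|^{-\alpha}p_t(v)\,dv = c_{q,\alpha}t^{-\alpha/2}$ (finite for $\alpha<q$), and the Chapman--Kolmogorov-type fact that the law of $w-z$ under $p_t(dw)p_{t'}(dz)$ is $p_{t+t'}$ centered appropriately.
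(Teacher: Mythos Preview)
Your approach is correct and essentially matches the paper's: absorb the polynomial moments into the Gaussian via Lemma~\ref{lem:algebra} to get $|w|^{r_1}p_t(w)\le C(R)\,t^{r_1/2}p_{2t}(w)$, then handle the exponential weight and the singular kernel (the paper simply cites Lemma~5.1 of \cite{mps:06} for this step, which is exactly your completing-the-square plus the convolution identity $\int\!\!\int p_{t}(w)p_{t'}(z)|w-z|^{-\alpha}\,dw\,dz=\int p_{t+t'}(v)|v|^{-\alpha}\,dv\le C t^{-\alpha/2}$); for the shifted version the paper, like you, uses $|w|^{r_1}\le 2^{r_1}(|w-x|^{r_1}+|x|^{r_1})$ to reduce to the centered case. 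Your worry about landing exactly on $e^{2r_3^2 t'}$ is resolved by absorbing the exponential \emph{first}: $r_3|w|-|w|^2/(4t)\le r_3^2 t$ gives $e^{r_3|w|}p_t(w)\le 2^{\dimension/2}e^{r_3^2 t}p_{2t}(w)$, then absorb the polynomial into $p_{2t}$, and the combined factor $e^{r_3^2 t}e^{r_3^2 t'}\le e^{2r_3^2 t'}$ matches the statement.
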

Using the two previous lemmas we can obtain a result on integrals ``outside'' a certain area:
\begin{lemma}\label{lem:4.4}
 For all $R>2$, $K>0$, there is a constant $C=C(K,R)$ such that for all $0 \leq p,r\leq R, \eta_0, \eta_1 \in (1/R, 1/2)$, $l=1,\dots,\dimension$, $0\leq s < t\leq t' < K$ and $x,x' \in [-K,K]^{\dimension}$:
 \begin{equation}\label{eq:lem:4.4}
  \begin{split}
   \intRd \intRd & |w-x|^p |z-x|^p ( p_{t-s,l}(w-x) - p_{t'-s,l}(w-x') ) ( p_{t-s,l}(z-x) - p_{t'-s,l}(z-x') ) \\
   & \times \1 \{ |w-x| > (t'-s)^{1/2-\eta_0}\vee 2 |x-x'| \}  \, e^{r|w-x| + r |z-x|} (|w-z|^{-\alpha} + 1) \, dw dz\\
   & \leq C (t-s)^{-1-\alpha/2} \exp( - \eta_1 (t'-s)^{-2\eta_0} /256 ) \left[1\wedge \left( \frac{|x-x'|^2+ |t-t'|}{t-s} \right) \right]^{1-\eta_1/2}.
  \end{split}
 \end{equation}
\end{lemma}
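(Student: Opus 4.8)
The plan is to first reduce to the case $s=0$ by replacing $t,t',K$ with $t-s,t'-s,K$ and renaming, since $s$ only enters through the differences $t-s$ and $t'-s$ and the constraint $0\le s<t\le t'<K$ becomes $0< t\le t'<K$; this costs nothing. Next I would split the product of differences of heat kernel derivatives into four terms and, using Lemma \ref{lem:4.2} to bound each $|p_{u,l}(y)|$ by $Cu^{-1/2}p_{2u}(y)$, reduce everything to integrals of products of (pure) heat kernels against $(|w-z|^{-\alpha}+1)$ with polynomial-times-exponential weights; these are exactly the integrals controlled by Lemma \ref{lem:mps:5.1:add}. The indicator $\1\{|w-x|>(t'-s)^{1/2-\eta_0}\vee 2|x-x'|\}$ is the crux: on this set $|w-x|$ is large, so one extracts a Gaussian tail factor $\exp(-\tfrac{1}{4}|w-x|^2/(2\cdot 2t'))\le \exp(-c(t')^{-2\eta_0})$ by peeling off, say, half of the exponent of the slowest-decaying Gaussian $p_{4t'}(w-x)$ appearing; the remaining half of the Gaussian still integrates (with the polynomial and exponential weights absorbed using Lemma \ref{lem:algebra}, which converts powers $|w-x|^p$ and factors $e^{r|w-x|}$ into constants times mild Gaussian rescalings). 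This yields the factor $\exp(-\eta_1(t'-s)^{-2\eta_0}/256)$ after choosing the constant in the peeling carefully.

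Then I would obtain the remaining two ingredients. The prefactor $(t-s)^{-1-\alpha/2}$ comes out of Lemma \ref{lem:mps:5.1:add} (or the computation behind \eqref{intPhi}) applied to the $t$-kernel, exactly as the singular kernel $(|w-z|^{-\alpha}+1)$ against two heat kernels of the smaller time scale produces $t^{-\alpha/2}$ and the $t^{-1}$ is the square of the $t^{-1/2}$ from the two applications of Lemma \ref{lem:4.2}. The last factor $[1\wedge((|x-x'|^2+|t-t'|)/(t-s))]^{1-\eta_1/2}$ is produced by interpolation: one bound is the "trivial" bound (drop the difference structure, just sum the four terms), giving $C(t-s)^{-1-\alpha/2}$ times the tail factor and hence the power $0$ of the bracket; the other is the "smooth" bound obtained by exploiting cancellation in $p_{t-s,l}(w-x)-p_{t'-s,l}(w-x')$ à la Lemma \ref{lem:heat-ker:diff:2}, which gives a full power $1$ of $(|x-x'|^2+|t-t'|)/(t-s)$. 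Geometric interpolation between exponents $0$ and $1$ with weights $\eta_1/2$ and $1-\eta_1/2$ gives the stated exponent $1-\eta_1/2$; the slight loss of $\eta_1/2$ is precisely what one must sacrifice to keep the exponential tail factor, since on the "smooth" side the extracted Gaussian tail is slightly weakened.

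The main obstacle I expect is bookkeeping in the peeling step: one must simultaneously (i) keep enough of the Gaussian $p_{4t'}(w-x)$ to absorb the polynomial weight $|w-x|^p$ and the exponential $e^{r|w-x|}$ uniformly over $p,r\le R$ via Lemma \ref{lem:algebra}, (ii) extract a clean factor $\exp(-c(t'-s)^{-2\eta_0})$ from the indicator region, and (iii) do all of this in a way compatible with the interpolation so that on the "difference" side one still sees the gain $(|x-x'|^2+|t-t'|)/(t-s)$ — this last point requires applying the mean-value-theorem / cancellation estimate on a subregion where $|w-x|$ is comparable to its lower bound and checking that the extra derivative lands a factor of order $(|x-x'|+|t-t'|^{1/2})$ there. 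Tracking how the constant $256$ and the exponent $\eta_1$ propagate through (i)–(iii) is tedious but routine; the structural content is entirely Lemmas \ref{lem:algebra}, \ref{lem:4.2}, \ref{lem:heat-ker:diff:2} and \ref{lem:mps:5.1:add}. A full computation is deferred to the appendix, following the analogous argument in Section 4 of \cite{mp:11}.
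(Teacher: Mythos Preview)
Your plan is essentially correct and identifies all the right ingredients, but the paper implements the interpolation differently and more economically. Instead of proving two separate global bounds (a ``trivial'' one carrying the Gaussian tail and a ``smooth'' one carrying the bracket) and then taking the geometric mean $A^{\eta_1/2}B^{1-\eta_1/2}$, the paper applies H\"older's inequality \emph{directly to the integrand}: writing the measure $\mu(dw,dz)=|(p_{t-s,l}-p_{t'-s,l})(w)\,(p_{t-s,l}-p_{t'-s,l})(z)|(|w-z|^{-\alpha}+1)\,dwdz$ and the weight $f=|w-x|^p|z-x|^p e^{r(|w-x|+|z-x|)}\1_A$, one gets
\[
\int f\,d\mu \le \Bigl(\int 1\,d\mu\Bigr)^{1-\eta_1/2}\Bigl(\int f^{2/\eta_1}\,d\mu\Bigr)^{\eta_1/2}.
\]
The first factor is \emph{exactly} the unweighted integral of Lemma~\ref{lem:heat-ker:diff:2}, which already delivers $(t-s)^{-1-\alpha/2}$ times the bracket; the second factor carries all the weights (at power $2/\eta_1\le 2R$) together with the indicator, is expanded into four cross terms, and each is handled by Lemma~\ref{lem:4.2} plus the tail extraction (stated separately as Lemma~\ref{lem:heat-ker:deriv:1}) plus Lemma~\ref{lem:mps:5.1:add}. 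The tail $\exp(-c(t'-s)^{-2\eta_0})$ then gets raised to the $\eta_1/2$, which is where the $\eta_1/256$ in the statement comes from.

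The advantage of the paper's route is that it uses Lemma~\ref{lem:heat-ker:diff:2} \emph{as stated}, whereas your ``smooth bound'' would require a weighted version of that lemma (polynomial and exponential weights in $|w-x|,|z-x|$ inserted into the cancellation estimate). That weighted version is certainly provable by the same mean-value computations, but it is extra work you can avoid entirely via H\"older. Your description of the interpolation (``sacrifice $\eta_1/2$ to keep the tail'') is also slightly off: in the H\"older picture there is no sacrifice, the tail simply lives only in the second factor and so appears at power $\eta_1/2$.
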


\section{Local bounds on the difference of two solutions}\label{sec:localbounds}
In this section we present the extension of Theorem \ref{thm:collipmod}, i.e. the results showing (in some sense) ``H\"older-continuity of order 2''. This section is very similar in its ideas to Section 5 of \cite{mp:11}. Hence, we do not give all of the proofs but can refer the interested reader  to Section 9.4 of \cite{rippl:12} for the details.
First, let us recall that for $n \in \NN,$
\[
a_n =\exp(-n(n+1)/2)
\]
and for $(t,x), (t',x') \in \RR_+ \times \Rdim,$
\[ d((t,x),(t',x')) = \sqrt{|t-t'|} + |x-x'|. 
\]
Define for $N,K,n \in \NN$, $\beta \in [0,1/2]$ the random set
\begin{equation}\begin{split}
 Z(N,n,K,\beta)(\omega) = \{& (t,x) \in [0,T_K] \times [-K,K]^{\dimension} \subset \RR_+ \times \RR^{\dimension}: \text{ there is a } \\
 &\quad (\hat{t}_0,\hat{x}_0) \in [0,T_K]\times \RR \text{ such that } d((t,x),(\hat{t}_0,\hat{x}_0))\leq 2^{-N},\\
 &\quad 
  |u(\hat{t}_0,\hat{x}_0) | \leq a_n \wedge (\sqrt{a_n}2^{-N}), \text{ and } |\nabla u_{1,a_n}(\hat{t}_0,\hat{x}_0)| \leq a_n^\beta \}.
\end{split} \end{equation}
For $\beta=0$ define $Z(N,n,K,0) = Z(N,n,K)$ as above, but with the condition on $\nabla u_{1,a_n}$ omitted. 
Note that $(t,x) \in Z(N,n,K,\beta)$ always implies $t \leq K$.
For $ \gamma  < 1 $ define recursively $\gamma_0=1$ and
  \begin{equation}
  \gamma_{m+1} = \gamma \gamma_m + 1 - \frac{\alpha}{2}.
  \end{equation}
This gives the explicit formula
  \begin{equation}
  \gamma_m = 1 + \frac{(\gamma - \alpha/2)(1-\gamma^m)}{1-\gamma}.
  \end{equation}
Since  $\alpha< 2(2\gamma -1)$ we have that $\gamma_m$ is increasing to  $\gamma_\infty = \frac{1-\alpha/2}{1-\gamma} > 2$ for $m\rightarrow \infty.$ So there will be an $\bar{m}\in \NN$ such that  $\gamma_{\bar{m}+1} > 2 \geq \gamma_{\bar{m}}$. Set $\tilde{\gamma}_m := \gamma_m \wedge 2, 0 \leq m \leq \bar{m}+1.$

\begin{definition}
 A collection of $[0,\infty]$-valued random variables $\{N(\alpha):\alpha \in A\}$ will be called stochastically bounded uniformly in $\alpha$, iff
 $$ \lim_{M\to\infty} \sup_{\alpha \in A} \IP[N(\alpha)\geq M] =0.$$
\end{definition}
\noindent For $m \in \ZZ_+$, we will let $(P_m)$ denote the following property:
\begin{property}[$P_m$]

 \hangindent2em
  \hangafter=0
  For any $n\in \NN, \xi,\eps_0 \in (0,1),K\in \NN^{\geq K_1}$ and $ \beta \in [0,1/2]$, there is an $N_1(\omega) = N_1(m,n,\xi,\eps_0, K,\beta)$ in $\NN$ a.s. such that for all $N\geq N_1$, if $(t,x)\in Z(N,n,K,\beta)$, $t'\leq T_K$ and $d((t,x),(t',x')) \leq 2^{-N}$, then
  \begin{equation}
  \label{P_mprop}
  |u(t',x')| \leq a_n^{-\eps_0} 2^{-N\xi} [(\sqrt{a_n}\vee 2^{-N})^{\tilde{\gamma}_m -1} + a_n^{\beta} \1 \{m>0\} ].
  \end{equation}
  Moreover, $N_1$ is stochastically bounded uniformly in $(n,\beta)$. 

\end{property}

\begin{proposition}\label{prop:5.1}
  Property $(P_m)$ holds for any $m \leq \bar{m}+1$.
\end{proposition}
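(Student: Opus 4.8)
# Proof Proposal for Proposition~\ref{prop:5.1}

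The plan is to prove Property $(P_m)$ by induction on $m$, running from $m=0$ up to $m=\bar m+1$, in close analogy with the inductive scheme of Section~5 in \cite{mp:11}. The base case $(P_0)$ is essentially a restatement of Theorem~\ref{thm:collipmod} (with the extra bookkeeping on the parameter $n$ folded into the definition of $Z(N,n,K,\beta)$), and the uniform stochastic boundedness in $(n,\beta)$ there is obtained exactly as in the argument for $N_0$ in \cite{mps:06}. For the inductive step, I would assume $(P_{m})$ and derive $(P_{m+1})$ for $m+1\le\bar m+1$. The key decomposition is the splitting $u = u_{1,\delta} + u_{2,\delta}$ from \eqref{eq:u=u1+u2} at the scale $\delta = a_n^{\lambda}$ (with $\lambda$ chosen in terms of $\xi,\eps_0$ and the H\"older exponent $\gamma$, as in \eqref{eq:3.10}); one then estimates the ``smooth part'' $u_{1,\delta}$ and its gradient using the representation in Lemma~\ref{lem:3.1} together with the heat-kernel bounds of Section~\ref{sec:4} (especially Lemmas~\ref{lem:heat-ker:diff:2} and~\ref{lem:4.4}), and controls the ``rough part'' $u_{2,\delta}$ by feeding the previous property $(P_m)$ into a bound for the noise coefficient $D$ via \eqref{def:D}.

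More concretely, I would proceed in the following steps. First, fix $(t,x)\in Z(N,n,K,\beta)$ with nearby reference point $(\hat t_0,\hat x_0)$, and write $u(t',x') - u(\hat t_0,\hat x_0)$ as a sum of increments of $G_\delta$ and $F_{\delta,l}$ evaluated along the path from $(\hat t_0,\hat x_0)$ to $(t',x')$; the Mean Value Theorem in space handles the spatial part and the martingale increment handles the time part. Second, bound the stochastic integrals defining $G_\delta(s,t,x)-G_\delta(s,t',x')$ and the corresponding gradient differences: square-function/$L^p$ estimates (Burkholder--Davis--Gundy) reduce these to deterministic double integrals against the kernel $k(w,z)\le c_{\ref{kernel}}(|w-z|^{-\alpha}+1)$, which are precisely what Lemma~\ref{lem:heat-ker:diff:2} and Lemma~\ref{lem:4.4} are designed to estimate; the integrand $|D(r,y)|^2$ is controlled by $|u(r,y)|^{2\gamma}$ via \eqref{def:D}, and on the relevant space-time region $|u(r,y)|$ is bounded using $(P_m)$ (this is where the recursion $\gamma_{m+1}=\gamma\gamma_m + 1 - \alpha/2$ enters, since integrating $|u|^{2\gamma}\sim a_n^{2\gamma(\gamma_m-1)}$-type bounds against the heat kernel over a ball of radius $\sim\delta^{1/2}$ produces the exponent $\tilde\gamma_{m+1}$). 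Third, split the spatial integral into the ``inside'' region $|w-x|\le(t'-s)^{1/2-\eta_0}$, where one uses the trivial/local bounds, and the ``outside'' region, where Lemma~\ref{lem:4.4} gives the Gaussian decay factor $\exp(-\eta_1(t'-s)^{-2\eta_0}/256)$ that absorbs polynomial losses. Fourth, optimize over the cutoff scale $\delta=a_n^\lambda$ to balance the smooth-part and rough-part contributions, producing the claimed bound \eqref{P_mprop} with the factor $(\sqrt{a_n}\vee 2^{-N})^{\tilde\gamma_{m+1}-1}$ plus the gradient term $a_n^\beta\1\{m+1>0\}$. Fifth, verify that the resulting $N_1(m+1,n,\xi,\eps_0,K,\beta)$ is stochastically bounded uniformly in $(n,\beta)$: this follows because all the tail estimates (via Chebyshev/Borel--Cantelli on a dyadic grid of $(N,t',x')$, exactly as in the proof of Theorem~2.1 in \cite{ss:02} and Section~5 of \cite{mp:11}) yield constants depending only on $K,\xi,\eps_0$ and the fixed $\eps$-parameters, not on $n$ or $\beta$.

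The main obstacle I expect is twofold. The first is the careful bookkeeping of exponents: one must track the interplay between the regularization scale $\delta=a_n^\lambda$, the H\"older exponent $\gamma$, the singularity $\alpha$, and the recursively defined $\gamma_m$, ensuring that at each stage $m$ the gain is exactly $\tilde\gamma_m$ and that the argument does not break when $\gamma_{m+1}$ crosses the threshold $2$ (which is why $\tilde\gamma_m=\gamma_m\wedge 2$ is used and the induction stops at $\bar m+1$). The second, and genuinely new relative to \cite{mp:11}, is handling the correlation kernel $k$: in the white-noise case the double spatial integrals collapse to single integrals, whereas here one must use the colored-noise heat-kernel estimates of Section~\ref{sec:4} throughout, and in particular control cross terms $|u(r,w)|^\gamma|u(r,z)|^\gamma$ with $w,z$ at distance up to $(t'-s)^{1/2-\eta_0}$ — this requires applying $(P_m)$ at two points simultaneously and is the place where the multidimensional geometry and the kernel singularity interact most delicately. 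Once these estimates are assembled, the passage from $(P_m)$ to $(P_{m+1})$ is mechanical, and iterating $\bar m+1$ times completes the proof; I would, as the authors do, refer to \cite{rippl:12} for the fully explicit computations rather than reproduce them all here.
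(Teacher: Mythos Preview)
Your proposal is correct and follows essentially the same inductive scheme as the paper: base case via Theorem~\ref{thm:collipmod}, induction step via the splitting $u=u_{1,a_n^\lambda}+u_{2,a_n^\lambda}$, square-function estimates for the stochastic integrals using the heat-kernel lemmas of Section~\ref{sec:4} with the inside/outside decomposition, and a final choice of $\lambda$ to balance the two pieces. One small correction: the $\lambda$ used in the induction step is \emph{not} the one from \eqref{eq:3.10} (that is the splitting scale used in Section~\ref{sec:3}); in the paper's proof the grid is $\lambda_i=i/M$ with $M=\lceil 2/\eps_0\rceil$, and the particular $\lambda_i$ is selected so that $a_n^{\lambda_i/2}$ matches $2^{-N'}$ (or equals $\sqrt{a_n}$ if $2^{-N}\le\sqrt{a_n}$), which is precisely the ``optimize over $\delta$'' step you describe.
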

The induction start is proved as in Proposition 5.1 of\cite{mp:11} using Theorem \ref{thm:collipmod}, here, instead of their Lemma 2.3, so we omit it.
The induction step from $(P_m)$ to $(P_{m+1})$ is a bit more technical and needs some preparation. It will be completed at the end of this section on page \pageref{pr:prop:5.1:part2}.

To get there we first write down a lemma which tells us what we can get out of Property $(P_m)$:
\begin{lemma}\label{lem:5.2}
 Let $0 \leq m \leq \bar{m}+1$. Assume that $(P_m)$ holds. Let $ \eta, \xi, \eps_0, K, \beta$ be as in $(P_m)$. If $N \in \NN$ and $c_{\ref{lem:5.2}}(\omega)= (4a_n^{-\eps_0} + 2^{2N_1(\omega)} 2K e^K)^2$, then on the event
 \[ \{ \omega: N \geq N_1(m,n,\xi,\eps_0, K,\beta),(t,x)\in Z(N,n,K,\beta) \},\]
 we have, denoting $\bar{d}_N = 2^{-N} \vee d((s,y),(t,x)),$
  \begin{equation}
    |u(s,y)|  \leq \sqrt{c_{\ref{lem:5.2}}} e^{|y-x|}\bar{d}_N^\xi 
     [(\sqrt{a_n} \vee \bar{d}_N)^{\gamma_m -1} + \1 \{m>0\} a_n^\beta ]
  \end{equation}
for all $s < T_K$ and $y \in \RR^{\dimension}$. 
\end{lemma}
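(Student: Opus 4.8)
The plan is to reduce the global bound on $|u(s,y)|$ at an arbitrary point $(s,y)$ with $s<T_K$ and $y\in\RR^\dimension$ to the local bound provided by property $(P_m)$, which only applies inside a $2^{-N}$-ball around points of $Z(N,n,K,\beta)$. The key observation is that $(P_m)$ gives information at $(t',x')$ with $d((t,x),(t',x'))\le 2^{-N}$, so for points $(s,y)$ \emph{far} from $(t,x)$ one must instead use a crude a priori control coming from the stopping time $T_K$ and the definition of $Z$. First I would split into two cases according to whether $d((s,y),(t,x))\le 2^{-N}$ or not.

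In the near case, $\bar d_N=2^{-N}\vee d((s,y),(t,x))$ is comparable to $2^{-N}$ up to the dyadic scale, and one applies $(P_m)$ directly (with $\xi$ and $\eps_0$ as given), absorbing the factor $a_n^{-\eps_0}$ into $c_{\ref{lem:5.2}}$ via the term $4a_n^{-\eps_0}$ in its definition; the exponential $e^{|y-x|}$ is harmless since it is $\ge 1$, and replacing $2^{-N\xi}$ by $\bar d_N^\xi$ and $(\sqrt{a_n}\vee 2^{-N})^{\tilde\gamma_m-1}$ by $(\sqrt{a_n}\vee\bar d_N)^{\gamma_m-1}$ only weakens the bound when $m\le\bar m$ (where $\tilde\gamma_m=\gamma_m$), and when $m=\bar m+1$ one uses $\tilde\gamma_m=2\le\gamma_m$ so the exponent $\gamma_m-1$ is larger and $\sqrt{a_n}\vee\bar d_N\le 1$ makes this side smaller — I would check this monotonicity carefully as it is the one genuinely fiddly point. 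In the far case, $d((s,y),(t,x))>2^{-N}$ so $\bar d_N=d((s,y),(t,x))\ge 2^{-N}$, hence $2^{2N_1}\le 2^{2N}\le \bar d_N^{-2}$; combined with $s<T_K$ and the definition of $T_K$ in \eqref{TKdef}, which forces $|u(s,y)|\le K e^{|y|}\le K e^K e^{|y-x|}$ (using $|x|\le K$ since $(t,x)\in Z(N,n,K,\beta)$ implies $x\in[-K,K]^\dimension$), one gets $|u(s,y)|\le K e^K e^{|y-x|}\le (2Ke^K)\,\bar d_N^\xi\cdot\bar d_N^{-2}\cdot\bar d_N^{2}\cdots$; more precisely one writes $K e^K = 2^{2N_1}\cdot 2^{-2N_1}Ke^K \le \bar d_N^{-2}\cdot(2^{-2N_1}Ke^K)$ and then bounds the remaining powers of $\bar d_N$ against $\bar d_N^\xi$ times the bracketed factor, using $\bar d_N\le \mathrm{diam}$ bounded in terms of $K$. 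This is exactly why the constant $c_{\ref{lem:5.2}}$ is taken to be $(4a_n^{-\eps_0}+2^{2N_1}2Ke^K)^2$: its square root dominates both contributions.

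The main obstacle is bookkeeping the exponents of $\bar d_N$ in the far case so that the crude bound $K e^K e^{|y-x|}$ — which carries no small factor at all — can be written as $\sqrt{c_{\ref{lem:5.2}}}\,e^{|y-x|}\bar d_N^\xi[(\sqrt{a_n}\vee\bar d_N)^{\gamma_m-1}+\1\{m>0\}a_n^\beta]$. One needs $\bar d_N^\xi$ and the bracket to be bounded \emph{below} by a constant depending only on $K$ (since $\bar d_N$ ranges over a bounded set and $\bar d_N\ge 2^{-N}\ge 2^{-N_1}$, so $\bar d_N^\xi\ge 2^{-N_1\xi}$, and this is where the $2^{2N_1}$ in $c_{\ref{lem:5.2}}$ does its work — it must beat $2^{-N_1\xi}$ and also $(\sqrt{a_n}\vee\bar d_N)^{-(\gamma_m-1)}$ which is at most $a_n^{-(\gamma_m-1)/2}\le a_n^{-1/2}$ for $m\le\bar m$, again absorbed since $N_1$ is large). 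Once the case split and these elementary inequalities are in place the proof is routine; I would also remark that the uniform stochastic boundedness of $N_1$ in $(n,\beta)$ from $(P_m)$ is what makes $c_{\ref{lem:5.2}}(\omega)$ an almost surely finite random variable, though the statement as written fixes $N_1(\omega)$ so no further argument is needed there.
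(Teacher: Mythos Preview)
Your two-case split (near: $d\le 2^{-N}$ versus far: $d>2^{-N}$) has a genuine gap in the far case. The inequality chain you write, ``$\bar d_N\ge 2^{-N}\ge 2^{-N_1}$'', is backwards: since $N\ge N_1$ we have $2^{-N}\le 2^{-N_1}$, so from $\bar d_N\ge 2^{-N}$ you \emph{cannot} conclude $\bar d_N\ge 2^{-N_1}$. Concretely, take $d((s,y),(t,x))=2^{-N+1}$ with $N$ much larger than $N_1$: then $\bar d_N=2^{-N+1}$, so $\bar d_N^\xi\approx 2^{-N\xi}$, and the right-hand side of the claimed bound is of order $2^{2N_1}\cdot 2^{-N\xi}$, which is arbitrarily small. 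The crude estimate $|u(s,y)|\le K e^{|y|}$ from $T_K$ cannot possibly be dominated by this. The same sign error appears earlier where you write ``$2^{2N}\le \bar d_N^{-2}$''; from $\bar d_N\ge 2^{-N}$ one gets $\bar d_N^{-2}\le 2^{2N}$, not the reverse.

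The fix --- and this is the argument in \cite{mp:11} --- is to split according to $\bar d_N$ versus $2^{-N_1}$, not $d$ versus $2^{-N}$. When $\bar d_N\le 2^{-N_1}$, choose $N'\in\{N_1,\dots,N\}$ with $2^{-N'-1}<\bar d_N\le 2^{-N'}$; since $Z(N,n,K,\beta)\subset Z(N',n,K,\beta)$ for $N'\le N$ (the defining conditions relax as $N$ decreases), and since $d((s,y),(t,x))\le\bar d_N\le 2^{-N'}$, one may apply $(P_m)$ at scale $N'$ and then use $2^{-N'}<2\bar d_N$ to pass from $2^{-N'\xi}$ to $\bar d_N^\xi$. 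Only when $\bar d_N>2^{-N_1}$ does one fall back on the $T_K$ bound, and there $\bar d_N^\xi(\sqrt{a_n}\vee\bar d_N)^{\tilde\gamma_m-1}\ge \bar d_N^{\xi+\tilde\gamma_m-1}>2^{-N_1(\xi+\tilde\gamma_m-1)}\ge 2^{-2N_1}$ (using $\xi<1$ and $\tilde\gamma_m\le 2$), which is exactly what the factor $2^{2N_1}$ in $\sqrt{c_{\ref{lem:5.2}}}$ is designed to absorb. Your near case is fine; it is the intermediate regime $2^{-N}<\bar d_N\le 2^{-N_1}$ that requires this scale-adaptive use of $(P_m)$ rather than the crude bound.
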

As the proof is essentially the same as Lemma 5.2 in \cite{mp:11}' we omit it. The lemma gives control on $u(s,y)$ for $y$ close to points in $Z(N,n,K,\beta)$. To do the induction step we want to use this control in the estimate $|D(r,w)|  \leq R_0 e^{R_1|w|} |u(r,w)|^\gamma$ from (\ref{def:D}), which appeared in 
\begin{equation}
\label{def:F_de,m}
F_{\delta,l} (s,t,x) = \int_0^{(s-\delta)^+} p_{t-r, l}(w-x)D(r,w) \, W(dr\, dw),
\end{equation}
$\delta>0,$ $0 \leq s\leq t$, $x \in \RR^{\dimension}$ and for $1 \leq l \leq \dimension,$ see (\ref{Fdelta,ldef}) and Lemma \ref{lem:3.1}. This is related to the derivative of $u_{1,\delta}$ as given in Lemma \ref{lem:3.1}. Using the bound from Lemma \ref{lem:5.2} will lead to an improved bound on $u_{1,\delta}$. Later, we will also give estimates for $u_{2,\delta}$ and the combination of the two bounds allows us to do the induction step at the end of the section.

To estimate $F_{\delta,l}$ we use the following decomposition for $s \leq t \leq t'$, $s'\leq t'$:
  \begin{equation}\label{eq:F_delta:decomp}
  \begin{split}
  |F_{\delta,l} (s,t,x) & -F_{\delta,l}(s',t',x') | \\
  & \leq  |F_{\delta,l} (s,t,x) -F_{\delta,l}(s,t,x') |  
  + |F_{\delta,l} (s,t,x') -F_{\delta,l}(s,t',x') | \\  &\quad 
  + |F_{\delta,l} (s,t',x') -F_{\delta,l}(s',t',x') | \\
  & = | \int_0^{(s-\delta)^+} (p_{t-r, l}(w-x') - p_{t-r, l}(w-x) ) D(r,w) \, W(dr \, dw) |\\
  &\quad + |  \int_0^{(s-\delta)^+} (p_{t-r, l}(w-x') - p_{t'-r, l}(w-x') ) D(r,w) \, W(dr\, dw) | \\
  &\quad + | \int_{(s\wedge s'-\delta)^+}^{(s\vee s'-\delta)^+} p_{t'-r, l}(w-x')  D(r,w) \, W(dr\, dw) |, \ 1\leq l \leq \dimension.
  \end{split}
  \end{equation}
All of these three expressions in the moduli are martingales in the upper integral bound, when the rest of the values $x,x',t,t', (s\wedge s' -\delta)^+$ stay fixed. We want to consider the quadratic variations of these martingales and use the Dubins-Schwarz theorem. In order to calculate the first two quadratic variations we need to introduce the following partition of $\RR^{\dimension}$ (for fixed values of $x,x', \eta_0$):
  \begin{align*}
  A_1^{\eta_0} (r,t) &= \1 \{ y \in \RR^{\dimension}: |y-x| \leq (t-r)^{1/2 - \eta_0} \vee 2 |x-x'| \}
  \\
  A_2^{\eta_0} (r,t) &= \1 \{ y \in \RR^{\dimension}: |y-x| > (t-r)^{1/2 - \eta_0} \vee 2 |x-x'| \} = A_1^{\eta_0} (r,t)^C,
  \end{align*}
whenever $0\leq r < t$. 
For estimating \eqref{eq:F_delta:decomp}, we now introduce the following square functions for $i,j \in \{1,2\}$:
  \begin{align*}
    Q_{X,\delta, \eta_0}^{i,j} (s,t,x,t,x')  &= \int_0^{(s-\delta)^+} dr \int_{A_i^{\eta_0}(r,t)} dw \int_{A_j^{\eta_0}(r,t)} dz    \\
    & \quad | (p_{t-r, 1}(w-x') - p_{t-r, 1}(w-x) ) (p_{t-r, 1}(z-x') - p_{t-r, 1}(z-x) )|\\
    & \quad R_0^{2} e^{R_1 (|w|+|z|)} |u(r,w)|^\gamma |u(r,z)|^\gamma  (|w-z|^{-\alpha} + 1),
  \end{align*}
  \begin{align*}
    Q_{T,\delta, \eta_0}^{i,j} (s,t,x',t',x')  &= \int_0^{(s-\delta)^+} dr \int_{A_i^{\eta_0}(r,t')} dw \int_{A_j^{\eta_0}(r,t')} dz  \\
    & \quad | (p_{t-r, 1}(w-x') - p_{t'-r, 1}(w-x') ) (p_{t-r, 1}(z-x') - p_{t'-r, 1}(z-x') ) |\\
    & \quad R_0^{2} e^{R_1 (|w|+|z|)} |u(r,w)|^\gamma |u(r,z)|^\gamma  (|w-z|^{-\alpha} + 1)
  \end{align*}
and
  \begin{align*}
    Q_{S,\delta} (s,s',t',x') & = \int_{(s\wedge s'-\delta)^+}^{(s\vee s'-\delta)^+} dr \intRd dw \intRd dz \, | p_{t'-r, 1}(w-x') p_{t'-r, 1}(z-x') |\\   
    & \quad  R_0^{2} e^{R_1 (|w|+|z|)} |u(r,w)|^\gamma |u(r,z)|^\gamma (|w-z|^{-\alpha} + 1).
  \end{align*}
Now we want to establish an upper bound for 
  \begin{equation}\label{eq:Q:delta:bound}
  Q_{\delta, \eta_0}^{\text{tot}} (s,t,x,s',t',x') = Q_{S,\delta} (s,s',t',x')+ \sum_{i,j=1}^2 (Q_{T,\delta, \eta_0}^{i,j} (s,t,x',t',x') + Q_{X,\delta, \eta_0}^{i,j} (s,t,x,t,x')),
  \end{equation}
when $s,t,x,s',t',x'$ are subject to some restrictions. Then \eqref{eq:Q:delta:bound} is clearly an upper bound itself for the quadratic variation of each of the three martingales in \eqref{eq:F_delta:decomp}.
\begin{remark}\label{rem:only:l=1}
Since we would execute the same calculations  for any spatial dimension $l$ we restrict ourselves now to $l=1$ for the estimates on $F_{\delta,l}$. We already omitted this dependence in the definitions leading up to (\ref{eq:Q:delta:bound}). Also, note that
dependence of constants on the universal constants $\alpha, \dimension, \gamma, R_0$ and $R_1$ will not be mentioned in the following lemmas.
\end{remark}
We combine two estimates for the cases $(i,j)= (1,2), (2,1)$ or $(2,2),$ so $i+j\geq 3:$
\begin{lemma}\label{lem:5.4a}
For all $K \in \NN^{\geq K_1}$, $R>2$ there exist $c_{\ref{lem:5.4a}}(K,R)$, $N_{\ref{lem:5.4a}}(K,\omega)$ almost surely such that $\forall \eta_0, \eta_1 \in (1/R, 1/2)$, $\delta \in (0,1]$, $\beta \in [0,1/2]$, $N, n \in \NN$, $(t,x) \in \RR_+\times \RR^{\dimension}$ the following holds for $i+j \geq 3$: For $\omega \in \{ (t,x) \in Z(N,n,K,\beta), N \geq N_{\ref{lem:5.4a}} \}$ we have
  \begin{equation}
    \begin{split}
        Q_{X,\delta, \eta_0}^{i,j} (s,t,x,t,x') & \leq %
    c_{\ref{lem:5.4a}} 2^{4N_{\ref{lem:5.4a}}} |x-x'|^{2-\eta_1}, \\
Q_{T,\delta, \eta_0}^{i,j} (s,t,x',t',x') & \leq c_{\ref{lem:5.4a}} 2^{4N_{\ref{lem:5.4a}}}
     [|t-t'|^{1-\eta_1/2} +  |t-t'|^{1-\eta_1/2}  \delta^{-1-\alpha/2}
    (|t-t'|\wedge 1)^{4\gamma} ],\\
    \end{split}
  \end{equation}
for all $0 \leq s \leq t \leq t', |x'| \leq K+1.$
\end{lemma}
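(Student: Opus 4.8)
By Remark~\ref{rem:only:l=1} we may restrict to $l=1$, and by the symmetry $w\leftrightarrow z$ of the integrands it suffices, in each of $Q_{X,\delta,\eta_0}^{i,j}$ and $Q_{T,\delta,\eta_0}^{i,j}$ with $i+j\ge 3$, to bound the part of the double integral in which the $w$-variable lies in the far region $A_2^{\eta_0}$. In both cases the integrand is a product of two heat-kernel differences in $w$ and $z$ times $e^{R_1(|w|+|z|)}|u(r,w)|^\gamma|u(r,z)|^\gamma$. The strategy is: bound the factor $e^{R_1(|w|+|z|)}|u(r,w)|^\gamma|u(r,z)|^\gamma$ on the event by a deterministic exponential in $|w-x|+|z-x|$ times a random constant of the asserted size; observe that the indicator cutting out the far region $A_2^{\eta_0}$ is exactly the indicator $\1\{|w-x|>(t'-s)^{1/2-\eta_0}\vee 2|x-x'|\}$ appearing in Lemma~\ref{lem:4.4}, so that the remaining $w,z$-integral is controlled by that lemma; and finally carry out the resulting one-dimensional integral in the time variable $r\in[0,(s-\delta)^+]$.

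For the pointwise bound on $u$ we work on the event $\{(t,x)\in Z(N,n,K,\beta),\,N\ge N_{\ref{lem:5.4a}}\}$. Since $r\le(s-\delta)^+\le s\le t\le T_K$ there, \eqref{TKdef} gives $|u(r,w)|\le 2Ke^{|w|}$, and together with $|x|\le K\sqrt{\dimension}$ this already yields $e^{R_1(|w|+|z|)}|u(r,w)|^\gamma|u(r,z)|^\gamma\le C(K)\,e^{(R_1+\gamma)(|w-x|+|z-x|)}$, which is enough for the $Q_X$-statement. For the $Q_T$-statement this crude bound does not suffice (after integrating in $r$ one would pick up a term of order $|t-t'|^{-\alpha/2}$, which does not fall below $|t-t'|^{1-\eta_1/2}$ for $|t-t'|$ of moderate size), and one must instead use, for $r$ close to $t$ and $w$ close to $x$, the near-zero H\"older-type bound of Lemma~\ref{lem:5.2} with $m=0$; this is legitimate and non-circular, since $(P_0)$ is proved unconditionally from Theorem~\ref{thm:collipmod}. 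The $\omega$-dependent constant in that bound, raised to the power $2\gamma$, is what is recorded as the factor $2^{4N_{\ref{lem:5.4a}}}$, with $N_{\ref{lem:5.4a}}$ chosen $\ge$ the corresponding $N_1$ of Lemma~\ref{lem:5.2}; and it is this near-zero bound, evaluated for $r$ close to $t$, that produces the gain $(|t-t'|\wedge1)^{4\gamma}$.

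Now apply Lemma~\ref{lem:4.4}. For the $Q_X$-terms, use it with its ``$t'$'' equal to our $t$ (so $|t-t'|$ vanishes), with $p=0$, with exponential rate $R_1+\gamma$, and with $K$, $R$ replaced by $K+1$, $R\vee(R_1+\gamma+1)$ if necessary (this only changes the constant, which stays of the form $C(K,R)$, as $R_1,\gamma$ are universal); this bounds the $w,z$-integral by a constant times
\[
 (t-r)^{-1-\alpha/2}\exp\!\Big(-\tfrac{\eta_1}{256}(t-r)^{-2\eta_0}\Big)\Big[1\wedge\tfrac{|x-x'|^2}{t-r}\Big]^{1-\eta_1/2}\ \le\ |x-x'|^{2-\eta_1}\,(t-r)^{-2-\alpha/2+\eta_1/2}\exp\!\Big(-\tfrac{\eta_1}{256}(t-r)^{-2\eta_0}\Big),
\]
and the super-exponential decay as $r\uparrow t$ makes the $r$-integral of the right-hand side over $[0,(s-\delta)^+]\subset[0,t]$ finite and bounded by $C(K,R)$ uniformly in $\eta_0,\eta_1\in(1/R,1/2)$ (here $\eta_0,\eta_1\ge1/R$ is used) and $\delta\in(0,1]$; this gives the bound on $Q_X^{i,j}$. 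For the $Q_T$-terms, apply Lemma~\ref{lem:4.4} instead with its ``$x$'' equal to our $x'$ (so $|x-x'|$ vanishes) and its ``$t,t'$'' equal to our $t,t'$ (for $t'\ge K$ a direct estimate, using that $(t'-r)^{1/2-\eta_0}$ is then large on the far region so the kernels there are tiny, gives the same conclusion), obtaining a constant times $(t-r)^{-1-\alpha/2}\exp(-\tfrac{\eta_1}{256}(t'-r)^{-2\eta_0})[1\wedge|t-t'|/(t-r)]^{1-\eta_1/2}$, multiplied by the $|u|^{2\gamma}$-gain of the previous paragraph. Then split $r$ according to whether $t-r\ge|t-t'|$ or not; note that $(s-\delta)^+>0$ forces $s\ge\delta$, hence $t-r\ge t-s+\delta\ge\delta$ over the whole range (the integral being empty otherwise), and that $t'-r=(t'-t)+(t-r)=|t-t'|+(t-r)$, so $t'-r<2|t-t'|$ on $\{t-r<|t-t'|\}$. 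On $\{t-r\ge|t-t'|\}$ one has $[1\wedge|t-t'|/(t-r)]^{1-\eta_1/2}=(|t-t'|/(t-r))^{1-\eta_1/2}$ and the argument concludes as for $Q_X$, contributing $|t-t'|^{1-\eta_1/2}$; on $\{\delta\le t-r<|t-t'|\}$ the bracket is $1$, the time integral $\int(t-r)^{-1-\alpha/2}\,dr$ over this range combined with the extra $(t-r)^{-1}$ from the near-zero $|u|^{2\gamma}$-bound produces the $\delta^{-1-\alpha/2}$, and the near-zero bound itself (together with $t'-r<2|t-t'|$ there) supplies the $(|t-t'|\wedge1)^{4\gamma}$. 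Combining the two regimes and the two halves $w\leftrightarrow z$ yields the bound on $Q_T^{i,j}$.

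The main obstacle is the bookkeeping in this last step rather than any individual estimate: one must check that all constants are uniform in the six parameters $(\eta_0,\eta_1,\delta,\beta,N,n)$ at once --- in particular that the exponent $-2\eta_0$ in the super-exponential factor is harmless precisely because $\eta_0\ge 1/R$ (this is what rules out a blow-up of the $Q_T$-integral at intermediate values of $|t-t'|$), and that the case split for $Q_T$ is organised so that exactly the two terms $|t-t'|^{1-\eta_1/2}$ and $|t-t'|^{1-\eta_1/2}\delta^{-1-\alpha/2}(|t-t'|\wedge1)^{4\gamma}$ emerge and no slower-decaying one. All of the two-dimensional heat-kernel work is delegated to Lemma~\ref{lem:4.4}.
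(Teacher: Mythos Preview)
Your overall route---bound $|u|$ pointwise, delegate the $(w,z)$-integral to Lemma~\ref{lem:4.4}, then integrate in $r$---is exactly the paper's. For $Q_X$ your simplification is fine: the paper invokes Lemma~\ref{lem:5.2} with $m=0$, $\xi=3/4$ only to produce the prefactor $c_{\ref{lem:5.2}}\sim 2^{4N_{\ref{lem:5.4a}}}$, and then immediately collapses the $\bar d_N^{3\gamma/4}$ factor to a polynomial in $|w-x|$ using $t,r\le K$; your crude bound $|u(r,w)|\le 2Ke^{|w|}$ reaches the same integrand (without the random prefactor) and the rest is identical.

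The $Q_T$ part, however, is miswired. On the far set $A_2^{\eta_0}(r,t')$ one has $|w-x|>(t'-r)^{1/2-\eta_0}$, so the ``near-zero'' bound of Lemma~\ref{lem:5.2} gives no smallness there---$\bar d_N\ge |w-x|$ is \emph{large}, and there is no ``extra $(t-r)^{-1}$'' to extract. The factor $(|t-t'|\wedge 1)^{4\gamma}$ does not come from $|u|$ at all; it comes from the super-exponential decay in Lemma~\ref{lem:4.4}. Indeed, with the crude bound the $(w,z)$-integral is $\le C(K,R)(t-r)^{-1-\alpha/2}\exp(-\tfrac{\eta_1}{256}(t'-r)^{-2\eta_0})[1\wedge|t-t'|/(t-r)]^{1-\eta_1/2}$. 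On $\{t-r\ge|t-t'|\}$ one has $t'-r\le 2(t-r)$ and the $r$-integral is handled exactly as for $Q_X$, yielding $C(K,R)|t-t'|^{1-\eta_1/2}$. On $\{\delta\le t-r<|t-t'|\}$ the bracket is $1$, $t'-r<2|t-t'|$, and
\[
\int_{\delta}^{|t-t'|}(t-r)^{-1-\alpha/2}\,dr\;\le\;|t-t'|\,\delta^{-1-\alpha/2},
\]
so that piece is $\le C(K,R)\,\delta^{-1-\alpha/2}\,|t-t'|\exp(-c(R)|t-t'|^{-2\eta_0})$; since $\eta_0\ge 1/R$ this last factor is $\le C(K,R)|t-t'|^{1-\eta_1/2}(|t-t'|\wedge 1)^{4\gamma}$ by Lemma~\ref{lem:algebra}. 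So the crude bound \emph{does} suffice for $Q_T$, and one may take $N_{\ref{lem:5.4a}}$ to be any fixed constant (the paper's choice via $N_1$ simply records the common prefactor with Lemma~\ref{lem:5.5a}).

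A smaller point: you cannot literally plug ``$x=x'$'' into Lemma~\ref{lem:4.4} for $Q_T$, because the cutoff in $Q_T$ is $|w-x|>(t'-r)^{1/2-\eta_0}\vee 2|x-x'|$ (centred at the anchor $x$) while both kernels sit at $x'$. The fix is immediate: in the proof of Lemma~\ref{lem:4.4} one applies Lemma~\ref{lem:heat-ker:deriv:1} with $y=w-x'$, $\tilde y=w-x$, for which the hypothesis $|\tilde y|>(t'-r)^{1/2-\eta_0}\vee 2|y-\tilde y|$ is exactly the $Q_T$ cutoff, and the exponential weight $e^{r|w-x|}$ is recentred at $x'$ at the cost of $C(K,R)$ since $|x|,|x'|\le C(K)$.
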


\begin{proof}
We will just give the proof for $i=2$ without taking into account $j$, i.e.~the restriction on $z$. This suffices by symmetry.
Use the estimate \eqref{def:D} on $D$, take $\xi = 3/4, m=0$ and set $N_{\ref{lem:5.4a}} (K,\omega) = N_1(0,n,3/4,\eps_0,K,\beta),$ see the definition of $(P_m)$ in (\ref{P_mprop}). Also set w.l.o.g.~$\delta < s.$ Then, in Lemma \ref{lem:5.2} for the case $m=0$ we can take $\eps_0 = 0$ as well ($c_{\ref{lem:5.2}} = C(K)2^{N_1(0,3/4,K)}$) and obtain
  \begin{align*}
    Q_{X,\delta, \eta_0}^{i,j} (s,t,x,t,x') & \leq c_{\ref{lem:5.2}} \int_0^{s-\delta} dr \intRd \intRd \1 \{ |w-x| > (t-r)^{1/2 - \eta_0} \vee 2 |x-x'| \} \\
    &\qquad \quad (p_{t-r, 1}(w-x') - p_{t-r, 1}(w-x) ) (p_{t-r, 1}(z-x') - p_{t-r, 1}(z-x) )\\
    &\qquad \quad e^{R_1 (|w|+|z|)} e^{\gamma   (|w-x|+|z-x|)} R_0^{2}  (|w-z|^{-\alpha} + 1)\\
    &\qquad \quad [2^{-N} \vee d((r,w),(t,x))]^{3\gamma/4} [2^{-N} \vee d((r,z),(t,x))]^{3\gamma/4} 
    \, dw  dz. 
  \end{align*}
Using $d((r,w),(t,x))^\gamma = (\sqrt{t-r} + |w-x| )^\gamma \leq 2 ((t-r)^{\gamma/2} + |w-x|^\gamma )$ and $t,r\leq K$ we bound this by
  \begin{align*}
  & c_{\ref{lem:5.2}} R_0^2 \int_0^{(s-\delta)^+} dr \intRd \intRd \1 \{ |w-x| > (t-r)^{1/2 - \eta_0} \vee 2 |x-x'| \} \\
    &\qquad \quad (p_{t-r, 1}(w-x') - p_{t-r, 1}(w-x) ) (p_{t-r, 1}(z-x') - p_{t-r, 1}(z-x) )\\
    &\qquad \quad e^{2 R_1 |x|} e^{(\gamma + R_1)   (|w-x|+|z-x|)} \\
    &\qquad \quad 2( K^{3\gamma/8} + |w-x|^{3\gamma/4})\, 2(K^{3\gamma/8} + |z-x|^{3\gamma/4})
    (|w-z|^{-\alpha} + 1) \, dw  dz. 
  \end{align*}
With the help of Lemma \ref{lem:4.4} for $t=t'\leq K$ bound this by
  \begin{align*}
  & c_{\ref{lem:5.2}} C(K,R) \int_0^{s-\delta} dr \, (t-r)^{-1-\alpha/2} \exp (- \frac{\eta_1 (t-r)^{-2\eta_0}}{256} ) [1\wedge \frac{|x-x'|^2}{t-r} ]^{1-\eta_1/2}  \\
  & \leq c_{\ref{lem:5.2}} C(K,R) (256R)^R \int_0^{s-\delta} dr \,[1\wedge \frac{|x-x'|^2}{t-r} ]^{1-\eta_1/2} \\
  & \leq c_{\ref{lem:5.2}} C(K,R) |x-x'|^{2-\eta_1} \int_0^t dr\, (t-r)^{\eta_1/2 -1} \\
  & \leq c_{\ref{lem:5.2}} C(K,R) |x-x'|^{2-\eta_1},
  \end{align*}
  where we used Lemma \ref{lem:algebra}. The proof for the temporal estimate is similar but we omit it here.
\end{proof}
Next we need to consider the distances for the cases $i=j=1$.
\begin{lemma}\label{lem:5.5a}
  Let $0\leq m \leq \bar{m}+1$ and assume that $(P_m)$ holds. For all $K \in \NN^{\geq K_1}$, $R>2$ $n\in \NN$, $\beta \in [0,1/2]$, $\eps_0 \in (0,1)$, there exist $c_{\ref{lem:5.5a}}(K,R)$, $N_{\ref{lem:5.5a}}(m,n,R,\eps_0,K,\beta)(\omega) \in \NN$ almost surely such that 
  for all $\eta_1 \in (1/R, 1/2)$, $\eta_0 \in (0,\eta_1/32)$, $\delta \in [a_n,1]$, $N \in \NN$, $(t,x) \in \RR_+\times \RR^{\dimension}$ the following holds: For $\omega \in  \{ (t,x) \in Z(N,n,K,\beta), N \geq N_{\ref{lem:5.5a}} \}$ we have
    \begin{align}
\nn      Q_{X,\delta, \eta_0}^{1,1} (s,t,x,t,x')  & \leq c_{\ref{lem:5.5a}} [a_n^{-2\eps_0} + 2^{4N_{\ref{lem:5.5a}}}]  \left[ |x-x'|^{2-\eta_1} (\bar{\delta}_N^{(\gamma \gamma_m - 1 -\alpha/2)\wedge 0} + a_n^{2\beta \gamma} \bar{\delta}_N^{(\gamma-1-\alpha/2)\wedge0} ) \right. \\
      & \left. \quad  \quad  \quad  \quad  \quad  \quad  \quad  \quad  \quad  \quad  + (d\wedge \sqrt{\delta})^{2-\eta_1} \delta^{-1-\alpha/2} [\bar{d}_N^{2\gamma \gamma_m} + a_n^{2\beta \gamma} \bar{d}_N^{2\gamma} ] \right], \\
  \nn Q_{T,\delta, \eta_0}^{1,1} (s,t,x',t',x') & \leq c_{\ref{lem:5.5a}} [a_n^{-2\eps_0} + 2^{4N_{\ref{lem:5.5a}}}] \\
   & \quad \quad \left[|t-t'|^{1-\eta_1/2}  (\bar{\delta}_N^{(\gamma \gamma_m - 1 -\alpha/2)\wedge 0} + a_n^{2\beta \gamma} \bar{\delta}_N^{\gamma-1-\alpha/2} ) \right. \\
  \nn & \quad \quad + \left. (|t-t'| \wedge \delta)^{1-\eta_1/2}  \delta^{-1-\alpha/2}  [\bar{d}_N^{2\gamma \gamma_m} + a_n^{2\beta \gamma} \bar{d}_N^{2\gamma} ] \right], 
    \end{align}
for all $ 0 \leq s \leq t, |x'| \leq K+1.$
  Here $\bar{d}_N = |x-x'| \vee 2^{-N}$ and $\bar{\delta}_N = \delta \vee \bar{d}_N^2.$ Moreover, $N_{\ref{lem:5.5a}}$ is stochastically bounded uniformly in $(n,\beta).$
\end{lemma}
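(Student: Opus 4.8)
The plan is to follow the strategy of the analogous Lemma 5.5(a) of \cite{mp:11}, inserting at the crucial points the colored-noise heat-kernel estimates of Section \ref{sec:4} (in particular Lemma \ref{lem:heat-ker:diff:2} and Lemma \ref{lem:mps:5.1:add}) in place of their white-noise one-dimensional counterparts. First I would set $N_{\ref{lem:5.5a}} = N_1(m,n,R,\eps_0,K,\beta)$ from Property $(P_m)$ and work on the event $\{(t,x)\in Z(N,n,K,\beta),\ N\geq N_{\ref{lem:5.5a}}\}$, so that Lemma \ref{lem:5.2} applies and gives, for all relevant $(r,w)$,
\[
|u(r,w)|^\gamma \leq c_{\ref{lem:5.2}}^{\gamma/2} e^{\gamma|w-x|} \bar{d}_N(r,w)^{\gamma\xi}\bigl[(\sqrt{a_n}\vee \bar{d}_N(r,w))^{\gamma_m-1}+\1\{m>0\}a_n^\beta\bigr]^\gamma,
\]
where $\bar{d}_N(r,w)=2^{-N}\vee(\sqrt{t-r}+|w-x|)$. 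On $A_1^{\eta_0}(r,t)$ one has $|w-x|\leq (t-r)^{1/2-\eta_0}\vee 2|x-x'|$, so $\bar{d}_N(r,w)$ is comparable to $\sqrt{t-r}\vee \bar d_N$ up to the harmless factor $(t-r)^{-\eta_0}$, and the exponential $e^{\gamma|w-x|}$ is $O(1)$ on that region after absorbing it into the Gaussian tails; the constant $a_n^{-2\eps_0}$ in the statement is exactly what one pays for the $a_n^{-\eps_0}$ prefactors in $(P_m)$ when $m>0$ (for $m=0$ one takes $\eps_0=0$ as noted in the proof of Lemma \ref{lem:5.4a}), and the $2^{4N_{\ref{lem:5.5a}}}$ absorbs $c_{\ref{lem:5.2}}$.

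The main body of the argument is then a time integral $\int_0^{(s-\delta)^+}dr$ of the spatial double integral. After bounding $|u(r,w)|^\gamma|u(r,z)|^\gamma$ as above, the spatial integral is controlled by Lemma \ref{lem:heat-ker:diff:2} for the $|x-x'|$-difference piece and by the analogous temporal bound (same lemma with $t\neq t'$) for the $|t-t'|$-difference piece: each yields a factor $(t-r)^{-1-\alpha/2}\bigl(1\wedge\frac{|x-x'|^2+|t-t'|}{t-r}\bigr)$, and the extra polynomial weights $|w-x|^p,|z-x|^p$ and exponential weights are handled by Lemma \ref{lem:mps:5.1:add} (producing $(t-r)^{p/2}$-type factors and the $e^{2R_1^2(t-r)}=O(1)$ constant). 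One is left with a one-dimensional integral of the schematic form
\[
\int_0^{s-\delta} (t-r)^{-1-\alpha/2}\Bigl(1\wedge\tfrac{|x-x'|^2+|t-t'|}{t-r}\Bigr)\bigl[(\sqrt{t-r}\vee\bar d_N)^{\gamma_{m}-1}+a_n^\beta\bigr]^{2\gamma}(t-r)^{2\gamma\xi/2\text{-ish}}\,dr.
\]
Splitting the range at $r=t-\delta$ (equivalently at the scale where $t-r\asymp\delta$, using $\delta\geq a_n$) gives the two terms in each displayed bound: on $t-r\leq\delta$ one replaces $\sqrt{t-r}\vee\bar d_N$ by $\sqrt{\delta}\vee\bar d_N$, getting the $\delta^{-1-\alpha/2}$ term with the $(d\wedge\sqrt\delta)^{2-\eta_1}$ or $(|t-t'|\wedge\delta)^{1-\eta_1/2}$ truncation; on $t-r\geq\delta$ the $(t-r)^{-1-\alpha/2}$ singularity is integrable against the remaining powers and the $\bar\delta_N=\delta\vee\bar d_N^2$ exponents $(\gamma\gamma_m-1-\alpha/2)\wedge 0$ etc.\ arise exactly from whether $\gamma_m$ is large enough to make the corresponding power of $(t-r)$ integrable near its upper end. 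Choosing $\xi$ close to $1$ (as in Lemma \ref{lem:5.4a}, $\xi=3/4$, or slightly larger) and $\eta_0<\eta_1/32$ lets the stretched-exponential gain $\exp(-\eta_1(t-r)^{-2\eta_0}/256)$ from Lemma \ref{lem:4.4}—used for the $A_2$ regions in Lemma \ref{lem:5.4a} but also available here to kill the $(t-r)^{-\eta_0}$ losses—be absorbed into constants via Lemma \ref{lem:algebra}.

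The hard part will be the bookkeeping of exponents in the final one-dimensional integral: tracking how the two occurrences of $(P_m)$ (one for $w$, one for $z$) combine to give $\gamma\gamma_m$ and $2\gamma\gamma_m$ (and $\gamma$, $2\gamma$ in the $m>0$ gradient term $a_n^{2\beta\gamma}$), verifying that the exponent $(\gamma\gamma_m-1-\alpha/2)\wedge 0$ is the right outcome of the case analysis "is $(t-r)^{\gamma\gamma_m-1-\alpha/2}$ integrable at the upper limit $\bar\delta_N$ or not", and checking that the truncation factors $(d\wedge\sqrt\delta)^{2-\eta_1}$ and $(|t-t'|\wedge\delta)^{1-\eta_1/2}$ emerge correctly from the $1\wedge\frac{\cdot}{t-r}$ term split at scale $\delta$. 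The colored-noise aspect itself introduces no new conceptual difficulty once Lemmas \ref{lem:heat-ker:diff:2}–\ref{lem:4.4} are in hand; the uniform stochastic boundedness of $N_{\ref{lem:5.5a}}$ in $(n,\beta)$ is inherited directly from the corresponding uniformity of $N_1$ in $(P_m)$. I would present the $Q_X^{1,1}$ estimate in full and remark that $Q_T^{1,1}$ follows by the same computation with the temporal version of Lemma \ref{lem:heat-ker:diff:2}, as is done for Lemma \ref{lem:5.4a}, referring to Section 9.4 of \cite{rippl:12} for the complete details.
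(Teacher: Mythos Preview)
Your overall strategy matches the paper's: apply Lemma \ref{lem:5.2} via $(P_m)$, insert the colored-noise heat-kernel difference estimate Lemma \ref{lem:heat-ker:diff:2}, and reduce to a one-dimensional time integral that is split into two pieces. The uniform stochastic boundedness of $N_{\ref{lem:5.5a}}$ is indeed inherited from $N_1$, and treating $Q_T^{1,1}$ as a variant of $Q_X^{1,1}$ is also what the paper does. However, two of your details would not go through as written.

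First, the split of the $r$-integral cannot be at $t-r=\delta$: since the upper limit of integration is $(s-\delta)^+$ with $s\leq t$, one always has $t-r\geq\delta$, so the region ``$t-r\leq\delta$'' is empty. The paper splits instead at $t-r=\bar d_N^2$. On $\{t-r\geq\bar d_N^2\}$ the constraint $r\leq s-\delta$ forces $r\leq t-\bar\delta_N$ (this is exactly where $\bar\delta_N=\delta\vee\bar d_N^2$ appears), and an elementary bound $\int_{\bar\delta_N}^t u^{p-1}\,du\leq 2K\log(K/\bar\delta_N)\,\bar\delta_N^{p\wedge0}$ produces the $\bar\delta_N^{(\gamma\gamma_m-1-\alpha/2)\wedge0}$ and $\bar\delta_N^{(\gamma-1-\alpha/2)\wedge0}$ terms; the logarithm is absorbed into $|x-x'|^{-\eta_1/2}$ via Lemma \ref{lem:algebra}. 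On $\{\delta\leq t-r<\bar d_N^2\}$ one invokes Lemma 4.1(c) of \cite{mp:11} to get the $(d\wedge\sqrt\delta)^{2}\delta^{-1-\alpha/2}$ factor.

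Second, and more importantly, the stretched-exponential gain from Lemma \ref{lem:4.4} is \emph{not} available on $A_1^{\eta_0}$: that exponential comes precisely from the indicator $|w-x|>(t'-s)^{1/2-\eta_0}$, i.e.\ from being in $A_2^{\eta_0}$. On $A_1^{\eta_0}$ the only information is the upper bound $|w-x|\leq(t-r)^{1/2-\eta_0}\vee 2|x-x'|$, and the resulting $(t-r)^{-\eta_0}$ loss must be handled differently. The paper introduces $\gamma'=\gamma(1-2\eta_0)$, carries $\gamma'$ through the time integral, and at the very end converts $\gamma'$ back to $\gamma$ at a cost of $d^{-\eta_1/2}$, using $\xi\gamma'\geq\gamma(1-\eta_1/4)$ and $\gamma'(\gamma_m+\xi-1)\geq\gamma\gamma_m-\eta_1/4$, both of which require $\eta_0<\eta_1/32$. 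Correspondingly, $\xi=3/4$ is not enough here (that value works in Lemma \ref{lem:5.4a} only because the exponential kills everything); the paper takes $\xi=1-(8R)^{-1}\in(15/16,1)$ so that the $\eta_1$-budget is respected. With these two corrections the exponent bookkeeping proceeds exactly as you outline.
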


\begin{proof}
First, we estimate $Q_X$. Let $\xi = 1-(8R)^{-1} \in (15/16,1)$ and set $N_{\ref{lem:5.5a}} = N_1(m,n,\xi,\eps_0,K,\beta)$. W.l.o.g.~$s > \delta$ and therefore we always have $d((r,w),(t,x)) \wedge d((r,z),(t,x)) \geq \sqrt{a_n}$ in the integral. An application of Lemma \ref{lem:5.2} and the bound on $|w-x|, |z-x|$ respectively gives
  \begin{align*}
    Q_{X,\delta, \eta_0}^{1,1} (s,t,x,t,x') & \leq c_{\ref{lem:5.2}} \int_0^{s-\delta} dr \intRd \intRd dwdz\, e^{4 R_1 K} e^{4 \gamma   K} R_0^{2\gamma} \\
    & \quad (p_{t-r, 1}(w-x') - p_{t-r, 1}(w-x) ) (p_{t-r, 1}(z-x') - p_{t-r, 1}(z-x) )\\
    & \quad [2^{-N} \vee ((t-r)^{1/2} + (t-r)^{1/2 - \eta_0} \vee 2|x-x'| )]^{2\gamma \xi} \\
    & \quad \{ [2^{-N} \vee ((t-r)^{1/2} + (t-r)^{1/2 - \eta_0} \vee 2|x-x'| )]^{\gamma_m -1} + a_n^\beta \}^{2\gamma}\\
    & \quad (|w-z|^{-\alpha} +1 ).
  \end{align*}
Let $\gamma' = \gamma (1-2\eta_0)$ and observe the trivial inequalities 
  \begin{align}
  \sqrt{t-r} & \leq K^{\eta_0} (t-r)^{1/2-\eta_0}, \label{eq:t.r.eta}\\
  |x-x'| &\leq C(\dimension) K |x-x'|^{1-2\eta_0} . \nonumber
  \end{align}
Then, Lemma \ref{lem:heat-ker:diff:2} allows the following bound
  \begin{align*}
  Q_{X,\delta, \eta_0}^{1,1} (s,t,x,t,x') & \leq c_{\ref{lem:5.2}} C(K) \int_0^{s-\delta} dr\, (t-r)^{-1-\alpha/2} [1\wedge\frac{|x-x'|^2}{t-r}] \\
  & \quad \phantom{AAAAAAA}[2^{-2N\gamma} \vee (t-r)^{\gamma'} \vee  |x-x'|^{2\gamma'} ]^\xi \\
  & \quad \phantom{AAAAAAA} [2^{-N\gamma'(\gamma_m-1)} \vee (t-r)^{\gamma'(\gamma_m-1)}  \vee |x-x'|^{2\gamma(\gamma_m-1)}  + a_n^{2\beta \gamma}].
  \end{align*}
Using 
  \begin{align*}
  2^{-2N\gamma} \vee (t-r)^{\gamma'} \vee  |x-x'|^{2\gamma'}  & \leq 2^{-2N\gamma'}  \vee  |x-x'|^{2\gamma'} +  (t-r)^{\gamma'} \\
  & \leq 2 [\bar{d}_N^{2\gamma'} \vee (t-r)^{\gamma'}],
  \end{align*}
we can bound the above by
  \begin{eqnarray}
  &&Q_{X,\delta, \eta_0}^{1,1} (s, t,x,t,x') \\
  &\leq& c_{\ref{lem:5.2}} C(K) \int_0^{s-\delta} dr\, (t-r)^{-1-\alpha/2} [1\wedge\frac{|x-x'|^2}{t-r}] \, 2^\xi (\bar{d}_N^{2\gamma' \xi} \vee (t-r)^{\gamma' \xi} )\nonumber \\
  && \qquad \qquad \qquad \qquad  2^{\gamma_m -1} [(\bar{d}_N^{2} \vee (t-r))^{\gamma' (\gamma_m-1)} + a_n^{2\beta \gamma}] \nonumber \\
  &\leq& 4 c_{\ref{lem:5.2}} C(K)  \int_0^{s-\delta} dr\, \1 \{t-r \geq \bar{d}_N^2 \} (t-r)^{-1-\alpha/2+\gamma' \xi} [1\wedge\frac{|x-x'|^2}{t-r}]  \nonumber \\
  && \qquad \qquad \qquad \qquad [ (t-r)^{\gamma' (\gamma_m-1)} + a_n^{2\beta \gamma}] \nonumber\\
  && \quad + 4 c_{\ref{lem:5.2}} C(K) \int_0^{s-\delta} dr\, \1 \{t-r < \bar{d}_N^2 \} (t-r)^{-1-\alpha/2} [1\wedge\frac{|x-x'|^2}{t-r}]  \bar{d}_N^{2\gamma' \xi} \nonumber\\
  && \qquad \qquad \qquad \qquad [\bar{d}_N^{2\gamma' (\gamma_m-1)} + a_n^{2\beta \gamma}] \nonumber \\
  &=& c_{\ref{lem:5.2}} C(K) (I_1 + I_2). \label{eq:pr:5.5:1}
  \end{eqnarray}
We start with an estimate on $I_1$. If $r\leq s-\delta$ and $t-r \geq \bar{d}_N^2$ then
  \begin{equation}\label{eq:r:leq:s-delta}
  r \leq t- \bar{d}_N^2 \wedge s-\delta \leq t - \bar{d}_N^2 \wedge t -\delta = t- \bar{\delta}_N.
  \end{equation}
Use that to obtain 
  \begin{align*}
  I_1 &\leq \int_0^{t-\bar{\delta}_N} dr\, \left((t-r)^{-1-\alpha/2+\gamma' \xi + \gamma' (\gamma_m -1)} [1\wedge\frac{|x-x'|^2}{t-r}] \right.\\
  & \phantom{\leq \int_0^{t-\bar{\delta}_N} dr\, } \left. + (t-r)^{-1-\alpha/2+\gamma' \xi} [1\wedge\frac{|x-x'|^2}{t-r}] a_n^{2\beta \gamma}\right). 
  \end{align*}
We want to drop the minimum with $1$ to consider
$$ |x-x'|^2 \int_{\bar{\delta}_N}^t du\, \left( u^{-2-\alpha/2+\gamma' \xi + \gamma' (\gamma_m -1)}   + u^{-2-\alpha/2+\gamma' \xi}  a_n^{2\beta \gamma}  \right).$$
It holds that for  $p\in (-1,1), 0<a<b \leq K,$
  \begin{align}
  \int_a^b u^{p-1}\, du  \leq \log (b/a) (a^{p} + b^{p})\leq 2K  \log (b/a) a^{p \wedge 0}.
  \end{align}
Here, the first inequality follows since for $p \neq 0$ the left hand side equals $\frac{1}{|p|}|a^{p} - b^{p}|,$ which can be bounded using $1-x \leq - \log x, x \geq 0.$ For $p=0$ we even have equality. The second inequality follows by distinguishing cases for $p$ negative, positive, and zero, and by noting that $K \geq 1.$
Hence,  we have
  \begin{align*}
  I_1 & \leq 2 K |x-x'|^2 \log(K/\bar{\delta}_N) ( \bar{\delta}_N^{(-1-\alpha/2+\gamma' \xi + \gamma' (\gamma_m -1) )\wedge 0} +\bar{\delta}_N^{(-1-\alpha/2+\gamma' \xi)\wedge 0} a_n^{2\beta \gamma}).
  \end{align*}
The $\log$-term is bounded by $C(K,R)|x-x'|^{-\eta_1/2}$ (use Lemma \ref{lem:algebra}). Moreover, by Lemma 4.1(c) in \cite{mp:11} we bound
  \begin{align}
  I_2 & \leq  \frac{2}{2/ \alpha} (\delta \wedge |x-x'|^2) \delta^{-1-\alpha/2}\bar{d}_N^{2\gamma' \xi} [\bar{d}_N^{2\gamma' (\gamma_m-1)} + a_n^{2\beta \gamma}].
  \end{align}
Therefore,
  \begin{align*}
  Q_{X,\delta, \eta_0}^{1,1} (s,t,x,t,x')  \leq & c_{\ref{lem:5.2}} C(K,R) \\
  & \, \big[ |x-x'|^{2-\eta_1/2} ( \bar{\delta}_N^{(-1-\alpha/2  + \gamma' (\gamma_m +\xi-1) )\wedge 0} +\bar{\delta}_N^{(-1-\alpha/2+\gamma' \xi\wedge 0 )} a_n^{2\beta \gamma})\\
  &  \quad + (\delta \wedge |x-x'|^2) \delta^{-1-\alpha/2}\bar{d}_N^{2\gamma' \xi} [\bar{d}_N^{2\gamma' (\gamma_m-1)} + a_n^{2\beta \gamma} \big].
  \end{align*}
To finish the proof for $Q_X$ we replace $\xi=1-(8R)^{-1}$ by $1$ and $\gamma'=\gamma (1-2\eta_0)$ by $\gamma$ at the cost of multiplying by $d^{-\eta_1/2}\geq \bar{\delta}_N^{-\eta_1/4}$ since
$$\xi \gamma' = \gamma ( 1-2\eta_0)(1-(8R)^{-1}) \geq \gamma (1- \eta_1/4), \text{ hence } \xi \gamma'-\gamma \geq -\gamma\eta_1/4 \geq -\eta_1/4$$
and
  \begin{align*}
  \gamma' (\gamma_m + \xi -1) & = \gamma (1-2\eta_0)(\gamma_m - \frac{1}{8R}) \geq \gamma \gamma_m - \frac{\eta_1}{4}
  \end{align*}
by some algebra using $\eta_1 >32\eta_0 \vee R^{-1}.$

We will not give the proof for $Q_T$ as it is quite similar except that some exponents change slightly.
\end{proof}
Finally, there is an estimate on $Q_S$:
\begin{lemma}\label{lem:5.6}
 Let $0\leq m \leq \bar{m}+1$ and assume that $(P_m)$ holds. For all $K \in \NN^{\geq K_1}$, $R>2$ $n\in \NN$, $\beta \in [0,1/2]$, $\eps_0 \in (0,1)$, there exist $c_{\ref{lem:5.6}}(K,R,\gamma)$, $N_{\ref{lem:5.6}}(m,n,R,\eps_0,K,\beta)(\omega) \in \NN$ almost surely such that for all $ \eta_1 \in (1/R, 1/2)$,  $\delta \in [a_n,1]$, $N \in \NN$, $(t,x) \in \RR_+\times \RR^{\dimension}$ the following holds: For $  \omega \in  \{ (t,x) \in Z(N,n,K,\beta), N \geq N_{\ref{lem:5.6}} \}$ we have
  \begin{align*}
   Q_{S,\delta} (s,s',t',x') & \leq c_{\ref{lem:5.6}} [a_n^{-2\eps_0} + 2^{4N_{\ref{lem:5.6}}}] \\
   & \quad \quad \left[ |s-s'|^{1-\eta_1/2} (\bar{\delta}_N^{(\gamma \gamma_m - 1 -\alpha/2)\wedge 0} + a_n^{2\beta \gamma} \bar{\delta}_N^{(\gamma-1-\alpha/2)\wedge0} ) \right. \\
   & \qquad \left. + (|s'-s|\wedge \delta)^{1-\eta_1/2} \1 \{\delta < \bar{d}_N^2 \}  \delta^{-1-\alpha/2} [\bar{d}_N^{2\gamma \gamma_m} + a_n^{2\beta \gamma} \bar{d}_N^{2\gamma} ]\right] 
  \end{align*}
for all $ 0 \leq s \leq t, s' \leq t', |x'| \leq K+1.$ Here $\bar{d}_N = (|t-t'|^{1/2} + |x-x'|) \vee 2^{-N}$ and $\bar{\delta}_N = \delta \vee \bar{d}_N^2.$ Moreover, $N_{\ref{lem:5.6}}$ is stochastically bounded uniformly in $(n,\beta)$.
\end{lemma}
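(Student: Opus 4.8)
The plan is to mimic the proof of Lemma \ref{lem:5.5a}, but the analysis is simpler here because $Q_S$ involves a single heat-kernel product $|p_{t'-r,1}(w-x')\,p_{t'-r,1}(z-x')|$ rather than a difference, and the time integration runs only over the short window $r \in ((s\wedge s'-\delta)^+,(s\vee s'-\delta)^+)$, whose length is at most $|s-s'|\wedge(\text{something})$. First I would fix the auxiliary H\"older exponent $\xi = 1-(8R)^{-1}$ and set $N_{\ref{lem:5.6}} := N_1(m,n,\xi,\eps_0,K,\beta)$ from Property $(P_m)$, so that on the event $\{(t,x)\in Z(N,n,K,\beta),\, N\geq N_{\ref{lem:5.6}}\}$ Lemma \ref{lem:5.2} is available. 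W.l.o.g.\ assume $s\le s'$, so the integration window is $((s-\delta)^+,(s'-\delta)^+)$; since $\delta\geq a_n$ we again have $d((r,w),(t,x))\wedge d((r,z),(t,x))\geq\sqrt{a_n}$ on the domain, which lets us absorb the $a_n^{-\eps_0}$ factor into $c_{\ref{lem:5.6}}[a_n^{-2\eps_0}+2^{4N_{\ref{lem:5.6}}}]$ exactly as before.

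The core estimate: insert $|D(r,w)|\le R_0 e^{R_1|w|}|u(r,w)|^\gamma$ and bound $|u(r,w)|^\gamma$ via Lemma \ref{lem:5.2}, giving a factor $e^{\gamma|w-x'|}$ times $[\,2^{-N}\vee d((r,w),(t',x'))\,]^{\gamma\xi}\{[\,\ldots\,]^{\gamma_m-1}+a_n^\beta\}^{\gamma}$ for each of $w$ and $z$ (using $t,t'\le K$ to turn $|x-x'|,|t-t'|$ losses into constants, and using $d((r,w),(t',x'))\le \sqrt{t'-r}+|w-x'|$). Then the $w,z$-integral of $p_{t'-r,1}(w-x')p_{t'-r,1}(z-x')\,e^{C(|w-x'|+|z-x'|)}(|w-z|^{-\alpha}+1)$, together with the polynomial-in-$|w-x'|$ weights coming from $d((r,w),(t',x'))^{\gamma\xi}$ etc., is controlled by Lemma \ref{lem:mps:5.1:add} (with $r_1=r_2$ proportional to $\gamma\xi$ resp.\ $\gamma(\gamma_m-1)$, $r_3$ a fixed constant), producing a factor $C(K,R)(t'-r)^{-\alpha/2}$ times powers of $(t'-r)^{1/2}$ matching the spatial exponents; the leftover factor $(t'-r)^{-1}$ comes from the two $(t'-r)^{-1/2}$ from $p_{t'-r,1}$ via Lemma \ref{lem:4.2}. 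As in Lemma \ref{lem:5.5a} one splits according to whether $t'-r\ge\bar d_N^2$ or $t'-r<\bar d_N^2$. In the first regime one bounds $[2^{-N}\vee\sqrt{t'-r}\vee|x-x'|]^{\bullet}$ by $(t'-r)^{\bullet/2}$ (since $t'-r$ dominates), leaving an integral $\int (t'-r)^{-1-\alpha/2+\gamma\xi\gamma_m-\ldots}\,dr$ over a window of length $\le|s-s'|$, which one evaluates crudely as (window length)$^{1-\eta_1/2}$ times the value of the integrand at its worst endpoint $\bar\delta_N=\delta\vee\bar d_N^2$; in the second regime $t'-r<\bar d_N^2$ forces $\delta<\bar d_N^2$ (since $\delta\le t'-r+\delta\le s'-r+\delta\le$ window end, so $r>s'-\delta$ means $t'-r<\ldots$), one pulls out $\bar d_N^{\bullet}$ and integrates $(t'-r)^{-1-\alpha/2}$ as $\delta^{-1-\alpha/2}\cdot(|s'-s|\wedge\delta)^{1-\eta_1/2}$ via the same Lemma 4.1(c) of \cite{mp:11} used in Lemma \ref{lem:5.5a}. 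Finally one replaces $\xi\gamma'$ by $\gamma$ (absorbing the small loss $d^{-\eta_1/2}\ge\bar\delta_N^{-\eta_1/4}$) and collects terms to reach the stated bound; the uniform stochastic boundedness of $N_{\ref{lem:5.6}}$ in $(n,\beta)$ is inherited directly from that of $N_1$ in Property $(P_m)$.

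The main obstacle is bookkeeping rather than conceptual: one must check carefully that the window length in the $r$-integral really yields the factor $|s-s'|^{1-\eta_1/2}$ (respectively $(|s'-s|\wedge\delta)^{1-\eta_1/2}$) and that the case distinction $\delta\lessgtr\bar d_N^2$ interacts correctly with the endpoints $(s\wedge s'-\delta)^+$ and $(s\vee s'-\delta)^+$ of integration — in particular verifying the implication ``$t'-r<\bar d_N^2$ on the integration window $\Rightarrow\delta<\bar d_N^2$'' that produces the indicator $\1\{\delta<\bar d_N^2\}$ in the second term, which is the one structural difference from the conclusion of Lemma \ref{lem:5.5a}. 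Since all of this is a routine adaptation of the argument for $Q_X$ and $Q_T$ given in detail in the proof of Lemma \ref{lem:5.5a} (and carried out explicitly in Section 9.4 of \cite{rippl:12}), I would state the reduction, point to Lemmas \ref{lem:4.2}, \ref{lem:mps:5.1:add} and 4.1(c) of \cite{mp:11} as the analytic inputs, sketch the two-regime split, and refer to \cite{rippl:12} for the remaining computation.
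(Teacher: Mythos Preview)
Your proposal is essentially correct and follows the same route the paper indicates: bound $|u|^\gamma$ via Lemma~\ref{lem:5.2}, collapse the spatial integrals with Lemma~\ref{lem:4.2} and Lemma~\ref{lem:mps:5.1:add} to produce a $(t'-r)^{-1-\alpha/2}$ kernel times powers of $(t'-r)$, then split the $r$--integral according to $t'-r\gtrless \bar d_N^2$ and read off the indicator $\1\{\delta<\bar d_N^2\}$ in the second regime exactly as you describe. The paper omits the proof and points to Lemma~5.6 of \cite{mp:11} together with Lemmas~\ref{lem:algebra} and~\ref{lem:mps:5.1:add}, which is precisely the toolkit you invoke.

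The one point where you diverge from the paper is the auxiliary H\"older exponent: you carry over $\xi=1-(8R)^{-1}$ from Lemma~\ref{lem:5.5a}, whereas the paper (following \cite{mp:11}) takes $\xi=(3/2-(2\gamma)^{-1})\wedge(1-(4\gamma R)^{-1})$. The reason this matters is that $Q_S$ lacks the factor $[1\wedge |x-x'|^2/(t-r)]$ which in Lemma~\ref{lem:5.5a} supplied the $|x-x'|^{-\eta_1/2}$ slack used to pass from $\gamma'\xi$ back to $\gamma$ in the exponents. Here the only slack available is the gap between $|s-s'|$ and $|s-s'|^{1-\eta_1/2}$, and to make the first--regime bookkeeping close uniformly (in particular when $|s-s'|$ is not small compared to $\bar\delta_N$) one has to be a bit more careful about how the $r$--integral is estimated; the paper's choice of $\xi$ keeps the exponent $\gamma\xi\gamma_m-1-\alpha/2$ strictly negative for all $m\le\bar m+1$, which streamlines that step. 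Apart from this adjustment of $\xi$ (and the attendant arithmetic), your plan matches the paper's.
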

We omit the proof, since again it is similar to a proof in \cite{mp:11} (Lemma 5.6), here using $\xi = (3/2-(2\gamma)^{-1}) \wedge (1-(4\gamma R)^{-1})$ as well as Lemma \ref{lem:algebra} and Lemma \ref{lem:mps:5.1:add}.

\noindent \textbf{Notation:} For $s,t,s',t' \geq 0, x,x' \in \Rdim$ we now introduce 
\begin{equation}
\label{tilde{d}}
\tilde{d} ((s,t,x),(s',t',x')) := \sqrt{|s-s'|} + \sqrt{|t-t'|} + |x-x'|.
\end{equation}
As a corollary of all the previous calculations we get a bound on $Q_{\delta,\eta_0}^{\text{tot}}$ as defined in \eqref{eq:Q:delta:bound}.
\begin{corollary}\label{cor:local:bounds:on:F}
 Let $0\leq m \leq \bar{m}+1$ and assume that $(P_m)$ holds. For all $K \in \NN^{\geq K_1}$, $R>2,$ $n\in \NN$, $\beta \in [0,1/2]$, $\eps_0 \in (0,1)$, there exist $c_{\ref{cor:local:bounds:on:F}}(K,R)$, $N_{\ref{cor:local:bounds:on:F}}(m,n,R,\eps_0,K,\beta)(\omega) \in \NN$ almost surely such that for all $ \eta_1 \in (1/R, 1/2)$, $\eta_0 \in (1/R, \eta_1/32),$  $\delta \in [a_n,1]$, $N \in \NN$, $(t,x) \in \RR_+\times \RR^{\dimension}$, the following holds:  For  $\omega \in  \{ (t,x) \in Z(N,n,K,\beta), N \geq N_{\ref{cor:local:bounds:on:F}} \}$ we have
 \begin{equation}\begin{split}
  Q_{\delta,\eta_0}^{\text{tot}} (s,t,x,s',t',x')  \leq c_{\ref{cor:local:bounds:on:F}} (a_n^{-2\eps_0} + 2^{4N_{\ref{cor:local:bounds:on:F}}} ) \tilde{d}^{2-\eta_1} [& \delta^{-1-\alpha/2} \bar{d}_N^{\gamma \gamma_m} + \delta^{-1-\alpha/2} a_n^{2\beta \gamma} \bar{d}_N^{2 \gamma} \\
  &  + \bar{\delta}_N^{(\gamma \gamma_m -1-\alpha/2)\wedge 0} + a_n^{2\beta \gamma}\bar{\delta}_N^{\gamma -1-\alpha/2}]   \end{split}
 \end{equation}
for all $0 \leq s \leq t\leq t'\leq T_K, |x'| \leq K+1.$
Here, $\tilde{d} = \tilde{d}((s,t,x),(s',t',x'))$ from (\ref{tilde{d}}), $\bar{d}_N = d((t,x),(t',x')) \vee 2^{-N}$ and $\bar{\delta}_N = \delta \vee \bar{d}_N$.
Moreover, $N_{\ref{cor:local:bounds:on:F}}$ is stochastically bounded uniformly in $(n,\beta).$
\end{corollary}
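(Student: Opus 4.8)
Following the pattern of the neighbouring lemmas, the plan is to read the bound off directly from the decomposition \eqref{eq:Q:delta:bound},
\[
Q_{\delta,\eta_0}^{\text{tot}}(s,t,x,s',t',x') = Q_{S,\delta}(s,s',t',x') + \sum_{i,j=1}^{2}\bigl(Q_{T,\delta,\eta_0}^{i,j}(s,t,x',t',x') + Q_{X,\delta,\eta_0}^{i,j}(s,t,x,t,x')\bigr),
\]
by applying the three estimates just proved to the nine summands. First I would set
\[
N_{\ref{cor:local:bounds:on:F}} := N_{\ref{lem:5.4a}}(K,\omega) \vee N_{\ref{lem:5.5a}}(m,n,R,\eps_0,K,\beta)(\omega) \vee N_{\ref{lem:5.6}}(m,n,R,\eps_0,K,\beta)(\omega),
\]
so that on the event $\{(t,x)\in Z(N,n,K,\beta),\ N\geq N_{\ref{cor:local:bounds:on:F}}\}$ all of Lemmas \ref{lem:5.4a}, \ref{lem:5.5a} and \ref{lem:5.6} apply simultaneously; as a finite maximum of families each stochastically bounded uniformly in $(n,\beta)$, $N_{\ref{cor:local:bounds:on:F}}$ inherits that property. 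I would likewise take $c_{\ref{cor:local:bounds:on:F}}$ to be a fixed multiple of $c_{\ref{lem:5.4a}}+c_{\ref{lem:5.5a}}+c_{\ref{lem:5.6}}$, replace each of $2^{4N_{\ref{lem:5.4a}}},2^{4N_{\ref{lem:5.5a}}},2^{4N_{\ref{lem:5.6}}}$ by the larger $2^{4N_{\ref{cor:local:bounds:on:F}}}$, and each bare prefactor $2^{4N}$ by $a_n^{-2\eps_0}+2^{4N_{\ref{cor:local:bounds:on:F}}}$; all of this only enlarges the estimates.

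It then remains to dominate each summand by $c_{\ref{cor:local:bounds:on:F}}(a_n^{-2\eps_0}+2^{4N_{\ref{cor:local:bounds:on:F}}})\tilde d^{2-\eta_1}$ times the four-term bracket of the corollary. For the six terms with $i+j\geq 3$, Lemma \ref{lem:5.4a} gives bounds of the form $|x-x'|^{2-\eta_1}$ and $|t-t'|^{1-\eta_1/2}\bigl(1+\delta^{-1-\alpha/2}(|t-t'|\wedge 1)^{4\gamma}\bigr)$; using $|x-x'|\vee|t-t'|^{1/2}\leq\tilde d$, $(|t-t'|\wedge 1)^{4\gamma}\leq|t-t'|^{4\gamma}\leq\bar d_N^{8\gamma}$ together with $\gamma\gamma_m\leq 3\gamma<8\gamma$ (since $\gamma_m\le\gamma_{\bar m+1}<3$), and the fact that every distance appearing is bounded by a constant $C(K)$ under the standing restrictions $s,s',t,t'\leq K$, $|x|,|x'|\leq K+1$ (so that $\delta^{-1-\alpha/2}\geq 1$ and $\bar\delta_N^{(\gamma\gamma_m-1-\alpha/2)\wedge 0}\geq c(K)>0$), these fit into the first bracketed term, and the absence of the $(P_m)$-refinement and of the factors $a_n^{2\beta\gamma}$ in Lemma \ref{lem:5.4a} is harmless since the corollary's right-hand side only has additional nonnegative terms. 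For the three remaining terms $Q_{X}^{1,1}$, $Q_T^{1,1}$, $Q_{S,\delta}$, Lemmas \ref{lem:5.5a} and \ref{lem:5.6} already produce exactly the four structural quantities $\delta^{-1-\alpha/2}\bar d_N^{\gamma\gamma_m}$, $\delta^{-1-\alpha/2}a_n^{2\beta\gamma}\bar d_N^{2\gamma}$, $\bar\delta_N^{(\gamma\gamma_m-1-\alpha/2)\wedge0}$, $a_n^{2\beta\gamma}\bar\delta_N^{\gamma-1-\alpha/2}$ (up to replacing a power of $\bar d_N$ by half that power, which costs only a factor $C(K)$), each carrying a prefactor among $|x-x'|^{2-\eta_1}$, $(|x-x'|\wedge\sqrt\delta)^{2-\eta_1}$, $|t-t'|^{1-\eta_1/2}$, $(|t-t'|\wedge\delta)^{1-\eta_1/2}$, $|s-s'|^{1-\eta_1/2}$, all $\leq\tilde d^{2-\eta_1}$. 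One then passes from the lemma-specific $\bar d_N$ and $\bar\delta_N$ (built from $|x-x'|$, resp. $|t-t'|^{1/2}$, and $\delta$) to the corollary's $\bar d_N=d((t,x),(t',x'))\vee 2^{-N}$, $\bar\delta_N=\delta\vee\bar d_N$ by monotonicity, bounding each distance from above or from below according to the sign of its exponent; here the hypothesis $\alpha<2(2\gamma-1)$, equivalently $\gamma_\infty>2$, is used to guarantee that the combined exponents obtained after pairing a prefactor with a negative power of $\bar\delta_N$ (e.g. $\gamma_{m+1}-1-\eta_1/2$) stay of the right sign, so that those products remain bounded by the claimed right-hand side rather than blowing up.

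The only real obstacle is this last piece of bookkeeping: the three input lemmas use mildly different normalizations of $\bar d_N$ and $\bar\delta_N$ and different prefactor distances, so one must carefully track the sign of every exponent — in particular of $\gamma\gamma_m-1-\alpha/2=\gamma_{m+1}-2$, which changes sign at $m=\bar m$ — when choosing the direction of each monotonicity estimate, and verify the handful of elementary exponent inequalities ($8\gamma\geq\gamma\gamma_m$, $2\gamma\geq\gamma$, $\xi\gamma'\geq\gamma(1-\eta_1/4)$, and the like) that make the $m=0$ contributions and the halved exponents fit. Since this is routine and entirely parallel to the white-noise case, I would state the combined bound and refer the reader to Section~9.4 of \cite{rippl:12}, following \cite{mp:11}, for the explicit computation, exactly as is done for the neighbouring Lemmas \ref{lem:5.5a} and \ref{lem:5.6}.
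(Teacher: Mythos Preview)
Your approach is correct and matches the paper's: the paper states this result merely ``as a corollary of all the previous calculations'' and gives no proof, so combining Lemmas \ref{lem:5.4a}, \ref{lem:5.5a} and \ref{lem:5.6} via the decomposition \eqref{eq:Q:delta:bound}, setting $N_{\ref{cor:local:bounds:on:F}}$ equal to the maximum of the three input thresholds, and absorbing the various distance prefactors into $\tilde d^{2-\eta_1}$ is exactly what is intended. Your handling of the $i+j\geq 3$ terms (using $\gamma_{\bar m+1}<3$ and $\bar\delta_N^{(\gamma\gamma_m-1-\alpha/2)\wedge 0}\geq c(K)$ to fit the crude bounds of Lemma \ref{lem:5.4a} into the bracket) and your deferral of the exponent bookkeeping to \cite{rippl:12} are both in line with the paper's treatment of the surrounding lemmas.
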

\noindent
\textbf{Notation: } Let us now introduce
\begin{equation}\label{eq:def:Delta:u_1'}
 \begin{split}
  \bar{\Delta}_{u_1'} (m,n,\lambda,\eps_0,2^{-N}) =  a_n^{-\eps_0}[& a_n^{-\lambda/2(1+\alpha/2)} 2^{-N\gamma \gamma_m} + (a_n^{\lambda/2} \vee 2^{-N})^{(\gamma \gamma_m -1-\alpha/2)\wedge 0} \\
  & + a_n^{-\lambda/2(1+\alpha/2) + \beta \gamma} (a_n^{\lambda/2} \vee 2^{-N})^\gamma ].
 \end{split}
\end{equation}
We note that in this definition and in the following $\lambda \in [0,1]$ replaces the analogous $\alpha$ of \cite{mp:11}.

\begin{proposition}\label{prop:5.8}
 Let $0\leq m \leq \bar{m}+1$ and assume that $(P_m)$ holds. For all $n\in \NN, \eta_1\in (0,1/2], \eps_0\in(0,1), K\in \NN^{\geq K_1}, \lambda \in [0,1], \beta\in[0,1/2]$ there is an $N_{\ref{prop:5.8}}=N_{\ref{prop:5.8}}(m,n,\eta_1,\eps_0,K,\lambda,\beta)(\omega) \in \NN^{\geq 2}$ almost surely such that for all $ N \geq N_{\ref{prop:5.8}}$, $(t,x) \in Z(N,n,K,\beta)$, $s\leq t $, $s'\leq t'\leq T_K$ and $\tilde{d} =\tilde{d}((s,t,x),(s',t',x')) < 2^{-N}$ it holds that
 \begin{equation}
  |F_{a_n^\lambda,l} (s,t,x) - F_{a_n^{\lambda},l} (s',t',x')| \leq 2^{-86}\dimension^{-4}  \tilde{d}^{1-\eta_1}  \bar{\Delta}_{u_1'} (m,n,\lambda,\eps_0,2^{-N}), \quad l=1,\dots,\dimension.
 \end{equation}
Moreover, $N_{\ref{prop:5.8}}$ is stochastically bounded uniformly in $(n,\lambda,\beta)$. 
\end{proposition}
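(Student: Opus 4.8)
\textbf{Proof strategy for Proposition \ref{prop:5.8}.}
The plan is to control the increment $|F_{a_n^\lambda,l}(s,t,x) - F_{a_n^\lambda,l}(s',t',x')|$ by applying the Dubins--Schwarz theorem together with the quadratic variation bound of Corollary \ref{cor:local:bounds:on:F}, following the scheme of Proposition 5.8 in \cite{mp:11}. First I would recall the three-term decomposition \eqref{eq:F_delta:decomp}: each of the three summands is a continuous martingale in its upper integration limit (with the remaining arguments held fixed), and the sum of its quadratic variations is dominated by $Q_{a_n^\lambda,\eta_0}^{\text{tot}}(s,t,x,s',t',x')$ from \eqref{eq:Q:delta:bound}. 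By Corollary \ref{cor:local:bounds:on:F}, on the event $\{(t,x)\in Z(N,n,K,\beta),\ N\geq N_{\ref{cor:local:bounds:on:F}}\}$ and for $\delta = a_n^\lambda \in [a_n,1]$ (which holds since $\lambda \leq 1$), this total square function is bounded by
\[
c_{\ref{cor:local:bounds:on:F}}(a_n^{-2\eps_0} + 2^{4N_{\ref{cor:local:bounds:on:F}}})\,\tilde d^{\,2-\eta_1}\bigl[a_n^{-\lambda(1+\alpha/2)}\bar d_N^{2\gamma\gamma_m} + a_n^{-\lambda(1+\alpha/2)+2\beta\gamma}\bar d_N^{2\gamma} + \bar\delta_N^{(\gamma\gamma_m-1-\alpha/2)\wedge 0} + a_n^{2\beta\gamma}\bar\delta_N^{\gamma-1-\alpha/2}\bigr],
\]
where I substitute $\delta^{-1-\alpha/2} = a_n^{-\lambda(1+\alpha/2)/2}$ into the exponents as appropriate (note that the $\bar d_N$-powers in Corollary \ref{cor:local:bounds:on:F} carry exponent $\gamma\gamma_m$ rather than $2\gamma\gamma_m$, so I must be careful to match the definition of $\bar d_N$ there with $\bar d_N = d((t,x),(t',x'))\vee 2^{-N} \leq 2^{-N}$ when $\tilde d < 2^{-N}$). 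Recognizing the square root of this bound as exactly $\tilde d^{\,1-\eta_1/2}$ times a quantity controlled by $\bar\Delta_{u_1'}(m,n,\lambda,\eps_0,2^{-N})$, one identifies the right-hand side of the proposition up to constants.

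The passage from a quadratic-variation bound to an almost-sure modulus-of-continuity statement is the standard Garsia--Rodemich--Rumsey / chaining argument, exactly as on the relevant pages of \cite{mp:11}: one applies the Dubins--Schwarz representation to write each martingale increment as a time-changed Brownian motion evaluated at a (random) time bounded by the square function, uses the Gaussian tail together with a union bound over a dyadic net of space-time points $(s,t,x,s',t',x')$ with $\tilde d < 2^{-N}$ inside $[0,T_K]$, and absorbs the resulting logarithmic losses into the factor $\tilde d^{-\eta_1/2}$ (this is why $\tilde d^{\,1-\eta_1/2}$ is degraded to $\tilde d^{\,1-\eta_1}$). The threshold $N_{\ref{prop:5.8}}$ is then the maximum of $N_{\ref{cor:local:bounds:on:F}}$ and the (random) level beyond which the chaining bound beats the target constant $2^{-86}q^{-4}$; the term $2^{4N_{\ref{cor:local:bounds:on:F}}}$ appearing in the square function bound is dominated once $N$ is large because it is multiplied by $\tilde d^{\,2-\eta_1} \leq 2^{-N(2-\eta_1)}$ and one chooses $N_{\ref{prop:5.8}}$ large enough in terms of $N_{\ref{cor:local:bounds:on:F}}$. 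Summing over $l = 1,\dots,q$ contributes only a factor $q$, harmlessly absorbed. The uniform stochastic boundedness of $N_{\ref{prop:5.8}}$ in $(n,\lambda,\beta)$ follows because $N_{\ref{cor:local:bounds:on:F}}$ has this property by Corollary \ref{cor:local:bounds:on:F} and the additional randomness in the chaining step (a supremum of Gaussians over a countable net) has tails independent of these parameters.

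The main obstacle I anticipate is purely bookkeeping rather than conceptual: one must verify that the algebraic combination of exponents coming out of Corollary \ref{cor:local:bounds:on:F} --- after the substitutions $\delta = a_n^\lambda$, $\bar d_N \leq 2^{-N}$, $\bar\delta_N = a_n^\lambda \vee 2^{-2N}$ --- collapses precisely to the four terms in the definition \eqref{eq:def:Delta:u_1'} of $\bar\Delta_{u_1'}$, including getting the exponent $-\lambda/2(1+\alpha/2)$ (and not $-\lambda(1+\alpha/2)$) right after taking the square root, and correctly handling the $(\,\cdot\,)\wedge 0$ truncations in the exponents $(\gamma\gamma_m - 1 - \alpha/2)\wedge 0$ versus the untruncated $\gamma - 1 - \alpha/2$ (which is automatically negative since $\gamma < 1$). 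A secondary point requiring care is that Corollary \ref{cor:local:bounds:on:F} is stated for $\delta \in [a_n,1]$ while here $\delta = a_n^\lambda$ with $\lambda \in [0,1]$, so $\lambda = 0$ gives $\delta = 1$ and $\lambda = 1$ gives $\delta = a_n$, both admissible; no degenerate case arises. Since all these steps are direct transcriptions of \cite{mp:11} with the colored-noise heat-kernel estimates of Section \ref{sec:4} (in particular Lemma \ref{lem:4.4}) already built into Corollary \ref{cor:local:bounds:on:F}, I would not reproduce the chaining computation in full but refer to Section 9.4 of \cite{rippl:12} for the details, as the text does for the surrounding lemmas.
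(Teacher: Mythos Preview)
Your proposal is correct and follows essentially the same route as the paper's proof: apply Corollary~\ref{cor:local:bounds:on:F} to bound the total square function $Q_{a_n^\lambda,\eta_0}^{\text{tot}}$, use the Dubins--Schwarz representation of the three martingales in \eqref{eq:F_delta:decomp} to obtain a Gaussian tail bound of the form $C\exp(-\tilde d^{-\eta_1/2}/2)$, and then feed this into the Kolmogorov--Centsov type chaining lemma (Lemma~5.7 of \cite{mp:11}) to pass to an almost-sure modulus, with the final reference to Section~9.4 of \cite{rippl:12} for the details. The only step you do not mention explicitly is the paper's handling of the case $t'\leq t$ by noting that then $(t',x')\in Z(N-1,n,K+1,\beta)$ and interchanging the roles of $(s,t,x)$ and $(s',t',x')$, which costs a harmless factor and explains the final constant $2^{-86}$ rather than $2^{-88}$.
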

\begin{proof}We do the proof for $l=1$ only, see Remark \ref{rem:only:l=1}.
 Let $R=33\eta_1^{-1}, \eta_0 \in (R^{-1},\eta_1/32)$ and consider the case $t\leq t'$ in the beginning only. Set
 \begin{align*}
  d &= d((t,x),(t',x')), \\
  \tilde{d} & = \sqrt{|s'-s|} + d, \\
  \bar{d}_N & = d \vee 2^{-N},\\
  \bar{\delta}_{n,N} & = a_n^\lambda \vee \bar{d}_N^2.
 \end{align*}
By Corollary \ref{cor:local:bounds:on:F} for $(t,x)\in Z(N,n,K,\beta), N\geq N_{\ref{cor:local:bounds:on:F}}$ it holds that
\begin{equation}\label{eq:pr:prop:5.8:1}
 \begin{split}
   Q_{a_n^\lambda,\eta_0}^{\text{tot}}(s,t,x,s',t',x')^{1/2} & \leq c_{\ref{cor:local:bounds:on:F}}^{1/2}(K,\eta_1) (a_n^{-\eps_0} + 2^{2N_{\ref{cor:local:bounds:on:F}}})  \tilde{d}^{1-\eta_1/2} \\ 
  & \quad   [(a_n^\lambda)^{-1/2(1+\alpha/2)} [\bar{d}_N^{\gamma \gamma_m} + a_n^{\beta \gamma} \bar{d}_N^\gamma ] \\ 
  & \quad \quad + \bar{\delta}_{n,N}^{(\gamma \gamma_m -1 -\alpha/2)/2 \wedge 0} + a_n^{\beta \gamma} \bar{\delta}_{n,N}^{(\gamma - 1-\alpha/2)/2} ]  \end{split}
\end{equation}
for all $s\leq t\leq t', s'\leq t' \leq T_K, |x'| \leq K+2.$
Therefore define for $c_{\dimension}= 2 + \log (2+\dimension)$
\begin{equation*}
 \begin{split}
  \Delta(m,n,\bar{d}_N) &= 2^{-96} a_n^{-\eps_0} \{ a_n^{-\lambda/2(1+\alpha/2)} [ \bar{d}_N^{\gamma \gamma_m} + a_n^{\beta \gamma} \bar{d}_N^\gamma ] \\
  & \quad \quad + (\sqrt{\bar{\delta}_{n,N}})^{(\gamma \gamma_m -1-\alpha/2)\wedge 0} + a_n^{\beta \gamma} (\sqrt{\bar{\delta}_{n,N}})^{\gamma-1-\alpha/2} \} (\dimension^5  4^{c_{\dimension}})^{-1},
 \end{split}
\end{equation*}
which satisfies
$$ \Delta(m,n,2^{-N+1}) \leq (2^{\gamma \gamma_m}\vee 2^\gamma \vee 2^0 \vee 2^{\gamma -1 -\alpha/2} ) \Delta(m,n,2^{-N}) \leq 4 \Delta (m,n,2^{-N}).$$
Choose $N_3 = \frac{33}{\eta_1}[N_{\ref{cor:local:bounds:on:F}} + N_4(K,\eta_1)] + \frac{4}{\eta_1}(8+10\log \dimension)$, where $N_4$ is chosen in such a way that
\begin{align*}
 \dimension^5 4^{c_{\dimension}} c_{\ref{cor:local:bounds:on:F}}^{1/2} [a_n^{-\eps_0} + 2^{2N_{\ref{cor:local:bounds:on:F}}}] 2^{-\eta_1N_3 /4} & \leq c_{\ref{eq:pr:prop:5.8:1}} (K,\eta_1)[a_n^{-\eps_0} + 2^{2N_{\ref{cor:local:bounds:on:F}}}]2^{-8N_{\ref{cor:local:bounds:on:F}} -8N_4} \\
 & \leq a_n^{-\eps_0} 2^{-100},
\end{align*}
i.e. $N_4 = N_4(a_n,\eps_0,N_{\ref{cor:local:bounds:on:F}},c_{\ref{eq:pr:prop:5.8:1}})$ and hence $N_3 = N_3(n,\eps_0,N_{\ref{cor:local:bounds:on:F}},K,\eta_1)$, which is stochastically bounded uniformly in $(n,\lambda,\beta)$.

Let $N' \in \NN$ be such that $\tilde{d} \leq 2^{-N'}$, which implies $\tilde{d}^{1-\eta_1/2} \leq 2^{-N'\eta_1/4}\tilde{d}^{1-3\eta_1/4}$. Then it is true that on the event
\begin{align*}
 \{ \omega: &(t,x)\in Z(N,n,K+1,\beta), N\geq N_3,N'\geq N_3\}\\
 \intertext{we have that}
 Q_{a_n^\lambda,\eta_0}^{\text{tot}}(s,t,x,s',t',x')^{1/2} & \leq c_{\ref{cor:local:bounds:on:F}}^{1/2} (a_n^{-\eps_0} + 2^{2N_{\ref{cor:local:bounds:on:F}}}) 2^{-N'\eta_1/4}\tilde{d}^{1-3\eta_1/4} 2^{100} a_n^{\eps_0}\Delta(m,n,\bar{d}_N) (\dimension^7 4^{c_{\dimension}}) \\
 & \leq \tilde{d}^{1-3\eta_1/4} \frac{1}{16}\Delta(m,n,\bar{d}_N), 
\end{align*}
whenever $ s\leq t \leq t', s'\leq t' \leq T_k, |x'|\leq K+2.$
Recalling the decomposition of $F_{\delta,1}$ in \eqref{eq:F_delta:decomp} into the sum of three martingales and applying the Dubins-Schwarz-Theorem we can write as long as $s\leq t \leq t', s'\leq t'$ and $\tilde{d} \leq 2^{-N},$
\begin{align}
 \IP [ & |F_{a_n^\lambda,1} (s,t,x) - F_{a_n^{\lambda},1} (s',t',x')| \geq \tilde{d}((s,t,x)(s',t',x'))^{1-\eta_1} \Delta(m,n,\bar{d}_N),  \nonumber \\
 &\qquad \qquad \phantom{AAAAAAAAAAAAA} (t,x) \in Z(N,n,K+1,\beta), N' \wedge N \geq N_3, t'\leq T_K ] \nonumber \\
 & \leq 3 \IP[ \sup_{u \leq \tilde{d}^{2-3\eta_1/2}(\Delta(m,n,\bar{d}_N)/16)^2 } |B(u)| \geq \tilde{d}^{1-\eta_1} \Delta(m,n,\bar{d}_N)/3  ] \nonumber \\
 & \leq 3 \IP[\sup_{u\leq 1} |B(u)| \geq \tilde{d}^{-\eta_1/4} ] \nonumber \\
 & \leq C \int_{\tilde{d}^{-\eta_1/4}}^{\infty} \exp(-y^2/2) \, dy \nonumber \\
 & \leq C \exp(-\tilde{d}^{-\eta_1/2}/2), \label{eq:pr:5.8:1}
\end{align}
where we used the Reflection Principle in the next to last inequality.

Next apply a lemma similar to the Kolmogorov-Centsov estimate Lemma 5.7 in \cite{mp:11}, which is used in the proof of Proposition 5.8 in \cite{mp:11}. For details we refer to the proof in Section 9.4 of \cite{rippl:12}. Then, we obtain for a certain $N_{\ref{prop:5.8}}$ which is bounded uniformly in $n,\lambda, \beta$: For $N \geq N_{\ref{prop:5.8}}$ and $(t,x)\in Z(N,n,K,\beta)(\omega)$, $\tilde d= \tilde d((s,t,x),(s',t',x'))\leq 2^{-N}$ and $s\leq t \leq t', s' \leq t'\leq T_K$ we have
\begin{equation}
 |F_{a_n^\lambda,1} (s,t,x) - F_{a_n^{\lambda},1} (s',t',x')| \leq 32 (\dimension+2) 4^{c_{\dimension}+1} \Delta(m,n,2^{-N}) \tilde{d}^{1-\eta_1} .
\end{equation}
Thus, 
\begin{equation}\label{eq:pr:prop:5.8:2}
 |F_{a_n^\lambda,1} (s,t,x) - F_{a_n^{\lambda},1} (s',t',x')| \leq 2^{-88}\dimension^{-4}  \tilde{d}^{1-\eta_1}  \bar{\Delta}_{u_1'} (m,n,\lambda,\eps_0,2^{-N}) .
\end{equation}
However if $t' \leq t$, then $(t',x') \in Z(N-1,n,K+1,\beta),$ and interchanging $(s,t,x)$ with $(s',t',x')$ gives the same estimate as \eqref{eq:pr:prop:5.8:1} so that we  obtain that $ Q_{a_n^\lambda}^{tot}(s',t',x',s,t,x) $ is bounded by 4 times the right hand side of  \eqref{eq:pr:prop:5.8:1}.
Proceeding as in the case $t\leq t'$ we end up with \eqref{eq:pr:prop:5.8:2} replaced by
\begin{equation*}
 |F_{a_n^\lambda,1} (s',t',x') - F_{a_n^{\lambda},1} (s,t,x)| \leq 2^{-86}\dimension^{-4}  \tilde{d}^{1-\eta_1}  \bar{\Delta}_{u_1'} (m,n,\lambda,\eps_0,2^{-N}) .
\end{equation*}
This completes the proof for the first coordinate. Clearly, the constants $c_{\ref{prop:5.8}}$ and $N_{\ref{prop:5.8}}$ can be chosen such that the result holds uniformly for all dimensions $1\leq l \leq \dimension$.\\
\end{proof}

So putting things together we get for $\nabla u_{1,\delta} (t,x) = (F_{\delta,l}(t,t,x))_{1\leq l \leq \dimension}$:
\begin{corollary}\label{cor:5.9}
 Let $0\leq m \leq \bar{m}+1$ and assume that $(P_m)$ holds. Let $n,\eta_1,\eps_0,K,\lambda$ and $\beta$ be as in Proposition \ref{prop:5.8}. For all $N\geq N_{\ref{prop:5.8}}, (t,x) \in Z(N,n,K,\beta)$, $x'\in \Rdim$ and $t' \leq T_K$:
 \begin{align*}
  & d((t,x),(t',x')) \leq 2^{-N} \text{  implies that}\\
  & |\nabla u_{1,a_n^{\lambda}}(t,x) - \nabla u_{1,a_n^{\lambda}}(t',x')| \leq 2^{-85} \dimension^{-2} \, d((t,x),(t',x'))^{1-\eta_1} \bar{\Delta}_{u_1'}(m,n, \lambda,\eps_0,2^{-N}).
 \end{align*}
\end{corollary}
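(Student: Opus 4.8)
The plan is to obtain Corollary \ref{cor:5.9} as a direct specialization of Proposition \ref{prop:5.8}. Recall that $\nabla u_{1,a_n^\lambda}(t,x)=(F_{a_n^\lambda,l}(t,t,x))_{1\leq l\leq\dimension}$ by the remark following Lemma \ref{lem:3.1}, applied with the special choice $s=t$ in the definition \eqref{Fdelta,ldef} of $F_{\delta,l}$. So the strategy is: first set $s=t$ and $s'=t'$ in Proposition \ref{prop:5.8}; then control the coordinatewise distance $\tilde d$ by the metric $d$; then sum the $\dimension$ coordinate estimates.

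Concretely, first I would fix $n,\eta_1,\eps_0,K,\lambda,\beta$ as in Proposition \ref{prop:5.8} and take the same $N_{\ref{prop:5.8}}$. For $N\geq N_{\ref{prop:5.8}}$, $(t,x)\in Z(N,n,K,\beta)$, $x'\in\Rdim$ and $t'\leq T_K$ with $d((t,x),(t',x'))\leq 2^{-N}$, set $s=t$ and $s'=t'$. Then in the notation \eqref{tilde{d}} we have
\[
\tilde d((t,t,x),(t',t',x')) = \sqrt{|t-t'|}+\sqrt{|t-t'|}+|x-x'| \leq 2\bigl(\sqrt{|t-t'|}+|x-x'|\bigr) = 2\,d((t,x),(t',x')).
\]
Since $d((t,x),(t',x'))\leq 2^{-N}$, one still needs $\tilde d\leq 2^{-N}$ to apply the proposition directly; this is the one genuinely delicate point, handled exactly as in \cite{mp:11}: shrink the allowed radius by replacing $N$ with $N+1$ (note $(t,x)\in Z(N,n,K,\beta)$ implies $(t,x)\in Z(N-1,n,K,\beta)$, and $d\le 2^{-N}$ gives $\tilde d\le 2^{-N+1}$), or equivalently absorb the factor of $2$ into $\bar\Delta_{u_1'}$ using that $\bar\Delta_{u_1'}(m,n,\lambda,\eps_0,2^{-N+1})\leq C\,\bar\Delta_{u_1'}(m,n,\lambda,\eps_0,2^{-N})$ for a universal $C$ (the same multiplicative bound by $4$ already used for $\Delta(m,n,\cdot)$ in the proof of Proposition \ref{prop:5.8}). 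Either way one gets, for each $l=1,\dots,\dimension$,
\[
|F_{a_n^\lambda,l}(t,t,x)-F_{a_n^\lambda,l}(t',t',x')| \leq C\, 2^{-86}\dimension^{-4}\, d((t,x),(t',x'))^{1-\eta_1}\,\bar\Delta_{u_1'}(m,n,\lambda,\eps_0,2^{-N}),
\]
with $C$ accounting for the $\tilde d\le 2d$ and the radius/$\bar\Delta$ adjustment.

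Finally I would sum over the $\dimension$ coordinates: since $|\nabla u_{1,a_n^\lambda}(t,x)-\nabla u_{1,a_n^\lambda}(t',x')|\leq\sum_{l=1}^\dimension|F_{a_n^\lambda,l}(t,t,x)-F_{a_n^\lambda,l}(t',t',x')|$ (or $\leq\sqrt\dimension$ times the max), the $\dimension$ copies of the per-coordinate bound combine into a single bound with $\dimension^{-4}$ improving to at worst $\dimension^{-3}$, and after tracking the explicit numerical constants one checks that the stated prefactor $2^{-85}\dimension^{-2}$ is comfortably large enough to absorb the factor $\dimension$ from the sum, the factor $2^{1-\eta_1}\leq 2$ from $\tilde d\leq 2d$, and the constant $C$ from the radius adjustment (all of these together contribute well under a factor of $2$ relative to moving from $2^{-86}$ to $2^{-85}$ and from $\dimension^{-4}$ to $\dimension^{-2}$). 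The uniform stochastic boundedness of $N_{\ref{prop:5.8}}$ in $(n,\lambda,\beta)$ carries over verbatim since we use the very same random index. The main obstacle — though a minor one — is the bookkeeping needed to pass from the coordinatewise metric $\tilde d$ in the three-time-variable setting of Proposition \ref{prop:5.8} to the space-time metric $d$ here while keeping the explicit dyadic constants consistent; there is no new analytic content beyond Proposition \ref{prop:5.8}.
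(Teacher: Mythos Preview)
Your proposal is correct and is precisely the argument the paper intends: the corollary is stated without proof, introduced only by ``putting things together we get for $\nabla u_{1,\delta}(t,x)=(F_{\delta,l}(t,t,x))_{1\le l\le \dimension}$,'' so specializing $s=t$, $s'=t'$ in Proposition~\ref{prop:5.8}, bounding $\tilde d\le 2d$, and summing over coordinates is exactly what is meant. The one bookkeeping point you flag---that $\tilde d\le 2^{-N+1}$ forces either a shift $N\to N-1$ or absorption into the constants via $\bar\Delta_{u_1'}(\cdot,2^{-N+1})\le 4\,\bar\Delta_{u_1'}(\cdot,2^{-N})$---is handled by the slack between $2^{-86}\dimension^{-4}$ and $2^{-85}\dimension^{-2}$ (and, if needed, by increasing $N_{\ref{prop:5.8}}$ by one, which preserves its stochastic boundedness).
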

This result gives us something like a H\"older regularity of the gradient $\nabla u_{1,\delta}$ with $\delta = a_n^\lambda$. This will be helpful later.

Recalling the definition of $J_{n,i}$, however, we just ``know'' the range of the gradients of $u_{1,\delta}$ for $\delta = a_n$. But it will be helpful to find a result relating this range to the  gradients of $u_{1,\delta}$ for $\delta = a_n^{\lambda}$. The definition of $F_{\delta,l}$ allows us to relate these two gradients, since for $\delta \geq a_n$ and $s=t-\delta + a_n,$
\begin{align}
 \partial_{x_l} u_{1,\delta} (t,x) = \partial_{x_l} P_\delta (u_{(t-\delta)^+}) (x) & = \partial_{x_l} P_{t-s+a_n}(u_{(s-a_n)^+})(x) \nonumber \\
 & = - F_{a_n,l}(s,t,x) \label{eq:u_1:F} \\
 & = - F_{a_n,l}(t-\delta+a_n,t,x). \nonumber 
\end{align}
Note the last equality holds for any $t,\delta, a_n \geq 0$, where they are trivial if $t-\delta \leq 0$. So we need to relate $F_{a_n,l}(t-a_n^\lambda+a_n,t,x)$ and $F_{a_n,l}(t,t,x)$. We can show a lemma on the square function $Q_{T,a_n}(s,t,t,x)$ using Lemma \ref{lem:4.2} and \ref{lem:mps:5.1:add} and ideas from the proof of Lemma \ref{lem:5.5a}. Then transfer that to the following proposition using the same techniques as in the proof of Proposition \ref{prop:5.8}. For details we refer to Lemma 5.9 in \cite{mp:11} or to \cite{rippl:12}.

\begin{proposition}\label{prop:5.11}
 Let $0\leq m \leq \bar{m}+1$ and assume that $(P_m)$ holds. Then for all $n\in \NN, \eta_1\in (0,1/2], \eps_0\in(0,1), K\in \NN^{\geq K_1}, \beta\in[0,1/2]$ there is an $N_{\ref{prop:5.11}}=N_{\ref{prop:5.11}}(m,n,\eta_1,\eps_0,K,\beta)(\omega) \in \NN^{\geq 2}$ almost surely such that  for all $N \geq N_{\ref{prop:5.11}}, (t,x) \in Z(N,n,K,\beta)$ and $0\leq t-s \leq N^{-8/\eta_1}$ it holds that for $l=1,\dots,\dimension$:
 \begin{align*}
  |F_{a_n,l} & (s,t,x) -  F_{a_n,l} (t,t,x)|  \leq  2^{-78} \dimension^{-1} a_n^{-\eps_0} \big[ 2^{-N(1-\eta_1)} (a_n^{1/2}\vee 2^{-N})^{(\gamma \gamma_m -1-\alpha /2)\wedge 0 } \\
 & \quad   + 2^{N\eta_1} a_n^{-\alpha/4} (\tfrac{2^{-N}}{\sqrt{a_n}} +1) ( 2^{-N\gamma \tilde{\gamma}_m} + a_n^{\beta \gamma} (\sqrt{a_n} \vee 2^{-N})^\gamma ) \\
  &  \quad +  (t-s)^{(1-\eta_1)/2} ((\sqrt{t-s} \vee \sqrt{a_n})^{\gamma \tilde{\gamma}_m - 1 -\alpha/2} + a_n^{\beta \gamma}(\sqrt{t-s} \vee \sqrt{a_n})^{\gamma-1-\alpha/2}) \big].
 \end{align*}
Moreover, $N_{\ref{prop:5.11}}$ is stochastically bounded uniformly in $(n,\beta)$.
\end{proposition}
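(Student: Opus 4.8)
The plan is to follow exactly the strategy used in the proof of Proposition \ref{prop:5.8}, i.e.~control a square function by a deterministic bound, pass to a Brownian motion via Dubins–Schwarz, estimate the exceedance probability by the Reflection Principle, and finally upgrade the pointwise bound to a uniform one through a Kolmogorov–Centsov type lemma. Using \eqref{eq:u_1:F}, the difference $F_{a_n,l}(s,t,x) - F_{a_n,l}(t,t,x)$ is the increment (in the upper time parameter of the heat kernel) of a single martingale: writing both terms via Lemma \ref{lem:3.1},
\[ F_{a_n,l}(s,t,x) - F_{a_n,l}(t,t,x) = \int_0^{(s-a_n)^+}\!\!\! \int \big( p_{t-r,l}(w-x) - p_{t-r,l}(w-x) \big) D(r,w)\, W(dr\,dw) + \int_{(s-a_n)^+}^{(t-a_n)^+}\!\!\! \int p_{t-r,l}(w-x) D(r,w)\, W(dr\,dw), \]
so only the ``time'' part $Q_{T,a_n}$ and the ``time-shift'' part $Q_{S,a_n}$ of the total square function appear — there is no $x,x'$ discrepancy, hence no $Q_X$. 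First I would therefore record a lemma bounding $Q_{T,a_n}^{i,j}(s,t,t,x)$ and $Q_{S,a_n}(s,t,t,x)$ on the event $\{(t,x)\in Z(N,n,K,\beta),\ N\geq N_0\}$: for $i+j\geq 3$ one uses Property $(P_0)$ (Theorem \ref{thm:collipmod}) exactly as in Lemma \ref{lem:5.4a}; for $i=j=1$ one plugs the bound of Lemma \ref{lem:5.2} into $|D(r,w)|\leq R_0 e^{R_1|w|}|u(r,w)|^\gamma$ and estimates the heat-kernel-derivative integrals using Lemma \ref{lem:4.2}, Lemma \ref{lem:mps:5.1:add} and Lemma \ref{lem:heat-ker:diff:2}, splitting the $r$-integral at $t-r \gtrless \bar d_N^2$ as in the proof of Lemma \ref{lem:5.5a}. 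The outcome is a bound of the shape $Q^{\text{tot}}_{a_n} \leq c(a_n^{-2\eps_0}+2^{4N_0}) (t-s)^{1-\eta_1/2}\big[\cdots\big]$ plus the terms carrying the $2^{-N}$ and $2^{N\eta_1}a_n^{-\alpha/4}$ factors that appear in the statement; the $2^{N\eta_1}$ factor comes from the range $a_n\leq t-r \leq \bar d_N^2$ where one cannot do better than the crude bound $\delta^{-1-\alpha/2}$ with $\delta=a_n$.

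Next, with $\delta = a_n$ fixed, I would set $\xi = (3/2 - (2\gamma)^{-1}) \wedge (1 - (4\gamma R)^{-1})$ (the same choice flagged after Lemma \ref{lem:5.6}), $R = 33\eta_1^{-1}$, and define $\Delta(m,n,\bar d_N)$ to be a small constant multiple of the bracketed quantity in the proposition. Taking square roots of the $Q^{\text{tot}}$ bound and restricting to $N, N' \geq N_3$ for a suitable $N_3 = N_3(n,\eps_0,N_0,K,\eta_1)$ (chosen so the prefactor $c^{1/2}(a_n^{-\eps_0}+2^{2N_0})2^{-N'\eta_1/4}$ is absorbed into $\frac{1}{16}\Delta$), the increment's square function is $\leq (t-s)^{1-3\eta_1/4}\frac{1}{16}\Delta(m,n,\bar d_N)^2$. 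Dubins–Schwarz then bounds $|F_{a_n,l}(s,t,x) - F_{a_n,l}(t,t,x)|$ by $\sup_{u\leq 1}|B(u)|$ rescaled, and the Reflection Principle gives an exceedance probability $\leq C\exp(-(t-s)^{-\eta_1/2}/2)$, summable in the relevant dyadic increments. Feeding this into the Kolmogorov–Centsov lemma (the analogue of Lemma 5.7 in \cite{mp:11}, over the parameter $s$ for fixed $t,x$, on the dyadic scale $0\leq t-s\leq N^{-8/\eta_1}$) yields the a.s.~bound for all $N\geq N_{\ref{prop:5.11}}$, with $N_{\ref{prop:5.11}}$ stochastically bounded uniformly in $(n,\beta)$ because each of $N_0$, $N_3$ inherits that uniformity from Property $(P_m)$ and Lemma \ref{lem:5.2}. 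Matching the resulting powers of $2^{-N}$, $\sqrt{a_n}$ and $\sqrt{t-s}$ against $\bar\Delta$ (replacing $\gamma_m$ by $\tilde\gamma_m = \gamma_m\wedge 2$ where the bound is only exploited for $\gamma\tilde\gamma_m$) gives precisely the three-term estimate in the statement.

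The main obstacle is the bookkeeping of the $Q_{T,a_n}^{1,1}$ and $Q_{S,a_n}$ estimates with $\delta = a_n$ rather than a free $\delta$: unlike in Proposition \ref{prop:5.8}, here the mollification scale is tied to the cutoff $a_n$ appearing inside $Z(N,n,K,\beta)$, so the regime $a_n \leq t-r \leq \bar d_N^2$ (where Lemma \ref{lem:5.2} only controls $|u(r,w)|$ by $\bar d_N^{\gamma_m-1}+a_n^\beta$ and the heat-kernel integral contributes $(t-r)^{-1-\alpha/2}$) produces the ungainly $2^{N\eta_1}a_n^{-\alpha/4}(2^{-N}/\sqrt{a_n}+1)$ term; showing this term is harmless when the proposition is applied (which happens downstream, for $t-s$ polynomially small in $N$ and $2^{-N}$ comparable to $\sqrt{a_n}$) requires care but is routine once the estimate is written down. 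Everything else is a direct transcription of the $\dimension=1$, white-noise arguments of \cite{mp:11}, with the colored-noise kernel $(|w-z|^{-\alpha}+1)$ handled throughout by the heat-kernel lemmas of Section \ref{sec:4}; full details are as in Section 9.4 of \cite{rippl:12}.
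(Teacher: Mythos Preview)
Your plan is essentially the paper's own: bound the relevant square function using Lemma~\ref{lem:4.2}, Lemma~\ref{lem:mps:5.1:add} and the ideas from the proof of Lemma~\ref{lem:5.5a}, then pass to the pathwise bound via Dubins--Schwarz, the Reflection Principle, and the Kolmogorov--Centsov lemma exactly as in Proposition~\ref{prop:5.8}. One minor slip: since here $t'=t$ and $x'=x$, your own displayed identity shows the first integral vanishes identically, so only the $Q_{S,a_n}$-type square function is present---there is no $Q_{T,a_n}^{i,j}$ contribution and hence no need for the $A_1^{\eta_0}/A_2^{\eta_0}$ split into cases $i+j\geq 3$ versus $i=j=1$; the paper accordingly bounds just one square function (which it denotes $Q_{T,a_n}(s,t,t,x)$, a slight abuse of notation for the $Q_S$-type quantity).
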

There is a similar result for $G_\delta$ in a special case ($(t,x) = (t',x')$), which will be needed later:
\begin{proposition}\label{prop:5.12}
 Let $0 \leq m \leq \bar{m} +1$ and assume that $(P_m)$ holds. For any $n \in \NN, \eta_1 \in (0,\frac{1}{2}), \eps_0 \in (0,1)$, $K\in \NN^{\geq K_1}$, $\lambda \in [0,1]$ and $\beta \in [0,1/2]$, there is an $N_{\ref{prop:5.12}} = N_{\ref{prop:5.12}}(m,n,\eta_1,\eps_0,K,\lambda,\beta) \in \NN$ a.s. such that for all $N\geq N_{\ref{prop:5.12}}$, $(t,x)\in Z(N,n,K,\beta)$, $s\leq t$ and $\sqrt{t-s} \leq 2^{-N}$,
  \begin{align*}
  |G_{a_n^\lambda} (s,t,x) -  G_{a_n^\lambda} (t,t,x)|  \leq   2^{-95} a_n^{-\eps_0-\lambda \alpha/4} (t-s)^{(1-\eta_1)/2} & \big[ (a_n^{\lambda/2}\vee 2^{-N})^{\gamma \gamma_m} \\
  & \quad \quad + a_n^{ \beta \gamma} (\sqrt{a_n} \vee 2^{-N})^\gamma \big].
 \end{align*}
Moreover, $N_{\ref{prop:5.12}}$ is stochastically bounded uniformly in $(n,\lambda,\beta)$.
\end{proposition}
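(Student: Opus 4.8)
The plan is to mimic the proof of Proposition~\ref{prop:5.8} but specialized to the zeroth-order object $G_{a_n^\lambda}$ (rather than its spatial derivative $F_{a_n^\lambda,l}$) and with $(t',x')=(t,x)$ so that only a temporal increment is involved. First I would write $G_{a_n^\lambda}(s,t,x)-G_{a_n^\lambda}(t,t,x)$ using Lemma~\ref{lem:3.1}: since $G_{a_n^\lambda}(s,t,x)=\int_0^{(s-a_n^\lambda)^+}\int p_{t-r}(y-x)D(r,y)\,W(dr\,dy)$ and $G_{a_n^\lambda}(t,t,x)=\int_0^{(t-a_n^\lambda)^+}\int p_{t-r}(y-x)D(r,y)\,W(dr\,dy)$, the difference is a single stochastic integral over the time strip $((s-a_n^\lambda)^+,(t-a_n^\lambda)^+]$ against the kernel $p_{t-r}(y-x)$ — this is simpler than the three-term decomposition \eqref{eq:F_delta:decomp} because there is no spatial shift and no change in the heat-kernel time parameter. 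This difference is a martingale in its upper time limit, so the Dubins--Schwarz theorem applies once we control the quadratic variation, which is exactly a $Q_S$-type square function but with $p_{t-r}$ in place of $p_{t-r,l}$.

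Next I would bound that quadratic variation. Using \eqref{def:D}, $|D(r,y)|\le R_0 e^{R_1|y|}|u(r,y)|^\gamma$, together with Lemma~\ref{lem:5.2} applied on the event $\{(t,x)\in Z(N,n,K,\beta),\ N\ge N_1\}$ to bound $|u(r,y)|^\gamma$ by $c_{\ref{lem:5.2}}^{\gamma/2} e^{\gamma|y-x|}\bar d_N^{\gamma\xi}[(\sqrt{a_n}\vee \bar d_N)^{\gamma(\gamma_m-1)}+\1\{m>0\}a_n^{\beta\gamma}]^{\gamma}$, and using Lemma~\ref{lem:mps:5.1:add} to integrate $\int\int p_{t-r}(w-x)p_{t-r}(z-x)|w-x|^{r_1}|z-x|^{r_2}e^{r_3(|w-x|+|z-x|)}(|w-z|^{-\alpha}+1)\,dw\,dz \le C e^{2r_3^2(t-r)}(t-r)^{-\alpha/2}(\cdots)$, the strip integral $\int_{(s-a_n^\lambda)^+}^{(t-a_n^\lambda)^+} dr\,(t-r)^{-\alpha/2}(\cdots)$ is a convergent one-dimensional integral since $t-r\ge a_n^\lambda$ on the domain. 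This yields a bound of the schematic form $Q_S^{1/2}\lesssim (a_n^{-\eps_0}+2^{2N_1})\,(t-s)^{(1-\eta_1/2)/2}\,a_n^{-\lambda\alpha/4}\,[(a_n^{\lambda/2}\vee 2^{-N})^{\gamma\gamma_m}+a_n^{\beta\gamma}(\sqrt{a_n}\vee 2^{-N})^{\gamma}]$; the power $a_n^{-\lambda\alpha/4}$ comes from $(a_n^\lambda)^{-\alpha/4}$ after integrating $(t-r)^{-\alpha/2}$ down to $t-r\approx a_n^\lambda$, and the restriction $\gamma_m\wedge 2=\tilde\gamma_m$ vs.\ $\gamma_m$ is moot here because the exponent $\gamma\gamma_m$ appears without a $\wedge 0$, so I should keep $\gamma_m$ (the stated proposition indeed writes $\gamma\gamma_m$, not $\gamma\tilde\gamma_m$, in the $G$-bound).

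Then I would convert the quadratic-variation bound into the pathwise estimate via the reflection principle exactly as in \eqref{eq:pr:5.8:1}: set $\Delta(m,n,2^{-N})$ equal (up to the universal constant $2^{-95}a_n^{-\eps_0-\lambda\alpha/4}$) to the bracketed quantity, show $\IP[\,|G_{a_n^\lambda}(s,t,x)-G_{a_n^\lambda}(t,t,x)|\ge (t-s)^{(1-\eta_1)/2}\Delta,\ (t,x)\in Z(N,n,K,\beta),\ N\wedge N'\ge N_5\,]\le C\exp(-(t-s)^{-\eta_1/2}/2)$ using that $(t-s)^{(1-\eta_1/2)/2}\le 2^{-N'\eta_1/4}(t-s)^{(1-3\eta_1/4)/2}$ when $\sqrt{t-s}\le 2^{-N'}$, and finally invoke the same Kolmogorov--Centsov-type chaining lemma (Lemma~5.7 of \cite{mp:11}, cf.\ the proof of Proposition~\ref{prop:5.8}) to upgrade the probability bound at dyadic $(s,t,x)$ to an almost-sure modulus with a stochastically bounded $N_{\ref{prop:5.12}}$, uniform in $(n,\lambda,\beta)$ because all ingredients ($N_1$, $c_{\ref{lem:5.2}}$, the heat-kernel constants) are. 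Summing over $l$ is unnecessary here since $G$ is scalar.

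I expect the main obstacle to be bookkeeping the exponent of $a_n$ so that the final bound matches the stated $2^{-95}a_n^{-\eps_0-\lambda\alpha/4}(t-s)^{(1-\eta_1)/2}[(a_n^{\lambda/2}\vee 2^{-N})^{\gamma\gamma_m}+a_n^{\beta\gamma}(\sqrt{a_n}\vee 2^{-N})^{\gamma}]$ exactly: one must track that the $(t-r)^{-\alpha/2}$ integration over $[\text{const}\cdot a_n^\lambda,\ t-s]$ produces the clean factor $a_n^{-\lambda\alpha/4}$ (rather than something involving $a_n$ at a different power or an extra logarithm), and that replacing $\gamma'=\gamma(1-2\eta_0)$ by $\gamma$ and $\xi$ by $1$ only costs an extra $\tilde d^{-\eta_1/4}$ as in Lemma~\ref{lem:5.5a}, which is absorbed by enlarging $\eta_1$; since the paper explicitly says ``we refer to Lemma 5.9 in \cite{mp:11}'' for the analogous statement, I would present this as ``the proof follows the same lines as Proposition~\ref{prop:5.8}, using Lemma~\ref{lem:4.2} to replace $p_{t-r,l}$ by $t^{-1/2}p_{2(t-r)}$ where needed and Lemma~\ref{lem:mps:5.1:add} for the spatial integration, and is carried out in detail in Section 9.4 of \cite{rippl:12}'' rather than grinding through every constant.
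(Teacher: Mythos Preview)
Your proposal is correct and matches the paper's intended approach: the paper says the proof is ``similar, even easier than the proof leading to Proposition~\ref{prop:5.8}'' and defers details to \cite{rippl:12}, and your outline---write $G_{a_n^\lambda}(s,t,x)-G_{a_n^\lambda}(t,t,x)$ as a single stochastic integral over the time strip $((s-a_n^\lambda)^+,(t-a_n^\lambda)^+]$ with kernel $p_{t-r}$, bound its quadratic variation via Lemma~\ref{lem:5.2} and Lemma~\ref{lem:mps:5.1:add}, then apply Dubins--Schwarz and the chaining lemma exactly as in Proposition~\ref{prop:5.8}---is precisely that. Two small corrections: the reference in \cite{rippl:12} is Section~9.8 (not 9.4), and Lemma~\ref{lem:4.2} is not actually needed here since the kernel in $G_\delta$ is $p_{t-r}$, not $p_{t-r,l}$; this is part of why the proof is easier than Proposition~\ref{prop:5.8}.
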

The proof of this result is similar, even easier than the proof leading to Proposition \ref{prop:5.8} and is omitted here, but details can be found in Section 9.8 in \cite{rippl:12}.

Recall that the goal of this section was to do the induction step of $(P_m)$, i.e. to get good H\"older estimates on $u=u_{1,\delta} + u_{2,\delta}$. Now we are able to give a result for $u_{1,\delta}$.\\
\noindent
\textbf{Notation:} Let
 \begin{align*}  \bar{\Delta}_{u_1} (m,n,\lambda,\eps_0,2^{-N}) = a_n^{-\eps_0- \lambda(1+\alpha/2)/2}  \big[ a_n^{\beta +\lambda(1+\alpha/2)/2} +  (a_n^{\lambda/2} \vee 2^{-N})^{\gamma \tilde{\gamma}_m +1} &  \\
   + \1 (m\geq \bar{m}) a_n^{\beta+\lambda(1+\alpha/2)/2}  + a_n^{\beta\gamma} (a_n^{\lambda/2} \vee & 2^{-N})^{\gamma +1} \big] \end{align*}
and
\[ N_{\ref{prop:5.13}}'(\eta) = \min \{N \in \NN:  2^{1-N} \leq N^{-8/\eta} \} \]
for a constant $\eta >0$.

\begin{proposition}\label{prop:5.13}
 Let $0\leq m \leq \bar{m}+1$ and assume that $(P_m)$ holds. For all $n\in \NN, \eta_1\in (0,\frac{1}{2} \wedge (1-\frac{\alpha}{2})), \eps_0, \eps_1 \in(0,1), K\in \NN^{\geq K_1}$, $\lambda \in [\eps_1, 1-\eps_1]$, $\beta\in[0,1/2]$ there is an $N_{\ref{prop:5.13}}=N_{\ref{prop:5.13}}(m,n,\eta_1,\eps_0,K,\lambda,\beta)(\omega) \in \NN^{\geq 2}$ almost surely such that for all $N \geq N_{\ref{prop:5.13}}$ and $ n, \lambda$ that satisfy
 \begin{equation}\label{eq:proof:5.13:1}
  a_n \leq 2^{-2(N_{\ref{prop:5.11}} +1)} \wedge 2^{-2(N_{\ref{prop:5.13}}'( \eta_1 \eps_1) +1)} \text{ and } \lambda \geq \eps_1,
 \end{equation}
and $ (t,x) \in Z(N,n,K,\beta)$, $t' \leq T_K, x' \in \Rdim$ s.t.~$d((t,x),(t',x')) \leq 2^{-N}$ it holds that
 \begin{align*}
  |u_{1,a_n^\lambda} (t,x) -  u_{1,a_n^\lambda} (t',x')|  \leq &  2^{-90} d((t,x),(t',x'))^{1-\eta_1} \bar{\Delta}_{u_1} (m,n,\lambda,\eps_0,2^{-N}).
 \end{align*}
Moreover, $N_{\ref{prop:5.13}}$ is stochastically bounded uniformly in $(n,\lambda,\beta)$.
\end{proposition}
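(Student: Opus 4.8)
The argument will mirror the proof of the corresponding statement in \cite{mp:11}. I would first reduce to the case $t'\le t$: if $t\le t'$, interchange the roles of $(t,x)$ and $(t',x')$, which then forces the membership $(t',x')\in Z(N-1,n,K+1,\beta)$ (the anchor point of $(t,x)$ lies within distance $2\cdot2^{-N}$ of $(t',x')$, and $|x'|\le K+1$) and changes only constants. Write $\delta=a_n^\lambda$ and $d=d((t,x),(t',x'))$, and split
\[
 u_{1,\delta}(t,x)-u_{1,\delta}(t',x')=\underbrace{\big[u_{1,\delta}(t,x)-u_{1,\delta}(t,x')\big]}_{=:A}+\underbrace{\big[u_{1,\delta}(t,x')-u_{1,\delta}(t',x')\big]}_{=:B},
\]
estimating the spatial increment $A$ through gradient bounds and the temporal increment $B$ through the martingale representation of $u_{1,\delta}$ from Lemma \ref{lem:3.1}.

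For $A$: since $u_{1,\delta}(t,\cdot)=G_\delta(t,t,\cdot)$ and $\partial_{x_l}G_\delta(t,t,y)=-F_{\delta,l}(t,t,y)$ by \eqref{Fdelta,ldef} and Lemma \ref{lem:3.1}, the Fundamental Theorem of Calculus gives $|A|\le|x-x'|\sup_{y\in[x,x']}|\nabla u_{1,\delta}(t,y)|$. Every $y\in[x,x']$ satisfies $d((t,x),(t,y))=|x-y|\le d\le2^{-N}$, so Corollary \ref{cor:5.9} controls $|\nabla u_{1,\delta}(t,y)-\nabla u_{1,\delta}(t,x)|$, and it remains to bound the base value $|\nabla u_{1,\delta}(t,x)|$. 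For this I would use the chain: (i) the membership $(t,x)\in Z(N,n,K,\beta)$ supplies an anchor $(\hat t_0,\hat x_0)$ with $d((t,x),(\hat t_0,\hat x_0))\le2^{-N}$ and $|\nabla u_{1,a_n}(\hat t_0,\hat x_0)|\le a_n^{\beta}$, and Corollary \ref{cor:5.9} with $\lambda=1$ transports this to a bound on $|\nabla u_{1,a_n}(t,x)|$; (ii) $\nabla u_{1,a_n^\lambda}(t,x)=-\big(F_{a_n,l}(t-a_n^\lambda+a_n,t,x)\big)_l$ by \eqref{eq:u_1:F}; (iii) Proposition \ref{prop:5.11} relates $F_{a_n,l}(t-a_n^\lambda+a_n,t,x)$ to $F_{a_n,l}(t,t,x)=-\partial_{x_l}u_{1,a_n}(t,x)$, which is applicable precisely because the smallness of $a_n$ imposed in \eqref{eq:proof:5.13:1} (through the threshold $N_{\ref{prop:5.13}}'(\eta_1\eps_1)$) together with $\lambda\ge\eps_1$ places the time-gap $a_n^\lambda-a_n\le a_n^{\eps_1}$ inside the admissible window of that proposition. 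Combining (i)--(iii) bounds $|\nabla u_{1,\delta}(t,x)|$ by the size encoded in $\bar{\Delta}_{u_1}$.

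For $B$: from Lemma \ref{lem:3.1}, assuming $t'\ge\delta$ (the case $t'<\delta$ being easier),
\[
 B=\int_0^{t'-\delta}\!\!\int\big(p_{t-r}(y-x')-p_{t'-r}(y-x')\big)D(r,y)\,W(dr\,dy)+\int_{t'-\delta}^{t-\delta}\!\!\int p_{t-r}(y-x')D(r,y)\,W(dr\,dy).
\]
The second integral equals $G_\delta(t,t,x')-G_\delta(t',t,x')$ and is bounded directly by Proposition \ref{prop:5.12} (applied at $(t,x')\in Z(N-1,n,K+1,\beta)$, using $\sqrt{t-t'}\le d\le2^{-N}$). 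The first integral is a ``$Q_T$-type'' object but for the heat kernel $p$ in place of its spatial derivative $p_{\cdot,l}$; its quadratic variation I would estimate by a short additional lemma of the kind in Section \ref{sec:4}, using $|D(r,y)|\le R_0e^{R_1|y|}|u(r,y)|^\gamma$, the control on $|u(r,y)|$ near $Z$-points from Lemma \ref{lem:5.2}, and the heat-kernel estimates of Section \ref{sec:4} --- this contribution being strictly less singular than the $p_{\cdot,l}$-estimates and dominated by the required right-hand side. A Dubins--Schwarz/reflection estimate as around \eqref{eq:pr:5.8:1}, followed by the Kolmogorov--Centsov-type chaining lemma used in the proof of Proposition \ref{prop:5.8}, then upgrades these quadratic-variation bounds to the pointwise modulus estimate; $N_{\ref{prop:5.13}}$ is chosen large enough to absorb $N_{\ref{prop:5.8}}$, $N_{\ref{prop:5.11}}$, $N_{\ref{prop:5.12}}$, and the uniform stochastic boundedness in $(n,\lambda,\beta)$ is inherited from that of the ingredients.

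Finally, adding the bounds for $A$ and $B$ and collecting exponents produces the estimate with $\bar{\Delta}_{u_1}(m,n,\lambda,\eps_0,2^{-N})$; note $\tilde{d}\le 2d$ for the evaluations used here, so the $\tilde{d}$-bounds from Propositions \ref{prop:5.8} and \ref{prop:5.12} convert to the required $d$-bounds up to constants. I expect the one genuinely delicate point to be exactly this last step: one must check that every error term produced by the transfers --- the $\bar{\Delta}_{u_1'}$-terms of Corollary \ref{cor:5.9}, the $a_n^{-\alpha/4}$-type loss from Proposition \ref{prop:5.11}, and the $Q_S$/$Q_T$-contributions --- is dominated, uniformly over $\lambda\in[\eps_1,1-\eps_1]$ and over $a_n$ satisfying \eqref{eq:proof:5.13:1}, by the precise exponents appearing in $\bar{\Delta}_{u_1}$. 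This is the place where $\alpha<2(2\gamma-1)$, hence $\gamma_m\uparrow\gamma_\infty>2$, enters. The routine details, together with those of the chaining argument, are the content of Section 9.4 of \cite{rippl:12}.
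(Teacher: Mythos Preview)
Your overall strategy is sound and would work, but the paper's decomposition is simpler and avoids one of your steps entirely. The paper writes (for $t'\le t$)
\[
|u_{1,a_n^\lambda}(t,x)-u_{1,a_n^\lambda}(t',x')|\le T_1+T_2+T_3,
\]
where $T_1=|u_{1,a_n^\lambda}(t',x')-u_{1,a_n^\lambda}(t',x)|$, $T_2=|u_{1,a_n^\lambda}(t',x)-P_{t-t'}(u_{1,a_n^\lambda}(t',\cdot))(x)|$, and $T_3=|G_{a_n^\lambda}(t',t,x)-G_{a_n^\lambda}(t,t,x)|$. Your ``second integral in $B$'' is exactly $T_3$ (at $x'$), and your spatial term $A$ is $T_1$ (at $t$ instead of $t'$); these match. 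The key difference is your ``first integral in $B$'': by the semigroup identity $G_{a_n^\lambda}(t',t,x')=P_{t-t'}(u_{1,a_n^\lambda}(t',\cdot))(x')$, this integral is precisely $T_2$ evaluated at $x'$. The paper therefore writes $T_2=\bigl|\int p_{t-t'}(y)[u_{1,a_n^\lambda}(t',x)-u_{1,a_n^\lambda}(t',x-y)]\,dy\bigr|$ and bounds it \emph{deterministically} through the spatial gradient estimates of Corollary~\ref{cor:5.9} (together with the base-point bound coming from Proposition~\ref{prop:5.11}, exactly as in your steps (i)--(iii)). This bypasses the extra quadratic-variation lemma and the Dubins--Schwarz/chaining argument you propose for that piece. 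Your route would go through, since the non-derivative kernel differences are indeed less singular than the $p_{\cdot,l}$ ones, but it duplicates work: everything reduces to Corollary~\ref{cor:5.9} and Proposition~\ref{prop:5.12}, which is why the paper says only ``use Corollary~\ref{cor:5.9} for $T_1$ and $T_2$ and Proposition~\ref{prop:5.12} for $T_3$.''
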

We leave out the proof since it is really the same as the proof of Proposition 5.13 in \cite{mp:11}, but present the key idea. By definition
\begin{align*}
 |u_{1,a_n^\lambda} (t',x') -  u_{1,a_n^\lambda} (t,x)| & \leq |u_{1,a_n^\lambda} (t',x') -  u_{1,a_n^\lambda} (t',x)| + |u_{1,a_n^\lambda} (t',x) -  P_{t-t'}(u_{1,a_n^\lambda} (t',\cdot))(x)| \nonumber \\
 & \quad + |G_{a_n^\lambda}(t',t,x) - G_{a_n^\lambda}(t,t,x)| \\
 & \equiv T_1 + T_2 + T_3.
\end{align*}
Then use Corollary \ref{cor:5.9} for $T_1$ and $T_2$ and Proposition \ref{prop:5.12} for $T_3$ to get the result.

We also would like to obtain a similar result for $u_{2,\delta}$. We omit its proof which is simpler than the previous calculations.
In the statement of the result we use the following abbreviations:\\
\textbf{Notation:}
\begin{align*}
 \bar{\Delta}_{1,u_2} (m,n,\eps_0,2^{-N}) & = a_n^{-\eps_0} 2^{-N\gamma} [(a_n^{1/2} \vee 2^{-N})^{\gamma (\tilde{\gamma}_m -1)} + a_n^{\beta \gamma} ], \\
 \bar{\Delta}_{2,u_2} (m,n,\lambda, \eps_0) & = a_n^{-\eps_0} [
a_n^{\frac{\lambda}{2} (\gamma \tilde{\gamma}_m - \alpha/2)} + a_n^{\beta \gamma} a_n^{\frac{\lambda}{2}(\gamma- \alpha/2)} ].
\end{align*}
\begin{proposition}\label{prop:5.14}
  Let $0\leq m \leq \bar{m}+1$ and assume that $(P_m)$ holds. For all $n\in \NN, \eta_1\in (0,\frac{\alpha}{4}\wedge (1-\frac{\alpha}{2}) ), \eps_0 \in(0,1), K\in \NN^{\geq K_1}$, $\lambda \in [0,1]$, $\beta\in[0,\frac{1}{2}]$ there is an $N_{\ref{prop:5.14}}=N_{\ref{prop:5.14}}(m,n,\eta_1,\eps_0,K,\lambda,\beta)(\omega) \in \NN$ almost surely such that for all $ N \geq N_{\ref{prop:5.14}}$, $ (t,x) \in Z(N,n,K,\beta), t' \leq T_K:$
 \begin{align*}
  & d:= d((t,x),(t',x')) \leq 2^{-N} \text{ implies that }\\
  & |u_{2,a_n^\lambda} (t,x) -  u_{2,a_n^\lambda} (t',x')|  \leq   2^{-94} (d^{(1-\alpha/2)(1-\eta_1)} \bar{\Delta}_{1,u_2} + d^{1-\eta_1} \bar{\Delta}_{2,u_2}).
 \end{align*}
 Moreover, $N_{\ref{prop:5.14}}$ is stochastically bounded uniformly in $(n,\lambda,\beta)$.
\end{proposition}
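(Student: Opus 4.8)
The plan is to follow the pattern of the proof of Proposition~\ref{prop:5.8}, in the simpler situation where no spatial derivative of the heat kernel is present. Recall that $u_{2,a_n^\lambda}(t,x) = u(t,x) - u_{1,a_n^\lambda}(t,x) = \int_{(t-a_n^\lambda)^+}^{t} \intRd p_{t-r}(w-x)\, D(r,w)\, W(dr\,dw)$, i.e.\ the contribution to $u$ of the noise in the last $a_n^\lambda$ units of time. Fixing $(t,x)\in Z(N,n,K,\beta)$, $t'\le T_K$ and assuming w.l.o.g.\ $t\le t'$, I would decompose $u_{2,a_n^\lambda}(t,x) - u_{2,a_n^\lambda}(t',x')$ into three martingale increments as in \eqref{eq:F_delta:decomp}: a spatial increment $\int_{(t-a_n^\lambda)^+}^{t}\intRd (p_{t-r}(w-x)-p_{t-r}(w-x'))D(r,w)\,W(dr\,dw)$, an increment from changing the kernel and the upper time limit from $t$ to $t'$, and an increment from moving the lower time limit from $(t-a_n^\lambda)^+$ to $(t'-a_n^\lambda)^+$. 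Each of these is a martingale in its (upper) time parameter with the remaining arguments frozen, so the Dubins--Schwarz representation applies to each.

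The core step is to bound the quadratic variation of each piece on the event $\{(t,x)\in Z(N,n,K,\beta),\ N\ge N_\ast\}$, where $N_\ast$ is taken to be an $N_1$ from property $(P_m)$ with $\xi$ chosen close to $1$, so that $N_\ast$ is stochastically bounded uniformly in $(n,\beta)$. On that event I would insert the bound $|D(r,w)|\le R_0 e^{R_1|w|}|u(r,w)|^\gamma$ from \eqref{def:D} together with the local control on $|u(r,w)|$ coming from Lemma~\ref{lem:5.2}; since the time window has length $a_n^\lambda$, the Gaussian weight effectively confines $w$ to $|w-x|\lesssim\sqrt{t-r}\le a_n^{\lambda/2}$, so the quantity $\bar d_N$ of Lemma~\ref{lem:5.2} is at most $2^{-N}\vee a_n^{\lambda/2}$, and $|u(r,w)|^\gamma$ contributes exactly the powers of $2^{-N}$, $a_n^{1/2}\vee 2^{-N}$, $a_n^{\lambda/2}$ and $a_n^{\beta\gamma}$ that appear in $\bar\Delta_{1,u_2}$ and $\bar\Delta_{2,u_2}$; the truncation $\tilde\gamma_m=\gamma_m\wedge 2$ reflects that $(P_m)$ only furnishes the locally improved modulus up to Hölder order $2$. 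After this substitution each quadratic variation reduces to an integral over $r\in[(t-a_n^\lambda)^+,t]$ of a product of (differences of) plain heat kernels $p_{t-r}$ against the correlation kernel $k$ subject to \eqref{kernel}; such integrals are controlled by the estimates of Section~\ref{sec:4}, in particular Lemma~\ref{lem:mps:5.1:add}, together with the derivative-free heat-kernel difference bounds of \cite{mps:06}. Splitting the $r$-integral and distinguishing cases according to the relative sizes of $d$, $2^{-N}$ and the window length $a_n^\lambda$ produces the two terms: the regime in which the spatial separation is not resolved inside the window yields only the standard colored-noise exponent and hence the term $d^{(1-\alpha/2)(1-\eta_1)}\bar\Delta_{1,u_2}$, while the complementary regime yields the stronger exponent and the term $d^{1-\eta_1}\bar\Delta_{2,u_2}$. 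The hypotheses $\eta_1<\tfrac{\alpha}{4}\wedge(1-\tfrac{\alpha}{2})$ are precisely what make the resulting $r$-integrals converge with the claimed exponents.

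Having bounded each quadratic variation by $\tilde d^{\,2-\eta_1}$ times the square of the target right-hand side (with $\tilde d$ as in \eqref{tilde{d}}), the Dubins--Schwarz theorem and the reflection principle give, exactly as at \eqref{eq:pr:5.8:1}, a Gaussian tail estimate $\IP[\,\cdot\,]\le C\exp(-\tilde d^{-\eta_1/2}/2)$ for the probability that the increment of $u_{2,a_n^\lambda}$ exceeds the target bound at a fixed pair of space--time points with $(t,x)\in Z(N,n,K,\beta)$ and $N$ large. Summing these tail bounds over a dyadic net and invoking the Kolmogorov--Centsov-type lemma used in \cite{mp:11} (their Lemma~5.7; see also Section~9.4 of \cite{rippl:12}) upgrades this to the claimed uniform modulus, with an a.s.\ finite $N_{\ref{prop:5.14}}$; tracking the uniform stochastic boundedness of $N_\ast$ through the argument shows that $N_{\ref{prop:5.14}}$ is stochastically bounded uniformly in $(n,\lambda,\beta)$.

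The main obstacle is the bookkeeping inside the quadratic-variation estimate: one must track $d$, $2^{-N}$ and $a_n^{\lambda/2}$ carefully enough to land on the precise exponents $(1-\alpha/2)(1-\eta_1)$ and $1-\eta_1$ and the precise prefactors $\bar\Delta_{1,u_2}$, $\bar\Delta_{2,u_2}$, uniformly in $\lambda\in[0,1]$ and in $m\le\bar m+1$. This is genuinely easier than Proposition~\ref{prop:5.8}, however: because no heat-kernel derivative appears, Lemma~\ref{lem:4.4} and the partition of $\RR^\dimension$ into the regions $A_1^{\eta_0}$, $A_2^{\eta_0}$ are not needed, and the white-noise computation in \cite{mp:11} can be essentially transcribed, substituting the colored-noise heat-kernel estimates of Section~\ref{sec:4} for the corresponding white-noise ones. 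For this reason the full details may be omitted, as in \cite{rippl:12}.
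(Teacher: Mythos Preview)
Your proposal is correct and follows essentially the same approach as the paper, which in fact omits the proof entirely, stating only that ``the ideas are the same as those leading to Proposition~\ref{prop:5.8} and many steps are analogous to Section~7 in \cite{mp:11}.'' Your outline---the martingale decomposition of $u_{2,a_n^\lambda}(t,x)-u_{2,a_n^\lambda}(t',x')$, the quadratic-variation bound via Lemma~\ref{lem:5.2} and the colored-noise heat-kernel estimates of Section~\ref{sec:4}, followed by Dubins--Schwarz and the Kolmogorov--Centsov-type chaining---is precisely that pattern, and your observation that the absence of heat-kernel derivatives makes the $A_1^{\eta_0}/A_2^{\eta_0}$ partition and Lemma~\ref{lem:4.4} unnecessary is also in line with why this case is simpler.
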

We omit the proof since the ideas are the same as those leading to Proposition \ref{prop:5.8} and  many steps are analogous to Section 7 in \cite{mp:11}. Now we are ready to complete this section by proving the induction step:\\

\noindent
\textbf{Proof of Proposition \ref{prop:5.1}:}\label{pr:prop:5.1:part2}\\
Let $0 \leq m \leq \bar{m}$ and assume $(P_m)$. We want to show $(P_{m+1})$. Let therefore $\eps_0 \in (0,1)$, $M = \lceil \frac{2}{\eps_0} \rceil, \eps_1 = \frac{1}{M} \leq  \eps_0/2,$ and $\lambda_i = i \eps_1$ for $i = 1, \dots, M$. Then clearly $\lambda_i \in [\eps_1,1]$ for all $i = 1, \dots, M$.

Let $n, \xi, K, \beta$ be as in $(P_m)$ and w.l.o.g. $\xi$ is sufficiently large such that  for $\eta_1 := 1-\xi$ we have $\eta_1 < \tfrac{\alpha}{4} \wedge (1-\tfrac{\alpha}{2})$. Set $\xi' = (1+\xi)/2 \in (\xi, 1)$ and
\begin{align*}
 N_2 (m,n,\xi,\eps_0,K,\beta)(\omega) & = \bigvee_{i = 1}^M N_{\ref{prop:5.13}}(m,n,\eta_1, \eps_0/2, K+1, \lambda_i, \beta), \\
 N_3 (m,n,\xi,\eps_0,K,\beta)(\omega) & = \bigvee_{i = 1}^M N_{\ref{prop:5.14}}(m,n,\eta_1, \eps_0/2, K+1, \lambda_i, \beta), \\
 N_4 (m,n,\xi,\eps_0,K,\beta)(\omega) & = \lceil \frac{2}{1-\xi} \rceil (N_{\ref{prop:5.11}} \vee N_{\ref{prop:5.13}'} + 1)  =: \frac{1}{1- \xi} N_5, \\
 N_1 & = N_2 \vee N_3 \vee N_4 \vee N_0(\xi',K) + 1 \in \NN,
\end{align*}
where $N_0(\xi',K)$ is the constant we obtained from Theorem \ref{thm:collipmod}. By the results on each of the single constants we know that $N_1$ is then stochastically bounded uniformly in $(n,\beta)$. Let now 
$$N \geq N_1, (t,x) \in Z(N,n,K,\beta), t'\leq T_K \text{ and } d := d((t,x),(t',x')) \leq 2^{-N}.$$
There are two cases for the values of $n$ to consider.
We start with small $n$:
\begin{equation} a_n > 2^{-N_5(m,n,\eta_1,\eps_0,K,\beta)}, \end{equation}
which implies $\sqrt{a_n}^{\tilde{\gamma}_{m+1} -1} \geq a_n^{1/2} \geq 2^{-N_5/2}$. As $N \geq N_0(\xi',K)$ we get by $(P_0)$ in the case $\eps_0=0,$
\begin{align*}
 |u(t',x')| & \leq 2^{-N\xi'} \\
 & \leq 2^{-N\xi'}[ (\sqrt{a_n}\vee 2^{-N})^{\tilde{\gamma}_m + 1 } ] 2^{N_5/2} \\
 & \leq 2^{-N(1-\xi)/2} 2^{N_5/2} 2^{-N\xi} [ (\sqrt{a_n}\vee 2^{-N})^{\tilde{\gamma}_m + 1 } + a_n^\beta ] \\
 & \leq 2^{-N\xi} [ (\sqrt{a_n}\vee 2^{-N})^{\tilde{\gamma}_m + 1 } + a_n^\beta ].
\end{align*}
This already completes the first case.
For large $n$,
\begin{equation} a_n \leq 2^{-N_5}  , \label{eq:pr:5.1:case2} \end{equation}
let $N' = N - 1 \geq N_2 \vee N_3$, which gives $(t',x')\in Z(N',n,K+1,\beta)$ by the triangle inequality. As \eqref{eq:pr:5.1:case2} holds, we can apply Proposition \ref{prop:5.13} with $(\eps_0/2,K+1)$ instead of $(\eps_0, K)$. Additionally,  use Proposition \ref{prop:5.14}.
So we can estimate $|u(\hat{t}_0,\hat{x}_0)-u(t',x')|$. Before doing so we have to choose which partition with $\delta =a_n^\lambda$ of $u$ to take in the sense of \eqref{eq:u=u1+u2} and then obtain some estimates. Therefore select $i\in \{1, \dots , M\}$ such that the following holds:
\begin{enumerate}
 \item \label{case:5.1:1} If $2^{-N} > \sqrt{a_n}$ then $a_n^{\lambda_i/2} < 2^{-N'} \leq a_n^{\lambda_{i-1}/2} = a_n^{\lambda_i/2} a_n^{-\eps_1/2}$.
 \item \label{case:5.1:2} If $2^{-N} \leq \sqrt{a_n}$ then $i = M$ and hence $a_n^{\lambda_i/2} = \sqrt{a_n} \geq 2^{-N'}$.
\end{enumerate}
Then in both cases
\begin{equation*}
 a_n^{\lambda_i/2}  \vee 2^{-N'} \leq \sqrt{a_n} \vee 2^{-N'} \label{eq:5.1:2} 
 \end{equation*}
 and writing $\lambda=\lambda_i$ leads to
 \begin{align}
 a_n^{-\frac{\lambda}{2}(1+\alpha/2)} & (\sqrt{a_n} \vee 2^{-N'})^{1+\alpha/2} \leq (a_n^{-\frac{\lambda}{2}} (\sqrt{a_n} \vee 2^{-N'}))^{1+\alpha/2} \nonumber \\
&\leq  \nonumber \left\{\begin{array}{ll}  (a_n^{-\lambda/2} 2^{-N'})^{1+\alpha/2} \leq a_n^{-\frac{\eps_1}{2}(1+\alpha/2)} &\text {in case \eqref{case:5.1:1}}, \\
 (a_n^{-\lambda/2} a_n^{1/2})^{1+\alpha/2} = 1 \leq a_n^{-\frac{\eps_1}{2}(1+\alpha/2)} &  \text{in case \eqref{case:5.1:2}.}
 \end{array}\right.
\end{align}
 Hence in both cases we obtain a bound of 
\begin{equation}
 a_n^{-\frac{\lambda}{2}(1+\alpha/2)} (\sqrt{a_n} \vee 2^{-N'})^{1+\alpha/2} \leq a_n^{-\frac{\eps_1}{2}(1+\alpha/2)}. \label{eq:5.1:3}
 \end{equation}
Furthermore we have by $\xi - \alpha/2 \leq (2-\alpha)\tfrac{\xi}{2},$
\begin{align*}
 2^{-N'(2-\alpha)\frac{\xi}{2}} & \leq 2^{-N'\xi}2^{N'\frac{\alpha}{2}} \text{ and}\\
 2^{-N'((2-\alpha)\frac{\xi}{2} + \gamma)} &= 2^{-N'\xi} 2^{-N'(\gamma-\alpha/2)} \\
 & \leq 2^{-N'\xi} (a_n^{1/2} \vee 2^{-N'})^{\gamma-\alpha/2} \quad \quad ( \gamma > \alpha/2).
\end{align*}
Using the aforementioned propositions, the special $i$, \eqref{eq:5.1:2} and the previous lines we get
\begin{align*}
 |&u (\hat{t}_0,\hat{x}_0)  -u(t',x')| \\ &\leq |u_{1,a_n^{\lambda_i}}(\hat{t}_0,\hat{x}_0) - u_{1,a_n^{\lambda_i}}(t,x) | + |u_{2,a_n^{\lambda_i}}(\hat{t}_0,\hat{x}_0) - u_{2,a_n^{\lambda_i}}(t,x)| \\
 & \leq 2^{-90} a_n^{-\eps_0/2} 
  2^{-N'\xi}[a_n^\beta  + a_n^{ -\frac{\lambda}{2}(1+\alpha/2)} a_n^{\beta \gamma}  ( (a_n^{\frac{\lambda}{2}} \vee 2^{-N'})^{\gamma+1} + a_n^{- \frac{\lambda}{2}(1+\alpha/2)}   (a_n^{\frac{\lambda}{2}} \vee 2^{-N'})^{\gamma \tilde{\gamma}_m +1} \\
  & \phantom{AAAAAAAAAAAAAAAAAAAAAAA} +\1(m=\bar{m})(a_n^{\frac{\lambda}{2}} \vee 2^{-N})]\\
  & \quad  + 2^{-94} a_n^{-\eps_0/2} \big[ 2^{-N'((2-\alpha)\frac{\xi}{2} + \gamma)} \big( (a_n^{1/2} \vee 2^{-N'})^{\gamma( \tilde{\gamma}_m-1)} + a_n^{\beta \gamma} \big) \\
  & \phantom{AAAAAAAA}+ 2^{-N'\xi} \big( (a_n^{1/2} \vee 2^{-N'})^{\gamma \tilde{\gamma}_m- \alpha/2} + a_n^{\beta \gamma} (a_n^{1/2} \vee 2^{-N'})^{\gamma-\alpha/2} \big)\big] \\
  & \leq 2^{-89} a_n^{-\eps_0/2} 2^{-N' \xi} \Big[a_n^\beta + a_n^{ -\lambda(1+\alpha/2)/2} a_n^{\beta \gamma} (a_n^{1/2} \vee 2^{-N})^{1 + \alpha/2}   (a_n^{1/2} \vee 2^{-N})^{\gamma-\alpha/2} \\
  & \qquad  + a_n^{- \lambda(1+\alpha/2)/2}   (a_n^{1/2} \vee 2^{-N})^{1+\alpha/2} (a_n^{1/2} \vee 2^{-N})^{\gamma \tilde{\gamma}_m - \alpha/2} +\1(m=\bar{m})(a_n^{1/2} \vee 2^{-N}) \\
  & \qquad   + (a_n \vee 2^{-N'})^{\gamma-\alpha/2} ( (a_n^{1/2} \vee 2^{-N'})^{\gamma( \tilde{\gamma}_m-1)} + a_n^{\beta \gamma} ) + \\
  & \qquad  + ( (a_n^{1/2} \vee 2^{-N'})^{\tilde{\gamma}_{m+1}} + a_n^{\beta \gamma} (a_n^{1/2} \vee 2^{-N'})^{\gamma-\alpha/2} ) \Big]. 
\end{align*}
Now apply \eqref{eq:5.1:3} to bound this by
\begin{align*}
 &  2^{-89} a_n^{-\eps_0/2} a_n^{-\frac{\eps_1}{2}(1+\alpha/2)} 2^{-N'\xi} \Big[ a_n^\beta a_n^{\frac{\eps_1}{2}(1+\alpha/2)} 
 + a_n^{\beta \gamma} (\sqrt{a_n}\vee 2^{-N})^{\gamma-\alpha/2} (2a_n^{\frac{\eps_1}{2}(1+\alpha/2)}+1) \\
 &  \phantom{AAAAAAAAAAAA}+\1(m=\bar{m})(a_n^{1/2} \vee 2^{-N}) + (\sqrt{a_n}\vee 2^{-N})^{\tilde{\gamma}_{m+1}} ( 1 + 2 a_n^{\frac{\eps_1}{2}(1+\alpha/2)}) \Big]. 
 \end{align*}
 Trivially,
 \[ (\sqrt{a_n}\vee 2^{-N})^{\tilde{\gamma}_{m+1}-1} + \1(m=\bar{m})(\sqrt{a_n}\vee 2^{-N}) \leq 2 (\sqrt{a_n} \vee 2^{-N}). \]
Then, since $\eps_0 \leq \eps_1/2$ and $\eps_1 \geq 0$,
 \begin{align*}
 |u (\hat{t}_0,\hat{x}_0)  -u(t',x')| \leq  2^{-87} 2^{-N'\xi} a_n^{-\eps_0} \big[& a_n^\beta + a_n^{\beta \gamma} (\sqrt{a_n}\vee 2^{-N})^{\gamma  -\alpha/2} \\
 & \quad  + (\sqrt{a_n}\vee 2^{-N})^{\tilde{\gamma}_{m+1}} \big].
\end{align*}
Now we can proceed to the last step of this statement and obtain
\begin{align*}
 |& u  (t',x')|  \leq |u(\hat{t}_0,\hat{x}_0)| + |u(\hat{t}_0,\hat{x}_0)  -u(t',x')| \\
            & \leq \sqrt{a_n}2^{-N} + 2^{-85} 2^{-N\xi} a_n^{-\eps_0}[a_n^\beta + a_n^{\beta \gamma} (\sqrt{a_n}\vee 2^{-N})^{\gamma  -\alpha/2} + (\sqrt{a_n}\vee 2^{-N})^{\tilde{\gamma}_{m+1}} ] \\
            & \leq a_n^{-\eps_0} 2^{-N\xi} [\sqrt{a_n}2^{-N(1-\xi)} + 2^{-85} (a_n^\beta + a_n^{\beta \gamma} (2^{-N}\vee \sqrt{a_n})^{\gamma-\alpha/2}+ (2^{-N}\vee \sqrt{a_n})^{\tilde{\gamma}_{m+1}} ].
\end{align*}
Clearly, $\sqrt{a_n}2^{-N(1-\xi)} \leq \sqrt{a_n}/2 \leq a_n^\beta /2$ and by \eqref{eq:pr:5.1:case2} and an easy calculation (see Lemma 5.15 in \cite{mp:11}) we arrive at
$$a_n^{\beta \gamma}(2^{-N}\vee \sqrt{a_n})^{\gamma-\alpha/2} \leq a_n^\beta \vee 2^{-N} \leq a_n^\beta + (2^{-N}\vee \sqrt{a_n})^{\gamma \tilde{\gamma}_m - \alpha/2}.$$
Therefore, we can write
\begin{align*}
 |u(t',x')| & \leq a_n^{-\eps_0} 2^{-N\xi}( \frac{a_n^\beta}{2} + 2^{-84} a_n^\beta +2^{-84} (2^{-N}\vee \sqrt{a_n})^{\gamma \tilde{\gamma}_m - \alpha/2} )\\
            & \leq  a_n^{-\eps_0} 2^{-N\xi} (a_n^\beta + (2^{-N}\vee \sqrt{a_n})^{\tilde{\gamma}_{m+1}-1} ),
\end{align*}
since
\begin{equation}
 \tilde{\gamma}_{m+1} = (\gamma \gamma_m + 1 - \frac{\alpha}{2}) \wedge 2.
\end{equation}
This completes the proof of Proposition \ref{prop:5.1}.

\section{Proof of Proposition \ref{prop:3.3}}\label{sec:6}

Fix $K_0 \in \NN^{\geq K_1}$, $\eps_0,\eps_1 \in (0,1)$ as in the definition \eqref{eq:epsconditions} and for  $0 < \beta \leq 1- \eps_1$ define
\be \lambda(\beta) := 2(\beta + \eps_1) \in [0,1].\eq
We define four collections of random times the first one being
\begin{align}
\nonumber
 U_{M,n,\beta}^{(1)} = \inf \{& t \geq 0: \exists \eps \in [0,2^{-M}], |x| \leq K_0 +1, \hat{x}_0 , x' \in \RR^{\dimension}, \text{such that }  | x-x'| \leq 2^{-M}, \\
 &\nonumber |\hat{x}_0 -x| \leq \eps, |u(t,\hat{x}_0)| \leq a_n \wedge (\sqrt{a_n} \eps), \, |\nabla u_{1,a_n}(t,\hat{x}_0)| \leq a_n^\beta, \text{ and }\\
 &\nonumber |\nabla u_{1,a_n^\lambda}(t,x) - \nabla u_{1,a_n^\lambda}(t,x')| > 2^{-82} a_n^{-\eps_0 - \eps_1(1+\alpha/2)} |x-x'|^{1-\eps_0} \\
 &\nonumber \qquad \quad [a_n^{-\beta(1+\alpha/2)} (\eps \vee |x-x'|)^{2 \gamma} + 1 + a_n^{\beta(\gamma-1-\alpha/2)}(\eps \vee |x'-x|)^\gamma ]\}\\
 \wedge T_{K_0} &,
\end{align}
whenever $M,n \in \NN$, $\beta >0$. We define $U_{M,n,0}^{(1)}$ in the same way, omitting the condition on $|\nabla u_{1,a_n}(t,\hat{x}_0)|$. These random times are actually stopping times by Theorem IV.T.52 of \cite{pM:66}.

\begin{lemma}\label{lem:6.1}
 For all $n \in \NN$, $\beta$ as in \eqref{eq:3.12} it holds that $U_{M,n,\beta}^{(1)} \nearrow T_{K_0}$ almost surely as $M \to \infty$ and
 $$\lim_{M\to \infty} \sup_{n,0\leq \beta \leq 1/2 -\eps_1} \IP[U_{M,n,\beta}^{(1)} < T_{K_0}] = 0.$$
\end{lemma}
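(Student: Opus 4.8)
The statement is that the stopping times $U_{M,n,\beta}^{(1)}$ increase to $T_{K_0}$ as $M\to\infty$, uniformly in $n$ and $\beta\in[0,1/2-\eps_1]$ in the sense of the stated limit. The natural strategy is to recognize $U_{M,n,\beta}^{(1)} < T_{K_0}$ as the event that the modulus-of-continuity estimate for $\nabla u_{1,a_n^\lambda}$ established in Corollary \ref{cor:5.9} (and incorporated into Proposition \ref{prop:5.13}) \emph{fails} at scale $2^{-M}$ near a point of $Z(N,n,K_0,\beta)$, and then invoke the uniform stochastic boundedness of the associated $N_{\bullet}$. Concretely: I would first observe that if $t < U_{M,n,\beta}^{(1)}$ then, for the $\eps, x, x', \hat x_0$ realizing the infimum definition, the point $(t,\hat x_0)$ together with the conditions $|\hat x_0 - x|\le \eps$, $|u(t,\hat x_0)|\le a_n\wedge(\sqrt{a_n}\eps)$ and $|\nabla u_{1,a_n}(t,\hat x_0)|\le a_n^\beta$ says precisely that $(t,x)\in Z(N,n,K_0,\beta)$ once $2^{-N}$ is chosen of the order of $\eps\vee|x-x'|$ (or slightly larger). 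So the failure event is contained in the event $\{N_{\ref{cor:5.9}}(\text{parameters}) > M'\}$ for a suitable $M' = M'(M)\to\infty$ as $M\to\infty$, where $N_{\ref{cor:5.9}}$ is the threshold from Corollary \ref{cor:5.9}/Proposition \ref{prop:5.13} with $\eta_1 := \eps_0$ (matching the exponent $1-\eps_0$ appearing in the definition of $U^{(1)}$) and $\lambda = \lambda(\beta)$.

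The key quantitative point is that the bracketed expression in the definition of $U_{M,n,\beta}^{(1)}$,
\[
a_n^{-\eps_0-\eps_1(1+\alpha/2)}\,[\,a_n^{-\beta(1+\alpha/2)}(\eps\vee|x-x'|)^{2\gamma} + 1 + a_n^{\beta(\gamma-1-\alpha/2)}(\eps\vee|x'-x|)^\gamma\,],
\]
must be shown to dominate (up to the harmless constant $2^{-82}$ vs.\ $2^{-85}$) the quantity $\bar\Delta_{u_1'}(m,n,\lambda,\eps_0,2^{-N})$ — or rather the $u_1$-version $\bar\Delta_{u_1}$ / the combined bound from Proposition \ref{prop:5.13} — when $2^{-N}\asymp \eps\vee|x-x'|$ and $\lambda=\lambda(\beta)=2(\beta+\eps_1)$. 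This is a deterministic exponent bookkeeping step: substitute $\lambda/2 = \beta+\eps_1$, so $a_n^{-\lambda(1+\alpha/2)/2} = a_n^{-(\beta+\eps_1)(1+\alpha/2)}$, note $a_n^{\lambda/2}\vee 2^{-N} = a_n^{\beta+\eps_1}\vee 2^{-N}$, and check term by term that each summand of $\bar\Delta_{u_1}$ is at most a constant times the corresponding summand above, using $\gamma\tilde\gamma_m+1 \ge 2\gamma$ is \emph{not} what is needed — rather one uses that $(\eps\vee|x-x'|)^{2\gamma}$ with the prefactor $a_n^{-\beta(1+\alpha/2)}$ absorbs the $u_1'$-term $a_n^{-\lambda/2(1+\alpha/2)}2^{-N\gamma\gamma_m}$ after multiplying by one more factor $(\eps\vee|x-x'|)$ coming from passing from $\nabla u_{1}$ (Corollary \ref{cor:5.9}) to $u_1$; I would follow exactly the reduction in the proof of Proposition 5.13 and Lemma 5.15 of \cite{mp:11}, since the exponents match. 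This should be stated as: "for $n$ large (depending on the already-fixed $\eps_0,\eps_1$) the defining inequality of $U^{(1)}_{M,n,\beta}$ cannot hold unless $N_{\ref{prop:5.13}}(m,n,\eps_0,\eps_0/2,K_0+1,\lambda(\beta),\beta) > c\log_2(1/(\eps\vee|x-x'|))$", and the small-$n$ cases are handled directly since there are only finitely many of them and the paths are continuous.

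Given this inclusion, the monotonicity $U_{M,n,\beta}^{(1)}\nearrow T_{K_0}$ a.s.\ follows because $N_{\ref{prop:5.13}}<\infty$ a.s.: on the event $\{N_{\ref{prop:5.13}}\le M'\}$, no violating configuration exists at scale $2^{-M}$, so $U^{(1)}_{M,n,\beta} = T_{K_0}$ for $M$ large; monotonicity in $M$ of the defining family of constraints is immediate from $[0,2^{-M}]\supset[0,2^{-(M+1)}]$ and the $2^{-M}$ ball shrinking. For the uniform statement, I would write
\[
\sup_{n,\,0\le\beta\le 1/2-\eps_1}\IP[U_{M,n,\beta}^{(1)} < T_{K_0}]
\le \sup_{n,\beta}\IP[N_{\ref{prop:5.13}}(m,n,\eps_0,\tfrac{\eps_0}{2},K_0+1,\lambda(\beta),\beta) > M']
+ (\text{finite-}n\text{ correction}),
\]
and the first term $\to 0$ as $M\to\infty$ (hence $M'\to\infty$) precisely by the "stochastically bounded uniformly in $(n,\lambda,\beta)$" conclusions of Corollary \ref{cor:5.9} and Proposition \ref{prop:5.13}; the finite-$n$ correction is a finite sum of terms each tending to $0$ by a.s.\ finiteness. \textbf{The main obstacle} I anticipate is the exponent-matching step — verifying that the specific numerical bracket chosen in the definition of $U^{(1)}_{M,n,\beta}$, with its particular powers $1-\eps_0$, $2\gamma$, $\gamma$ and the prefactor $a_n^{-\eps_0-\eps_1(1+\alpha/2)}$, is genuinely an upper bound for the Hölder-modulus constant supplied by Proposition \ref{prop:5.13} for \emph{every} admissible $\beta$ and every relevant $m\le\bar m+1$ simultaneously (the worst case over $m$ is $m=\bar m+1$, where $\tilde\gamma_m=2$), together with correctly identifying which "$N$" — i.e.\ which $2^{-N}$ — to feed into $Z(N,n,K_0,\beta)$ given only the geometric data $\eps$ and $|x-x'|$. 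Everything else is soft: continuity of paths, finiteness of the thresholds, and the definition of uniform stochastic boundedness.
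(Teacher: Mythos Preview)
Your overall strategy is correct and matches the paper's: apply Corollary~\ref{cor:5.9} with $m=\bar m+1$, $\eta_1=\eps_0$, $K=K_0+1$, $\lambda=\lambda(\beta)=2(\beta+\eps_1)$, choose $N$ so that $2^{-N-1}<\eps\vee|x-x'|\le 2^{-N}$, observe that the geometric constraints in the definition of $U^{(1)}_{M,n,\beta}$ say precisely that $(t,x)\in Z(N,n,K_0+1,\beta)$, and conclude that if $M\ge N_0$ (the threshold from Corollary~\ref{cor:5.9}) then the defining inequality of $U^{(1)}$ cannot be violated, whence $\IP[U^{(1)}_{M,n,\beta}<T_{K_0}]\le\IP[M<N_0]$ and uniform stochastic boundedness of $N_0$ finishes.

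However, your write-up contains two detours that should be removed. First, Proposition~\ref{prop:5.13} and $\bar\Delta_{u_1}$ are \emph{irrelevant} here: the stopping time $U^{(1)}_{M,n,\beta}$ constrains $|\nabla u_{1,a_n^\lambda}(t,x)-\nabla u_{1,a_n^\lambda}(t,x')|$, not $|u_{1,a_n^\lambda}(t,x)-u_{1,a_n^\lambda}(t,x')|$, so Corollary~\ref{cor:5.9} and $\bar\Delta_{u_1'}$ are exactly what you need, with no integration step and no ``extra factor $\eps\vee|x-x'|$''. The exponent matching you actually have to do is: with $m=\bar m+1$ one has $\gamma\gamma_m>2\gamma>1+\alpha/2$, so in $\bar\Delta_{u_1'}$ the middle term becomes $(a_n^{\lambda/2}\vee 2^{-N})^0=1$, the first term is bounded by $a_n^{-(\beta+\eps_1)(1+\alpha/2)}2^{-2N\gamma}$, and the third by $a_n^{-(\beta+\eps_1)(1+\alpha/2)+\beta\gamma}(a_n^{\beta+\eps_1}\vee 2^{-N})^\gamma$; then one checks $\beta(\gamma-1-\alpha/2)+\beta\gamma+(\beta+\eps_1)\gamma>0$ to absorb the $a_n^{\beta+\eps_1}$ branch of the last maximum, yielding exactly the bracket in the definition of $U^{(1)}$.

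Second, no ``finite-$n$ correction'' is needed: the threshold $N_0$ from Corollary~\ref{cor:5.9} is already stochastically bounded uniformly in $(n,\beta)$ for \emph{all} $n\in\NN$, so the inequality $\IP[U^{(1)}_{M,n,\beta}<T_{K_0}]\le\IP[M<N_0]$ gives the uniform limit directly.
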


\begin{proof}
The almost sure convergence follows from the second statement by monotonicity of the probabilities in $M$. For the second statement use Corollary \ref{cor:5.9} with $m = \bar{m}+1$ (justified by Proposition \ref{prop:5.1}), $\eta_1 = \eps_0$, $K=K_0+1$. Then there is an $N_0 = N_0 (n,\eps_0, \eps_1, K_0 +1, \beta)$ stochastically bounded in $n$ and $\beta$ such that  for all $N\geq N_0$, $(t,x) \in Z(N,n,K_0+1,\beta)$ and $|x-x'| \leq 2^{-N}$ it holds that
 \begin{align*}
  |\nabla u_{1,a_n^\lambda} (t,x) - \nabla u_{1,a_n^\lambda} (t,x')| & \leq 2^{-85} |x-x'|^{1-\eps_0} a_n^{-\eps_0} [ a_n^{-(\beta +\eps_1)(1+\alpha/2)} 2^{-2N\gamma} + (a_n^{\beta + \eps_1} \vee 2^{-N})^0\\
  & \qquad \quad +  a_n^{-(\beta +\eps_1)(1+\alpha/2)} a_n^{\beta \gamma} (a_n^{\beta + \eps_1} \vee 2^{-N})^\gamma ] \\
  & \leq 2^{-85} |x-x'|^{1-\eps_0} a_n^{-\eps_0 -\eps_1(1+\alpha/2)} \\
  & \qquad [a_n^{-\beta(1+\alpha/2)} 2^{-2N\gamma} + 1 + a_n^{\beta( \gamma - 1-\alpha/2)}(a_n^{\beta + \eps_1} \vee 2^{-N})^\gamma ].
 \end{align*}
Note that by $ \beta(\gamma -1-\alpha/2) + \beta \gamma + (\beta +\eps_1) \gamma = \beta(2\gamma - \alpha/2 -1) + \eps_1 \gamma >0$ we now obtain
\begin{align*}
  |\nabla u_{1,a_n^\lambda} (t,x) - \nabla u_{1,a_n^\lambda} (t,x')| & \leq 2^{-84} a_n^{-\eps_0 - \eps_1 (1+\alpha/2)} |x-x'|^{1-\eps_0} \\
  & \qquad [a_n^{-\beta(1+\alpha/2)}2^{-2N\gamma} + 1 +a_n^{\beta(\gamma -1-\alpha/2)} 2^{-N\gamma}].
\end{align*}
We only do the case $\beta >0$. Assume that $M>N_0$ and that there is a $t< T_{K_0}$, $\eps \in [0,2^{-M}]$, $|x| \leq K_0 + 1$, $\hat{x}_0, x' \in \RR^{\dimension}$ with $|x-x'| \leq 2^{-M}, |\hat{x}_0 -x| \leq \eps$, $|u(t,\hat{x}_0)| \leq a_n \wedge (\sqrt{a_n} \eps),$ and
$$ |\nabla u_{1,a_n} (t,\hat{x}_0)| \leq a_n^\beta.$$
If $x\neq x'$, then there is a $N \geq N_0$ such that  $2^{-N-1} < |x-x'| \vee \eps \leq 2^{-N} \leq 2^{-N_0}$. Then $(t,x) \in Z(N,n,K_0+1, \beta_i)$ and we can use the previous estimate. Hence,
\begin{align*}
 |\nabla u_{1,a_n^\lambda} (t,x) - & \nabla u_{1,a_n^\lambda} (t,x')|  \leq 2^{-82} a_n^{-\eps_0 - \eps_1 (1+\alpha/2)} |x-x'|^{1-\eps_0} \\
  & \qquad [a_n^{-\beta(1+\alpha/2)}(\eps \vee |x-x'|)^{2\gamma} + 1 +a_n^{\beta(\gamma -1-\alpha/2)} (\eps \vee |x-x'|)^{\gamma}].
\end{align*}
Therefore, $U_{M,n,\beta}^{(1)} = T_{K_0}$ and thus $\IP(U_{M,n,\beta}{(1)} < T_{K_0}) = \IP(M<N_0).$ As $N_0$ is stochastically bounded uniformly in $(n,\beta)$ the assertion follows.
\end{proof}

\begin{remark}
We note that the fact that we consider splitting at $\delta = a_n^\lambda$ rather than $\delta = a_n$ is essential in the previous lemma. 
\end{remark}
Let us define more stopping times, this time with $u_2$. For $0< \beta \leq 1/2 - \eps_1$ set
\begin{align}
\nonumber
 U_{M,n,\beta}^{(2)} = \inf \big\{& t \geq 0: \exists \eps \in [0,2^{-M}], |x| \leq K_0 +1, \hat{x}_0 , x' \in \RR^{\dimension}, \text{ such that } | x-x'| \leq 2^{-M}, \\
 \nonumber& |\hat{x}_0 -x| \leq \eps, |u(t,\hat{x}_0)| \leq a_n \wedge (\sqrt{a_n} \eps), \, |\nabla u_{1,a_n} (t,\hat{x}_0)| \leq a_n^\beta, \text{ and }\\
\nonumber & |u_{2,a_n^\lambda} (t,x) - u_{2,a_n^\lambda}(t,x')| > 2^{-87} a_n^{-\eps_0} [ |x-x'|^{(1-\eps_0)(1-\alpha/2)} \\
\nonumber & \qquad \quad [(\sqrt{a_n} \vee \eps \vee |x'-x|)^{2\gamma} + a_n^{\beta \gamma} ( \eps \vee |x'-x|)^\gamma ] \\
 & \qquad +|x-x'|^{1-\eps_0} a_n^{\beta + \eps_1(1-\gamma)} ] \big\} \wedge T_{K_0}.
\end{align}
And in the case $\beta = 0$ we make the same definition but without the condition on $|\nabla u_{1,a_n}(t,\hat{x}_0)|.$ Then, we get
\begin{lemma}\label{lem:6.2}
 For all $n \in \NN$, $\beta$ as in \eqref{eq:3.12} it holds that $U_{M,n,\beta}^{(2)} \nearrow T_{K_0}$ almost surely as $M \to \infty$ and
 $$\lim_{M\to \infty} \sup_{n,0\leq \beta \leq 1/2 -\eps_1} \IP[U_{M,n,\beta}^{(2)} < T_{K_0}] = 0.$$
\end{lemma}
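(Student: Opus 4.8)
The plan is to follow the proof of Lemma \ref{lem:6.1} essentially verbatim, with the sole change that the input Corollary \ref{cor:5.9} (the H\"older bound on $\nabla u_{1,\delta}$) is replaced by Proposition \ref{prop:5.14} (the analogous bound on $u_{2,\delta}$). As in Lemma \ref{lem:6.1}, the almost sure monotone convergence $U_{M,n,\beta}^{(2)}\nearrow T_{K_0}$ follows from the uniform probability bound together with monotonicity of the events in $M$, so it suffices to produce, for each $n$ and each admissible $\beta$, a $[0,\infty]$-valued random variable $N_0=N_0(n,\eps_0,\eps_1,K_0+1,\beta)$, stochastically bounded uniformly in $(n,\beta)$, such that $\{U_{M,n,\beta}^{(2)}<T_{K_0}\}\subset\{M<N_0\}$.

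First I would invoke Proposition \ref{prop:5.14} with $m=\bar m+1$ (permissible by Proposition \ref{prop:5.1}, and note that then $\tilde\gamma_{\bar m+1}=2$), $\eta_1=\eps_0$, $K=K_0+1$, and $\lambda=\lambda(\beta)=2(\beta+\eps_1)\in[2\eps_1,1]$. This yields an $N_{\ref{prop:5.14}}$, stochastically bounded uniformly in $(n,\beta)$, such that for $N\ge N_{\ref{prop:5.14}}$, $(t,x)\in Z(N,n,K_0+1,\beta)$, $t'\le T_{K_0}$, and $d:=d((t,x),(t',x'))\le 2^{-N}$,
\[
|u_{2,a_n^\lambda}(t,x)-u_{2,a_n^\lambda}(t',x')|\le 2^{-94}\big(d^{(1-\alpha/2)(1-\eps_0)}\bar\Delta_{1,u_2}+d^{1-\eps_0}\bar\Delta_{2,u_2}\big).
\]
Then I would simplify the right-hand side using $\tilde\gamma_{\bar m+1}=2$: one has $\bar\Delta_{1,u_2}=a_n^{-\eps_0}2^{-N\gamma}[(a_n^{1/2}\vee 2^{-N})^\gamma+a_n^{\beta\gamma}]$ and $\bar\Delta_{2,u_2}=a_n^{-\eps_0}[a_n^{(\beta+\eps_1)(2\gamma-\alpha/2)}+a_n^{\beta\gamma+(\beta+\eps_1)(\gamma-\alpha/2)}]$. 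Since $\alpha<2(2\gamma-1)$ gives $2\gamma-\alpha/2-1>0$, both exponents in $\bar\Delta_{2,u_2}$ dominate $\beta+\eps_1(1-\gamma)$, so $\bar\Delta_{2,u_2}\le 2a_n^{-\eps_0}a_n^{\beta+\eps_1(1-\gamma)}$; and $2^{-N\gamma}(a_n^{1/2}\vee 2^{-N})^\gamma\le(a_n^{1/2}\vee 2^{-N})^{2\gamma}$ and $2^{-N\gamma}a_n^{\beta\gamma}=a_n^{\beta\gamma}(2^{-N})^\gamma$, so that, once $2^{-N}$ is replaced by a quantity comparable to $\eps\vee|x-x'|$, the first summand is of the form appearing in the first bracket in the definition of $U_{M,n,\beta}^{(2)}$.

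To conclude I would argue exactly as in Lemma \ref{lem:6.1}. Set $N_0=N_{\ref{prop:5.14}}+1$. On $\{M>N_0\}$, suppose there were $t<T_{K_0}$, $\eps\in[0,2^{-M}]$, $|x|\le K_0+1$ and $\hat x_0,x'\in\RR^\dimension$ meeting the inner conditions of the definition (the case $x=x'$, $\eps=0$ being vacuous as the left-hand side then vanishes); choose $N$ with $2^{-N-1}<|x-x'|\vee\eps\le 2^{-N}$, so $N\ge M>N_0$. Then $|\hat x_0-x|\le\eps\le 2^{-N}$, $|u(t,\hat x_0)|\le a_n\wedge(\sqrt{a_n}\eps)\le a_n\wedge(\sqrt{a_n}2^{-N})$, and (when $\beta>0$) $|\nabla u_{1,a_n}(t,\hat x_0)|\le a_n^\beta$, hence $(t,x)\in Z(N,n,K_0+1,\beta)$; the displayed estimate together with the simplifications above then contradicts the strict inequality in the definition of $U_{M,n,\beta}^{(2)}$, forcing $U_{M,n,\beta}^{(2)}=T_{K_0}$. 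Therefore $\IP[U_{M,n,\beta}^{(2)}<T_{K_0}]\le\IP[M<N_0]$, which tends to $0$ uniformly in $(n,\beta)$ by the stochastic boundedness of $N_{\ref{prop:5.14}}$. The case $\beta=0$ is identical, using $Z(N,n,K_0+1,0)=Z(N,n,K_0+1)$ and dropping the gradient condition throughout.

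I expect the only genuine work to be the bookkeeping in the second paragraph: checking that the powers of $a_n$, $\eps$ and $|x-x'|$ produced by Proposition \ref{prop:5.14} with $\lambda=2(\beta+\eps_1)$ and $m=\bar m+1$ are dominated by those in the definition of $U_{M,n,\beta}^{(2)}$ \emph{uniformly} in $\beta\in[0,\tfrac12-\eps_1]$. As indicated, this boils down to the single algebraic fact $2\gamma-\alpha/2-1>0$, which is exactly the hypothesis $\alpha<2(2\gamma-1)$, combined with $a_n<1$ and the constraints \eqref{eq:epsconditions} on $\eps_0,\eps_1$; everything else is a direct transcription of the proof of Lemma \ref{lem:6.1}.
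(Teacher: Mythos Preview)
Your proposal is correct and follows exactly the approach the paper indicates: it omits the proof with the remark that it ``is similar to the one of Lemma \ref{lem:6.1} this time using Proposition \ref{prop:5.14} instead of Corollary \ref{cor:5.9}.'' Your bookkeeping in the second paragraph (reducing the $\bar\Delta_{2,u_2}$ terms via $2\gamma-1-\alpha/2>0$ and matching the $\bar\Delta_{1,u_2}$ terms to the bracket in the definition of $U_{M,n,\beta}^{(2)}$) is precisely the computation needed to close the argument.
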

As the proof is similar to the one of Lemma \ref{lem:6.1} this time using Proposition \ref{prop:5.14} instead of Corollary \ref{cor:5.9} we omit it. Define
\begin{align*}
 \tilde{\Delta}_{u_1'}(n,\eps_1,\eps_0,\beta) = a_n^{-\eps_0} \eps^{-\eps_0} \{ \eps + a_n^{-\alpha/4} (\eps a_n^{-1/2} + 1) (\eps^{2\gamma} + a_n^{\beta \gamma} (\eps \vee \sqrt{a_n})^\gamma )\}
\end{align*}
and for $0< \beta \leq 1/2 - \eps_1$ set
\begin{align*}
 U_{M,n,\beta}^{(3)} = &\inf \big\{ t \geq 0: \exists \eps \in [2^{-a_n^{-(\beta + \eps_1)\eps_0/4}},2^{-M}], |x| \leq K_0 +1, \hat{x}_0 \in \RR^{\dimension}, \\
  & \qquad |\hat{x}_0 -x| \leq \eps, |u(t,\hat{x}_0)| \leq a_n \wedge (\sqrt{a_n} \eps), \, |\nabla u_{1,a_n} (t,\hat{x}_0)| \leq a_n^\beta, \text{ and }\\
 & \qquad |\nabla u_{1,a_n^\lambda}(t,x) - \nabla u_{1,a_n}(t,x)| > 2^{-74} (\tilde{\Delta}_{u_1'}(n,\eps_1,\eps_0,\beta) + a_n^{\beta + \eps_1 (1-\gamma)/4}) \big\} \\
 & \wedge T_{K_0}.
\end{align*}
In the case $\beta =0$ we make the analogous definition  without the condition on $ |\nabla u_{1,a_n}(t,\hat{x}_0)|$. Again those are stopping times, and we obtain the analogous statement:
\begin{lemma}\label{lem:6.3}
 For all $n \in \NN$, $\beta$ as in \eqref{eq:3.12} it holds that $U_{M,n,\beta}^{(3)} \nearrow T_{K_0}$ almost surely as $M \to \infty$ and
 $$\lim_{M\to \infty} \sup_{n,0\leq \beta \leq 1/2 -\eps_1} \IP[U_{M,n,\beta}^{(3)} < T_{K_0}] = 0.$$
\end{lemma}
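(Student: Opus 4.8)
The plan is to mirror the proof of Lemma~\ref{lem:6.1}, with Corollary~\ref{cor:5.9} replaced by Proposition~\ref{prop:5.11}, which is exactly the estimate comparing $\nabla u_{1,a_n^\lambda}(t,x)$ with $\nabla u_{1,a_n}(t,x)$ through the square-function bound on the increments $s\mapsto F_{a_n,l}(s,t,x)$. As in Lemmas~\ref{lem:6.1} and~\ref{lem:6.2}, the almost sure statement $U_{M,n,\beta}^{(3)}\nearrow T_{K_0}$ follows from monotonicity of $M\mapsto U_{M,n,\beta}^{(3)}$ together with the uniform probability bound, so only the latter needs to be proved.

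First I would fix $m=\bar m+1$ (legitimate by Proposition~\ref{prop:5.1}), $\eta_1=\eps_0$, $K=K_0+1$, and call on Proposition~\ref{prop:5.11} to produce $N_{\ref{prop:5.11}}=N_{\ref{prop:5.11}}(\bar m+1,n,\eps_0,\eps_0,K_0+1,\beta)$, which is stochastically bounded uniformly in $(n,\beta)$. Suppose $M>N_{\ref{prop:5.11}}$ and the event $\{U_{M,n,\beta}^{(3)}<T_{K_0}\}$ occurs, witnessed at a time $t<T_{K_0}$ by some $\eps\in[2^{-a_n^{-(\beta+\eps_1)\eps_0/4}},2^{-M}]$, $|x|\le K_0+1$ and $\hat x_0$ with $|\hat x_0-x|\le\eps$, $|u(t,\hat x_0)|\le a_n\wedge(\sqrt{a_n}\,\eps)$, and, if $\beta>0$, $|\nabla u_{1,a_n}(t,\hat x_0)|\le a_n^\beta$. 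Pick $N\in\NN$ with $2^{-N-1}<\eps\le 2^{-N}$; then $N\ge M>N_{\ref{prop:5.11}}$, and $(t,x)$ lies in the random set $Z(N,n,K_0+1,\beta)$ with $(\hat t_0,\hat x_0)=(t,\hat x_0)$ serving as its distinguished pair. The lower bound $\eps\ge 2^{-a_n^{-(\beta+\eps_1)\eps_0/4}}$ gives $N\le a_n^{-(\beta+\eps_1)\eps_0/4}$, hence, since $\lambda=\lambda(\beta)=2(\beta+\eps_1)$ and $\eta_1=\eps_0$,
\[ 0\le t-s=a_n^{\lambda}-a_n\le a_n^{\lambda}=a_n^{2(\beta+\eps_1)}\le N^{-8/\eps_0}=N^{-8/\eta_1},\qquad s:=t-a_n^\lambda+a_n, \]
so Proposition~\ref{prop:5.11} applies with this $s$, and by \eqref{eq:u_1:F} we have $\partial_{x_l}u_{1,a_n^\lambda}(t,x)=-F_{a_n,l}(s,t,x)$ and $\partial_{x_l}u_{1,a_n}(t,x)=-F_{a_n,l}(t,t,x)$, so $|\nabla u_{1,a_n^\lambda}(t,x)-\nabla u_{1,a_n}(t,x)|$ is bounded coordinatewise by the right-hand side of Proposition~\ref{prop:5.11}.

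It remains to reorganize that bound into the threshold in the definition of $U_{M,n,\beta}^{(3)}$. Since $m=\bar m+1$ we have $\tilde\gamma_m=2$, so $\gamma\tilde\gamma_m=2\gamma$, and because $\alpha<2(2\gamma-1)$ both $\gamma\tilde\gamma_m-1-\alpha/2=2\gamma-1-\alpha/2>0$ and $\gamma\gamma_m-1-\alpha/2>0$, which kills the minima with $0$ in Proposition~\ref{prop:5.11}. Using $\eps\le 2^{-N}<2\eps$ and $2^{N\eta_1}\le\eps^{-\eps_0}$, the first two terms of the bound equal, up to a universal constant, the two groups of terms constituting $\tilde{\Delta}_{u_1'}(n,\eps_1,\eps_0,\beta)$, the overall factor $a_n^{-\eps_0}$ of Proposition~\ref{prop:5.11} matching the one in $\tilde{\Delta}_{u_1'}$; this is precisely how $\tilde{\Delta}_{u_1'}$ was defined. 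For the remaining term $(t-s)^{(1-\eta_1)/2}\big((\sqrt{t-s}\vee\sqrt{a_n})^{2\gamma-1-\alpha/2}+a_n^{\beta\gamma}(\sqrt{t-s}\vee\sqrt{a_n})^{\gamma-1-\alpha/2}\big)$ one uses $t-s\le a_n^\lambda=a_n^{2(\beta+\eps_1)}$, splits according to whether $a_n^\lambda\le 2a_n$, and invokes the elementary consequences $2\gamma-\alpha/2>1$ and $\gamma-\alpha/2>1-\gamma$ of $\alpha<2(2\gamma-1)$ together with \eqref{eq:epsconditions} to see that this contribution is at most a constant times $a_n^{\beta+\eps_1(1-\gamma)/4}$. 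Altogether the left-hand side is $\le 2^{-74}\big(\tilde{\Delta}_{u_1'}(n,\eps_1,\eps_0,\beta)+a_n^{\beta+\eps_1(1-\gamma)/4}\big)$, contradicting the defining inequality of $U_{M,n,\beta}^{(3)}$. Hence $\{M>N_{\ref{prop:5.11}}\}\subset\{U_{M,n,\beta}^{(3)}=T_{K_0}\}$, so $\IP[U_{M,n,\beta}^{(3)}<T_{K_0}]\le\IP[N_{\ref{prop:5.11}}\ge M]$, which tends to $0$ as $M\to\infty$ uniformly in $n$ and in $0\le\beta\le 1/2-\eps_1$ by the stochastic boundedness of $N_{\ref{prop:5.11}}$. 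The case $\beta=0$ is handled identically, with the condition on $|\nabla u_{1,a_n}(t,\hat x_0)|$ dropped and $Z(N,n,K_0+1,0)$ replacing $Z(N,n,K_0+1,\beta)$.

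The main obstacle is the last paragraph: fitting the somewhat unwieldy bound of Proposition~\ref{prop:5.11} (at $m=\bar m+1$, with $2^{-N}\sim\eps$ and $t-s\le a_n^\lambda$) into the explicit expression $\tilde{\Delta}_{u_1'}+a_n^{\beta+\eps_1(1-\gamma)/4}$ is a delicate exponent computation --- in particular the case analysis on the sizes of $\sqrt{t-s}$, $\sqrt{a_n}$ and $a_n^\lambda$, the sign distinction between $2\gamma-1-\alpha/2>0$ and $\gamma-1-\alpha/2<0$, and the repeated use of $\alpha<2(2\gamma-1)$ and \eqref{eq:epsconditions}. This is the routine but lengthy bookkeeping carried out in full in \cite{rippl:12}; everything else transcribes the proof of Lemma~\ref{lem:6.1} verbatim.
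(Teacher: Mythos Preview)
Your proposal is correct and follows exactly the approach indicated in the paper, which states only that ``The proof of this lemma requires Proposition~\ref{prop:5.11} and structurally equals the one of Lemma~\ref{lem:6.1}.'' You have correctly identified the key substitution (Proposition~\ref{prop:5.11} in place of Corollary~\ref{cor:5.9}), the use of \eqref{eq:u_1:F} to translate between $\nabla u_{1,a_n^\lambda}$ and $F_{a_n,l}$, the role of the lower bound on $\eps$ in guaranteeing $t-s\le N^{-8/\eta_1}$, and the exponent bookkeeping that matches the three terms of Proposition~\ref{prop:5.11} (at $m=\bar m+1$, so $\tilde\gamma_m=2$ and the $\wedge 0$ disappears) to $\tilde\Delta_{u_1'}+a_n^{\beta+\eps_1(1-\gamma)/4}$.
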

The proof of this lemma requires Proposition \ref{prop:5.11} and structurally equals the one of Lemma \ref{lem:6.1}. As the fourth collection of stopping times define
\begin{align}
\nonumber
 U_M^{(4)} = \inf \{& t \geq 0: \exists \eps \in [0,2^{-M}], |x| \leq K_0 +1, \hat{x}_0 , x' \in \RR^{\dimension}, |x-x'|\leq 2^{-M}, |x-\hat{x}_0| < \eps \\
 & |u(t,\hat{x}_0) | \leq \eps, |u(t,x) - u(t,x')| > (\eps \vee |x'-x|)^{1-\eps_0} \} \wedge T_{K_0}.
\end{align}
\begin{lemma}\label{lem:6.4}
 Almost surely $U_M^{(4)} \nearrow T_{K_0}$ as $M \to \infty$.
\end{lemma}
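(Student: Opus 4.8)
The plan is to mirror the structure of the proof of Lemma \ref{lem:6.1}, but now invoking the locally improved modulus of continuity from Theorem \ref{thm:collipmod} rather than Corollary \ref{cor:5.9}. First I would observe that, as in the previous lemmas, the sequence $U_M^{(4)}$ is non-decreasing in $M$ and bounded above by $T_{K_0}$, so the limit $U_\infty^{(4)} := \lim_{M \to \infty} U_M^{(4)}$ exists almost surely and satisfies $U_\infty^{(4)} \leq T_{K_0}$; it therefore suffices to show $P(U_M^{(4)} < T_{K_0}) \to 0$, or more directly that for each fixed $\omega$ outside a null set one has $U_M^{(4)} = T_{K_0}$ for all $M$ large enough. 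The point where the defining condition of $U_M^{(4)}$ can be violated is exactly a configuration $(\eps, x, \hat x_0, x')$ with $|x - \hat x_0| < \eps \leq 2^{-M}$, $|x - x'| \leq 2^{-M}$, $|u(t, \hat x_0)| \leq \eps$ and $|u(t,x) - u(t,x')| > (\eps \vee |x' - x|)^{1 - \eps_0}$ for some $t < T_{K_0}$; I want to show no such configuration exists once $M$ exceeds a (stochastically finite) threshold.

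The key step is to apply Theorem \ref{thm:collipmod} with $K = K_0$ and a fixed exponent $\xi \in (1 - \eps_0, 1) \cap \bigl(0, \tfrac{1 - \alpha/2}{1 - \gamma} \wedge 1\bigr)$, which is possible since $\alpha < 2(2\gamma - 1)$ forces $\tfrac{1 - \alpha/2}{1 - \gamma} > 1$ (indeed $> 2$, as noted after the definition of $\gamma_\infty$), so the admissible range for $\xi$ contains points above $1 - \eps_0$. This yields an a.s.\ finite $N_0 = N_0(\xi, K_0, \omega) \in \NN$ such that for all $N \geq N_0$ and all $(t,x) \in Z(N, K_0)$, the bound $d((t',x'),(t,x)) \leq 2^{-N}$ with $t' \leq T_{K_0}$ implies $|u(t',x') - u(t,x)| \leq 2^{-N\xi}$. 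Now fix $M > N_0$ and suppose a bad configuration as above exists at some time $t < T_{K_0}$. Setting $r := (\eps \vee |x' - x|)$, we have $0 < r \leq 2^{-M} \leq 2^{-N_0}$, so there is a unique $N \geq N_0$ with $2^{-N-1} < r \leq 2^{-N}$. Then $|x - \hat x_0| < \eps \leq r \leq 2^{-N}$ and $|u(t,\hat x_0)| \leq \eps \leq 2^{-N}$, so taking $(\hat t_0, \hat x_0) := (t, \hat x_0)$ shows $(t,x) \in Z(N, K_0)$ (here one also needs $|x| \leq K_0$; this is forced because $|x| \leq K_0 + 1$ is part of the definition while $t < T_{K_0}$ and the regularity must be checked on $[-K_0,K_0]^q$ — more carefully, one restricts attention to $x \in [-K_0,K_0]^q$, which is the only place where the modulus bound is needed, and handles $|x| \in (K_0, K_0+1]$ by the same argument with $K_0 + 1$ in place of $K_0$). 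Moreover $d((t,x),(t,x')) = |x - x'| \leq r \leq 2^{-N}$ and $t' = t \leq T_{K_0}$, so Theorem \ref{thm:collipmod} gives $|u(t,x) - u(t,x')| \leq 2^{-N\xi}$. But $2^{-N\xi} \leq (2^{-N})^{\xi} = (2 \cdot 2^{-N-1})^\xi \leq 2 r^\xi < 2 r^{1-\eps_0}$ — and with a slightly more careful bookkeeping (e.g.\ choosing $\xi$ close enough to $1$ and absorbing the factor $2$ by noting $r^\xi \leq r^{1-\eps_0}$ together with $2 \leq r^{\xi - (1-\eps_0)} \cdot (\text{const})$ once... ) one arrives at a contradiction with $|u(t,x) - u(t,x')| > r^{1 - \eps_0}$.

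The main obstacle is the factor-of-$2$ (and the gap between $2^{-N}$ and $r$) in the last chain of inequalities: a literal comparison of $2^{-N\xi}$ with $r^{1-\eps_0}$ does not close immediately because $\xi < 1$ and $r$ can be as small as $2^{-N-1}$. The clean fix is to choose $\xi$ strictly between $1 - \eps_0$ and $1$ and to enlarge $N_0$ (still stochastically finite) so that $2^{1-N_0\xi} \leq 2^{-N_0(1-\eps_0)}$; then for $N \geq N_0$, $2^{-N\xi} \leq 2^{-1}\cdot 2^{-N(1-\eps_0)}\cdot 2^{1 - (N - N_0)(\xi - (1-\eps_0))}\cdot\dots$ — more simply, since $r > 2^{-N-1}$ we get $r^{1-\eps_0} > 2^{-(N+1)(1-\eps_0)}$, and it suffices that $2^{-N\xi} \leq 2^{-(N+1)(1-\eps_0)}$, i.e.\ $N(\xi - (1 - \eps_0)) \geq 1 - \eps_0$, which holds for all $N \geq N_1 := \lceil (1-\eps_0)/(\xi - (1-\eps_0)) \rceil$. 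Replacing $N_0$ by $N_0 \vee N_1$ (still a.s.\ finite, and this enlargement is deterministic) yields the contradiction, hence $U_M^{(4)} = T_{K_0}$ whenever $M > N_0 \vee N_1$. Since $N_0 \vee N_1 < \infty$ a.s., this proves $U_M^{(4)} \nearrow T_{K_0}$ almost surely, completing the proof.
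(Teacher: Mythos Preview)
Your proposal is correct and follows essentially the same approach the paper indicates: invoke Theorem~\ref{thm:collipmod} (with $K=K_0+1$ and some $\xi\in(1-\eps_0,1)$) in the template of the proof of Lemma~\ref{lem:6.1}, and show that for $M$ exceeding the resulting $N_0$ (plus a deterministic offset to absorb the factor $2$ from $2^{-N-1}<r\le 2^{-N}$) no bad configuration can occur, so $U_M^{(4)}=T_{K_0}$. Two cosmetic remarks: take $K=K_0+1$ in Theorem~\ref{thm:collipmod} from the outset so the constraint $|x|\le K_0+1$ is handled cleanly, and streamline the factor-of-$2$ discussion by simply fixing $\xi\in(1-\eps_0,1)$ and noting that $2^{-N\xi}\le 2^{-(N+1)(1-\eps_0)}$ holds for all $N\ge N_1:=\lceil(1-\eps_0)/(\xi-(1-\eps_0))\rceil$, a deterministic constant.
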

This proof uses Theorem \ref{thm:collipmod} and is similar to the proof of Lemma \ref{lem:6.1}, so it is omitted here.
Finally define the stopping times for \ref{prop:3.3}:
\begin{align*}
 U_{M,n,\beta} &= \bigvee_{j=1}^3 U_{M,n,\beta}^{(j)} \\
 U_{M,n} &= ( \bigvee_{i=0}^{L(\eps_0,\eps_1)} U_{M,n,\beta_i} ) \wedge U_M^{(4)}.
\end{align*}
By Lemmas \ref{lem:6.1}, \ref{lem:6.2}, \ref{lem:6.3}, \ref{lem:6.4} we have that $U_{M,n}$ fulfills $(H_1)$. Hence there is not much left to do in order to complete the proof of Proposition \ref{prop:3.3}. It just remains to show the compactness of $\tilde{J}_{n,i}(s)$ and $\tilde{J}_{n,i}(s) \supset J_{n,i}(s)$ for all $s < U_{M,n}$. We will be mostly concerned with $\tilde{J}_{n,i}(s) \supset J_{n,i}(s)$, show that in several steps and assume \eqref{eq:3.14} throughout the rest of the section, i.e.
\begin{equation}\label{eq:6} a_n^{\eps_1} \leq 2^{-M-4} \text{ and } \sqrt{a_n} \geq 2^{-a_n^{-\eps_0 \eps_1/4}} .\end{equation}
We first give a list of three lemmas that are analogous to Lemmas 6.5, 6.6 and 6.7 of \cite{mp:11}. As the proofs are quite similar we only show the last lemma since it also contains a slight improvement of Lemma 6.7 of \cite{mp:11}.
\begin{lemma} \label{lem:6.5}
 When $i \in \{0, \dots, L\}, 0 \leq s < U_{M,n}$, $x \in J_{n,i}(s)$, then
 \begin{enumerate}
  \item $|\nabla u_{1,a_n}(s,\hat{x}_n(s,x)) - \nabla u_{1,a_n^{\lambda_i}}(s,\hat{x}_n(s,x))| \leq 2^{-71} a_n^{\beta_i + \eps_1 (1-\gamma)/2}.$
  \item For $i>0$: $\nabla u_{1,a_n^{\lambda_i}}(s,\hat{x}_n(s,x)) \cdot \sigma_x \leq | u_{1,a_n^{\lambda_i}}(s,\hat{x}_n(s,x)) |  \leq a_n^{\beta_i}/2 $.
  \item For $i<L$: $\nabla u_{1,a_n^{\lambda_i}}(s,\hat{x}_n(s,x)) \cdot \sigma_x  \geq a_n^{\beta_{i+1}}/8 $.
 \end{enumerate}
\end{lemma}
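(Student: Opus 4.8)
The plan is to exploit that $x \in J_{n,i}(s)$ forces $\hat{x}_n(s,x)$ to be a point where $|u|$ is small, so that $(s,\hat{x}_n(s,x))$ lies in the relevant set $Z(N,n,K_0+1,\beta_i)$ for a suitable $N$, and then to read off the three claims from the stopping-time definitions $U^{(1)}_{M,n,\beta}$ and $U^{(3)}_{M,n,\beta}$ together with the algebraic inequalities between the $\beta_i$ and $\lambda_i$ from \eqref{eq:3.10}--\eqref{eq:3.12}. First I would record what $x\in J_{n,i}(s)$ gives us directly: $|\la u_s,\Phi_x^{m_{n+1}}\ra|\le a_n$, so by the mean-value property of the mollifier there is a point in $B^\dimension(x,\sqrt{a_n})$ where $|u(s,\cdot)|\le a_n$; by the very definition of $\hat{x}_n(s,x)$ as a minimizer of $|u(s,\cdot)|$ over $B^\dimension(x,\sqrt{a_n})$ we then get $|u(s,\hat{x}_n(s,x))|\le a_n$, and moreover the stronger bound $|u(s,\hat{x}_n(s,x))|\le a_n\wedge(\sqrt{a_n}\,\eps)$ whenever we take $\eps=\sqrt{a_n}$ (or even $\eps$ slightly larger, which is what makes the $Z$-set membership work). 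Also $|\nabla u_{1,a_n}(s,\hat{x}_n(s,x))|\le a_n^{\beta_i}/4\le a_n^{\beta_i}$ by the definition of $J_{n,i}$, so $\hat{x}_n(s,x)$ qualifies as the point $\hat x_0$ in the stopping-time conditions with $\beta=\beta_i$.

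Part (a): since $s<U_{M,n}\le U^{(3)}_{M,n,\beta_i}$, the event defining $U^{(3)}$ has not yet occurred up to time $s$; applying this with $x$ replaced by $\hat{x}_n(s,x)$, $\hat x_0=\hat{x}_n(s,x)$ and $\eps=\sqrt{a_n}$ (legitimate because $\eps\ge 2^{-a_n^{-(\beta_i+\eps_1)\eps_0/4}}$ by \eqref{eq:6}, and $\eps\le 2^{-M}$ by \eqref{eq:6}), we obtain
\[
|\nabla u_{1,a_n^{\lambda_i}}(s,\hat{x}_n(s,x))-\nabla u_{1,a_n}(s,\hat{x}_n(s,x))|\le 2^{-74}(\tilde{\Delta}_{u_1'}(n,\eps_1,\eps_0,\beta_i)+a_n^{\beta_i+\eps_1(1-\gamma)/4}).
\]
It then remains to check that, with $\eps=\sqrt{a_n}$, $\tilde{\Delta}_{u_1'}(n,\eps_1,\eps_0,\beta_i)\le a_n^{\beta_i+\eps_1(1-\gamma)/2}$ up to the constant $2^{3}$; plugging $\eps=\sqrt{a_n}$ into the definition of $\tilde{\Delta}_{u_1'}$ gives $a_n^{-\eps_0}a_n^{-\eps_0/2}\{\sqrt{a_n}+a_n^{-\alpha/4}\cdot 2\cdot(a_n^\gamma+a_n^{\beta_i\gamma}a_n^{\gamma/2})\}$, and one uses $\eps_0\le\frac14(1-\gamma)\eps_1$ from \eqref{eq:epsconditions} plus $\beta_i\le\frac12-6\eps_1$ and $\gamma>\frac12$, $\alpha<2(2\gamma-1)$ to absorb everything into $2^{-71}a_n^{\beta_i+\eps_1(1-\gamma)/2}$. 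This exponent-chasing is routine but is the step most prone to bookkeeping errors, so I would carry it out carefully rather than sketch it.

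Parts (b) and (c): these follow by combining (a) with the facts coming straight out of $J_{n,i}$. For (c), $i<L$ means the definition of $J_{n,i}$ gives $|\nabla u_{1,a_n}(s,\hat{x}_n(s,x))|\ge a_n^{\beta_i}/4\ge a_n^{\beta_{i+1}}/4$ (recall $\beta_{i+1}\ge\beta_i$ for $i<L$... actually $\beta_{i}$ is increasing so $\beta_{i+1}>\beta_i$, hence $a_n^{\beta_{i+1}}\le a_n^{\beta_i}$, and the inclusion in $J_{n,0}$ or $J_{n,i}$ gives $|\nabla u_{1,a_n}(s,\hat x_n(s,x))|\ge \tfrac14 a_n^{\beta_{i+1}}$ after using the lower endpoint of the interval). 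Since $\sigma_x$ by its definition is the unit vector in the direction of $\nabla u_{1,a_n}(s,\hat{x}_n(s,x))$, we have $\nabla u_{1,a_n}(s,\hat{x}_n(s,x))\cdot\sigma_x=|\nabla u_{1,a_n}(s,\hat{x}_n(s,x))|\ge a_n^{\beta_{i+1}}/4$; then (a) lets us pass to $\nabla u_{1,a_n^{\lambda_i}}$ at the cost of the error $2^{-71}a_n^{\beta_i+\eps_1(1-\gamma)/2}$, which is $\le a_n^{\beta_{i+1}}/8$ because $\beta_{i+1}=\beta_i+\eps_0\le\beta_i+\eps_1(1-\gamma)/2$ (using $\eps_0\le\frac14(1-\gamma)\eps_1\le\frac12(1-\gamma)\eps_1$) so $a_n^{\beta_i+\eps_1(1-\gamma)/2}\le a_n^{\beta_{i+1}}$ (since $a_n<1$, larger exponent means smaller value — careful with the direction, one actually wants $\beta_i+\eps_1(1-\gamma)/2\ge\beta_{i+1}$, which holds). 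For (b), $i>0$ gives from $J_{n,i}$ that $|\nabla u_{1,a_n}(s,\hat{x}_n(s,x))|\le a_n^{\beta_i}/4$, hence $\nabla u_{1,a_n^{\lambda_i}}(s,\hat{x}_n(s,x))\cdot\sigma_x\le|\nabla u_{1,a_n^{\lambda_i}}(s,\hat{x}_n(s,x))|\le a_n^{\beta_i}/4+2^{-71}a_n^{\beta_i+\eps_1(1-\gamma)/2}\le a_n^{\beta_i}/2$, and the middle quantity $|u_{1,a_n^{\lambda_i}}(s,\hat{x}_n(s,x))|$ is sandwiched by the $Z$-set / $J_{n,i}$ bound together with the previous inequality. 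The main obstacle throughout is purely the exponent arithmetic in part (a) — verifying that the composite bound $\tilde{\Delta}_{u_1'}$ with $\eps=\sqrt{a_n}$ collapses below $a_n^{\beta_i+\eps_1(1-\gamma)/2}$ under \eqref{eq:epsconditions} — everything else is a direct read-off from the stopping-time definitions and the ordering of the $\beta_i$.
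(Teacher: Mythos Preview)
Your approach is correct and matches the paper's: use $s<U^{(3)}_{M,n,\beta_i}$ with $\eps=\sqrt{a_n}$ and $x=\hat x_0=\hat x_n(s,x)$ to obtain (a), then read off (b) and (c) from (a) together with the bounds on $|\nabla u_{1,a_n}(s,\hat x_n(s,x))|$ built into $J_{n,i}(s)$ and the definition of $\sigma_x$; the reference to $U^{(1)}$ in your opening paragraph is unnecessary here.

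One point to tighten: the stopping time $U^{(3)}$ yields the bound $2^{-74}\bigl(\tilde\Delta_{u_1'}+a_n^{\beta_i+\eps_1(1-\gamma)/4}\bigr)$, and the second summand carries exponent $\eps_1(1-\gamma)/4$, not $/2$. Since $a_n<1$, you cannot dominate $a_n^{\beta_i+\eps_1(1-\gamma)/4}$ by $a_n^{\beta_i+\eps_1(1-\gamma)/2}$; the inequality goes the other way. This is harmless for the applications, because in (b) one only needs the error to be $\le a_n^{\beta_i}/4$, and in (c) one only needs it $\le a_n^{\beta_{i+1}}/8$, and both follow already from the weaker exponent $\eps_1(1-\gamma)/4$ via $\eps_0<\tfrac14(1-\gamma)\eps_1$. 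So either the exponent $/2$ in the stated lemma is a typo for $/4$, or the statement is slightly stronger than what the $U^{(3)}$ argument directly gives; in any case your derivation of (b) and (c) goes through unchanged once you replace $/2$ by $/4$ in (a). Also note that the middle term in (b) should evidently carry a $\nabla$ (as you implicitly read it); the first inequality is then just Cauchy--Schwarz with the unit vector $\sigma_x$.
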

The proof is done using $U_{M,n,\beta}^{(3)}$. Next consider the derivatives of $u_{1,a_n^\lambda}$.
\begin{lemma}\label{lem:6.6}
 When $i \in \{0, \dots, L\}, 0 \leq s < U_{M,n}$, $x \in J_{n,i}(s)$ and $|x-x'| \leq 5 \bar{\ell}_n(\beta_i)$, then
 \begin{enumerate}
  \item For $i>0$: $|\nabla u_{1,a_n^{\lambda_i}} (s,x') | \leq a_n^{\beta_i} .$
  \item For $i<L$: $\nabla u_{1,a_n^{\lambda_i}} (s,x') \cdot \sigma_x \geq a_n^{\beta_{i+1}}/16.$
 \end{enumerate}
\end{lemma}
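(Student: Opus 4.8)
The plan is to deduce Lemma \ref{lem:6.6} from Lemma \ref{lem:6.5} together with Corollary \ref{cor:5.9} (applied with $m=\bar m+1$, justified by Proposition \ref{prop:5.1}) by a ``stepping'' argument along the line segment from $\hat x_n(s,x)$ to $x'$. First I would fix $i\in\{0,\dots,L\}$, $s<U_{M,n}$ and $x\in J_{n,i}(s)$, and record the basic geometry: $|\hat x_n(s,x)-x|\le\sqrt{a_n}$ by construction of $\hat x_n$, and $|x-x'|\le 5\bar\ell_n(\beta_i)$ by hypothesis, so $|\hat x_n(s,x)-x'|\le 6\bar\ell_n(\beta_i)$, which by Lemma \ref{lem:3.6} (and $\bar\ell_n(\beta_i)=a_n^{\beta_i+5\eps_1}$) is much smaller than any fixed constant once \eqref{eq:6} holds. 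The point $(s,\hat x_n(s,x))$ lies in $Z(N,n,K_0+1,\beta_i)$ for a suitable $N$: indeed $|u(s,\hat x_n(s,x))|\le a_n\wedge(\sqrt{a_n}\cdot 2^{-N})$ and $|\nabla u_{1,a_n}(s,\hat x_n(s,x))|\le a_n^{\beta_i}$ hold on $\{s<U_{M,n}\}$ (the latter for $i>0$, and for $i=0$ one uses the membership condition $|\nabla u_{1,a_n}(s,\hat x_n(s,x))|\ge a_n^{\eps_0}/4$ differently — see below), so one picks $N$ with $2^{-N-1}<\sqrt{a_n}\vee|\cdot|\le 2^{-N}$ and $N\ge N_{\ref{prop:5.8}}$, which is guaranteed by \eqref{eq:6} and the uniform stochastic boundedness of $N_{\ref{prop:5.8}}$ in $(n,\lambda,\beta)$.

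Next I would chain Corollary \ref{cor:5.9} along a partition of $[\hat x_n(s,x),x']$ into $O(\bar\ell_n(\beta_i)/2^{-N})$ subintervals of length $\le 2^{-N}$; at each step the endpoints stay within $Z$-distance $2^{-N}$ of $(s,\hat x_n(s,x))$ as long as the total displacement is $\le 2^{-N}$, but since $\bar\ell_n(\beta_i)$ may exceed $2^{-N}$ one actually wants to invoke the version of the estimate that propagates: each point $x''$ with $|x''-\hat x_n(s,x)|\le 2^{-N}$ itself satisfies $(s,x'')\in Z(N',n,K_0+1,\beta_i)$ for a comparable $N'$ once $|\nabla u_{1,a_n}(s,x'')|\le a_n^{\beta_i}$ is re-established, so the cleanest route is actually to invoke Corollary \ref{cor:5.9} directly with the point $(s,\hat x_n(s,x))$ and $d((s,\hat x_n(s,x)),(s,x'))\le 2^{-N'}$ where $N'$ is chosen with $2^{-N'}\gtrsim\bar\ell_n(\beta_i)$: since $\bar\ell_n(\beta_i)=a_n^{\beta_i+5\eps_1}$ and $a_n^{\beta_i+\eps_1}\vee 2^{-N}\approx a_n^{\beta_i+\eps_1}$ in the relevant regime, the factor $d^{1-\eps_0}\bar\Delta_{u_1'}(m,n,\lambda_i,\eps_0,2^{-N})$ evaluates to something of order $a_n^{(\beta_i+5\eps_1)(1-\eps_0)}$ times the bracket in \eqref{eq:def:Delta:u_1'}, and one checks term by term (using $\lambda_i=2(\beta_i+\eps_1)$, $\gamma_{\bar m+1}>2$, and the $\eps$-inequalities \eqref{eq:epsconditions}) that this is $\le 2^{-72}a_n^{\beta_i+\eps_1(1-\gamma)/2}$, say. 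Combining with Lemma \ref{lem:6.5}(a)--(c): for (a), $|\nabla u_{1,a_n^{\lambda_i}}(s,x')|\le|\nabla u_{1,a_n^{\lambda_i}}(s,\hat x_n(s,x))|+|\nabla u_{1,a_n^{\lambda_i}}(s,x')-\nabla u_{1,a_n^{\lambda_i}}(s,\hat x_n(s,x))|\le a_n^{\beta_i}/2 + 2^{-72}a_n^{\beta_i+\eps_1(1-\gamma)/2}\le a_n^{\beta_i}$; for (b), $\nabla u_{1,a_n^{\lambda_i}}(s,x')\cdot\sigma_x\ge a_n^{\beta_{i+1}}/8 - 2^{-72}a_n^{\beta_i+\eps_1(1-\gamma)/2}\ge a_n^{\beta_{i+1}}/16$ since $\beta_{i+1}=\beta_i+\eps_0<\beta_i+\eps_1(1-\gamma)/2$ is false in general, so here one must be careful — actually $\eps_0<\tfrac14(1-\gamma)\eps_1<\eps_1(1-\gamma)/2$, so $a_n^{\beta_i+\eps_1(1-\gamma)/2}=o(a_n^{\beta_{i+1}})$ and the bound goes through.

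The main obstacle I expect is the bookkeeping in the last step: verifying that the error term coming from Corollary \ref{cor:5.9} — namely $\bar\ell_n(\beta_i)^{1-\eps_0}\bar\Delta_{u_1'}(\bar m+1,n,\lambda_i,\eps_0,2^{-N})$ — is genuinely negligible compared to both $a_n^{\beta_i}$ (for part (a)) and $a_n^{\beta_{i+1}}=a_n^{\beta_i+\eps_0}$ (for part (b)), uniformly over $i$ and over the allowed range of $n$. This requires expanding $\bar\Delta_{u_1'}$ into its three summands, substituting $\lambda_i=2(\beta_i+\eps_1)$ and $2^{-N}\approx a_n^{\beta_i+\eps_1}$ (more precisely $a_n^{\lambda_i/2}\vee 2^{-N}=a_n^{\beta_i+\eps_1}\vee 2^{-N}$ and in the regime where $Z(N,\dots)$ is nonempty one has $2^{-N}\gtrsim\sqrt{a_n}$ so the relevant exponent is controlled), and using $\gamma\gamma_{\bar m+1}\ge\gamma\cdot 2\cdot\gamma/\gamma$ — really one uses $\gamma\tilde\gamma_{\bar m+1}$ and the fact that $\alpha<2(2\gamma-1)$ to see all exponents exceed what is needed with $32\eps_1$ room to spare, exactly as in the heuristic leading to $\alpha<2(2\gamma-1)$. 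Since Lemma 6.6 of \cite{mp:11} is the one-dimensional white-noise analogue, I would mirror its proof, inserting the colored-noise heat-kernel bounds (Lemma \ref{lem:4.4}) only implicitly through Corollary \ref{cor:5.9}, and relegate the fully explicit exponent arithmetic to a reference to Section 9 of \cite{rippl:12}.
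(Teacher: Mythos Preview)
Your proposal has a genuine gap: you try to invoke Corollary~\ref{cor:5.9} directly at the given $(s,x)$, but that corollary requires $N\ge N_{\ref{prop:5.8}}(\omega)$, and nothing in the hypothesis $s<U_{M,n}$ guarantees this. Your remark that ``$N\ge N_{\ref{prop:5.8}}$ is guaranteed by \eqref{eq:6} and the uniform stochastic boundedness of $N_{\ref{prop:5.8}}$'' conflates a distributional statement with a pointwise one --- stochastic boundedness says nothing about the particular $\omega$ under consideration. The entire purpose of the stopping times $U_{M,n,\beta}^{(1)}$ is precisely to convert the random threshold $N_{\ref{prop:5.8}}$ into a deterministic bound valid for \emph{all} $s$ before the stopping time: the proof of Lemma~\ref{lem:6.1} shows that the gradient-increment inequality encoded in $U_{M,n,\beta}^{(1)}$ follows from Corollary~\ref{cor:5.9}, so that whenever $s<U_{M,n,\beta_i}^{(1)}$ the inequality holds \emph{by definition of the stopping time}, with no further appeal to Corollary~\ref{cor:5.9} or to $N_{\ref{prop:5.8}}$.

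The paper's route is therefore: since $s<U_{M,n}\le U_{M,n,\beta_i}^{(1)}$, apply the defining inequality of $U_{M,n,\beta_i}^{(1)}$ once, with (in the notation of that definition) $\hat x_0=\hat x_n(s,x)$, $\eps=\sqrt{a_n}$, the base point equal to $\hat x_n(s,x)$, and the target point equal to your $x'$. The distance $|\hat x_n(s,x)-x'|\le 6\bar\ell_n(\beta_i)\le 2^{-M}$ by \eqref{eq:6}, so the bound applies directly --- no chaining is needed. (Your proposed chaining is also problematic on its own terms: propagating membership in $Z(N,n,K,\beta)$ to intermediate points would require bounding $|\nabla u_{1,a_n}|$ there, which is exactly what you are trying to prove.) One then combines the resulting increment bound with Lemma~\ref{lem:6.5}(b),(c) and checks, via the same exponent arithmetic you sketch at the end, that the increment is $\le a_n^{\beta_i}/2$ for (a) and $\le a_n^{\beta_{i+1}}/16$ for (b).
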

The proof uses $U_{M,n,\beta}^{(1)}$, but is left out. To finish things we only need a similar result for the $u_2$ expressions for which we give the details of the proof:
\begin{lemma}\label{lem:6.7}
  When $i \in \{0, \dots, L\}, 0 \leq s < U_{M,n}$, $x \in J_{n,i}(s)$, $x', x'' \in \RR^\dimension$ and $|x-x'| \leq 4 \sqrt{a_n}$, then
 $$ |u_{2,a_n^{\lambda_i}} (s,x')  -u_{2,a_n^{\lambda_i}} (s,x'') | \leq 2^{-75}a_n^{\beta_{i+1}} (| x'-x''| \vee a_n^{\frac{2}{\alpha}(\gamma-\beta_{i+1} - \eps_1)} \vee a_n ) $$
 as long as $| x'-x''| \leq \bar{\ell}_n(\beta_i).$
\end{lemma}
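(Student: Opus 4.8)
The plan is to reduce the claimed bound on the increment of $u_{2,a_n^{\lambda_i}}$ to the event $\{s < U_{M,n}^{(2)}\}$ (where $U_{M,n}^{(2)} = \bigvee_{i} U_{M,n,\beta_i}^{(2)}$ is part of $U_{M,n,\beta}$, hence of $U_{M,n}$), so that the defining inequality in $U_{M,n,\beta_i}^{(2)}$ is \emph{not} triggered at time $s$ for the relevant points. Concretely, fix $i$, $s < U_{M,n}$, $x \in J_{n,i}(s)$ and $x',x'' \in \RR^\dimension$ with $|x-x'|\leq 4\sqrt{a_n}$ and $|x'-x''|\leq \bar{\ell}_n(\beta_i)$. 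By the definition of $J_{n,i}(s)$ there is a point $\hat{x}_0 = \hat{x}_n(s,x) \in B^\dimension(x,\sqrt{a_n})$ with $|u(s,\hat{x}_0)| \leq a_n$ and (for $i>0$) $|\nabla u_{1,a_n}(s,\hat{x}_0)| \leq a_n^{\beta_i}/4 \leq a_n^{\beta_i}$; this matches the configuration quantified over in the definition of $U_{M,n,\beta_i}^{(2)}$ once one takes $\eps = \sqrt{a_n}$ there and uses $a_n \wedge (\sqrt{a_n}\eps) = a_n$. The value $\eps = \sqrt{a_n}$ lies in the admissible range $[0,2^{-M}]$ because $\sqrt{a_n} \leq a_n^{\eps_1} \leq 2^{-M-4} \leq 2^{-M}$ by \eqref{eq:6}. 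Since $s < U_{M,n,\beta_i}^{(2)}$, the event in the infimum has not occurred, so for all such configurations with $|x-x'|\leq 2^{-M}$ and in particular for our $x',x''$ (after possibly first connecting $x$ to $x'$ and then $x'$ to $x''$) the reverse inequality holds.

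The execution then has two sub-steps. First, apply the failure of the $U^{(2)}$-condition with the pair $(x,x'')$ — here we need $|x-x''| \leq |x-x'| + |x'-x''| \leq 4\sqrt{a_n} + \bar{\ell}_n(\beta_i) \leq 2^{-M}$, which follows from \eqref{eq:6} since $\bar{\ell}_n(\beta_i) = a_n^{\beta_i + 5\eps_1} \leq a_n^{5\eps_1}$ and $\sqrt{a_n}\leq a_n^{\eps_1}$ are both much smaller than $2^{-M-4}$ — to bound $|u_{2,a_n^{\lambda_i}}(s,x) - u_{2,a_n^{\lambda_i}}(s,x'')|$, and similarly with the pair $(x,x')$; then use the triangle inequality $|u_{2,a_n^{\lambda_i}}(s,x') - u_{2,a_n^{\lambda_i}}(s,x'')| \leq |u_{2,a_n^{\lambda_i}}(s,x') - u_{2,a_n^{\lambda_i}}(s,x)| + |u_{2,a_n^{\lambda_i}}(s,x) - u_{2,a_n^{\lambda_i}}(s,x'')|$. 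This yields a bound of the form
\[
2^{-87} a_n^{-\eps_0} \Big[ |x-x'|^{(1-\eps_0)(1-\alpha/2)} \big( (\sqrt{a_n}\vee |x'-x|)^{2\gamma} + a_n^{\beta_i\gamma}|x'-x|^\gamma \big) + |x-x'|^{1-\eps_0} a_n^{\beta_{i}+\eps_1(1-\gamma)} \Big]
\]
plus the corresponding term with $x''$ in place of $x'$ (and $\eps = \sqrt{a_n}$ inserted in the places dictated by the definition). Second, one must simplify this to the clean right-hand side $2^{-75} a_n^{\beta_{i+1}}(|x'-x''| \vee a_n^{\frac{2}{\alpha}(\gamma - \beta_{i+1} - \eps_1)} \vee a_n)$. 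This is the routine but delicate part: one distinguishes cases according to whether $|x'-x''|$ (respectively $|x-x'|$, which is at most $4\sqrt{a_n}$) dominates $a_n^{\frac{2}{\alpha}(\gamma - \beta_{i+1}-\eps_1)}\vee a_n$ or not, uses $\beta_{i+1} \leq \beta_i + \eps_0$ together with $\eps_0 \leq \frac14(1-\gamma)\eps_1$ from \eqref{eq:epsconditions} to absorb the $a_n^{-\eps_0}$ prefactor, and uses the arithmetic identity already exploited in Lemma \ref{lem:3.6}, namely $\frac{2}{\alpha}(\gamma - \beta_{i+1} - \eps_1) \geq 1 - \beta_i - 2\eps_1$, so that powers of $a_n$ coming from $(\sqrt{a_n}\vee|x'-x|)^{2\gamma}$ and the like can be bounded below by the target exponent $\beta_{i+1}$ plus a small positive slack. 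The constant $2^{-75}$ is comfortably larger than $2\cdot 2^{-87}$ times whatever fixed combinatorial factors appear, so the constant-chasing closes.

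The main obstacle I expect is purely bookkeeping in this second sub-step: tracking the five-way maximum $|x'-x''| \vee a_n^{\frac{2}{\alpha}(\gamma-\beta_{i+1}-\eps_1)} \vee a_n$ through the different regimes while making sure that in \emph{every} regime the exponent of $a_n$ produced by the $U^{(2)}$-bound exceeds $\beta_{i+1}$ by at least a fixed multiple of $\eps_1$ (to kill the $a_n^{-\eps_0}$ factor and any leftover logarithmic-type slack), and that the presence of two terms (from $x'$ and from $x''$, and within each the two summands of the $U^{(2)}$-definition) does not spoil the constant. A minor additional point — and where the promised "slight improvement over Lemma 6.7 of \cite{mp:11}" presumably enters — is handling the term $|x-x'|^{(1-\eps_0)(1-\alpha/2)}(\sqrt{a_n}\vee|x'-x|)^{2\gamma}$: since $(1-\eps_0)(1-\alpha/2) + 2\gamma > 1$ when $\alpha < 2(2\gamma-1)$, this is in fact a \emph{better} than Lipschitz estimate in $|x-x'|$, which is exactly the mechanism allowing the improved range of $\gamma$, and one should exploit $|x-x'|\leq 4\sqrt{a_n}$ to convert the extra exponent into extra powers of $a_n$ that feed the slack. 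None of this requires new ideas beyond those already used in Lemmas \ref{lem:3.4} and \ref{lem:3.6}; the reference to $\cite{rippl:12}$ for the full computation of the analogous white-noise lemma covers the remaining arithmetic.
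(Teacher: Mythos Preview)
Your approach has a genuine gap: routing through the point $x$ via the triangle inequality destroys the dependence on $|x'-x''|$, and this dependence is essential. When $|x'-x''|$ is small (below the threshold $a_n^{\frac{2}{\alpha}(\gamma-\beta_{i+1}-\eps_1)}$), the target bound is $\sim a_n^{\beta_{i+1}+\frac{2}{\alpha}(\gamma-\beta_{i+1}-\eps_1)}$, whereas your bound $B(x,x')+B(x,x'')$ is governed by $|x-x'|,|x-x''|\sim\sqrt{a_n}$ and contributes a term of order $a_n^{\frac{1}{2}(1-\alpha/2)+\gamma}$ (up to $\eps_0$-corrections). A short computation shows that for $\alpha<2(2\gamma-1)$ and small $\beta_{i+1}$ one has $\frac{1}{2}(1-\alpha/2)+\gamma<\frac{2\gamma}{\alpha}$, so your bound is strictly larger than the target and the constant-chasing cannot close. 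The point is not bookkeeping: the triangle inequality step is lossy in a way that no choice of $\eps_0,\eps_1$ can repair.

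The paper's fix is simple but crucial: take $x'$ itself (not $x$) as the base point in the $U^{(2)}$-configuration, with $\hat{x}_0=\hat{x}_n(s,x)$ and $\eps=5\sqrt{a_n}$ so that $|x'-\hat{x}_0|\le |x'-x|+\sqrt{a_n}\le 5\sqrt{a_n}=\eps$. Then the failure of the $U^{(2)}$-trigger at time $s$ gives \emph{directly} a bound on $|u_{2,a_n^{\lambda_i}}(s,x')-u_{2,a_n^{\lambda_i}}(s,x'')|$ with the factor $|x'-x''|^{(1-\eps_0)(1-\alpha/2)}$ out front; no triangle inequality is needed. After this one sets $Q(n,\eps_0,\beta_i,r)=a_n^{-\eps_0}r^{(1-\eps_0)(1-\alpha/2)}\bigl((\sqrt{a_n}\vee r)^{2\gamma}+a_n^{\beta_i\gamma}r^\gamma\bigr)$ and proves the claim $Q(r)\le 2a_n^{\beta_{i+1}}(r\vee a_n^{\frac{2}{\alpha}(\gamma-\beta_{i+1}-\eps_1)}\vee a_n)$ for $r\le\bar\ell_n(\beta_i)$ by splitting into the three regimes $r\ge\sqrt{a_n}$, $a_n^{\frac{2}{\alpha}(\gamma-\beta_{i+1}-\eps_1)}\le r<\sqrt{a_n}$, and $r$ below the threshold. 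Your second sub-step (the case analysis) is basically this claim and is fine in spirit; the error is entirely in the first sub-step.
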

\begin{remark}\label{rem:lem:6.7}
This lemma is stricter than Lemma 6.7 of \cite{mp:11}. Following their strategy, we would obtain $\frac{1}{\alpha}(\gamma-2\beta_i(1-\gamma)-\eps_1)$ instead of the larger exponent $\frac{2}{\alpha}(\gamma-\beta_{i+1}-\eps_1).$
\end{remark}
\begin{proof}
 Let $(i,n,s,x,x')$ be as above and $\eps = 5 \sqrt{a_n} \leq 2^{-M}$ by \eqref{eq:3.14}. Then
 \begin{align*}
  |x' - \hat{x}_n(s,x)| &\leq |x'-x| + \sqrt{a_n} \leq \eps, |x'| \leq K_0 +1 \quad \text{and}\\
  | u(s,\hat{x}_n(s,x))| &\leq a_n = a_n \wedge (\sqrt{a_n} \eps).
 \end{align*}
If $i>0$ then for $x \in J_{n,i}(s)$ we obtain $|\nabla u_{1,a_n} (s,\hat{x}_n(s,x))| \leq a_n^{\beta_i}/4 \leq a_n^{\beta_i}$.
Let
\[ Q(n,\eps_0, \beta_i, r) := a_n^{-\eps_0} r^{(1-\eps_0)(1-\alpha/2)} ((\sqrt{a_n} \vee r)^{2\gamma} + a_n^{\beta_i \gamma} r^\gamma). \]
Assume that $|x'-x''| \leq \bar{\ell}_n(\beta_i) \, ( \leq 2^{-M})$. Since $s < U_{M,n,\beta_i}^{(2)}$, it holds that
\begin{align}
 |u_{2,a_n^{\lambda_i}} (s,x')  -u_{2,a_n^{\lambda_i}} (s,x'') | & \leq 2^{-87} [ Q(n,\eps_0, \beta_i, \eps \vee |x'-x''|) + a_n^{-\eps_0} |x'-x''|^{1-\eps_0} a_n^{\beta_i + \eps_1 (1-\gamma)}] \nonumber \\
 & \leq 2^{-80} [Q(n, \eps_0, \beta_i, |x'-x''|) + |x'-x''|^{1-\eps_0} a_n^{\beta_i + \eps_1 (1-\gamma) -\eps_0}]. \label{eq:pr:lem:6.7}
\end{align}
We will now show the following\\
\emph{Claim:} 
\begin{equation}
\label{ClaimQ}
 Q(n, \eps_0, \beta_i,r) \leq 2 a_n^{\beta_{i+1}} (r \vee a_n^{\frac{2}{\alpha}(\gamma-\beta_{i+1}-\eps_1)} \vee a_n) \text{ if  }0 \leq r \leq \bar{\ell}_n(\beta_i).
 \end{equation}
 We split the proof of the claim into several cases:

\emph{Case 1:} $\sqrt{a_n} \leq r \leq \bar{\ell}_n(\beta_i) = a_n^{\beta_i + 5 \eps_1}$\\
In this case we will bound $Q$ by $2a_n^{\beta_{i+1}}r$ and this holds if
\begin{align}
 r^{(1-\eps_0)(1-\alpha/2) +2\gamma -1} \leq a_n^{\beta_{i+1} + \eps_0} \label{eq:6.22} \\
 \intertext{ and}
 r^{\gamma -1 + (1-\eps_0)(1-\alpha/2)} \leq a_n^{\beta_{i+1} + \eps_0 - \beta_i \gamma}. \label{eq:6.23}
\end{align}
Note that
$$(1-\eps_0)(1-\alpha/2) - 1 +2\gamma \geq 2 \gamma - \alpha/2 - \eps_0 \geq 1 \quad  (\text{by } \eqref{eq:3.10}).$$
Therefore \eqref{eq:6.22} already follows (additionally using $5 \eps_1 > 2 \eps_0$). For the other inequality by $r \leq a_n^{\beta_i + 5 \eps_1 }$ it suffices to show that
$$ \beta_i (1-\gamma) + 2\eps_0 -\beta_i [\gamma - 1 + (1-\eps_0)(1-\alpha/2)] - 5 \eps_1[\gamma - 1 + (1-\eps_0)(1-\alpha/2)] \leq 0. $$
This holds if
$$ \beta_i( 2\gamma -1 - \alpha/2 + \eps_0(1-\alpha/2)) - 2 \eps_0 +5 \eps_1(\gamma - \alpha/2) \geq 0.$$
And this holds since $\alpha < 2(2\gamma -1)$ and $2\eps_0 < 5 \eps_1$. So we are done with the first case.\\

\emph{Case 2:} $ a_n^{\frac{2}{\alpha}(\gamma-\beta_{i+1}-\eps_1)}  \leq r < \sqrt{a_n}$.\\
In this case
\begin{align*} Q(n,\eps_0, \beta_i, r) & = a_n^{-\eps_0} r^{(1-\eps_0)(1-\alpha/2)}[a_n^\gamma + a_n^{\gamma \beta_i } r^\gamma] . \end{align*}
We need to estimate both summands. First
$$ r^{(1-\eps_0)(1-\alpha/2)} a_n^{\gamma-\eps_0} \leq r a_n^{\beta_{i+1}}$$
is true by the lower bound on $r$ and the fact that $\eps_0 < \eps_1(1+\frac{\alpha}{2})^{-1}.$ The second summand satisfies
$$ a_n^{-\eps_0}r^{(1-\eps_0)(1-\alpha/2)}a_n^{\gamma \beta_i} r^\gamma \leq r a_n^{\beta_{i+1}},$$
since
\begin{align*}
 r^{\gamma -\alpha/2-\eps_0(1-\alpha/2)} &\leq  \sqrt{a_n}^{\gamma-\alpha/2 -\eps_0} \\
 & \leq \sqrt{a_n}^{1-\gamma +5 \eps_0} \leq a_n^{\beta_i(1-\gamma)} a_n^{2\eps_0},
\end{align*}
by $3\eps_0 < \eps_1 < \frac{1}{2}(2\gamma - 1 -\frac{\alpha}{2}).$

\emph{Case 3:} $ r < a_n^{\frac{2}{\alpha}(\gamma-\beta_{i+1} - \eps_1)}$\\
This follows from monotonicity in $r$ and the fact that Case 2 actually occurs since
$$ \frac{2}{\alpha}(\gamma - \beta_{i+1} - \eps_1) \geq \frac{2}{\alpha}(\gamma-\frac{1}{2}) > \frac{\alpha/2}{\alpha} = \frac{1}{2}.$$
Hence the claim in (\ref{ClaimQ}) is shown.

Next, consider the other term in \eqref{eq:pr:lem:6.7} to finish the proof. In the case $r\geq a_n$ we have
\begin{align*} r^{1-\eps_0} a_n^{\beta_i + \eps_1 (1-\gamma) -\eps_0}(a_n^{\beta_{i+1}} r)^{-1} & = r^{-\eps_0} a_n^{-2 \eps_0 + \eps_1 (1-\gamma)} \nonumber \\
& \leq a_n^{-3 \eps_0 + \eps_1 (1-\gamma)} < 1,\end{align*}
since $\eps_0 < \frac{1-\gamma}{3}\eps_1$. So for any $r>0$ (when using the previous estimate with $(r\vee a_n)$ in place of $r$),
\begin{align*} r^{1-\eps_0} a_n^{\beta_i + \eps_1 (1-\gamma) -\eps_0} & \leq a_n^{\beta_{i+1}} (r\vee a_n) \\ & \leq a_n^{\beta_{i+1}} (r \vee a_n^{\frac{2}{\alpha}(\gamma-\beta_{i+1}-\eps_1)} \vee a_n ).\end{align*}
Putting things together we get the statement of the lemma.
\end{proof}

\begin{lemma}\label{lem:6.8}
 If $0 \leq s < U_{M,n}$ and $x\in J_{n,0}(s)$ then
  \be | u(s,x) - u(s,x') | \leq ( \sqrt{a_n} \vee | x-x'|)^{1-\eps_0} \ \text{ if } x' \text{ is such that  } |x'-x|\leq 2^{-M} \eq
 and
  \be |u(s,x')| \leq 3(\sqrt{a_n})^{1-\eps_0} \ \text{ if } |x'-x| \leq \sqrt{a_n}. \eq
\end{lemma}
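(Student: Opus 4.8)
The statement of Lemma \ref{lem:6.8} is the specialization of the general machinery of Lemmas \ref{lem:6.5}--\ref{lem:6.7} to the endpoint case $i=0$, where $\beta_0 = 0$ and where the extra structure needed is really just the modulus of continuity of $u$ itself near points where it is small, rather than a gradient estimate. The natural approach is therefore to feed the defining condition of $J_{n,0}(s)$ into the stopping time $U_M^{(4)}$, whose very definition was tailored to give an $N$-independent H\"older estimate of exponent $1-\eps_0$ for $u$ near its approximate zeros.

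First I would fix $0\le s<U_{M,n}$ and $x\in J_{n,0}(s)$. By definition of $J_{n,0}(s)$ we have $|x|\le K_0$ and $|\la u_s,\Phi_x^{m_{n+1}}\ra|\le a_n$; the latter forces the existence of a point $\hat x_0 := \hat x_n(s,x)\in B^\dimension(x,\sqrt{a_n})$ with $|u(s,\hat x_0)|\le a_n$ (indeed $\hat x_n(s,x)$ was chosen precisely so that $|u(s,\hat x_n(s,x))| = \inf\{|u(s,z)|:|z-x|\le\sqrt{a_n}\}$, and this infimum is $\le a_n$ since otherwise $|\la u_s,\Phi_x^{m_{n+1}}\ra|$ could not be $\le a_n$). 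Set $\eps := \sqrt{a_n}$, which satisfies $\eps\le 2^{-M}$ by \eqref{eq:6}. Then for any $x'$ with $|x'-x|\le 2^{-M}$ we have $|x|\le K_0\le K_0+1$, $|x-\hat x_0|<\eps$ (after an infinitesimal enlargement of $\eps$, or simply noting $\le\eps$ suffices up to replacing $\sqrt{a_n}$ by a slightly larger value — more cleanly, take $\eps = 2\sqrt{a_n}\le 2^{-M}$ by \eqref{eq:6}), and $|u(s,\hat x_0)|\le a_n\le\eps\le\eps\vee|x'-x|$. Since $s<U_{M,n}\le U_M^{(4)}$, the defining event in $U_M^{(4)}$ has not occurred at time $s$, hence
\[
|u(s,x)-u(s,x')| \le (\eps\vee|x'-x|)^{1-\eps_0} = (2\sqrt{a_n}\vee|x-x'|)^{1-\eps_0}\le 2(\sqrt{a_n}\vee|x-x'|)^{1-\eps_0},
\]
which already gives the first inequality (up to the harmless constant $2$, which can be absorbed by adjusting $\eps_0$ or by a slightly more careful bookkeeping of the constant $\eps$; in \cite{mp:11} the analogous constant is handled by the same device). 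For the second inequality, take $x'$ with $|x'-x|\le\sqrt{a_n}$ and combine with the bound on $\hat x_0$: applying the triangle inequality and the first part twice (or once for the pair $(x,x')$), $|u(s,x')|\le |u(s,x)| + |u(s,x)-u(s,x')|$, and then $|u(s,x)|\le |u(s,\hat x_0)| + |u(s,x)-u(s,\hat x_0)|\le a_n + 2(\sqrt{a_n})^{1-\eps_0}$; since $|x-x'|\le\sqrt{a_n}$ we get $|u(s,x)-u(s,x')|\le 2(\sqrt{a_n})^{1-\eps_0}$, so altogether $|u(s,x')|\le a_n + 4(\sqrt{a_n})^{1-\eps_0}\le 3(\sqrt{a_n})^{1-\eps_0}$ once $n$ is large, using $a_n\le(\sqrt{a_n})^{1-\eps_0}$ (true since $\sqrt{a_n}\le 1$) and \eqref{eq:6}.

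The only genuine subtlety — and the step I expect to require the most care — is the bookkeeping of the constants $\eps$ and the factor $2$, to land on exactly the stated inequalities with clean right-hand sides $(\sqrt{a_n}\vee|x-x'|)^{1-\eps_0}$ and $3(\sqrt{a_n})^{1-\eps_0}$ rather than constant multiples thereof. This is resolved exactly as in the proof of Lemma 6.8 of \cite{mp:11}: one chooses $\eps$ to be $\sqrt{a_n}$ (not $2\sqrt{a_n}$) and observes $|\hat x_0 - x|\le\sqrt{a_n} = \eps$ is admissible in the (closed) condition $|x-\hat x_0|<\eps$ only up to an arbitrarily small enlargement, which is absorbed because $\eps$ ranges over a closed interval $[0,2^{-M}]$ in the definition of $U_M^{(4)}$ and $\sqrt{a_n}$ lies strictly inside it by \eqref{eq:6}; alternatively, one notes that $B_n(s,x)$ being the set of minimizers means we may pick $\hat x_0$ with $|\hat x_0-x|$ strictly less than $\sqrt{a_n}$ unless $u$ is constant there, in which case the bounds are trivial. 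Everything else is a direct quotation of the non-occurrence of the $U_M^{(4)}$-event together with the elementary inequality $a_n\le(\sqrt{a_n})^{1-\eps_0}$ and the constraint \eqref{eq:6} that makes $\sqrt{a_n}$ small enough for the numerical constants $2,4$ to be swallowed into the leading constant $3$.
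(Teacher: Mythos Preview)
Your approach is correct and is exactly the one the paper has in mind: it omits the proof entirely with the remark that it is ``just the same as Lemma 6.8 in \cite{mp:11}'', namely feeding $\hat x_0=\hat x_n(s,x)$ and $\eps=\sqrt{a_n}$ into the non-occurrence of the $U_M^{(4)}$-event, and then using the triangle inequality through $\hat x_0$ for the second bound. The only cosmetic point is that your digression about the factor $2$ is unnecessary: the clean way is the limiting argument you mention last --- take $\eps\in(\sqrt{a_n},2^{-M}]$ (possible by \eqref{eq:6}), so that $|x-\hat x_0|\le\sqrt{a_n}<\eps$ and $|u(s,\hat x_0)|\le a_n<\eps$, obtain $|u(s,x)-u(s,x')|\le(\eps\vee|x-x'|)^{1-\eps_0}$, and let $\eps\downarrow\sqrt{a_n}$; the first inequality then holds with no extra constant, and the second follows from $|u(s,x')|\le a_n+2(\sqrt{a_n})^{1-\eps_0}\le 3(\sqrt{a_n})^{1-\eps_0}$.
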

\noindent
 This statement has just the same proof as Lemma 6.8 in \cite{mp:11}, so we omit it. We are finally going to complete the\\
\noindent
\textbf{Proof of Proposition \ref{prop:3.3}.}\\
The compactness of $\tilde{J}_{n,i}(s)$ follows from the continuity of all the functions involved and the inclusion $J_{n,i}(s) \subset \tilde{J}_{n,i}(s)$ follows from Lemmas \ref{lem:6.6}, \ref{lem:6.7} and \ref{lem:6.8}. \qed

\appendix
\section{Appendix}\label{appendix}

We give the proofs of the results from Section \ref{sec:4} and add some auxiliary results.  Remember that $C>0$ denotes a constant that may change its values from line to line. 
 We start with the proof of Lemma \ref{lem:algebra}.
\begin{proof}[Proof of Lemma \ref{lem:algebra}]
 Consider for $r>0, u\geq 1$  the function
 $$ f(a) = a  \exp \left(- \frac{a^r}{u} \right), \, a \geq 0,$$
 which attains its maximal value $u^{1/r} (\frac{1}{r})^{1/r}\exp(-1/r)$ at $a = (\tfrac{u}{r})^{1/r}.$ Hence, choose
 $C(r_0,r_1) =\max_{r \in [r_0,r_1]} (\frac{1}{r})^{1/r}\exp(-1/r)$ to obtain the result.
\end{proof}
This lemma can be applied in the next proof.
\begin{proof}[Proof of Lemma \ref{lem:4.2}]
By Lemma \ref{lem:algebra} applied with  $a=\frac{|x|}{2\sqrt{t}}, u=1, r=2,$ 
 \begin{align*} |p_{t,l}(x)| &\leq \frac{1}{\sqrt{t}} \frac{|x|}{\sqrt{t}} (2\pi t)^{-\dimension /2} \exp \left( - \frac{|x|^2}{2t} \right) \leq C \frac{1}{\sqrt{t}} (4\pi t)^{-\dimension /2} \exp \left( -\frac{|x|^2}{4t} \right),
 \end{align*}
which  proves the result.
\end{proof}
Next we can extend the results of Lemma 5.2 in \cite{mps:06} to derivatives:
\begin{lemma}\label{lem:heat-ker:diff:1}
 There is a uniform constant $C>0$  such that for any $0<t<t'$, $w, v \in \RR^{\dimension}$
 the following holds for $l=1,\dots,\dimension:$
 \begin{enumerate}
  \item Setting $\hat{v}_0:= 0$ and $\hat{v}_i:= \hat{v}_{i-1} + v_i e_i, 1\leq i \leq \dimension,$ where $e_i$ is the $i$-th unit vectors in $\RR^{\dimension},$ we have
  for the spatial differences
   \begin{equation}
         \begin{split}
          |p_{t,l}(w+v) - p_{t,l}(w)| & \leq  C t^{-1} \sum_{i=1}^{\dimension}  \int_0^{|v_i|} dr_i\,  p_{2t}(w+\hat{v}_{i-1} + r_i e_i).
         \end{split}
        \end{equation}        
  \item We obtain for the time differences
  \begin{equation}
   | p_{t,l} (w) - p_{t',l}(w)| \leq C  |t-t'|^{\frac{1}{2}} t^{-\frac{1}{2}} (t^{-1/2} p_{2t}(w) + t'^{-1/2} p_{4t'}(w)).
   \end{equation}
 \end{enumerate}
\end{lemma}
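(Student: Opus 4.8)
\textbf{Proof proposal for Lemma \ref{lem:heat-ker:diff:1}.}

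The plan is to reduce both statements to elementary one-dimensional estimates on the Gaussian density and then to exploit the product structure of $p_t$. For part (a), I would write $p_{t,l}(x) = -\frac{x_l}{t} p_t(x)$ and telescope the spatial increment $w\mapsto w+v$ coordinate by coordinate along the path $\hat v_0,\hat v_1,\dots,\hat v_q$, so that $p_{t,l}(w+v)-p_{t,l}(w) = \sum_{i=1}^q \big(p_{t,l}(w+\hat v_i)-p_{t,l}(w+\hat v_{i-1})\big)$, where consecutive terms differ only in the $i$-th coordinate. For each such one-variable difference I would apply the fundamental theorem of calculus, writing it as $\int_0^{v_i} \partial_{x_i} p_{t,l}(w+\hat v_{i-1}+r_i e_i)\,dr_i$ (with the obvious sign/orientation adjustment when $v_i<0$, which produces the $\int_0^{|v_i|}$ and absolute value). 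The integrand is a second-order mixed derivative of the heat kernel, which is a polynomial in the coordinates divided by a power of $t$ times $p_t$; the key pointwise bound is that $|\partial_{x_i}\partial_{x_l} p_t(y)| \le C t^{-1} p_{2t}(y)$, obtained exactly as in Lemma \ref{lem:4.2} by applying Lemma \ref{lem:algebra} to the monomials $\frac{|y_i y_l|}{t}$ and $\frac{|y_i|^2}{t}$ (with $r=2$, $u=1$) to absorb them into a Gaussian with doubled variance. Collecting these bounds over $i=1,\dots,q$ gives the stated estimate.

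For part (b), the natural route is again the fundamental theorem of calculus, this time in the time variable: $p_{t',l}(w)-p_{t,l}(w) = \int_t^{t'} \partial_s p_{s,l}(w)\,ds$. Since $p_s$ solves the heat equation, $\partial_s p_s = \frac12 \Delta p_s$, so $\partial_s p_{s,l}(w) = \partial_{x_l}\big(\tfrac12 \Delta p_s\big)(w)$, a third-order spatial derivative of $p_s$; pointwise it is bounded by $C s^{-3/2} p_{2s}(w)$ by the same Lemma \ref{lem:algebra} trick (absorbing monomials of degree up to three in $w/\sqrt s$). Then $|p_{t,l}(w)-p_{t',l}(w)| \le C\int_t^{t'} s^{-3/2} p_{2s}(w)\,ds$, and one bounds $\int_t^{t'} s^{-3/2}\,ds \le C |t-t'|^{1/2} t^{-1}$ (using $t\le t'$ and, say, $s^{-3/2}\le t^{-1}s^{-1/2}$), while $p_{2s}(w)$ for $s\in[t,t']$ is dominated by $p_{2t}(w)+p_{4t'}(w)$ up to a constant — this is a standard comparison of Gaussians with comparable variances, e.g.\ splitting at $s\le 2t$ versus $s>2t$ and using monotonicity of $s\mapsto p_{cs}(w)$ in the two regimes. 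Combining gives the bound $C|t-t'|^{1/2}t^{-1}(p_{2t}(w)+p_{4t'}(w))$, which one rewrites in the stated form by factoring $t^{-1} = t^{-1/2}\cdot t^{-1/2}$ and bounding $t^{-1/2}p_{4t'}(w)\le t'^{-1/2}\cdot(t'/t)^{1/2}p_{4t'}(w)$ — here one has to be a little careful, absorbing the harmless $(t'/t)^{1/2}$ factor, or more cleanly keeping $t^{-1/2}$ throughout and noting $t^{-1/2}\ge t'^{-1/2}$ so that the claimed form with $t'^{-1/2}p_{4t'}(w)$ is in fact weaker; I would present whichever bookkeeping is cleanest.

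The main obstacle I anticipate is not conceptual but bookkeeping: keeping the constants uniform in $l$ and in the dimension $q$ while controlling the monomials that appear in the mixed derivatives, and choosing the right dilation factors ($2t$, $4t'$, etc.) so that every Gaussian comparison goes through with a single constant $C$. In particular, in part (a) one must check that the telescoping path stays controlled — the intermediate points $w+\hat v_{i-1}+r_i e_i$ have $|r_i|\le|v_i|$, so they lie in a box around $w$ of the correct size and the Gaussian $p_{2t}$ evaluated there is what appears — and in part (b) the comparison $p_{2s}(w)\lesssim p_{2t}(w)+p_{4t'}(w)$ for $s$ between $t$ and $t'$ needs the elementary but slightly fiddly observation that $\sup_{a\in[2t,4t']} p_a(w)$ is attained at an endpoint once one separates $|w|^2$ small from large. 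None of this is deep, and all of it follows the pattern already established in Lemma \ref{lem:4.2} and in Section 5 of \cite{mps:06}, so I would state these as routine consequences of Lemma \ref{lem:algebra}.
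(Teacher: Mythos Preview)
Your argument for part (a) is correct and is essentially the paper's proof, only phrased more abstractly: the paper writes out the telescoping sum explicitly (separating the $i=l$ summand, where both the prefactor $w_l/t$ and the exponential change, from the $i\neq l$ summands) and then applies Lemma~\ref{lem:algebra} exactly as you describe to absorb the monomials into $p_{2t}$.

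For part (b), however, there is a genuine gap. Your plan is to bound
\[
|p_{t,l}(w)-p_{t',l}(w)|\le C\int_t^{t'} s^{-3/2}p_{2s}(w)\,ds
\]
and then factor the integral as $\bigl(\int_t^{t'} s^{-3/2}\,ds\bigr)\cdot\sup_{s\in[t,t']}p_{2s}(w)$, claiming that $p_{2s}(w)\le C\bigl(p_{2t}(w)+p_{4t'}(w)\bigr)$ uniformly for $s\in[t,t']$. This pointwise comparison is \emph{false} when $t'/t$ is large: the map $a\mapsto p_a(w)$ is unimodal with maximum of order $|w|^{-\dimension}$ at $a=|w|^2/\dimension$, so if one takes, say in $\dimension=1$, $w=10^{-n}$, $t=10^{-4n}$, $t'=1$, then $\sup_{s\in[t,t']}p_{2s}(w)\sim 10^{n}$ while $p_{2t}(w)\to 0$ and $p_{4t'}(w)$ stays bounded. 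The ratio blows up. The same unboundedness of $t'/t$ makes the ``harmless'' factor $(t'/t)^{1/2}$ in your final bookkeeping not harmless at all: your intermediate bound $C|t-t'|^{1/2}t^{-1}\bigl(p_{2t}(w)+p_{4t'}(w)\bigr)$ is \emph{weaker} than the stated one, not stronger, so you cannot pass from it to the claim.

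The paper sidesteps this entirely by a different decomposition: it writes
\[
p_{t,l}(w)-p_{t',l}(w)=w_l\Bigl[(t^{-1-\dimension/2}-t'^{-1-\dimension/2})e^{-|w|^2/(2t)}+t'^{-1-\dimension/2}\bigl(e^{-|w|^2/(2t)}-e^{-|w|^2/(2t')}\bigr)\Bigr]
\]
(up to $(2\pi)^{-\dimension/2}$), applies the mean value theorem to the power of $t$ in the first bracket and the fundamental theorem of calculus in $\sqrt{s}$ to the exponential difference in the second. After absorbing the stray $|w|$'s via Lemma~\ref{lem:algebra}, the first bracket produces exactly the $t^{-1/2}p_{2t}(w)$ term and the second produces exactly the $t'^{-1/2}p_{4t'}(w)$ term, each already carrying the correct prefactor $|t^{1/2}-t'^{1/2}|\,t^{-1/2}\le |t-t'|^{1/2}t^{-1/2}$. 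No comparison of $p_{2s}$ across the whole interval $[t,t']$ is needed. If you wish to salvage the FTC-in-time route, you would have to estimate $\int_t^{t'} s^{-3/2}p_{2s}(w)\,ds$ without separating the two factors, which is possible but considerably less clean than the paper's direct split.
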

\begin{proof} We follow \cite[page 1932]{mps:06}. Without loss of generality we can assume that $l=1.$ Then we consider for (a):
\begin{equation*}
 \begin{split}
  | \frac{w_1}{t} & \exp ( - \frac{|w|^2}{2t} ) - \frac{w_1+ v_1}{t}  \exp (- \frac{|w+v|^2}{2t} ) | \\
  & \leq |\frac{w_1}{t} \exp ( - \frac{|w|^2}{2t} ) - \frac{w_1+ v_1}{t}  \exp (- \frac{|w+\hat{v}_1|^2}{2t} ) | + \\
  & \quad + \sum_{i=2}^{\dimension} | \frac{w_1+ v_1}{t}| | \exp (- \frac{|w+\hat{v}_{i-1}|^2}{2t} ) - \exp (- \frac{|w+\hat{v}_{i}|^2}{2t} ) |
  \end{split}
\end{equation*}
Now, observe that \[\partial_{x_1} \left( x_1/t \exp(-|x|^2/(2t))\right) = t^{-1} \exp(-|x|^2/(2t)) - (x_1/t)^2 \exp(-|x|^2/(2t))\] and 
$\partial_{x_1}  \exp(-|x|^2/(2t))=  - (x_1/t) \exp(-|x|^2/(2t)).$  Hence, the above is bounded by
\begin{align*}
  &  |\int_0^{|v_1|} dr_1 \, [t^{-1} \exp(- \frac{|w+r_1 e_1|^2}{2t} ) - (\frac{w_1+r_1}{t})^2 \exp(- \frac{|w+r_1 e_1|^2}{2t} ) ] |\\
  & \quad + \frac{|w_1+ v_1|}{t} \sum_{i=2}^{\dimension} | \int_0^{|v_i|} dr_i\,  \frac{w_i+r_i}{t} \exp(- \frac{|w+\hat{v}_{i-1} + r_i e_i|^2}{2t} ) |.
   \end{align*}
 Now, use \eqref{eq:ineq:1} twice with $a=\frac{w_i+r_i}{\sqrt{t}}, u=4$ and $r=1$ respectively $r=2$ to bound this further by
  \begin{align*}
  &  t^{-1} \int_0^{|v_1|} dr_1\, \left( \exp(- \frac{|w+r_1 e_1|^2}{2t} ) + C \exp(- \frac{|w+r_1 e_1|^2 }{2t} + \frac{|w_1+r_1|^2}{4t} )  \right) \\
  &\quad + C  \sum_{i=2}^{\dimension}  \int_0^{|v_i|} dr_i\,  \exp(- \frac{|w+\hat{v}_{i-1} + r_i e_i|^2}{2t} + \frac{|w_i+r_i|^2}{4t} + \frac{|w_1+ v_1|^2}{4t}) \\
   & \leq C t^{-1} \int_0^{|v_1|} dr_1\, \exp(- \frac{|w+r_1 e_1|^2}{4t} )  +  C  t^{-1} \sum_{i=2}^{\dimension}  \int_0^{|v_i|} dr_i\,  \exp(- \frac{|w+\hat{v}_{i-1} + r_i e_i|^2}{4t} ).
 \end{align*}
And the result follows by multiplication with $(2\pi t)^{-\dimension/2}$.

To prove (b) we consider the time differences, following (52) in \cite{mps:06}. First, rewriting and then using the Mean Value Theorem we get
  \begin{align*} 
  | p_{t,1} (w) - p_{t',1}(w)| & = (2\pi)^{-\dimension /2} | \frac{w_1}{t} t^{-\dimension /2} \exp (-\frac{|w|^2}{2t}) -  \frac{w_1}{t'} t'^{-\dimension /2} \exp (-\frac{|w|^2}{2t'}) | \\
  & \leq (2\pi)^{-\dimension /2} |  (t^{1/2})^{-\dimension-2} -(t'^{1/2})^{-\dimension-2}|\, |w| \exp (-\frac{|w|^2}{2t}) \\
  & \quad +  (2\pi)^{-\dimension /2}  |w| t'^{-\dimension /2-1} |\exp (-\frac{|w|^2}{2t}) -   \exp (-\frac{|w|^2}{2t'}) | \\
  & \leq (2\pi)^{-\dimension/2} (\dimension+2) |w| |t^{1/2} - t'^{1/2}| (t^{1/2})^{-\dimension-3} \exp (-\frac{|w|^2}{2t})  \\
  & \quad + (2\pi)^{-\dimension /2} |w| t'^{-1-\dimension /2} \int_{t^{1/2}}^{t'^{1/2}} \exp (-\frac{|w|^2}{2s^2}) \frac{|w|^2}{s^3}\, ds.
 \end{align*}  
 Using $a\leq \exp (a)$ for $a = |w|^2/(4s^2)$, we have
$$  \int_{t^{1/2}}^{t'^{1/2}} \exp (-\frac{|w|^2}{2s}) \frac{|w|^2}{s^3}\, ds \leq  \int_{t^{1/2}}^{t'^{1/2}} \frac{4}{s} \exp (-\frac{|w|^2}{4s^2})\, ds \leq |t^{1/2} - t'^{1/2}| \frac{4}{t^{1/2}}\exp (-\frac{|w|^2}{4t'}).$$
Using further \eqref{eq:ineq:1} in both lines of the above expression,  we can bound it by
  \begin{align*} 
  | p_{t,1} (w) - p_{t',1}(w)| & \leq  (2\pi)^{-\dimension /2} C t^{-1-\dimension /2} |t^{1/2} - t'^{1/2}| \exp (-\frac{|w|^2}{4t})  \\
  & \quad 
 +  (2\pi)^{-\dimension /2} C t'^{-1/2 - \dimension /2}   |t^{1/2} - t'^{1/2}| t^{-1/2} \exp (-\frac{|w|^2}{8t'}) \\
  & \leq C  |t^{1/2} - t'^{1/2}| t^{-1/2} (t^{-1/2} p_{2t}(w) + t'^{-1/2} p_{4t'}(w)).
  \end{align*}
\end{proof}
Next, combine that lemma with Lemma 5.1 in \cite{mps:06}:
\begin{proof}[Proof of Lemma \ref{lem:heat-ker:diff:2}]
 There are two estimates to make, one for each part of the $\wedge$.\\
 First, let us consider the left part. Expanding the product in the integral gives
\begin{align}
\nn   \intRd \intRd &| \ (p_{t,l}(w-x) - p_{t',l}(w-x'))(p_{t,l}(z-x) - p_{t',l}(z-x')) | \ (|w-z|^{-\alpha} + 1) \, dw  dz \\
\nn    \leq  & \intRd \intRd | p_{t,l}(w-x) p_{t,l}(z-x) | (|w-z|^{-\alpha} + 1) \, dw  dz \\
\nn    & + \intRd \intRd | p_{t',l}(w-x') p_{t',l}(z-x') | (|w-z|^{-\alpha} + 1) \, dw  dz \\
\label{fourptl}  & + \intRd \intRd | p_{t,l}(w-x) p_{t',l}(z-x') | (|w-z|^{-\alpha} + 1) \, dw  dz  \\
\nn    & + \intRd \intRd | p_{t',l}(w-x') p_{t,l}(z-x) | (|w-z|^{-\alpha} + 1) \, dw  dz.
\end{align}
Note that by a change of variables (and $|w| = |-w|$) the last two lines coincide. The same is true for the first two lines except that $t$ and $t'$ differ. Thus,
expression (\ref{fourptl}) is equal to
\begin{equation}
\label{threeptl}
 \begin{split}
  & \intRd \intRd  | p_{t,l}(w) p_{t,l}(z) | (|w-z|^{-\alpha} + 1) \, dw  dz \\
  &+   \intRd \intRd  | p_{t',l}(w) p_{t',l}(z) | (|w-z|^{-\alpha} + 1) \, dw  dz \\
    & + 2 \intRd \intRd | p_{t,l}(w-(x-x')) \; p_{t',l}(z) | (|w-z|^{-\alpha} + 1) \, dw  dz. 
 \end{split}
\end{equation}
For the first line of (\ref{threeptl}) we write, using $|w_l| \leq |w|$ 
 and \eqref{eq:ineq:1},
 \begin{align*}
   & t^{-1} \intRd \intRd (2 \pi t)^{-\dimension} \frac{|w_l|}{\sqrt{t}} \exp(- \frac{|w|^2}{2t})   \frac{|z_l|}{\sqrt{t}} \exp(- \frac{|z|^2}{2t})   (|w-z|^{-\alpha} + 1) \, dw  dz \\
   & \leq  C t^{-1}  \intRd \intRd (2\pi t)^{-\dimension} \exp(- \frac{|w|^2}{4t})    \exp(- \frac{|z|^2}{4t})   (|w-z|^{-\alpha} + 1) \, dw  dz,  \\
   \leq & C(\alpha, \dimension) (t^{-\alpha/2 -1} + t^{-1})
  \end{align*}
   by an application of Lemma 5.1 in \cite{mps:06} and the fact that $t\leq t'$. For the second line (with $t'$) we can do exactly the same and obtain the same even with $t$ instead of $t'$, since $t\leq t'$.

For the third line of (\ref{threeptl}) the same reasoning leads to the bound
  \begin{align*}
   & 2 (tt')^{-1/2} \intRd \intRd (2 \pi)^{-\dimension} (tt')^{-\dimension /2} \frac{|(w-(x-x'))_l|}{\sqrt{t}} \exp(- \frac{|w-(x-x')|^2}{2t})   \\
   & \phantom{ (tt')^{-1/2} \intRd \intRd (2 \pi)^{-\dimension} (tt')^{-\dimension /2}AAAAA}   \frac{|z_l|}{\sqrt{t'}}\exp(- \frac{|z|^2}{2t'})   (|w-z|^{-\alpha} + 1) \, dw  dz \\
   \leq &  C t^{-1} \intRd \intRd (2\pi t)^{-\dimension} \exp(- \frac{|w-(x-x')|^2}{4t})    \exp(- \frac{|z|^2}{4t'})   (|w-z|^{-\alpha} + 1) \, dw  dz,  \\
   & \leq C(\alpha, \dimension) (t^{-\alpha/2 -1} + t^{-1})
  \end{align*}
 by an application of Lemma 5.1 in \cite{mps:06} and $t\leq t' \leq K$.
 \smallskip
 
So this was the first part of the $\wedge$. To consider the second estimate, we start with a decomposition
\begin{equation}\label{eq:heat-ker:decomp}
 \begin{split}
  | \ (p_{t,l}(w-x) & - p_{t',l}(w-x'))(p_{t,l}(z-x) - p_{t',l}(z-x')) \ | \\
  & \leq \ | \ (p_{t,l}(w-x) - p_{t,l}(w-x'))(p_{t,l}(z-x) - p_{t,l}(z-x')) \ | \\
  & \quad + | \ (p_{t,l}(w-x) - p_{t,l}(w-x'))(p_{t,l}(z-x') - p_{t',l}(z-x')) \ | \\
  & \quad + | \ (p_{t,l}(w-x') - p_{t',l}(w-x'))(p_{t,l}(z-x) - p_{t,l}(z-x')) \ | \\
  & \quad + | \ (p_{t,l}(w-x') - p_{t',l}(w-x'))(p_{t,l}(z-x') - p_{t',l}(z-x')) \ |.
 \end{split}
\end{equation}
We start with the simplest case in \eqref{eq:heat-ker:decomp}:
  \begin{align*}
  & \intRd \intRd |  (p_{t,l}(w-x) - p_{t,l}(w-x'))(p_{t,l}(z-x) - p_{t,l}(z-x'))   | (|w-z|^{-\alpha} + 1) \, dw  dz.
   \end{align*} 
Changing variables, setting  $v = x-x'$ and using Lemma \ref{lem:heat-ker:diff:1}, we bound this by
     \begin{align*}
   &C \intRd \intRd  \Bigl[   t^{-1} \sum_{i=1}^{\dimension}  \int_0^{|v_i|} dr_i \,  p_{2t}(w+\hat{v}_{i-1} + r_i e_i) \Bigr] 
    \Bigl[  t^{-1} \sum_{j=1}^{\dimension}  \int_0^{|v_j|} d\tilde{r}_j \  p_{2t}(z+\hat{v}_{j-1} + \tilde{r}_j e_j) \Bigr] \\
  & \qquad \qquad (|w-z|^{-\alpha} + 1) \, dw  dz\\
     & =  C t^{-2} \sum_{i=1}^{\dimension}  \int_0^{|v_i|} dr_i\, \sum_{j=1}^{\dimension}  \int_0^{|v_j|} d\tilde{r}_j \\
   & \qquad \qquad \intRd \intRd p_{2t}(w+\hat{v}_{i-1} + r_i e_i) p_{2t}(z+\hat{v}_{j-1} + \tilde{r}_j e_j) (|w-z|^{-\alpha} + 1) \, dw  dz \\
    & \leq C t^{-2}(t^{-\alpha/2} +1) \max_{i,j} |v_i v_j| \leq C t^{-2}(t^{-\alpha/2} +1) |v|_2^2 = C t^{-2}(t^{-\alpha/2} +1) |x-x'|_2^2,
 \end{align*}
 using Lemma 5.1 (a) of \cite{mps:06} in the last step (compare this with Lemma 5.2 (b) in \cite{mps:06}).

Now, we consider the temporal distances in \eqref{eq:heat-ker:decomp}, i.e.~the last line. There we get by Lemma \ref{lem:heat-ker:diff:1} and Lemma 5.1 (a) of \cite{mps:06} that
  \begin{align*}
  & \intRd \intRd |  (p_{t,l}(w-x') - p_{t',l}(w-x'))(p_{t,l}(z-x') - p_{t',l}(z-x'))   | (|w-z|^{-\alpha} + 1) \, dw  dz \\
  & \leq c|t-t'| t^{-2} (t^{-\alpha/2}+1),
  \end{align*}
and this is the next part of the proposition - similar to Lemma 5.3 in \cite{mps:06}. The mixed parts in \eqref{eq:heat-ker:decomp} can be done similarly.
\end{proof}
Next we give the proof of a technical lemma:
\begin{lemma}\label{lem:heat-ker:deriv:1}
 For $R > 0$ there is a constant $C=C(R)$ such that for any  $y, \tilde{y} \in \RR^{\dimension}$,  $0<t\leq t'$ and $\eta_0 \in(1/R,1/2)$ the following holds for $l=1,\dots,\dimension$:
 \begin{enumerate}
  \item $\1 \{ |\tilde{y}| > t'^{1/2-\eta_0} \vee 2 |y-\tilde{y}| \} \, | p_{t,l}(y) |   \leq C  \exp (-\tfrac{1}{64} t^{-2\eta_0}) p_{4t}(y).$
  \item $\1 \{ |\tilde{y}| > t'^{1/2-\eta_0} \vee 2 |y-\tilde{y}| \} \, | p_{t,l}(y) |   \leq 2^{\dimension} C  \exp (-\tfrac{1}{64} t^{-2\eta_0}) p_{16t}(\tilde{y}).$
 \end{enumerate}
\end{lemma}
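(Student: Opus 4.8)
The plan is to prove both parts together, using the elementary Gaussian-tail estimate that hides in the indicator and the already-proven Lemma \ref{lem:4.2}. The key observation is that on the event $\{|\tilde y| > t'^{1/2-\eta_0} \vee 2|y-\tilde y|\}$ one has $|y| \geq |\tilde y| - |y-\tilde y| \geq \tfrac12 |\tilde y| > \tfrac12 t'^{1/2-\eta_0} \geq \tfrac12 t^{1/2-\eta_0}$, since $t \leq t'$; and also $|\tilde y| \leq |y| + |y-\tilde y| \leq |y| + \tfrac12|\tilde y|$, hence $|\tilde y| \leq 2|y|$. So on this event $|y|$ and $|\tilde y|$ are comparable up to a factor $2$, and both are bounded below by $\tfrac12 t^{1/2-\eta_0}$.

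First I would apply Lemma \ref{lem:4.2} to get $|p_{t,l}(y)| \leq C t^{-1/2} p_{2t}(y)$. Then I would split the Gaussian weight: write
\[
 t^{-1/2} p_{2t}(y) = t^{-1/2} (4\pi t)^{-\dimension/2} \exp\Bigl(-\frac{|y|^2}{4t}\Bigr)
 = t^{-1/2} (4\pi t)^{-\dimension/2} \exp\Bigl(-\frac{|y|^2}{8t}\Bigr)\exp\Bigl(-\frac{|y|^2}{8t}\Bigr).
\]
On the event in question $|y|^2 \geq \tfrac14 t^{1-2\eta_0}$, so $\exp(-|y|^2/(8t)) \leq \exp(-t^{-2\eta_0}/32)$. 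Absorbing the prefactor $t^{-1/2}$ into the exponential by Lemma \ref{lem:algebra} (with $a = t^{-1/2}$, $u=1$, say $r=1/(2\eta_0)$, noting $\eta_0 \in (1/R,1/2)$ so $r$ ranges over a fixed compact set, which turns $t^{-1/2}$ into $C \exp(c\, t^{-\eta_0 \cdot 2})$ — more simply, $t^{-1/2} \leq C \exp(t^{-2\eta_0}/64)$ for a constant depending on $R$, since $t \leq t' \leq $ something, or use that $t^{-1/2}$ grows slower than $\exp(t^{-2\eta_0}/64)$ as $t\to0$), one of the two copies of $\exp(-|y|^2/(8t))$ is used to kill $t^{-1/2}$ and produce the factor $\exp(-\tfrac1{64} t^{-2\eta_0})$, while the remaining $(4\pi t)^{-\dimension/2}\exp(-|y|^2/(8t))$ is comparable to $p_{4t}(y)$ up to a dimensional constant. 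This yields part (a).

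For part (b) I would start from part (a) and simply change the point $y$ to $\tilde y$ in the Gaussian: since $|\tilde y| \leq 2|y|$ on the event, we have $|y|^2 \geq |\tilde y|^2/4$, so $\exp(-|y|^2/(16t)) \leq \exp(-|\tilde y|^2/(64t))$, and therefore $p_{4t}(y) \leq 2^{\dimension} p_{16t}(\tilde y)$ up to the normalization constants (the $2^\dimension$ comes from $(4\pi\cdot 4t)^{-\dimension/2}$ vs.\ $(4\pi\cdot 16t)^{-\dimension/2}$ and the comparison of exponents — I would be a little careful to track where the $2^\dimension$ precisely comes from, but it is routine). The main obstacle, and the only place requiring care, is the bookkeeping in the step where $t^{-1/2}$ is absorbed into the Gaussian tail: one must check that the resulting constant depends only on $R$ (through the range of $\eta_0$) and not on $t$ itself, which is exactly what Lemma \ref{lem:algebra} is designed to deliver once one writes $t^{-1/2} = (t^{-2\eta_0})^{1/(4\eta_0)}$ and applies the inequality with $u=1$ and exponent $r = 4\eta_0 \in (4/R, 2)$. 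Everything else is elementary triangle-inequality manipulation inside the indicator and comparison of heat kernels at comparable times.
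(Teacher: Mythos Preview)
Your proof is correct and follows essentially the same route as the paper: the paper likewise uses the triangle inequality on the indicator to get $|y|\ge \tfrac12 t^{1/2-\eta_0}$ and $|\tilde y|\le 2|y|$, splits the Gaussian $\exp(-|y|^2/(4t))$ into two halves, uses one half together with Lemma~\ref{lem:algebra} to absorb $t^{-1/2}$ and produce $\exp(-\tfrac{1}{64}t^{-2\eta_0})$, and then passes from $p_{4t}(y)$ to $p_{16t}(\tilde y)$ via $|y|^2\ge|\tilde y|^2/4$. The only cosmetic difference is that the paper writes out the $|y_l|/t$ factor explicitly rather than citing Lemma~\ref{lem:4.2}; also, in your application of Lemma~\ref{lem:algebra} you should take $u=64$ rather than $u=1$ to land on the exponent $t^{-2\eta_0}/64$, but you already noted this hedging and the conclusion is unaffected.
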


\begin{proof}
Let us write $A:= \{ |\tilde{y}| > t'^{1/2-\eta_0} \vee 2 |y-\tilde{y}| \}$, then on that event it holds that
$$ |y| \geq |\tilde{y}| - |y-\tilde{y}| > \tfrac{|\tilde{y}|}{2} > \tfrac{t'^{1/2-\eta_0}}{2} \geq \tfrac{t^{1/2-\eta_0}}{2} \text{ , thus } \frac{|y|^2}{t}\geq \frac{t^{-2\eta_0}}{4}. $$
Using this and (\ref{eq:ineq:1}) twice, we calculate 
  \begin{align*}
 \1_A | p_{t,l}(y) | & = \1_A \frac{|y_l|}{t} (2\pi t)^{-\dimension /2} \exp(- \frac{|y|^2}{2t} ) \\
  & \leq \1_A \frac{|y|}{t} (2\pi t)^{-\dimension /2} \exp(- \frac{|y|^2}{2t} ) \\
  & \leq \1_A C t^{-1/2} (2\pi t)^{-\dimension /2} \exp(- \frac{|y|^2}{4t} )  \\
  & = \1_A C   t^{-1/2} \exp(- \frac{|y|^2}{8t} ) (2\pi t)^{-\dimension /2} \exp(- \frac{|y|^2}{8t} )   \\
  & \leq \1_A C  t^{-1/2} \exp (-\tfrac{1}{32} t^{-2\eta_0}) (2\pi t)^{-\dimension /2} \exp( - \frac{|y|^2}{8t} ) \\
  &\leq \1_A C(R)  \exp (-\tfrac{1}{64} t^{-2\eta_0}) p_{4t}(y). 
   \end{align*} 
Given that on the set $A$ we have
$$ |\tilde{y}| < 2 |y| \text{ , thus } |y|^2 \geq \frac{|\tilde{y}|^2}{4},$$
we can bound this further by
\begin{equation*}
\1_A | p_{t,l}(y) | \leq C(R)  \exp (-\tfrac{1}{64} t^{-2\eta_0}) p_{16t}(\tilde{y}).
\end{equation*}
\end{proof}
In order to prepare Lemma \ref{lem:4.4} we give the following proof.
\begin{proof}[Proof of Lemma \ref{lem:mps:5.1:add}]
 By \eqref{eq:ineq:1}  we bound
 $$ |w|^{r_1} p_t(w) \leq 4^{r_1/2+\dimension /2} t^{r_1/2}  p_{2t}(w)
 \quad \text{ and } \quad  |z|^{r_2} p_{t'}(z) \leq  4^{r_2/2+\dimension /2} t'^{r_2/2}  p_{2t'}(z).$$
 Next apply Lemma 5.1 (b) of \cite{mps:06} if $r_3 >0$ and their Lemma 5.1 (a) if $r_3 = 0$, to get the first estimate.
 For the second estimate note that by \eqref{eq:ineq:1}  and $|x| \leq \sqrt{\dimension} K,$
 \begin{align*}
  p_t(x-w) |w|^{r_1} & \leq p_t(x-w) 2^{r_1} (|w-x|^{r_1} + |x|^{r_1}) \\
   & \leq  2^R  (4t)^{r_1/2} p_{2t}(x-w) + (2\sqrt{\dimension}K)^{r_1} p_t(x-w) \\
   & \leq C(K,R) p_{2t}(x-w) (t^{r_1/2} +1)
 \end{align*}
so that we obtain the result by the first part.
\end{proof}
Finally, we can conclude the appendix with the proof of Lemma \ref{lem:4.4}
\begin{proof}[Proof of Lemma \ref{lem:4.4}]
By H\"older's Inequality we can bound the left hand side in \eqref{eq:lem:4.4} by
  \begin{align*}
  \big[ \intRd &\intRd   | ( p_{t-s,l}(w-x) - p_{t'-s,l}(w-x') ) ( p_{t-s,l}(z-x) - p_{t'-s,l}(z-x') )  |  \\
  &\quad   \ (|w-z|^{-\alpha} + 1) \, dw  dz \big]^{1-\eta_1/2} \\
  & \times
  \big[ \intRd \intRd  | ( p_{t-s,l}(w-x) - p_{t'-s,l}(w-x') ) ( p_{t-s,l}(z-x) - p_{t'-s,l}(z-x') ) | \\
   &  \quad \quad   |w-x|^{2p/ \eta_1} |z-x|^{2p/ \eta_1} \, \1 \{ |w-x| > (t'-s)^{1/2-\eta_0}\vee 2 |x-x'| \} \\
   &  \quad \quad  e^{2r/\eta_1(|w-x| + |z-x|)} (|w-z|^{-\alpha} + 1) \, dw  dz \big] ^{\eta_1/2}.
   \end{align*}
Now estimate the first integral using Lemma \ref{lem:heat-ker:diff:2} and expand the second one to obtain as a bound for \eqref{eq:lem:4.4}
  \begin{equation}\label{eq:4.4:pr:1}
    \begin{split}
    & C(R) (t-s)^{-(1+\alpha/2)(1-\eta_1/2)} \left[1\wedge \left( \frac{|x-x'|^2+|t-t'|}{t-s}   \right) \right]^{1-\eta_1/2} \\
    &  \times \left[ \intRd \intRd |p_{t-s,l}(w-x)p_{t-s,l}(z-x)| L(x,x',w,z,s,t') \, dw  dz \right. \\
    &\quad  + \intRd \intRd |p_{t'-s,l}(w-x')p_{t'-s,l}(z-x') |L(x,x',w,z,s,t')\, dw  dz \\
    & \quad + \intRd \intRd |p_{t-s,l}(w-x)p_{t'-s,l}(z-x') | L(x,x',w,z,s,t')\, dw  dz \\
    & \quad + \left. \intRd \intRd | p_{t'-s,l}(w-x')p_{t-s,l}(z-x) | L(x,x',w,z,s,t')\, dw  dz \right]^{\eta_1/2} ,
  \end{split} \end{equation}
where
\begin{eqnarray*}
L(x,x',w,z,s,t') &:=&  |w-x|^{2p/ \eta_1} |z-x|^{2p/ \eta_1} \, \1 \{ |w-x| > (t'-s)^{1/2-\eta_0}\vee 2 |x-x'| \} \\
& &e^{2r/\eta_1(|w-x| + |z-x|)} (|w-z|^{-\alpha} + 1).
\end{eqnarray*}
Since all of the four summands in the end are similar, we only consider the last one which is the worst with respect to  $(t-s)$-asymptotics. 
Use Lemma \ref{lem:4.2}, replace $\tilde{w} = w-x, \tilde{z}=z-x$ and then use Lemma \ref{lem:heat-ker:deriv:1} (b) to obtain
\begin{align*}
  & \intRd  \intRd  | p_{t'-s,l}(w-x')p_{t-s,l}(z-x)| \ |w-x|^{2p/\eta_1} |z-x|^{2p/\eta_1} \\
   &  \times \1 \{ |w-x| > (t'-s)^{1/2-\eta_0}\vee 2 |x-x'| \}  \, e^{2r/\eta_1(|w-x| + |z-x|)}  (|w-z|^{-\alpha} + 1) \, dw  dz \\
   & \leq C \intRd \intRd  | p_{t'-s,l}(\tilde{w}+x-x') | (t-s)^{-1/2}p_{2(t-s)}(\tilde{z} ) |\tilde{w}|^{2p/\eta_1} |\tilde{z}|^{2p/\eta_1} \\
  &  \quad \quad \times \1 \{ |\tilde{w}| > (t'-s)^{1/2-\eta_0}\vee 2 |x-x'| \}  e^{2r/\eta_1(|\tilde{w}| + |\tilde{z}|)} (|\tilde{w} - \tilde{z}|^{-\alpha} + 1) \, d\tilde{w} \, d\tilde{z}\\
   & \leq C(R) \intRd \intRd \exp (-\tfrac{1}{64} (t'-s)^{-2\eta_0}) p_{16(t'-s)}(\tilde{w})p_{2(t-s)}(\tilde{z}) \\ 
  &  \quad \quad \times |\tilde{w}|^{2p/\eta_1} |\tilde{z}|^{2p/\eta_1} (t-s)^{-1/2}  e^{2r/\eta_1(|\tilde{w}| + |\tilde{z}|)} (|\tilde{w}-\tilde{z}|^{-\alpha} + 1) \, d\tilde{w} \, d\tilde{z}. \\
  & \leq C(R,\eta_1,K)  (t-s)^{-1/2} \exp (-\tfrac{1}{64} (t'-s)^{-2\eta_0}) \nonumber \\
  & \qquad \qquad \qquad  e^{32r^2 \eta_1^{-2}(t'-s)} (t'-s)^{p\eta_1^{-1}} (t-s)^{p\eta_1^{-1}}  ((t'-s)^{-\frac{\alpha}{2}} + 1). \nonumber \\
  & \leq C(R,K) (t-s)^{-1/2} \exp(-\tfrac{1}{128} (t'-s)^{-2\eta_0} ), 
  \end{align*}
where we used Lemma \ref{lem:mps:5.1:add}, first part, in the next to last line and $(t'-s)\leq K$. The other summands are similar, we use Lemma \ref{lem:heat-ker:deriv:1} (with $t=t'$ for lines 1 and 3) and can use the exponential of $t'-s$ ($t-s$ in lines 1 and 3) to control all of the negative exponents.
Putting this back in \eqref{eq:4.4:pr:1} gives the result since $(1+ \alpha/2)(1- \eta_1/2) + (1/2) (\eta_1/2)  \leq 1 + \alpha/2.$
\end{proof}

\noindent
{\bf Acknowledgement.} T.R.~would like to thank Leonid Mytnik for valuable discussions and the invitation to spend time at the Technion.

\bibliographystyle{alpha}
\bibliography{2012-12-17-PUforSHE_article}

\end{document}